\documentclass[11pt, a4paper, oneside]{book}

\usepackage[utf8]{inputenc}
\usepackage[main = english]{babel} 			
\usepackage[T1]{fontenc}							
\usepackage{newpxtext}

\usepackage[ 
		layoutoffset=0pt,						
		bottom=4cm, 
		top=2.5cm,	
		left=4cm, 
		right=2.5cm,			
		bindingoffset=0pt,						
		headheight=26pt				
		]{geometry}

\usepackage{comment}
					
\usepackage{microtype, fancyhdr}	 
\usepackage{setspace}					 
\usepackage{pdflscape} 					 

\usepackage{amsfonts,amsmath,amsthm,amssymb, mathrsfs, amscd, bm}	 
\numberwithin{equation}{chapter}	

\usepackage{url, enumitem} 			
\usepackage[noadjust]{cite}				
\usepackage{epigraph}						
\usepackage[nottoc]{tocbibind}			
\usepackage{blindtext}						
\usepackage{imakeidx}					
\makeindex

\usepackage{graphicx} 				
\graphicspath{{Pictures/}}				
\usepackage{xcolor}
\usepackage{longtable} 				
\usepackage{lscape} 					
\usepackage{booktabs} 				
\usepackage{multicol} 					
\usepackage{multirow}		
\usepackage{wrapfig} 					

\usepackage{caption}

\let\headruleORIG\headrule
\renewcommand{\headrule}{\color{black} \headruleORIG}

\usepackage{colortbl}
\arrayrulecolor{black}

\fancypagestyle{plain}{
  \fancyhead{}
  \fancyfoot{}
  
}

\makeatletter
\def\cleardoublepage{\clearpage\if@twoside \ifodd\c@page\else%
    \hbox{}%
    \thispagestyle{empty}
    \newpage%
    \if@twocolumn\hbox{}\newpage\fi\fi\fi}
\makeatother

\usepackage{etoolbox}
\patchcmd{\chapter}{plain}{empty}{}{}
\makeatletter
  \let\ps@plain\ps@empty
\makeatother

\usepackage[linktocpage,bookmarksnumbered,pagebackref]{hyperref} 
\usepackage{breqn} 
\usepackage{pdfpages} 

\usepackage{bbm}

\newcommand{\E}{\mathbb E}
\newcommand{\J}{\mathbb J}
\newcommand{\Tr}{\mathrm{Tr}}
\newcommand{\T}{\mathbb{T}}
\newcommand{\PP}{\mathbb P}

\newcommand{\x}{\mathbf{x}}
\newcommand{\xx}{{\bf x}}

\newcommand{\TT}{\mathcal{T}}
\newcommand{\UU}{\mathcal{U}}

\newcommand{\beq}{\begin{equation}}
\newcommand{\eeq}{\end{equation}}

\newcommand\numberthis{\addtocounter{equation}{1}\tag{\theequation}}

\newtheorem{thm}{Theorem}[section]
\newtheorem{prop}[thm]{Proposition}
\newtheorem{cor}[thm]{Corollary}
\newtheorem{lem}[thm]{Lemma}
\newtheorem{definition}{Definition}[section]

\begin{document}
\begin{titlepage}


\begin{center}

\begin{spacing}{2.5}
{\Huge \textsc{\textbf{Spectral properties of random matrices}}}
\end{spacing}

\vspace{1.0 cm}

\begin{center}
\includegraphics[width=0.5\textwidth]{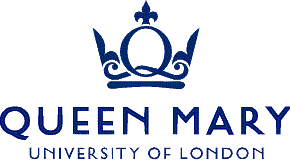}
\end{center}
\vspace{0.9cm}

{\LARGE Anastasis Kafetzopoulos} 

\vspace{1cm}

{\large \sffamily
Doctor of Philosophy\\
\vspace{0.9 cm}
School of Mathematics\\
\vspace{0.5 cm}
Queen Mary University of London\\
\vspace{0.5cm}
April 2023 
}

\end{center}

\end{titlepage}

\doublespacing		
\chapter*{Statement of originality}
\label{C:Statement}
\addcontentsline{toc}{section}{\nameref{C:Statement}}

I confirm that the research included
within this thesis is my own work or that where it has been carried out in collaboration with, or supported by others, that this is duly acknowledged below and my contribution indicated. Previously published material is also acknowledged below.
\medskip

I attest that I have exercised reasonable care to ensure that the work is original, and does not to the best of my knowledge break any UK law, infringe any third party’s copyright or other Intellectual Property Right, or contain any confidential material.
\medskip

I accept that Queen Mary, University of London has the right to use plagiarism detection software to check the electronic version of the thesis.
\medskip

I confirm that this thesis has not been previously submitted for the award of a degree by this or any other university.
\medskip

The copyright of this thesis rests with the author and no quotation from it or information derived from it may be published without the prior written consent of the author.
\medskip

Signature: Anastasis Kafetzopoulos

Date: April 2023
\medskip

Details of publications: 

1. Local Marchenko-Pastur law. [Journal of Mathematical Physics] \newline
2. Universality of correlated covariance matrices. [in preparation]
\chapter*{Abstract}
\label{C:Abstract}
\addcontentsline{toc}{section}{\nameref{C:Abstract}}


In the first part of this thesis, we give the theoretical foundations of random matrix theory through the definitions of a random matrix, a random probability measure and the corresponding empirical spectral distribution we will be working with. The main technical tool of the first paper is also defined rigorously and analyzed deeply, which is the Stieltjes transform method.

We then use this tool to prove optimal convergence of the empirical spectral distribution of random sample covariance matrices to the deterministic Marchenko-Pastur distribution. We also give new results about the rigidity of the eigenvalues of this random sample covariance matrix as well as about the rate of their convergence.

In the second part of this thesis, we define another important and more general technical tool which works additionally well with non-Hermitian random matrices and that is the Dyson equation method which was used in the second paper. Just like the Stieltjes transform method, it is also defined rigorously and analyzed deeply.

We then prove new local laws about a random matrix model that interpolates between the Marchenko-Pastur distribution, the elliptical law and the circular law. Through our work these local laws can now be considered universal, which means that they are independent of the initial distribution of the random matrix entries.

We finally give an overview of our new results and provide new directions of study.

\chapter*{Acknowledgements}
\label{C:Acknowledgements}
\addcontentsline{toc}{section}{\nameref{C:Acknowledgements}}

In this section, I would like to thank everyone that made this work possible. Firstly, many thanks should go to my talented supervisor Anna Maltsev who firstly accepted me to this PhD program and then guided me through all these years with patient advice, valuable insights, high-level scientific discussions and excellent professionalism. Secondly, I would like to thank my second supervisor, Alexander Sodin, who as a world-renowned researcher in this scientific field uncovered his way of thinking to me through our meetings and some invaluable discussions. 

Next, many thanks should go to the Royal Society for funding this project for four years and also providing an excellent travelling package for scientific development and training in many conferences and summer schools. I would also like to thank Queen Mary, University of London for its excellent organisation and professionalism through all these years and its support for the research students while providing a very decent academic environment for conducting research. I would like to thank all my relatives and friends for supporting me while facing this interesting and challenging research project and experience. 

Lastly, I would like to thank my two examiners, Nick Simm and Alex Gnedin for their very useful comments and observations about the thesis.
\begin{spacing}{1}
\tableofcontents
\end{spacing} 
\chapter{Introduction}

\begin{section}{Random matrix theory}

Random matrix theory is an active research area of modern mathematics combining Mathematical and Theoretical Physics, Mathematical Analysis and Probability, and with numerous applications, for example in Theoretical Physics, Number Theory, Combinatorics and further in Statistics, Financial Mathematics, Biology and Engineering. \cite{akemann2011oxford}, (ch.3: applications of random matrices).

The main goal of Random Matrix Theory is to provide an understanding of the properties of random matrices, that is matrices that have as elements random variables, real, complex or quaternion. One important direction of investigation is the study of the eigenvalue distribution of such matrices when the matrix dimension is large. Many quantities in physics and other applied areas are modeled by the eigenvalues of such matrices. 

The origins of random matrices can be traced back to the works of Wishart in 1928 and James in 1960 in the discipline of Statistics, where random matrices were used to derive distributions for numerous statistics of normal multivariate random variables. A lot of development in this mathematical field was due to the work of Wigner in Nuclear Physics around 1950. Wigner was the first to suggest that the fluctuations of nuclei resonances can be described in terms of the properties of eigenvalues of very large symmetric matrices with random entries. 

An observable in quantum mechanics can be characterized by a self-adjoint linear operator in a Hilbert space, its Hamiltonian, which we may think informally of as a matrix of infinitely many dimensions. This suggests that random matrix theory should be dealing with general properties of the underlying generic Hamiltonians, most importantly such global features as the Hermiticity, the time-inversion invariance as well as other symmetries Hamiltonians may obey from general principles. Wigner hoped that the output of the model, that is the distribution of the eigenvalues for large-dimensional Hamiltonians will be universal and independent of the details of the entries. It would be common to the majority of systems sharing the corresponding symmetries. \cite{mehta2004random} (section 1.1: random matrices in nuclear physics). 

Along those lines Wigner succeeded in calculating nontrivial spectral characteristics of random real symmetric matrices with independent, identically distributed entries, the mean density of the eigenvalues and demonstrated that in the limit of large matrix size it is given quite generally by the semi-circle law. 
\end{section}

\begin{section}{The Wigner semi-circle law}
Consider a random symmetric $N\times N$ matrix $X$ with independent and identically distributed entries $(x_{ij})$ for $i\leq j \in \{1,...,N\}$ and then consider the distribution of its random eigenvalues. 

It turns out that as the dimension of the matrix increases the random distribution of the eigenvalues becomes deterministic provided we impose these two conditions on the matrix entries so that they don't get too large:
\[
\E [x_{ij}] = 0, \quad \mathrm{Var} [x_{ij}] = 1.
\]
Since the dimension is also getting large we need to normalize the matrix so that the eigenvalue distribution is confined to an interval independent of $N.$ For this reason, we consider the eigenvalues of the random matrix $W:=\frac{1}{\sqrt{N}}X$ with 
\[
\E [w_{ij}] = 0, \quad \mathrm{Var} [w_{ij}] = \frac{1}{N}.
\]
It turns out that the relative frequency histogram $\rho_N$ of the positions of the $N$ scaled random eigenvalues for large $N,$ is close to the following deterministic law $f$:
\[
\rho_N \approx f,
\]
where $f$ is a probability density supported in $[-2,2],$ which is a scaled semi-circle centered at $(0,0):$
\[
f(x) = \frac{1}{2\pi} \sqrt{4-x^2}.
\]
\begin{figure}[ht]
\centering
\includegraphics[width=0.8\textwidth]{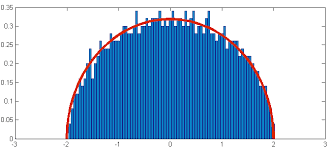}
\caption{Semi-circle law from \cite{feier2012methods}.}
\end{figure}

This was proved back in 1958 by Wigner, \cite{wigner1958distribution}. The approximation details will be clarified in the following chapters.
\end{section}

\pagebreak

\begin{section}{The Marchenko-Pastur law}

Another famous example of a deterministic distribution comes from the consideration of a large random sample covariance matrix.

For this, we consider firstly a random $N \times M$ matrix $X$ with i.i.d. centered entries with unit variance and then multiply it with its transpose. That means that we are considering the random covariance matrix $XX^T$ and get to study its random eigenvalues, after we normalize it as $W:=\frac{1}{N} XX^T.$

For this model, it turns out that as the dimensions $N$ and $M$ increase, the relative frequency histogram $\mu_N$ of the positions of the $N$ random eigenvalues of the Hermitian and positive semi-definite square matrix $\frac{1}{N}XX^T$ approaches the following law $g$:
\[
\mu_N \approx g,
\]
where $g$ is a probability density supported in $[\lambda_-,\lambda_+],$ which is called the Marchenko-Pastur distribution. The constants $\lambda_-,\lambda_+$ are defines as follows. First, we take the limit:
$$
d := \lim \limits_{N,M \to \infty} \frac{M}{N} \in (0,1],
$$
and the we define the endpoints:
\[
\lambda_\pm := (1 \pm d )^2.
\]
The distribution $g$ is then given by the following density,
\[
g(x) = \frac{1}{2\pi \lambda } \frac{\sqrt{(\lambda_+-x)(x-\lambda_-)}}{ x}, \quad x \in [\lambda_-,\lambda_+].
\]

\begin{figure}[ht]
\centering
\includegraphics[width=0.8\textwidth]{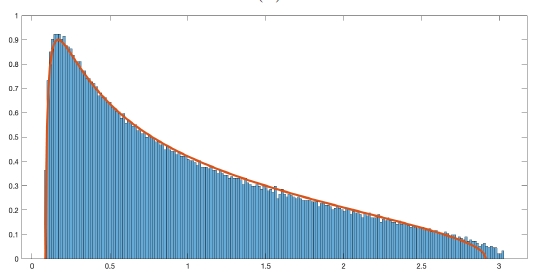}
\caption{Marchenko-Pastur law from \cite{bryson2021marchenko}.}
\end{figure}

This was proved back in 1973 by Marchenko and Pastur, \cite{marchenko1967distribution}. Again, the approximation details will be clarified in the next chapters.
\end{section}

\pagebreak

\begin{section}{The circular law}
We now turn our attention to non-Hermitian random matrix models which generally have complex eigenvalues. 
 
One famous such example is the case of a random matrix $X$ that has independent identically distributed complex entries, without any required symmetry in them. 
 
This means that we are considering an $N \times N$ random matrix $X$ with i.i.d. complex entries $(x_{ij})$ such that:
\[
\E [\mathrm{Re}(x_{ij})] = 0, \quad \mathrm{Var}[\mathrm{Re}(x_{ij})]=1/2, \]
\[
\E [\mathrm{Im}(x_{ij})] = 0, \quad \mathrm{Var}[\mathrm{Im}(x_{ij})]=1/2.
\]
If we consider now the 2-dimensional relative frequency histogram $c_N$ of the positions of the $N$ random eigenvalues of the scaled matrix $W:= \frac{1}{\sqrt{N}}X$ with expectation zero and complex variance $\frac{1}{N},$ we get that:
\[
c_N \approx h,
\]
 where $h$ is the complex uniform distribution on the unit disk, i.e.:
 \[
 h(z) = 1_{|z|\leq 1}.
 \]
 
\begin{figure}[ht]
\centering
\includegraphics[width=0.7\textwidth]{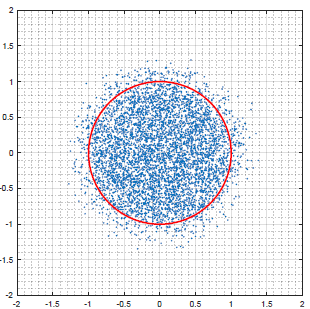}
\caption{Circular law from \cite{bothner2022complex}.}
\end{figure}
\end{section}

This is known as the circular law. For random matrices with a Gaussian distribution for the entries, the circular law was established in 1965 by Ginibre, see \cite{ginibre1965statistical}. Around 1980, Girko introduced an approach which allowed to establish the circular law for more general distributions of the matrix entries, see \cite{girko1985circular}.

\pagebreak

\begin{section}{The elliptic law}
The circular law result was extended in 1988 by Sommers, Crisanti, Sompolinsky and Stein to an elliptical law for ensembles of matrices with the properties described next, (see \cite{sommers1988spectrum}). 

Let $(\xi_1,\xi_2)$ be a random vector in $\mathbb{C}^2$ where both $\xi_1$ and $\xi_2$ have mean zero and unit complex variance. We say that a random matrix $X=(x_{ij})_{i,j=1}^N$ belongs to the complex elliptic ensemble if the following conditions hold:
\begin{itemize}
    \item (Independence). $\{ x_{ii} : i \geq 1\} \cup \{ (x_{ij},x_{ji}) : 1\leq i < j\}$ is a collection of independent random elements.
    \item (Common distribution). Each pair $(x_{ij},x_{ji}),$ $1\leq i < j$ is an i.i.d. copy of $(\xi_1,\xi_2).$
    \item (Flexibility of the main diagonal). The diagonal elements $\{ x_{ii} : i \geq 1 \}$ are i.i.d. with mean zero and finite variance.
    \item (Correlations). We have that $\E[\xi_1 \xi_2 ] = \rho, $ where $\rho \in (-1,1)$ is a universal correlation constant for the random matrix $X.$
\end{itemize}
If $d_N$ is the 2-dimensional relative frequency histogram of the positions of the random eigenvalues of the scaled version of this random matrix model $W:= \frac{1}{\sqrt{N}} X$, then we have that:
\[
d_N \approx k,
\]
where $k$ is the complex uniform distribution on the ellipse centered at $(0,0)$ and with its edges at $(1-\rho,0)$ and $(1+\rho,0):$
\[
k(z) := \frac{1}{\pi (1-\rho^2)} \mathbf{1}_{ \left \{ z \in \mathbb{C} \ \middle | \ \frac{(\mathrm{Re} z)^2}{(1+\rho)^2} + \frac{(\mathrm{Im} z)^2}{(1-\rho)^2} \leq 1 \right \} }.
\]
\begin{center}
\centering
\captionsetup{type=figure}
\includegraphics[scale=2]{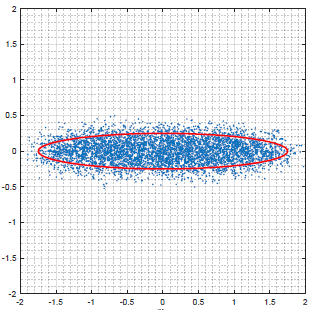}
\captionof{figure}{Elliptic law from \cite{bothner2022complex}.}
\end{center}

\end{section}

\begin{section}{Local laws}
Since the random matrix $W$ we are usually dealing with is normalized, we have that its spectrum is of constant order, or in other words the operator norm $\|W\|$ is of order $1.$ Since $W$ has $N$ eigenvalues, the typical separation of the eigenvalues is of order $\frac{1}{N}.$ Individual eigenvalues are expected to fluctuate from their mean locations but as it turns out for eigenvalues of random matrices these fluctuation are really small, eigenvalues are rigid.

This allows us to talk about convergence of the eigenvalue distribution in short scales on an interval where there is just a constant $\mathcal{O}(1)$ amount of them in contrast to a large $\mathcal{O}(N)$ amount of them. In general, if the fluctuations of a random process are not small this can turn out to be impossible, as it is for example in the Poisson point process.
\begin{center}
\centering
\captionsetup{type=figure}
\includegraphics[scale=0.3]{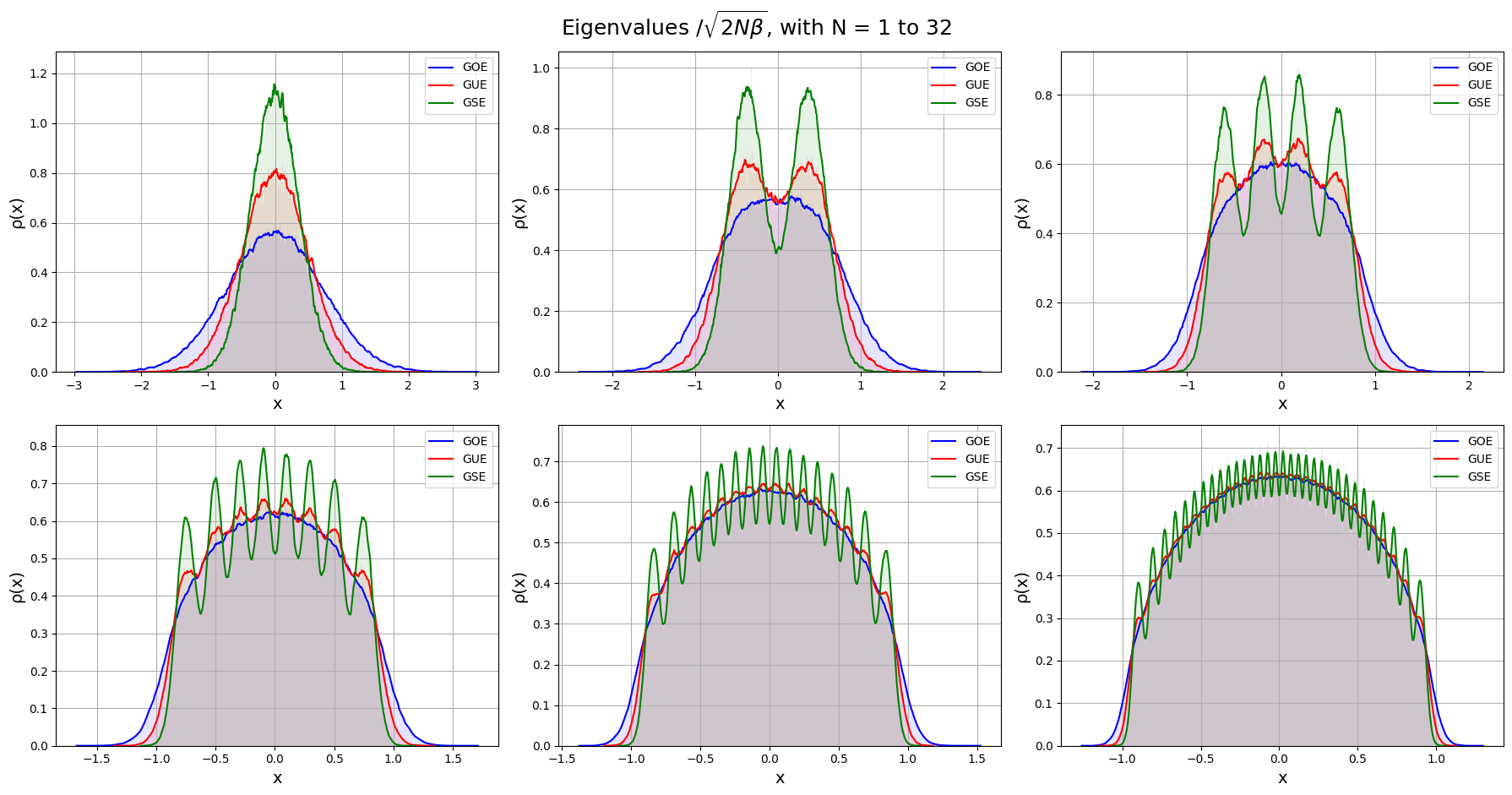}
\captionof{figure}{Local laws.}
\end{center}
This convergence on short scales for the eigenvalue distribution has numerous applications and is technically useful. We give a  description of it for our previous models:

\underline{Wigner matrix.} We will define a certain "zooming mechanism" regarding the "scale" of convergence of the random distribution of the eigenvalues, that is a mechanism for creating peaks at each interval in the real line with a constant amount of eigenvalues and a smoothing-out until the next one. Here, the typical separation of the eigenvalues is of order $\mathcal{O}(\frac{1}{N})$ everywhere in the support except from the edges and this mechanism can create peaks in each such small interval of that order where there is a constant amount of eigenvalues. Near the edges we have a typical separation of order $\mathcal{O}(\frac{1}{N^{2/3}}).$ We still get convergence to the semi-circle law with such a "bandwidth" aligned to the random eigenvalue distribution.

\underline{Covariance matrix.} We will define the same mechanism for the eigenvalue distribution of a random covariance matrix. There is a strong accumulation of eigenvalues near zero when $d=1$, which creates a singularity in the limiting Marchenko-Pastur distribution which approximates the random eigenvalue distribution. Due to this, the typical separation of eigenvalues there is of smaller order $\mathcal{O}(\frac{1}{N^2})$. Away from this point, the separation is of typical order $\mathcal{O}(\frac{1}{N})$ but again, near the right edge there must be a smaller amount of eigenvalues and so the separation is of slightly larger order $\mathcal{O}(\frac{1}{N^{2/3}}).$ We still get convergence to the Marchenko-Pastur distribution when we "zoom in" to these intervals but now greater care is needed for our mechanism process, especially near the singularity.

\underline{Circular law.} Here the zooming process is different since the eigenvalues are complex. We use a mechanism for creating peaks arising from small circles around each individual eigenvalue. Here the typical separation of the eigenvalues is of order $\mathcal{O}(\frac{1}{\sqrt{N}})$ everywhere inside the support of the unit disk. This means that we can create a random distribution which gives more mass to circles of diameter $\mathcal{O}(\frac{1}{\sqrt{N}})$ inside the unit disk while also following the random eigenvalue distribution. This would still converge to the the circular law for large $N.$

\underline{Elliptic law.} We use the same zooming process as we do with the local circular law to create small circles inside the domain of the ellipse of diameter $\mathcal{O}(\frac{1}{\sqrt{N}}),$ which is the typical separation of the random eigenvalues inside the ellipse. The eigenvalue distribution, aligned with this zooming mechanism still converges to the elliptic law.
\end{section}

\begin{section}{Discussion about the new results}
This section will be technical and will provide a description about the past results and the new original contributions that have been made during my work as a PhD student at Queen Mary university.

\underline{1. Local Marchenko-Pastur law.}

Let $X_N = X/\sqrt{N} $, where $X$ is a $N \times N$ matrix with entries $x_{ij} = \mathrm{Re}(x_{ij}) + i \mathrm{Im}(x_{ij})$, where $\mathrm{Re}(x_{ij}), \mathrm{Im}(x_{ij}) $ are i.i.d. real random variables with mean $0$ and variance $\frac{1}{2},$ so that $\mathbb{E} (x_{ij}) = 0$ and $\mathbb{E}|x_{ij}|^2=1 \ \text{for} \ i,j =1,...,N.$ We assume that the random variables have finite fourth moments.

Let $\lambda_i, \ i=1,...,N$ denote the eigenvalues of the positive semi-definite matrix $X_N^*X_N$ with $0\leq \lambda_1 \leq ... \leq \lambda_N .$ If we denote by $\rho_N$ the empirical spectral density of the eigenvalues:
$$ \rho_N(E) = \frac{1}{N} \# \left \{ i \leq N | s_i \leq E \right \}, $$
and by $\rho(E)$ the density of the Marchenko-Pastur distribution:
$$ \rho(E) = \frac{1}{2\pi} \sqrt{ \frac{4}{E} - 1}, $$
we have that $\rho_N \to P $ weakly in probability, where $P$ is the cumulative density of $\rho$. We show that this convergence holds locally at an optimal scale. Let's comment here about the previous results so far.

In \cite{erdos2012local}, Erd\"os-Schlein-Yau-Yin established the convergence of the empirical spectral density for general covariance matrices to the Marchenko-Pastur law in the bulk on small intervals for $X_N$ being a $M \times N$ matrix with $M<N$. They used a decomposition by minors for the diagonal elements of the resolvent to establish a self-consistent equation for the Stieltjes transform $S_N$ of $\rho_N$. Large deviation estimates and a continuity argument were then used to show the convergence of the spectral measure on small intervals (involving polynomial corrections) in the bulk  distribution. These methods have been extended to the "hard" edge for $N \times N$ matrices and logarithmic rather than polynomial corrections by Cacciapuoti-Maltsev-Schlein in \cite{cacciapuoti2013local}. More precisely, the authors showed that the fluctuation of the Stieltjes transform $\sqrt E S_N$ away from $\sqrt E S_{\rho}$ is on the order of $\sqrt{\frac{\sqrt{E}}{N\eta}}$ and they obtained convergence of the counting function of the eigenvalues everywhere including close to the "hard" edge. Eigenvalue rigidity with polynomial corrections for the "bulk" and "soft" edges for entries with sub-exponential decay can be found in Pillai-Yin \cite{pillai2014universality}.

In this work, we obtain optimal bounds on the expectations of high moments of the fluctuation $\Lambda = S_N - S_{\rho}$ on the optimal scale and the previous bound $\sqrt{\frac{\sqrt{E}}{N\eta}}$ gets improved to ${\frac{\sqrt{E}}{N\eta}}$ without any logarithmic corrections. Our methods and results apply to the "bulk" as well as to the "soft" and "hard" edges. The main objective of this work is to extend the results and the methodology of \cite{cacciapuoti2015bounds} to a "hard edge setup". This is the case when the limiting measure has a square root singularity near $0$ with typical distance between the eigenvalues of the order of $\frac{1}{N^2}.$ We were able to simplify the proof of Theorem \ref{t:sti} in \cite{cacciapuoti2015bounds} by avoiding different cases for the "bulk" and the "edges". Unlike in the Wigner case, where both edges are "soft", the presence of the "hard" edge at $0$ allows us to extend the bounds on the real part of the Stieltjes transform to the negative real line, thus also yielding a fluctuation for the individual eigenvalue near the "hard" edge that is decreasing with the eigenvalue number. In summary, this paper improves on \cite{cacciapuoti2013local} by removing the logarithmic corrections and improving the fluctuation bounds while keeping the same optimal scale of convergence. We also extended the proofs in \cite{gotze2016optimal,gotze2014rate} on fluctuations of quadratic forms to a "soft edge setup" by improving a factor of $|S_{\rho}|$ to a factor of $\mathrm{Im} (S_{\rho})$.

To show our results, we used the Stieltjes transform method which involves a parameter ${z \in \mathbb{C}} $ that depends on $N$. We define:
$$ S_N(z) = \int_{\mathbb{R}} \frac{1}{x-z} \rho_N(x) dx $$ and
$$ S(z) = \int_{\mathbb{R}} \frac{1}{x-z} \rho(x) dx, $$
where $z = E +i\eta$, with $E \in \mathbb{R}$ representing the position of an eigenvalue and $0<\eta(N)\leq 1$ representing the local scale of the convergence for the two distributions. It is proven that for:
\begin{equation} \label{Neta}
\frac{N\eta}{|\sqrt{z}|} \geq M,
\end{equation}
for a suitable large  constant $M$ and for each power $q$ with $1 \leq q \leq c_0 \left (\frac{N\eta}{|\sqrt{z}|} \right )^{1/8} $ and for each $z \in Z_{E,\eta} = \left \{E,\eta \in \mathbb{R} : E^2 + \eta^2 -4 |\eta | \leq 4E \right \} $ or $E<0:$
\begin{equation} \label{thm2}
\mathbb{P} \left ( |\sqrt{z} ( S_N (z) - S_{\rho}(z) ) | \geq K \frac{| \sqrt{z}|}{N\eta} \right ) \leq \frac{(Cq)^{cq^2}}{K^q} ,
\end{equation}
for some constants $C,c_0$ and for each $K>0.$

Relation \eqref{Neta} gives the optimal scaling according to the position of the eigenvalues, namely we have that for $ E $ close to $0:$  

$\eta \gtrsim \frac{1}{N^2} $ while for $E$ away from the "hard" edge: $ \eta \gtrsim \frac{1}{N} $.

Relation \eqref{thm2} gives the optimal bound on the convergence of the Stieltjes transforms. \\ In the "hard" edge the term $\sqrt{|z|}$ corrects this convergence, as we have that $|\sqrt{z} ( S_N (z) - S_{\rho}(z)) | \to 0$ in probability in the optimal scaling.

This theorem has as a consequence a bound on the convergence rate of the empirical spectral distribution to the Marchenko-Pastur distribution. Letting:
\[
P(E) = \int_0^E \rho(x) dx,
\]
we compare it to $\rho_N.$ With the same assumption as before, we have that:
\begin{equation} \label{thm3}
\mathbb{P} \left ( \left |\rho_N(E) - P(E)  \right | \geq K \min\left \{\sqrt{E},\frac{\log N}{N}\right \} \right ) \leq \frac{(Cq)^{cq^2}}{K^q},
\end{equation}
for all $E \in \mathbb{R},$ $K>0,$ $N>N_0$ and for each $q \in \mathbb{N}$ with $1 \leq q \leq c_0 \left (\frac{N\eta}{|\sqrt{z}|} \right )^{1/8}.$

Relation \eqref{thm3} is not optimal since it is known that the fluctuations between the number of eigenvalues predicted by the counting measure and the M-P distribution are of the order of $\frac{\sqrt{\log N}}{N}$ but the technique used here couldn't provide such a result.

Finally the rigidity of the eigenvalues was proven. There exist constants $C,c,N_0, \epsilon>0$ such that:
\begin{equation} \label{rigiditybulk}\mathbb{P} \left ( | \lambda_i  - \gamma_i | \geq K \frac{ \log N}{N} \left ( \frac{i}{N}  \right )\right) \leq \frac{(Cq)^{cq^2}}{K^q},\end{equation}
for $i=1,...,\lceil N/2 \rceil$, $N>N_0$, $K>0$, and $q \in \mathbb{N}$ with $q\leq N^\epsilon.$

Furthermore, for $i \leq \log N$ we have that:
\begin{equation} \label{rigidityedge}\mathbb{P} \left ( | \lambda_i  - \gamma_i | \geq K \frac{ i^2}{N^2}\right) \leq \frac{(Cq)^{cq^2}}{K^{q/2}}. \end{equation}
In this theorem the factor $\frac{i}{N}$ accounts for the higher density at the "hard" edge. Here we focus on hard-edge rigidity, since proofs of soft-edge rigidity require control of the largest eigenvalue which, to our knowledge, is not currently available in the case of truncated entries with four moments, in either the Wigner or the Sample Covariance case.

The terms $\gamma_i$ are defined as the classical locations of the eigenvalues predicted by the M-P distribution, that is:
$$ \int_0^{\gamma_i} \rho(E) dE = \frac{i}{N} .$$
In particular, we obtain the fluctuation of the eigenvalues near the "hard" edge to be of the order of $\frac{\log N}{N^2}$. The fluctuations of eigenvalues in the "bulk" and "soft" edges of both the Gaussian Unitary Ensemble and the Wishart Ensemble are known to be respectively of the order $\frac{\sqrt{\log N}}{N}$ in the bulk and $\frac{\sqrt{\log k}}{k^{1/3}N^{2/3}}$ for the $k$-th eigenvalue from the edge, $k \rightarrow \infty$ (see \cite{gustavsson2005gaussian, su2006gaussian}). To our knowledge similar results are not yet available for the "hard" edge.

\underline{2. Universality of correlated covariance matrices.}

In the second part of this thesis, we study the spectrum of  matrices of the form $X_{1}X_{2}^*,$ where the matrices $X_{1}$ and $X_{2}$ are correlated according to a parameter $\tau \in [0,1].$ This topic has a particularly wide application in Quantum Chromodynamics.

For $\tau=0$ we have two completely uncorrelated matrices whereas for $\tau=1$ we have two completely correlated matrices $(X_1=X_2)$ and we can recover the Marchenko-Pastur distribution in the limit $N \to \infty$.

The goal of this analysis is to determine the distribution (law and support) of the complex eigenvalues of $X_1X_2^*$ when the entries of each matrix are general i.i.d. random variables. This means that we are proving universality results, so that we can extend a recent work by Akemann, Byun and Kang in \cite{akemann2021non}, where the entries there were assumed to be Gaussian. Due to this assumption, the techniques that were used there could not be adapted to our analysis, so we followed the techniques used in \cite{alt2022local}.

Specifically, the model we are examining are matrices of the form $X_1X_2^*$ where
$$ X_1 = \sqrt{1+\tau} P + \sqrt{1-\tau}Q, \quad X_2 = \sqrt{1+\tau} P - \sqrt{1-\tau} Q $$
and $P,Q$ are $N \times N $ random matrices with i.i.d. entries and $\tau$ is a parameter that models the correlation between the matrices $X_1$ and $X_2$.

In \cite{akemann2021non} it was shown that the eigenvalue counting measure of the matrix $X=X_1 X_2^*$ (for random Gaussian entries of the matrices $P,Q$) converges to the measure $\hat{\mu}$, where
\begin{equation} \label{hatmeasure}
d\hat{\mu}(\zeta) = \frac{1}{1-\tau^2} \frac{1}{ 2 \pi |\zeta|} \mathbf{1}_{\hat{S}_{\tau}}(\zeta) d^2\zeta
\end{equation}
and $d^2 \zeta$ is the Lebesgue measure on the complex plane. The support $\widehat{S}_{\tau}$ of this measure is given by:
\begin{equation} \label{hatsupport}
\hat{S}_{\tau} := \left \{ \zeta = x+iy : \left ( \frac{ x - 2\tau}{1+\tau^2  } \right )^2 + \left ( \frac{y}{1-\tau^2 } \right )^2 \leq 1 \right \}.
\end{equation}
It represents a transition from the circular law to an ellipsoid and finally to the Marchenko-Pastur law according to the values of the correlation parameter $\tau$.

The universal result we prove here is the convergence of the empirical spectral density to the measure $d\hat{\mu}$ independently of the choice of the distributions for $P,Q$ and extend it on local scales.

The techniques used are based on the recent publication "Local elliptic law" \cite{alt2022local}, where the Dyson equation method is used for non-Hermitian matrices with correlated entries after they get "hermitized" through the Hermitization technique. The Dyson equation is a powerful technique and except for universality, it can also provide local laws for the limiting spectral measure.

The main idea in our analysis is to firstly prove a universal uniform limiting local elliptic law for the distribution of the eigenvalues of the auxiliary \textit{Dirac} matrix:
\beq \label{1:dirac} D:= \begin{pmatrix} 0 & X_1 \\ X_2^* & 0 \end{pmatrix} \eeq
and then recover the eigenvalues of the correlated covariance matrix $X_1 X_2^*$. The Hermitization matrix, resolvent matrix and then the Dyson equation are defined for this auxiliary matrix to provide the uniform elliptic distribution for its eigenvalues depending on the non-hermiticity parameter $\tau$, in a similar way to \cite{alt2022local}. Specifically, for the Dirac matrix, we have a local law given by the distribution:
\begin{equation}
d{\mu}(\zeta) := \frac{1}{1-\tau^2} \frac{1}{\pi} \mathbf{1}_{{S}_{\tau}}(\zeta) d^2 \zeta.
\end{equation}
with its support ${S}_{\tau}$ given by:
\begin{equation}
{S}_{\tau}:= \left \{ \zeta=x+iy : \left ( \frac{x}{1+\tau}\right )^2 + \left ( \frac{y}{1-\tau} \right )^2 \leq 1 \right \}.
\end{equation}
Much different analysis for the stability of the Dyson equation was now used to establish this local law as the equation is now given by $4\times 4$ matrices instead of $2\times 2$ as in \cite{alt2022local}. The least singular value is controlled after a bounded density assumption for the entries of $P,Q$ as in \cite{alt2021inhomogeneous}.

After establishing the local law for the Dirac matrix, the linearization technique as used for example in \cite{o2015products}, together with a careful change of variables is then enough to provide us with the limiting local spectral law for the initial correlated covariance matrix.

\end{section}
\chapter{Preliminaries}
Before going to the analysis of our results, we present here some preliminaries about random matrices and their spectral properties.

\begin{section}{Random matrices}
In this section, the notion of a random matrix is defined rigorously. We will be working with matrices which have their entries in the complex plane $\mathbb{C}.$ We denote by $\mathbb{C}^{N \times N}$ the space of $N \times N$ matrices $X$ with complex entries $(X_{ij})_{i,j=1}^N,$ which can be regarded as vectors in $\mathbb{C}^{N^2}.$ We can equip this space with a norm such as the operator norm $\| \ \cdot \ \|_{op}$ or the Hilbert-Schmidt norm $\| \ \cdot \ \|_{HS}.$ The operator norm is defined with respect to the Euclidean norm $\| \ \cdot \ \|_2$ in $\mathbb{C}^N$, as follows:
\[
\|X\|_{op} := \sup \left \{ \|Xu\|_2 \ :\ u \in \mathbb{C}^N \ , \ \|u\|_2=1 \right \},
\]
while the Hilbert-Schmidt norm $\| \ \cdot \ \|_{HS} $ is defined as:
\[
\|X\|_{HS} := \sqrt{\mathrm{Tr} (XX^*)},
\]
where $X^*$ is the conjugate transpose of $X$ and we also defined the \textit{trace} functional $\mathrm{Tr}: \mathbb{C}^{N \times N} \mapsto \mathbb{C}$ of a matrix $X$ as:
\[
\mathrm{Tr}(X) := \sum \limits_{i=1}^N X_{ii}.
\]
Some remarks about these norms are that they are equivalent to each other for any finite dimension $N \in \mathbb{N}$ and that the operator norm is in some sense similar to the maximum norm $\| \ \cdot \ \|_\infty$ in $\mathbb{C}^{N^2}$ while the Hilbert-Schmidt norm similar to the Euclidean norm $\| \ \cdot \ \|_2$ in $\mathbb{C}^{N^2}.$ The operator norm transforms $(\mathbb{C}^{N \times N},\|\ \cdot \ \|_{op})$ into a Banach space while the Hilbert-Schmidt norm transforms $(\mathbb{C}^{N \times N},\|\ \cdot \ \|_{HS})$ into a Hilbert space with corresponding inner product:
\[ 
\langle X,Y \rangle := \mathrm{Tr} (XY^*). 
\]
Another remark is that the convergence of a sequence of matrices $\left \{ X^{(k)} \right \}_{k=1}^\infty $, for $k \to \infty$ under any of these two norms in $\mathbb{C}^{N \times N}$ is equivalent to the convergence of each matrix entry $\left \{ X_{ij}^{(k)} \right \}_{k=1}^\infty $ in $\mathbb{C}$ for $i,j=1,...,N,$ under the usual complex norm.

A matrix $X \in \mathbb{C}^{N \times N}$ is called \textit{Hermitian} if the equality $X=X^*$ holds. An \textit{eigenvalue} $\lambda \in \mathbb{C}$ of a matrix $X \in \mathbb{C}^{N \times N}$ is a complex number such that $Xu=\lambda u$ for some $u\in \mathbb{C}^N$ different than the zero vector. Counting multiplicities, we can say that each complex matrix in $\mathbb{C}^{N \times N}$ has exactly $N$ eigenvalues. The set $\mathrm{Spec}(X)=\{ \lambda_1,...,\lambda_N \} $ of the eigenvalues of a matrix $X$ is called its \textit{spectrum.} We say that a matrix $X\in \mathbb{C}^{N\times N}$ is \textit{invertible} if there exists a matrix $Y\in \mathbb{C}^{N \times N}$ such that $XY=YX=I$ and we write then $Y=X^{-1}.$ We also define the \textit{determinant} functional $\det : \mathbb{C}^{N \times N} \mapsto \mathbb{C}$ \ as \ $\det(X) = \prod \limits_{i=1}^N \lambda_i,$ where $\lambda_1,...,\lambda_N$ are the $N$ eigenvalues of the matrix $X.$

Some properties of the trace functional are the following:
\begin{itemize}
    \item It is a continuous linear functional from $(\mathbb{C}^{N \times N},\|\ \cdot \ \|_{op})$ to $(\mathbb{C},\|\ \cdot \ \|).$
    \item For each matrix $X\in \mathbb{C}^{N \times N}$ with corresponding eigenvalues $\lambda_1,...,\lambda_N$ the equality $\mathrm{Tr}(X) = \sum \limits_{i=1}^N \lambda_i$ holds.
    \item For each $S,M \in \mathbb{C}^{N \times N}$ we have that $\mathrm{Tr}(SM) = \mathrm{Tr} (MS) .$
\end{itemize}
In the following we present two important theorems about the eigenvalue structure of a matrix. We note that if a matrix $X$ is Hermitian then all of its eigenvalues are real. We denote by $\mathrm{diag}(a_1,...,a_N)$ the \textit{diagonal} matrix $D \in \mathbb{C}^{N \times N}$ such that $D_{ii}=a_i$ for $i=1,...,N$ and $D_{ij} = 0$ for $i\neq j \in \{1,...,N\}.$ A \textit{triangular} matrix is a matrix $T\in \mathbb{C}^{N \times N}$ such that $T_{ij}=0$ for all $i>j.$ A \textit{unitary} matrix $U \in \mathbb{C}^{N \times N}$ is an invertible complex matrix such that $U^{-1}=U^*.$ 
\begin{thm}[Spectral theorem]
For any Hermitian matrix $X\in \mathbb{C}^{N \times N}$ there exists a unitary matrix $U\in \mathbb{C}^{N \times N}$ such that $U^{-1} X U = \mathrm{diag}(\lambda_1,...,\lambda_N),$ where $\lambda_1,...,\lambda_N$ are the $N$ real eigenvalues of the Hermitian matrix $X.$
\end{thm}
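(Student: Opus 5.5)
We prove the statement by induction on the dimension $N$. The plan is to peel off one eigenvector at a time: build a unitary matrix whose first column is a unit eigenvector, and use Hermiticity to conclude that the conjugated matrix splits into a $1\times 1$ block and an $(N-1)\times(N-1)$ Hermitian block. The case $N=1$ is trivial, since every $1\times 1$ Hermitian matrix is already diagonal and we may take $U=(1)$. For the inductive step we need three ingredients: existence of an eigenvalue, reality of eigenvalues, and the block decomposition.

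\textbf{Existence of an eigenvalue.} Since every matrix in $\mathbb{C}^{N\times N}$ has $N$ eigenvalues counted with multiplicity (the roots of $\det(X-\lambda I)$, by the fundamental theorem of algebra), $X$ has at least one eigenvalue $\lambda_1$. Choose an eigenvector $u_1$ with $Xu_1=\lambda_1 u_1$ and normalise it so that $\|u_1\|_2=1$.

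\textbf{Reality of the eigenvalue.} Using $X=X^*$,
\[
\lambda_1 = \langle Xu_1,u_1\rangle = \langle u_1, Xu_1\rangle = \overline{\lambda_1},
\]
so $\lambda_1\in\mathbb{R}$.

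\textbf{Block decomposition.} Extend $u_1$ to an orthonormal basis $u_1,\dots,u_N$ of $\mathbb{C}^N$ by Gram--Schmidt, and let $V$ be the unitary matrix with these columns. Then $V^*XV$ is Hermitian, and its first column is $V^*Xu_1=\lambda_1 e_1$. By Hermiticity its first row is $\lambda_1 e_1^T$ as well, so
\[
V^*XV=\begin{pmatrix}\lambda_1 & 0\\ 0 & X'\end{pmatrix},
\]
with $X'\in\mathbb{C}^{(N-1)\times(N-1)}$ Hermitian.

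\textbf{Inductive step.} By the induction hypothesis there is a unitary $W$ with $W^*X'W=\mathrm{diag}(\lambda_2,\dots,\lambda_N)$, all $\lambda_i$ real. Set
\[
U=V\begin{pmatrix}1&0\\0&W\end{pmatrix}.
\]
This is unitary, being a product of unitaries, and $U^{-1}XU=U^*XU=\mathrm{diag}(\lambda_1,\dots,\lambda_N)$. Finally, $\lambda_1,\dots,\lambda_N$ are exactly the eigenvalues of $X$ with multiplicity, because similar matrices have the same characteristic polynomial.

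The only non-elementary ingredient is the existence of an eigenvalue, which rests on the fundamental theorem of algebra. We take this as given, consistent with the earlier remark that every complex $N\times N$ matrix has $N$ eigenvalues. The one point needing care is that the off-diagonal block of the first row vanishes, and this is exactly where Hermiticity is used.
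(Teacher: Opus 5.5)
Your proof is correct and complete. It is the standard induction-by-deflation argument. The eigenvector exists because $X-\lambda_1 I$ is singular. Conjugation by the unitary $V$ preserves Hermiticity, so the first row of $V^*XV$ is the conjugate transpose of its first column $\lambda_1 e_1$. Finally, the diagonal entries of $U^*XU$ are the eigenvalues of $X$ with multiplicity, since similar matrices share a characteristic polynomial.

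The paper gives no proof of this statement. It quotes it as background from linear algebra, alongside the Schur decomposition. Given the Schur decomposition, there is a two-line route. If $U^{-1}XU=U^*XU=T$ with $T$ upper triangular and $X=X^*$, then $T^*=U^*X^*U=T$. So $T$ is both upper and lower triangular, hence diagonal, with $T_{ii}=\overline{T_{ii}}\in\mathbb{R}$.

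Your argument is in effect the proof of Schur's theorem specialised to the Hermitian case. There, Hermiticity kills the off-diagonal part of the first row at each step, so the deflation produces a block-diagonal rather than merely block-triangular matrix. Your route is self-contained, relying only on the fundamental theorem of algebra. The Schur route is shorter but simply outsources the same induction to the more general triangularisation result.
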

\begin{thm}[Schur decomposition] For any matrix $X\in \mathbb{C}^{N \times N}$ there exists a unitary matrix $U \in \mathbb{C}^{N \times N}$ such that $U^{-1} X U = T,$ where $T$ is a triangular matrix, satisfying $T_{ii}=\lambda_i,$ for $i=1,...,N$ where $\lambda_i$ are the eigenvalues of $X.$
\end{thm}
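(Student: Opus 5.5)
I would prove the Schur decomposition by induction on the dimension $N$. The base case $N=1$ is immediate: $U=(1)$ works, and $T=X$ is trivially triangular with $T_{11}=\lambda_1$.

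For the inductive step, assume the statement for $(N-1)\times(N-1)$ matrices and let $X\in\mathbb{C}^{N\times N}$.
\begin{itemize}
\item By the fundamental theorem of algebra, the characteristic polynomial $\det(X-\lambda I)$ has a root $\lambda_1\in\mathbb{C}$. Hence there is a unit vector $u_1$ with $Xu_1=\lambda_1 u_1$.
\item Extend $u_1$, by Gram--Schmidt, to an orthonormal basis $u_1,\dots,u_N$ of $\mathbb{C}^N$. Let $V$ be the unitary matrix with these columns.
\item Since $Xu_1=\lambda_1u_1$ and $V^*u_1=e_1$, the first column of $V^*XV$ is $\lambda_1 e_1$. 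Thus
\[
V^{-1}XV=\begin{pmatrix}\lambda_1 & w^* \\ 0 & X'\end{pmatrix}
\]
for some $w\in\mathbb{C}^{N-1}$ and $X'\in\mathbb{C}^{(N-1)\times(N-1)}$.
\item Apply the induction hypothesis to $X'$: there is a unitary $U'$ with $U'^{-1}X'U'=T'$ upper triangular.
\item Set $U=V\begin{pmatrix}1&0\\0&U'\end{pmatrix}$. This is a product of unitaries, hence unitary. A block computation gives
\[
U^{-1}XU=\begin{pmatrix}\lambda_1 & w^*U'\\ 0 & T'\end{pmatrix}=:T,
\]
which is upper triangular.
\end{itemize}
The statement calls this "triangular" with the convention $T_{ij}=0$ for $i>j$, which is exactly upper triangular.

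It remains to check that the diagonal of $T$ consists of the eigenvalues of $X$. Similarity preserves the characteristic polynomial, since
\[
\det(U^{-1}(X-\lambda I)U)=\det(X-\lambda I).
\]
For a triangular matrix, $\det(T-\lambda I)=\prod_i (T_{ii}-\lambda)$. This follows from cofactor expansion along the first column, or directly from the block structure together with induction. So the $T_{ii}$ are exactly the roots of the characteristic polynomial of $X$, counted with multiplicity. After relabelling they are $\lambda_1,\dots,\lambda_N$.

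One subtlety is that the paper defines $\det$ as the product of eigenvalues. The step above therefore tacitly uses the standard facts that $\det$ is multiplicative and that the eigenvalues are the roots of the characteristic polynomial. I would state these facts as standard.

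The only genuinely nontrivial ingredient, and thus the main point, is the existence of an eigenvalue, which comes from algebraic closedness of $\mathbb{C}$. Everything else is routine block-matrix bookkeeping.
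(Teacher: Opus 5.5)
The paper does not prove this statement: it quotes the Schur decomposition as standard linear-algebra background, next to the spectral theorem and with Artin's \emph{Algebra} cited nearby, so there is no argument in the paper to compare yours against.

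Your proof is the standard one and it is correct and complete. The four steps all check out:
\begin{itemize}
\item an eigenvector exists by algebraic closedness of $\mathbb{C}$;
\item it completes to a unitary $V$ by Gram--Schmidt;
\item the deflation to $X'$ and the conjugation by $\mathrm{diag}(1,U')$ produce an upper triangular $T$;
\item the diagonal of $T$ is identified with the eigenvalues through the similarity-invariant characteristic polynomial.
\end{itemize}
You are also right not to use the paper's definition $\det X=\prod_i\lambda_i$. That definition presupposes both that eigenvalues exist and what they are, so it would make the last step circular.

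One small refinement: rather than relabelling at the end, you can put the eigenvalues on the diagonal in any prescribed order. At each inductive step, choose $\lambda_1$ to be the next desired eigenvalue. This works because $\det(X-\lambda I)=(\lambda_1-\lambda)\det(X'-\lambda I)$, so $X'$ carries exactly the remaining eigenvalues.
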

For a specific $i \in \{1,...,N\}$, the $i$-th eigenvalue $\lambda_X^i$ of a matrix $X \in \mathbb{C}^{N \times N}$ is a continuous function in the topological sense. This means that if $X_k \to X$ for a sequence of matrices $\{X_k\}_{k=1}^\infty$ under the operator or Hilbert-Schmidt norm, then the eigenvalues $\{\lambda_{X_k}^i\}_{i=1}^N$ can be numbered in such a way that $\lambda_{X_k}^i \to \lambda_X^i$. See \cite{Artin:1998}, pages 138-139. 

This is a consequence of the fact that the roots of a polynomial are continuous functions of the polynomial coefficients under a suitable norm, and this fact is applied to the \textit{characteristic polynomial} of a matrix, $p(x):= \det(xI - X)$, see again \cite{Artin:1998}.

We now turn our attention to the definition of a random matrix.
\begin{definition}[Random matrix]
Let $(\Omega, \mathcal{A}, \mathbb{P})$ be a probability space and $N \in \mathbb{N}.$ An $N \times N$ random matrix is a measurable map $X: (\Omega,\mathcal{A}, \mathbb{P}) \mapsto (\mathbb{C}^{N \times N}, \mathcal{B}^{N^2}),$ where $\mathcal{B}^{N^2}$ denotes the Borel $\sigma$-algebra on $\mathbb{C}^{N \times N}.$
\end{definition}
It is clear that $X$ is a measurable function if and only if all entries $X(i,j)$ are measurable functions in $(\mathbb{C},\mathcal{B}_{\mathbb{C}})$ where $\mathcal{B}_{\mathbb{C}}$ denotes the Borel $\sigma$-algebra in $\mathbb{C}.$
We show next that the eigenvalues of a random matrix are also measurable functions. Specifically, we show that the eigenvalue function:
\[
\omega \mapsto \lambda_{X(\omega)}^i: (\Omega, \mathcal{A}, \mathbb{P}) \mapsto \mathbb{C},
\]
where $\lambda_{X(\omega)}^i$ denotes the $i$-th eigenvalue of the random matrix $X(\omega)$ is measurable, which makes it a complex-valued random variable.

We know that the eigenvalue function:
\begin{align*}
\mathrm{eig}^i: \ &\mathbb{C}^{N \times N} \mapsto \mathbb{C} , \\
&X \mapsto \lambda_X^i
\end{align*}
is continuous so it is measurable. The map $X: \Omega \mapsto \mathbb{C}^{N \times N}$ is also measurable by definition, hence the composition $\lambda_X^i:= \mathrm{eig}^i \circ X$ is also measurable.

This property of the eigenvalue function allows us to study the eigenvalues of a random matrix in the context of probability theory. For this, we firstly need to define the concept of a random measure.

\end{section}

\begin{section}{Random probability measures}
We denote the set of all measures on $(\mathbb{C},\mathcal{B}_{\mathbb{C}})$ by $\mathcal{M}(\mathbb{C})$, the set of probability measures by $\mathcal{M}_1(\mathbb{C})$ and the set of sub-probability measures by $\mathcal{M}_{\leq 1} (\mathbb{C})$. If $f:\mathbb{C} \to \mathbb{C}$ is a measurable function, we write:
\[
\langle \mu, f \rangle := \int_{\mathbb{C}} f d\mu
\]
We are interested in studying the convergence behaviour of sequences of measures in $\mathcal{M}_{1}(\mathbb{C})$ where the limit may generally lie in $\mathcal{M}_{\leq 1}(\mathbb{C})$. For this, we define the sets of test functions as follows:
\begin{enumerate}
    \item $C_b(\mathbb{C}) := \{ f: \mathbb{C}\mapsto \mathbb{C} \ | \ f \ \text{continuous and bounded}\}$
    \item $C_0(\mathbb{C}) :=  \{ f: \mathbb{C}\mapsto \mathbb{C} \ | \ f \ \text{continuous and vanishes at infinity}\}$
    \item $C_c(\mathbb{C}) :=  \{ f: \mathbb{C}\mapsto \mathbb{C} \ | \ f \ \text{continuous with compact support} \}$
\end{enumerate}
\begin{definition}
Let $\{\mu_N\}_{N=1}^\infty$ be a sequence of probability measures in $\mathcal{M}_1(\mathbb{C}).$
\begin{itemize}
    \item The sequence $\{\mu_N\}_{N=1}^\infty$ is said to converge weakly to an element $\mu \in \mathcal{M}_1(\mathbb{C}),$ if
    \[
    \forall f \in C_b(\mathbb{C}): \lim \limits_{N \to \infty} \langle \mu_N,f \rangle = \langle \mu,f \rangle
    \]
    \item The sequence $\{\mu_N\}_{N=1}^\infty$ is said to converge vaguely to an element $\mu \in \mathcal{M}_{\leq 1}(\mathbb{C}),$ if
    \[
    \forall f \in C_c(\mathbb{C}): \lim \limits_{N \to \infty} \langle \mu_N,f \rangle = \langle \mu,f \rangle
    \]
\end{itemize}
\end{definition}
Some remarks about the definition are that weak convergence of measures implies vague convergence, what's more weak and vague limits are unique. The target measures of weak convergence of probability measures are again probability measures. This is because $1_{\mathbb{C}} \in C_b(\mathbb{C})$ and so we must have that $\mu(\mathbb{C})=1.$ The target measures of vague convergence are exactly sub-probability measures, see for example \cite{kallenberg1997foundations}, page 143.

We will transform $\mathcal{M}_1(\mathbb{C})$ into a metric space by defining an appropriate distance function on it which metrizes the weak convergence of measures. We will define it according to test functions $f\in \mathbb{C}_b$. We then have that
\begin{itemize}
    \item The following metric:
    \[
    d(\mu,\nu) := \sup \left \{ \left | \int_{\mathbb{C}} f d\mu  - \int_{\mathbb{C}} f d \nu \right |: \|f\|_\infty \leq 1 \right \}
    \]
    metrizes weak convergence of measures, that is $d(\mu_N,\mu) \to 0 \Leftrightarrow \mu_N \to \mu$ {weakly}. 
    \item Equipped with this metric $(\mathcal{M}_1(\mathbb{C}),d)$ becomes a separable but not complete metric space.
\end{itemize}
The metric above is called the \textit{Prokhorov metric} which can be found for example in \cite{dudley2002real}, pages 394-395. 

Since $\mathcal{M}_1(\mathbb{C})$ is a metric space with a corresponding Borel algebra it is also a measurable space, so we can then study $\mathcal{M}_1(\mathbb{C})-$valued random variables which we will call random measures.

\begin{definition}[Random probability measure]
Let $(\Omega, \mathcal{A}, \mathbb{P})$ be a probability space. A random probability measure on $(\mathbb{C},\mathcal{B}_{\mathbb{C}})$ is a measurable map $\mu : \Omega \mapsto \mathcal{M}_1({\mathbb{C}}).$
\end{definition}

We remark that if $\mu$ is a random probability measure $\mu: \Omega \mapsto \mathcal{M}_1(\mathbb{C})$ and $f:\mathbb{C} \mapsto \mathbb{C}$ is a measurable bounded function, then the map $\omega \mapsto \langle \mu(\omega), f \rangle$ is also measurable and bounded by $\|f\|_\infty.$

Based on a random probability measure, we can also define its  "expected" deterministic measure. If $\mu$ is a random probability measure $\mu : \Omega \mapsto \mathcal{M}_1(\mathbb{C})$ and $B \in \mathcal{B}$ then $\mu(\omega,B)$ is a random variable, which means that we can consider its expectation $\mathbb{E} \mu (B)$ as the expected mass that $\mu$ would prescribe to the set $B$. The map $B \mapsto \mathbb{E}\mu(B)$ is then the deterministic "expected" measure on $(\mathbb{C},\mathcal{B}_{\mathbb{C}}).$

\begin{definition}[Expected measure] \label{exp-m} Let $(\Omega,\mathcal{B},\mathbb{P})$ be a probability space and $\mu$ a random probability measure on $(\mathbb{C},\mathcal{B}).$ Then the map:
\[
\overline{\mu} : \mathcal{B} \mapsto [0,1]
\]
with 
\[
\overline{\mu}(B)= \int_\Omega \mu(\omega,B) d\mathbb{P}(\omega) = \mathbb{E}\mu(B)
\]
is an element of $\mathcal{M}_1(\mathbb{C})$ and is called the expected measure of $\mu.$
\end{definition}

We will define three notions of convergence of random probability measures on $(\mathbb{C},\mathcal{B}_{\mathbb{C}})$, weak convergence in expectation, weak convergence in probability and weak convergence almost surely.

\begin{definition}[Weak convergence in expectation] Let $(\mu_n)_{n=1}^\infty$ be a sequence of random probability measures on $(\mathbb{C},\mathcal{B}_{\mathbb{C}})$ and $\mu$ a random probability measure on the same space. We say that $(\mu_N)_{n=1}^\infty$ converges weakly in expectation to $\mu,$ if the sequence of expected measures $(\mathbb{E}\mu_n)_{n=1}^\infty$ converges weakly to the expected measure $\mathbb{E}\mu.$
\end{definition}

\begin{definition}[Weak convergence in probability] We say that a sequence of random measures $(\mu_N)_{N=1}^\infty$
converges weakly in probability to $\mu$, if $\langle \mu_N,f \rangle$ converges to $\langle \mu, f \rangle$ in probability, for all $f \in C_b.$
\end{definition}

\begin{definition}[Almost sure weak convergence] We say that a sequence of random measures $(\mu_N)_{N=1}^\infty$ converges weakly to $\mu$ almost surely, if $\langle \mu_N, f \rangle$ converges to $\langle \mu , f \rangle $ almost surely, for all $f \in C_b.$
\end{definition}

We recall here that the definition of $\langle \cdot , \cdot \rangle$ contains a random measure integral which makes it a random variable. So we can use the definitions of almost sure convergence and convergence of probability for random variables. See for example \cite{dudley2002real}, pages 516-517.
\end{section}

\begin{section}{The empirical spectral distribution} \label{ESD}
Let $X$ be an $N \times N$ random matrix on $(\Omega,\mathcal{A},\mathbb{P}),$ then the \textit{empirical spectral distribution} (\textit{ESD}) $\sigma_N$ of $X$ is the random probability measure on $(\mathbb{C},\mathcal{B}_{\mathbb{C}})$ given by:
\begin{align*}
\sigma_N : \ &\Omega \times \mathcal{B}_{\mathbb{C}} \mapsto [0,1] \\
&(\omega,B) \mapsto \sigma_N(\omega,B) = \frac{1}{N} \sum \limits_{k=1}^N \delta_{\lambda_{X(\omega)}^k}(B),
\end{align*}
where $\delta_{\lambda_X^k}(\cdot)$ is the Dirac measure for the $k-$th eigenvalue of the random matrix $X,$ i.e.
\[
\delta_{\lambda_X^k}(B) = \begin{cases}
    1 , \ \text{if} \ \lambda_X^k \in B \\
    0 , \ \text{if} \ \lambda_X^k \notin B
\end{cases} 
\]

Note that $\sigma_N$ is indeed a random probability measure on $(\mathbb{C},\mathcal{B}_{\mathbb{C}})$. This is because
$\sigma_N(\omega)$ is a convex combination of probability measures and thus again a probability measure. On the other hand, if $B \in \mathcal{B}_{\mathbb{C}}$ is arbitrary, then $\sigma_N(B)$ is clearly measurable, making it a random measure.

For any measurable set $B\subset \mathbb{C}$, $\sigma_N(B)$ gives the proportion of the $N$ eigenvalues that fall into the set $B.$ Thus $\sigma_N$ carries information about the location of the eigenvalues and we are particularly interested in the case $N \to \infty.$

\end{section}
\chapter{The Stieltjes transform method}

In order to analyze properties of the empirical spectral distribution, it is useful to use a tool that relates this distribution with entries of the random matrix or another matrix related to it (see section \ref{s:Resol}). For this, we can use a suitable transform of the measure.

\begin{definition}[Stieltjes transform]
Let $\mu$ be a finite measure on some measurable linear space $Y$. The Stieltjes transform $S_\mu$ of $\mu$ is the map:
\begin{align*}
S_\mu : \ &Y \setminus \mathrm{supp}(\mu) \mapsto Y \\ \\
&y \mapsto \int_{\mathrm{supp}{(\mu)}} \frac{1}{w-y} d\mu(w)
\end{align*}
\end{definition}

The Stieltjes transform is defined via an integral involving our measure of interest.

\begin{section}{Distributions on the real line} \label{S-Re}
In the case that the measure is supported on the real line, we define the Stieltjes transform map as:
\begin{align*}
&S_\mu: \mathbb{C} \setminus \mathrm{supp}(\mu) \mapsto \mathbb{C} \\
&z \mapsto \int_{\mathbb{R}} \frac{1}{x-z} d\mu(x)
\end{align*}
The Stieltjes transform is now defined via an integral of a complex function with respect to our measure of interest. We remark some properties of the Stieltjes transform:
\begin{itemize}
    \item $\mathrm{Im}(z)\geq 0 \Leftrightarrow \mathrm{Im} S_\mu (z) \geq 0.$
    \item $S_\mu(\overline{z}) = \overline{S_\mu (z) }.$
    \item $S_\mu$ is uniquely determined by its restriction $S_\mu : \mathbb{C}_+ \to \mathbb{C}_+.$
    \item $|S_\mu(z)|\leq \frac{1}{|\mathrm{Im}(z)|}$.
    \item $S_\mu$ is holomorphic and in particular, $S_\mu$ is continuous and can be represented by a power series around $z_0 \in \mathbb{C}\setminus \mathrm{supp}(\mu)$ and is infinitely differentiable.
    \item The derivatives of the Stieltjes transform are given by
\[
S_\mu^{(k)}(z) = \int_{\mathrm{supp}(\mu)} \frac{k!}{(w-z)^{k+1}} d\mu(w),
\]
where $S_\mu^{(k)}$ denotes the $k$-th derivative of $S_\mu.$
\end{itemize}
These properties can be found for example in \cite{tao2023topics}, pages 169-171. We now remark an important measure-retrieval formula.
\begin{prop}[Retrieval of measure] For any bounded interval $I \subset \mathbb{R}$ with endpoints $a<b,$ we have the following formula:
\beq \label{retr1}
\mu(a,b) + \frac{1}{2} ( \mu(\{a\}) + \mu(\{b\} ) = \lim \limits_{\eta \to 0^+} \frac{1}{\pi} \int_I \mathrm{Im} S_\mu (E+i\eta) dE,
\eeq
\end{prop}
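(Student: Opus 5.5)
The plan is to write the imaginary part of the Stieltjes transform as a Poisson-kernel average of $\mu$, swap the order of integration, and then pass to the limit $\eta\to 0^+$ by dominated convergence. For $z=E+i\eta$ with $\eta>0$ we have
\[
\mathrm{Im}\, S_\mu(E+i\eta) = \int_{\mathbb{R}} \frac{\eta}{(x-E)^2+\eta^2}\, d\mu(x) = \pi\, (P_\eta * \mu)(E),
\]
where $P_\eta(t) = \frac{1}{\pi}\frac{\eta}{t^2+\eta^2}$ is the Poisson kernel. The integrand is nonnegative and $\mu$ is finite, so Tonelli's theorem gives
\[
\frac{1}{\pi}\int_a^b \mathrm{Im}\, S_\mu(E+i\eta)\, dE = \int_{\mathbb{R}} \left( \int_a^b P_\eta(x-E)\, dE \right) d\mu(x) = \int_{\mathbb{R}} F_\eta(x)\, d\mu(x).
\]
The inner integral can be computed explicitly:
\[
F_\eta(x) = \frac{1}{\pi}\left( \arctan\frac{b-x}{\eta} - \arctan\frac{a-x}{\eta} \right).
\]

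Next I would find the pointwise limit of $F_\eta$ as $\eta\to0^+$, splitting into cases according to the position of $x$.
\begin{itemize}
\item If $x\in(a,b)$, both arguments tend to $+\infty$ and $-\infty$ respectively, so $F_\eta(x)\to \frac{1}{\pi}(\frac{\pi}{2}+\frac{\pi}{2})=1$.
\item If $x<a$ or $x>b$, both arctangents tend to the same value $\pm\frac{\pi}{2}$, so $F_\eta(x)\to 0$.
\item If $x=a$, the second arctangent is exactly $0$ and the first tends to $\frac{\pi}{2}$, so $F_\eta(a)\to \frac12$.
\item If $x=b$, by symmetry $F_\eta(b)\to \frac12$.
\end{itemize}
Hence $F_\eta \to \mathbf{1}_{(a,b)} + \frac12\mathbf{1}_{\{a\}} + \frac12 \mathbf{1}_{\{b\}}$ pointwise on $\mathbb{R}$.

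Finally, $0\le F_\eta\le 1$ for all $\eta>0$, and constants are $\mu$-integrable because $\mu$ is finite. The dominated convergence theorem therefore lets us pass to the limit under $\int d\mu$, which yields exactly the left-hand side of \eqref{retr1}.

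The only delicate point is the endpoint contribution. It is precisely the value $\frac12$ of the limit of $F_\eta$ at $x=a$ and $x=b$ that produces the correction term $\frac12(\mu(\{a\})+\mu(\{b\}))$. This requires keeping the exact arctangent formula for $F_\eta$ rather than only a weak-convergence statement such as $P_\eta\to\delta_0$. The justification of Fubini/Tonelli is routine, since the integrand is positive and $\mu$ is finite.
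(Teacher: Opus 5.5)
Your proposal is correct and follows the paper's proof essentially step for step: swapping the integrals (Fubini/Tonelli), evaluating the inner integral as the arctangent difference, doing the case analysis of its pointwise limit (including the value $\tfrac12$ at the endpoints), and concluding by dominated convergence. Your explicit bound $0\le F_\eta\le 1$, together with the finiteness of $\mu$, supplies the domination that the paper leaves implicit.
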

\begin{proof}
Let $I$ be an interval with endpoints $a<b$ and $\eta>0.$ By Fubini's theorem we get that:
\[
\begin{aligned}
\frac{1}{\pi} \int_I \operatorname{Im} S_\mu(E+i \eta) \mathrm{d} E &=\frac{1}{\pi} \int_I \int_{\mathbb{R}} \frac{\eta}{(x-E)^2+\eta^2} \mathrm{d} \mu (x) \mathrm{d} E \\
&=\frac{1}{\pi} \int_{\mathbb{R}} \int_I \frac{\eta}{(x-E)^2+\eta^2} \mathrm{d} E \mathrm{d}\mu (x) \\
&=\frac{1}{\pi} \int_{\mathbb{R}} \int_a^b \frac{\eta}{(x-E)^2+\eta^2} \mathrm{~d} E \mathrm{d} \mu (x)
\end{aligned}
\]
Now since
\[
\begin{aligned}
\int_a^b \frac{\eta}{(x-E)^2+\eta^2} \mathrm{~d} E &=\frac{1}{\eta} \int_a^b \frac{1}{\left(\frac{E-x}{\eta}\right)^2+1} \mathrm{~d} E \\
&=\int_{\frac{a-x}{\eta}}^{\frac{b-x}{\eta}} \frac{1}{E^2+1} \mathrm{~d} E \\
&=\arctan \left(\frac{b-x}{\eta}\right)-\arctan \left(\frac{a-x}{\eta}\right),
\end{aligned}
\]
and $\arctan : \mathbb{R} \mapsto \left(-\frac{\pi}{2},\frac{\pi}{2}\right)$ is strictly increasing with $\lim \limits_{x \rightarrow \pm \infty} \arctan (x)=\pm \frac{\pi}{2}$, we obtain
\[
\lim _{\eta \to 0^+}\left[\arctan \left(\frac{b-x}{\eta}\right)-\arctan \left(\frac{a-x}{\eta}\right)\right]= \begin{cases}\pi , & \text { if } x \in(a, b) \\ 0, & \text { if } x \notin[a, b] \\ \frac{\pi}{2}, & \text { if } x=a \ \text{or} \ x=b .\end{cases}
\]
Thus, by dominated convergence we find
\[
\begin{aligned}
\lim _{\eta \to 0^+} \frac{1}{\pi} \int_I \operatorname{Im} S_\mu(E+i \eta) \mathrm{d} E &=\lim _{\eta \to 0^+} \frac{1}{\pi} \int_{\mathbb{R}} \arctan \left(\frac{b-x}{\eta}\right)-\arctan \left(\frac{a-x}{\eta}\right) \mathrm{d} \mu (x) \\
&=\int_{\mathbb{R}} \mathbf{1}_{(a, b)}(x)+\frac{1}{2} \mathbf{1}_{\{a, b\}}(x) \mathrm{d} \mu (x) \\
&=\mu((a, b))+\frac{1}{2}(\mu(\{a\})+\mu(\{b\})).
\end{aligned}
\]
\end{proof}
\begin{cor} \label{re-cor} For any bounded interval $I \subset \mathbb{R}$ with $\mu(\partial I)=0,$ we find that:
\[
\mu(I) = \lim \limits_{\eta \to 0^+} \frac{1}{\pi} \int_I  \mathrm{Im} S_\mu (E+i\eta) dE
\]
\end{cor}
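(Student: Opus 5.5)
This follows directly from the retrieval formula \eqref{retr1} in the preceding proposition. Let $I$ be a bounded interval with endpoints $a<b$; it may be open, closed or half-open. The hypothesis $\mu(\partial I)=0$ means exactly that $\mu(\{a\})=\mu(\{b\})=0$, since $\partial I=\{a,b\}$.

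The plan has two steps. First, substitute this into \eqref{retr1}. The boundary correction term $\frac{1}{2}(\mu(\{a\})+\mu(\{b\}))$ vanishes, and the right-hand side becomes $\mu((a,b))$. The integral on the right-hand side of \eqref{retr1} does not depend on whether the endpoints are included in $I$, since single points have Lebesgue measure zero.

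Second, identify $\mu((a,b))$ with $\mu(I)$. Whatever form $I$ takes, it differs from $(a,b)$ by a subset of $\{a,b\}$. By additivity, $\mu(I)=\mu((a,b))+\mu(I\cap\{a,b\})=\mu((a,b))$. This gives
\[
\mu(I)=\lim_{\eta\to0^+}\frac{1}{\pi}\int_I \mathrm{Im}\,S_\mu(E+i\eta)\,dE .
\]

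There is no real obstacle. The only point requiring care is to note that the proposition was stated for an interval with arbitrary endpoint inclusion, so both sides are insensitive to which endpoints belong to $I$: the left side because of the hypothesis, and the right side because the integral is against Lebesgue measure.
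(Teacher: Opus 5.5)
Your proof is correct and is exactly the intended argument. The paper states this corollary without proof as an immediate consequence of \eqref{retr1}, and you supply the two small details it leaves implicit: the endpoint term vanishes under $\mu(\partial I)=0$, and both sides are insensitive to whether the endpoints belong to $I$.
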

The previous proposition and corollary suggest that any finite measure $\mu$ on $(\mathbb{R},\mathcal{B}_{\mathbb{R}})$ is uniquely determined by $S_\mu$ or equivalently the map $\mu \mapsto S_\mu$ is injective.
\end{section}

\begin{section}{Distributions on the complex plane} \label{S-Co}
In the case that the measure is supported on the complex plane, we define the Stieltjes transform map as:
\begin{align*}
&S_\mu: \mathbb{C} \setminus \mathrm{supp}(\mu) \mapsto \mathbb{C} \\
&z \mapsto \int_{\mathbb{C}} \frac{1}{w-z} d\mu(w)
\end{align*}
We remark that all the previous properties of the Stieltjes transform in the real line still hold in the complex plane. Specifically, we have that:
\begin{itemize}
    \item $\mathrm{Im}(z)\geq 0 \Leftrightarrow \mathrm{Im} S_\mu (z) \geq 0.$
    \item $S_\mu(\overline{z}) = \overline{S_\mu (z) }.$
    \item $S_\mu$ is uniquely determined by its restriction $S_\mu : \mathbb{C}_+ \to \mathbb{C}_+.$
    \item $|S_\mu(z)|\leq \frac{1}{|\mathrm{Im}(z)|}$.
    \item $S_\mu$ is holomorphic and in particular, $S_\mu$ is continuous and can be represented by a power series around $z_0 \in \mathbb{C}\setminus \mathrm{supp}(\mu)$ and is infinitely differentiable.
    \item The derivatives of the Stieltjes transform are given by
\[
S_\mu^{(k)}(z) = \int_{\mathrm{supp}(\mu)} \frac{k!}{(w-z)^{k+1}} d\mu(w),
\]
where $S_\mu^{(k)}$ denotes the $k$-th derivative of $S_\mu.$
\end{itemize}
The proofs are similar to the ones for the real-case distributions, see \cite{tao2023topics}, pages 169-171.

The measure-retrieval formula is different in the complex case. We have the following proposition:
\pagebreak

\begin{prop}[Retrieval of measure] \label{re-comp} Let $\mu$ be a finite measure on $(\mathbb{C},\mathcal{B}_{\mathbb{C}})$ with corresponding Stieltjes transform $S_\mu,$ such that $d\mu = f(z)d^2 z,$ we then get that:
\beq \label{retr2}
f(z) = \frac{1}{\pi} \partial_{\overline{z}} S_\mu (z).
\eeq
\end{prop}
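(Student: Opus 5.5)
The formula rests on one fact: the Cauchy kernel $\frac{1}{\pi w}$ is a fundamental solution of $\partial_{\overline{w}}$, i.e.
\[
\partial_{\overline{w}}\Big(\frac{1}{\pi w}\Big)=\delta_0
\]
in the sense of distributions on $\mathbb{C}$. Once this is established, the statement is a convolution identity. Writing $S_\mu(z)=\int_{\mathbb{C}} \frac{1}{w-z} f(w)\,d^2w = -\big(\tfrac{1}{\cdot}\ast f\big)(z)$, we get $\partial_{\overline z} S_\mu = -\pi f$. So, with the sign convention $\frac{1}{w-z}$ used in the definition of the Stieltjes transform, the identity should read $f=-\frac{1}{\pi}\partial_{\overline z}S_\mu$. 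I would prove it in this form, and note that \eqref{retr2} holds with the convention $\frac{1}{z-w}$ (equivalently, up to the sign convention). Because $f$ is only assumed integrable, $S_\mu$ is generally not differentiable. I would therefore interpret $\partial_{\overline z}$ distributionally and check the identity against test functions $\varphi\in C_c^\infty(\mathbb{C})$.

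\textbf{Step 1 (local integrability).} Since $\frac{1}{|w-z|}$ is locally integrable in two dimensions, Tonelli's theorem shows that $S_\mu$ is finite almost everywhere and lies in $L^1_{loc}$. The reason is that $\int_K \frac{d^2z}{|w-z|}$ is bounded uniformly in $w$ for each compact set $K$, and $\mu$ is finite. Consequently $S_\mu$ defines a distribution, and we may use Fubini's theorem to write
\[
\langle \partial_{\overline z} S_\mu,\varphi\rangle=-\int S_\mu\,\partial_{\overline z}\varphi\,d^2z=-\int_{\mathbb{C}}\Big(\int_{\mathbb{C}}\frac{\partial_{\overline z}\varphi(z)}{w-z}\,d^2z\Big)\,d\mu(w).
\]

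\textbf{Step 2 (Cauchy--Pompeiu for the inner integral).} This is the main step. Fix $w$. I would remove a disk $D_\varepsilon(w)$ and apply Stokes' theorem in the form
\[
\int_{\Omega}\partial_{\overline z}g\,d^2z=\frac{1}{2i}\oint_{\partial\Omega}g\,dz
\]
to $g(z)=\frac{\varphi(z)}{w-z}$, which is smooth away from $w$ and satisfies $\partial_{\overline z}g=\frac{\partial_{\overline z}\varphi}{w-z}$. The outer boundary contributes nothing because $\varphi$ has compact support. On the small circle, the boundary integral tends to $-2\pi i\,\varphi(w)$ times the orientation sign as $\varepsilon\to0$, by continuity of $\varphi$. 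The area integral over $D_\varepsilon(w)$ is $O(\varepsilon)$ and vanishes in the limit. The outcome is
\[
\int\frac{\partial_{\overline z}\varphi(z)}{w-z}\,d^2z=\pi\varphi(w).
\]
The main obstacle is bookkeeping of orientation and constants. In particular, $d^2z$ is Lebesgue measure, so $dz\wedge d\overline z=-2i\,d^2z$, and I would check the constant against the explicit example $\varphi$ equal to a radial bump.

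\textbf{Step 3 (conclusion).} Substituting into Step 1 gives
\[
\langle\partial_{\overline z}S_\mu,\varphi\rangle=-\pi\int\varphi f\,d^2w.
\]
Hence $\partial_{\overline z}S_\mu=-\pi f$ as distributions, and also pointwise wherever $f$ is continuous, since locally $S_\mu$ is then $C^1$ off a null set. This gives the retrieval formula \eqref{retr2} up to the kernel sign convention. It also mirrors Corollary \ref{re-cor}: the Cauchy kernel plays here the role that the Poisson kernel $\frac{\eta}{(x-E)^2+\eta^2}$ played on the real line.
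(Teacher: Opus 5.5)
Your proof is correct, and your sign diagnosis is right: with the paper's convention $S_\mu(z)=\int\frac{d\mu(w)}{w-z}$ one gets $\partial_{\overline z}S_\mu=-\pi f$. For the uniform measure on the unit disk, $S_\mu(z)=-\overline z$ for $|z|<1$, so $\frac1\pi\partial_{\overline z}S_\mu=-\frac1\pi$. The paper obtains the $+$ sign through a slip. With $k(z)=\frac{1}{2\pi i z}$ it writes $(k\ast\mu)(z)=\frac{1}{2\pi i}\int\frac{d\mu(w)}{w-z}$, whereas the convolution is $\frac{1}{2\pi i}\int\frac{d\mu(w)}{z-w}$.

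Your route also differs from the paper's. The paper proceeds in three steps:
\begin{itemize}
\item it writes $S_\mu=-2\pi i\,(k\ast\mu)$;
\item it quotes the solvability of $\partial_{\overline z}v=g$ for continuous compactly supported $g$ (from Voisin);
\item it applies the Cauchy--Pompeiu formula \emph{to the solution $v$} to identify $v$ with $k\ast\mu$, where $d\mu=g\,dz\wedge d\overline z$.
\end{itemize}
You apply Cauchy--Pompeiu only to the smooth test function $\varphi$ and transfer it to $S_\mu$ by Fubini. In effect you prove $\partial_{\overline z}\big(\frac{1}{\pi z}\big)=\delta_0$ by duality.

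This buys several things. You need no existence theorem for the $\overline\partial$-problem. You avoid the paper's tacit assumption that $v$ vanishes at infinity; without it the identification step fails, because solutions are unique only up to entire functions. You need only $f\in L^1$, and in fact any finite $\mu$, giving $\partial_{\overline z}S_\mu=-\pi\mu$ in $\mathcal{D}'$. That is what is really needed for the injectivity of $\mu\mapsto S_\mu$ invoked in Theorem~\ref{converge}. The paper's framing, which presents $S_\mu$ as the canonical solution of a $\overline\partial$-equation, is conceptually suggestive, but as written it only covers continuous compactly supported densities.

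One small correction to your last step: continuity of $f$ does not make $S_\mu$ locally $C^1$. Its $\partial_z$-derivative is a Beurling-type singular integral of $f$, which need not be continuous. So the pointwise reading of the identity needs more, e.g.\ H\"older continuity of $f$; otherwise state it in the distributional sense.
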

Some remarks about this proposition are the following. 

Here, $d^2 z$ denotes the Lebesgue measure applied on the complex domain, while $f:\mathbb{C} \mapsto [0,1]$ is a complex density function, i.e. $f(z) = f_{\mathrm{Re}(Z),\mathrm{Im}(Z)} (\mathrm{Re}(z),\mathrm{Im}(z))$ is the joint density of the real and imaginary part of a complex random variable $Z$ evaluated at the point $z \in \mathbb{C}.$ 

The symbol $\partial_{\overline{z}}$ denotes the \textit{Wirtinger} derivative of a complex function $g(z)$, i.e.
\[
\partial_{\overline{z}} g = \frac{\partial g}{\partial \overline{z}} := \frac{1}{2} \left ( \frac{\partial g}{\partial x} + i \frac{\partial g}{\partial y} \right ), \ \ \text{whenever} \ z=x+iy.
\]
For the proof, see section \ref{wirtinger}.
\end{section}

\begin{section}{Stieltjes transform and weak convergence of measures}
Since $S_\mu$ carries all the information about $\mu,$ $S_\mu$ can also be used to analyze weak convergence of probability measures.

\begin{thm}[Convergence theorem] \label{converge}
Let $(Y,\mathcal{B}_Y)$ be a measurable linear normed topological space (either $\mathbb{C}$ or $\mathbb{R})$ with corresponding Borel algebra $\mathcal{B}_Y$ and let $\mu$ be a probability measure on that space. If $Z \subset Y\setminus \mathrm{supp}(\mu)$ is a set that has at least one accumulation point in $Y\setminus \mathrm{supp}(\mu),$ then we get the following statement:

For a sequences of measures $( \mu_n )_{n=1}^\infty \in \mathcal{M}_1(Y)$ and a measure $\mu \in \mathcal{M}_1(Y),$ we have that
\[
\mu_n \to \mu \ \textit{weakly} \Leftrightarrow \forall y \in Z: S_{\mu_n}(y) \to S_\mu(y),
\]
where we remind that $S_\mu(y) = \int_{\mathrm{supp}(\mu)} \frac{1}{w-y}d\mu(w).$
\end{thm}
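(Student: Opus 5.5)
The plan is to do the real-line case $Y=\mathbb{R}$ in full, with $Z\subset\mathbb{C}\setminus\mathbb{R}$ (say $Z\subset\mathbb{C}_+$, using $S_\mu(\overline{z})=\overline{S_\mu(z)}$). I will use two facts throughout. First, for fixed $y$ with $\mathrm{Im}\,y\neq 0$, the function $g_y(x)=\frac{1}{x-y}$ is continuous, bounded by $1/|\mathrm{Im}\,y|$, and tends to $0$ as $|x|\to\infty$, so $g_y\in C_0(\mathbb{R})\subset C_b(\mathbb{R})$. Second, every Stieltjes transform of a finite measure is holomorphic on $\mathbb{C}_+$, as listed in Section \ref{S-Re}.

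\emph{Forward direction.} If $\mu_n\to\mu$ weakly, then testing against the bounded continuous function $g_y$ (real and imaginary parts separately) gives $S_{\mu_n}(y)=\langle\mu_n,g_y\rangle\to\langle\mu,g_y\rangle=S_\mu(y)$ for every $y\in Z$. This is immediate.

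\emph{Reverse direction: subsequence argument.} Assume $S_{\mu_n}(y)\to S_\mu(y)$ for all $y\in Z$. I will show that every subsequence of $(\mu_n)$ has a further subsequence converging weakly to $\mu$; this implies convergence of the full sequence, because weak convergence is metrizable by the Prokhorov metric recalled earlier.
\begin{enumerate}
\item By Helly's selection theorem (vague sequential compactness of $\mathcal{M}_{\leq 1}(\mathbb{R})$), a given subsequence has a further subsequence $\mu_{n_k}$ converging vaguely to some $\nu\in\mathcal{M}_{\leq 1}(\mathbb{R})$.
\item Vague convergence upgrades to convergence against $C_0$ functions: approximate $g_y$ uniformly by $C_c$ functions and use $\mu_{n_k}(\mathbb{R})\le 1$, $\nu(\mathbb{R})\le 1$. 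Hence $S_{\mu_{n_k}}(y)\to S_\nu(y)$ for every $y\in\mathbb{C}_+$, and in particular $S_\nu=S_\mu$ on $Z$.
\item Both $S_\nu$ and $S_\mu$ are holomorphic on the connected open set $\mathbb{C}_+$, and $Z$ has an accumulation point there. By the identity theorem, $S_\nu\equiv S_\mu$ on $\mathbb{C}_+$.
\item By the retrieval formula (Proposition on retrieval of measure, and Corollary \ref{re-cor}), finite measures are determined by their Stieltjes transforms, so $\nu=\mu$.
\item Since $\mu$ is a probability measure, no mass escaped to infinity. A vaguely convergent sequence of probability measures whose limit has total mass $1$ converges weakly: tightness follows from $\mu_{n_k}([-R,R])\ge \int\phi_R\,d\mu_{n_k}\to\int\phi_R\,d\mu$ for a cutoff $\phi_R\in C_c$. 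Thus $\mu_{n_k}\to\mu$ weakly.
\end{enumerate}

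\emph{Main obstacle.} The delicate point is step 3, that is, where the accumulation point of $Z$ lies. The statement only asks for an accumulation point in $Y\setminus\mathrm{supp}(\mu)$, but the identity theorem needs it in a connected domain of holomorphy common to all the $S_{\mu_n}$ and to $S_\nu$. I would resolve this by working on $\mathbb{C}_+$ (or $\mathbb{C}\setminus\mathbb{R}$), which avoids every support. If one genuinely wants accumulation points on the real axis outside $\mathrm{supp}(\mu)$, I would add an assumption that the supports of the $\mu_n$ stay a uniform distance away from that point. Then $g_y$ is still bounded there and the same argument runs on the connected component of $\mathbb{C}\setminus\mathrm{supp}(\mu)$ containing the accumulation point.

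\emph{Complex case.} For $Y=\mathbb{C}$, holomorphy fails inside the support, and identification comes instead from Proposition \ref{re-comp}, which recovers the measure via the $\partial_{\overline z}$-derivative. There I would replace step 3 by showing $S_\nu=S_\mu$ as locally integrable functions (convergence in $L^1_{loc}$ via dominated convergence) and then apply $\frac1\pi\partial_{\overline z}$ in the distributional sense. This is the part I would expect to need the most care.
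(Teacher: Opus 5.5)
For $Y=\mathbb{R}$ your argument is the paper's own: Helly along an arbitrary subsequence, convergence against the $C_0$ function $w\mapsto 1/(w-y)$, the identity theorem, \eqref{retr1}, and the vague-to-weak upgrade. Your insistence that the accumulation point lie off $\mathbb{R}$, or in a region avoided by all the supports, is a genuine correction. For real $Z$ the paper's argument breaks down, and so does the forward direction. One small repair: justify the subsequence principle as in Lemma \ref{subseq}, one test function $f\in C_b$ at a time. Do not rely on the metric displayed in the paper: it is the total variation distance and does not metrize weak convergence ($\delta_{1/n}\to\delta_0$ weakly, yet they stay at distance $2$).

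The gap is the complex case, and it cannot be closed under the stated hypothesis. Your step ``show $S_\nu=S_\mu$ as locally integrable functions'' needs information on $\mathrm{supp}(\mu)$, but the hypothesis concerns only $Z\subset\mathbb{C}\setminus\mathrm{supp}(\mu)$. The identity theorem carries $S_\nu=S_\mu$ only through the component of $\mathbb{C}\setminus(\mathrm{supp}\,\mu\cup\mathrm{supp}\,\nu)$ containing the accumulation point. It never reaches into the supports, which is exactly where $\partial_{\overline z}S_\mu$ lives. In fact the statement is false for $Y=\mathbb{C}$. Let $\mu$ be the uniform probability measure on the unit disk $\mathbb{D}$, $\mu_n=\delta_0$, and $Z=\{|y|>2\}$. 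For $|y|>1$, the mean value property of the holomorphic function $w\mapsto 1/(w-y)$ on $\overline{\mathbb{D}}$ gives $S_\mu(y)=-1/y=S_{\mu_n}(y)$, yet $\mu_n\not\to\mu$. The forward direction fails too, because $w\mapsto 1/(w-y)$ is unbounded on $\mathbb{C}$: $\mu_n=(1-\frac1n)\mu+\frac1n\delta_{3+n^{-2}}\to\mu$ weakly, while $S_{\mu_n}(3)\to\infty$. The paper's proof invokes holomorphy and \eqref{retr2} for both cases at once, so it has the same defect.

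What your $\partial_{\overline z}$ route does prove is the corrected statement, with convergence assumed for Lebesgue-a.e.\ $y\in\mathbb{C}$. First, Jensen and Fubini give $\sup_n\int_K|S_{\mu_n}|^p\,d^2y\le\sup_w\int_K|w-y|^{-p}\,d^2y<\infty$ for compact $K$ and $1<p<2$, so Vitali yields $L^1_{\mathrm{loc}}$ convergence to $S_\mu$. Next, vague convergence $\mu_{n_k}\to\nu$ yields $S_{\mu_{n_k}}\to S_\nu$ in distributions, since $\Phi(w)=\int\phi(y)(w-y)^{-1}\,d^2y$ belongs to $C_0(\mathbb{C})$ for $\phi\in C_c^\infty(\mathbb{C})$. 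Hence $S_\nu=S_\mu$ a.e., applying $\partial_{\overline z}$ gives $\nu=\mu$, and your step 5 concludes.
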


In order to prove theorem \ref{converge} we need the following three Lemmas:
\begin{lem}[Helly's selection theorem] \label{helly}
Let $(\mu_n)_{n=1}^\infty$ be a sequence in $\mathcal{M}_1(Y)$, then there exists a subsequence $(\mu_{n_k})_{k=1}^\infty$ and a sub-probability measure $\mu \in \mathcal{M}_{\leq 1}(Y)$ such that $\mu_{n_k} \to \mu$ vaguely.
\end{lem}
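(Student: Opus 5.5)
We prove Helly's selection theorem by constructing the limit through distribution functions on a countable dense set and then upgrading to vague convergence.

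\textbf{Step 1: a diagonal subsequence.} We treat $Y=\mathbb{R}$ first. Let $F_n(x):=\mu_n((-\infty,x])$, so each $F_n$ is non-decreasing, right-continuous and takes values in $[0,1]$. Enumerate the rationals as $\mathbb{Q}=\{q_1,q_2,\dots\}$. Since $(F_n(q_1))_n$ is a bounded sequence in $[0,1]$, Bolzano--Weierstrass gives a convergent subsequence. From that subsequence we extract a further one along which $F_n(q_2)$ converges, and so on. The diagonal sequence $(n_k)$ then satisfies $F_{n_k}(q)\to G(q)$ for every $q\in\mathbb{Q}$, and $G$ is non-decreasing with values in $[0,1]$.

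\textbf{Step 2: the limit measure.} Define $F(x):=\inf\{G(q): q\in\mathbb{Q},\, q>x\}$. This $F$ is non-decreasing and right-continuous with values in $[0,1]$. Hence it is the distribution function of a unique measure $\mu$ with $\mu((a,b])=F(b)-F(a)$. Its total mass is $\mu(\mathbb{R})=\lim_{x\to\infty}F(x)-\lim_{x\to-\infty}F(x)\le 1$, so $\mu\in\mathcal{M}_{\leq 1}(\mathbb{R})$.

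A standard sandwich argument shows $F_{n_k}(x)\to F(x)$ at every continuity point $x$ of $F$. For rationals $q<x<q'$ we have $F_{n_k}(q)\le F_{n_k}(x)\le F_{n_k}(q')$. Letting $k\to\infty$ and then $q,q'\to x$ gives the claim at continuity points.

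\textbf{Step 3: vague convergence.} Fix $f\in C_c(\mathbb{R})$ and $\varepsilon>0$. By uniform continuity, choose a step function $h=\sum_j c_j\mathbf 1_{(a_j,a_{j+1}]}$ with $\|f-h\|_\infty<\varepsilon$. We take the partition points $a_j$ to be continuity points of $F$ lying outside a neighbourhood of $\mathrm{supp} f$. This is possible because $F$ has only countably many discontinuities.

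Then
\[
\int h\,d\mu_{n_k}=\sum_j c_j\bigl(F_{n_k}(a_{j+1})-F_{n_k}(a_j)\bigr)\to\int h\,d\mu,
\]
and all measures involved have mass at most $1$. Therefore $\limsup_k|\langle\mu_{n_k},f\rangle-\langle\mu,f\rangle|\le 2\varepsilon$. Since $\varepsilon$ was arbitrary, $\mu_{n_k}\to\mu$ vaguely.

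\textbf{The case $Y=\mathbb{C}$.} Identify $\mathbb{C}$ with $\mathbb{R}^2$. The cleanest route is functional-analytic, which avoids two-dimensional distribution functions. The space $C_c(\mathbb{C})$ has a countable family $\{f_j\}$ that is dense in the sup norm within each $C_c(B_R)$. A diagonal extraction gives a subsequence along which $\langle\mu_{n_k},f_j\rangle$ converges for every $j$.

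By density and the uniform bound $|\langle\mu_n,f\rangle|\le\|f\|_\infty$, the limit $\Lambda(f):=\lim_k\langle\mu_{n_k},f\rangle$ then exists for every $f\in C_c(\mathbb{C})$. The functional $\Lambda$ is positive and linear with $|\Lambda(f)|\le\|f\|_\infty$. The Riesz representation theorem yields a Radon measure $\mu$ with $\Lambda(f)=\langle\mu,f\rangle$. Moreover, $\mu(\mathbb{C})=\sup\{\Lambda(f):0\le f\le1,\ f\in C_c\}\le1$, so $\mu\in\mathcal{M}_{\leq 1}(\mathbb{C})$.

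\textbf{Main obstacle.} Steps 1 and 2 are mechanical. The delicate points are elsewhere. In Step 3 one must choose the partition at continuity points of $F$, so that atoms of $\mu$ do not break the convergence of the step-function integrals. In the complex case one must justify the countable dense family in $C_c$ (for instance, polynomials with rational coefficients multiplied by fixed cutoffs, via Stone--Weierstrass) and then invoke Riesz. It is also worth noting that mass may escape to infinity, which is exactly why the limit $\mu$ is only a sub-probability measure.
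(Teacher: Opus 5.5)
Your proof is correct. On $\mathbb{R}$ it is the argument the paper relies on: the paper gives no proof of its own and cites Durrett, whose proof is your Steps 1--2 (diagonal extraction along the rationals, the right-continuous regularization $F(x)=\inf_{q>x}G(q)$, and the sandwich at continuity points). Durrett's statement only gives $F_{n_k}(x)\to F(x)$ at continuity points of $F$. Your Step 3 is therefore the standard extra step needed to reach vague convergence in the paper's sense, i.e.\ against $C_c$ test functions. One wording slip there: only the outermost partition points need to lie outside $\mathrm{supp} f$; the interior ones are continuity points of $F$ with small mesh, which is possible because continuity points are dense.

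Where you genuinely depart from the paper is the case $Y=\mathbb{C}$. The paper states the lemma for it but the cited proof is one-dimensional. Redoing the distribution-function argument in $\mathbb{R}^2$ is possible but fiddly: it needs two-dimensional distribution functions, rectangles whose boundaries carry no mass, and inclusion--exclusion for rectangle masses. Your route instead takes a countable sup-norm-dense family in $C_c$, extracts a diagonal subsequence, extends by density using $|\langle\mu_n,f\rangle|\le\|f\|_\infty$, and applies the Riesz representation theorem with $\mu(\mathbb{C})=\sup\{\Lambda(f):0\le f\le 1\}\le 1$. This is dimension-free, works on any locally compact separable metric space, and would handle $\mathbb{R}$ as well.

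The trade-off is that the distribution-function proof is elementary and gives the finer conclusion that $F_{n_k}\to F$ pointwise at continuity points. The functional-analytic proof depends on Riesz and on separability of $C_c$, which your Stone--Weierstrass-with-cutoffs construction correctly supplies.
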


\begin{proof} 
The Lemma and its proof can be found for example in \cite{durrett2019probability}, page 108.
\end{proof}

\begin{lem} \label{subseq}
Let $(\mu_n)_{n=1}^\infty \in \mathcal{M}_1(Y)$ and $\mu \in \mathcal{M}_{\leq 1}(Y).$ Then $(\mu_n)_{n=1}^\infty$ converges weakly (respectively vaguely) to $\mu$ if every subsequence $(\mu_n)_{n \in J}, J \subset \mathbb{N}$ has a further subsequence $(\mu_n)_{n \in I}, I \subset J$ that converges weakly (respectively vaguely) to $\mu$.
\end{lem}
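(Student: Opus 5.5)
We argue by contradiction, since convergence of a sequence in a topology that is characterised by test functions reduces to convergence of real sequences. Suppose $(\mu_n)$ does not converge weakly (respectively vaguely) to $\mu$. Then, by the definitions given earlier, there is a test function $f\in C_b(Y)$ (respectively $f\in C_c(Y)$) such that $\langle \mu_n,f\rangle \not\to \langle \mu,f\rangle$. Unpacking this failure of convergence of a sequence of complex numbers, we obtain an $\varepsilon>0$ and an infinite index set $J\subset\mathbb{N}$ with
\[
\left| \langle \mu_n,f\rangle - \langle \mu,f\rangle \right| \geq \varepsilon \quad \text{for all } n\in J .
\]

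The key step is to apply the hypothesis to this particular subsequence $(\mu_n)_{n\in J}$. It produces a further subsequence indexed by $I\subset J$ that converges weakly (respectively vaguely) to $\mu$. In particular, since our fixed $f$ is an admissible test function for that mode of convergence, we get $\langle \mu_n,f\rangle\to\langle\mu,f\rangle$ along $n\in I$. This contradicts the lower bound $\varepsilon$, which holds for every $n\in I\subset J$. Hence $\mu_n\to\mu$ weakly (respectively vaguely).

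The argument is the standard fact that a real or complex sequence converges to $L$ as soon as every subsequence has a further subsequence converging to $L$, applied test function by test function. There is no real obstacle. The only point requiring care is that the same class of test functions ($C_b$ or $C_c$) is used both in the negation and in the hypothesis, so that the subsequential limit can be tested against the same $f$. One should also note that in the weak case $\mu$ is then automatically a probability measure: the constant $1\in C_b(Y)$ gives $\mu(Y)=\lim_n\mu_n(Y)=1$, which is consistent with the remarks following the definition of weak and vague convergence.
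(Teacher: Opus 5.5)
Your proof is correct and follows the paper's argument: negate convergence to get a test function $f$ (from $C_b$ or $C_c$ as appropriate), an $\varepsilon>0$ and an infinite $J\subset\mathbb{N}$ with $|\langle\mu_n,f\rangle-\langle\mu,f\rangle|\geq\varepsilon$ for all $n\in J$, then use the further subsequence $I\subset J$ supplied by the hypothesis to reach a contradiction. Your extra remark is also correct: in the weak case, testing against $1\in C_b(Y)$ forces $\mu(Y)=1$, which reconciles $\mu\in\mathcal{M}_{\leq 1}(Y)$ with the definition of weak convergence.
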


\begin{proof}
We prove this by contradiction. Suppose that $(\mu_n)_{n=1}^\infty$ doesn't converge weakly (respectively vaguely) to $\mu$. Then we can find a continuous and bounded (respectively with compact support) function $f: Y \mapsto Y$ and an $\epsilon > 0$ such that
$\left | \langle \mu_n , f \rangle -  \langle \mu , f \rangle \right | \geq \epsilon,$ for all $n$ in an infinite set $J \subset \mathbb{N}.$ But now we can find a subsequence $(\mu_n)_{n \in I}, I \subset J$ that converges weakly (respectively vaguely) to $\mu$. This means that we can find $n \in I \subset J$ such that $\left | \langle \mu_n , f \rangle -  \langle \mu , f \rangle \right | < \epsilon,$ which is a contradiction.
\end{proof}

\begin{lem} \label{vague-weak} If we have a sequence of probability measures that converges vaguely to a probability measure on $\mathcal{M}_1(Y),$ then vague convergence can be strengthened to weak convergence.
\end{lem}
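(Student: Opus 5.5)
The plan is to show that vague convergence $\mu_n \to \mu$ with $\mu(Y)=1$ forces tightness of $(\mu_n)$, and then to approximate an arbitrary $f \in C_b(Y)$ by compactly supported functions. Since $Y$ is $\mathbb{R}$ or $\mathbb{C}$, closed balls $\overline{B}(0,R)$ are compact. For each $R>0$ we fix a continuous cutoff $\chi_R$ with $0\le \chi_R \le 1$, $\chi_R = 1$ on $\overline{B}(0,R)$ and $\chi_R=0$ outside $B(0,R+1)$; in particular $\chi_R \in C_c(Y)$.

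\textbf{Step 1 (tightness).} Let $\epsilon>0$. Because $\mu$ is a probability measure and $\chi_R \uparrow 1$ pointwise, monotone convergence gives $R$ with $\langle \mu, \chi_R\rangle \ge 1-\epsilon$. Vague convergence applied to $\chi_R$ gives $\langle \mu_n,\chi_R\rangle \to \langle\mu,\chi_R\rangle$, so $\langle \mu_n,\chi_R\rangle \ge 1-2\epsilon$ for $n \ge n_0$. Since each $\mu_n$ is a probability measure, this says $\langle \mu_n, 1-\chi_R\rangle \le 2\epsilon$ for $n\ge n_0$. The same quantity for $\mu$ is at most $\epsilon$. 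This is the one step where the hypothesis $\mu(Y)=1$ is used, and it is what rules out mass escaping to infinity.

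\textbf{Step 2 (approximation).} Take $f\in C_b(Y)$ and split $f = f\chi_R + f(1-\chi_R)$. The first piece lies in $C_c(Y)$, so $\langle \mu_n, f\chi_R\rangle \to \langle \mu, f\chi_R\rangle$ by vague convergence. For the second piece, Step 1 gives $|\langle \mu_n, f(1-\chi_R)\rangle| \le 2\epsilon\|f\|_\infty$ for $n\ge n_0$ and $|\langle \mu, f(1-\chi_R)\rangle| \le \epsilon\|f\|_\infty$. Therefore
\[
\limsup_{n\to\infty} |\langle \mu_n,f\rangle - \langle\mu,f\rangle| \le 3\epsilon\|f\|_\infty ,
\]
and letting $\epsilon\to0$ yields weak convergence. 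For complex-valued $f$ one can either run the same argument directly or apply it to the real and imaginary parts separately.

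The main obstacle is Step 1, and the estimate there is a one-line consequence of the total masses of $\mu_n$ and $\mu$ agreeing. Beyond that, the only care needed is to construct the cutoff so that $\chi_R$ is genuinely compactly supported, which is where local compactness of $Y=\mathbb{R}$ or $\mathbb{C}$ enters.
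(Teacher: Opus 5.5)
Your proof is correct, and it is essentially the argument in the reference the paper cites in place of a proof (Chung, p.~93). Chung likewise uses total mass one plus vague convergence to get uniform control of the tails of the $\mu_n$, then truncates $f\in C_b(Y)$ to a compactly supported function and bounds the remainder by $\|f\|_\infty$ times the tail mass.

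Chung phrases this with intervals $(a,b]$ whose endpoints are continuity points of $\mu$ and a piecewise-linear truncation of $f$. Your multiplicative cutoff $\chi_R$ instead works directly with the paper's $C_c$-definition of vague convergence, and it also covers $Y=\mathbb{C}$. One cosmetic fix: ``monotone convergence'' in Step 1 should be dominated convergence (or the bound $\langle\mu,\chi_R\rangle\ge\mu(\overline{B}(0,R))\to1$), since you never required $\chi_R$ to increase in $R$.
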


\begin{proof}
For this Lemma and its proof, check for example \cite{chung2001course}, page 93.
\end{proof}

We now continue with the proof of our main theorem in this section.

\begin{proof}[Proof of Theorem \ref{converge}] The $"\Rightarrow"$ is obvious since the function $y \mapsto \frac{1}{w-y}$ is continuous and bounded. To show $"\Leftarrow"$, we will use Lemma \ref{subseq}. Let $(\mu_n)_{n \in J}, J \subset \mathbb{N}$  be a subsequence of $(\mu_n)_{n=1}^\infty$. Then, by Lemma \ref{helly} there exists a further subsequence $(\mu_n)_{n \in I}, I \subset J$ such that $(\mu_{n})_{n \in I} \to \nu$ vaguely for some $\nu \in \mathcal{M}_{\leq 1}(Y).$ Since $y \mapsto \frac{1}{w-y}$ vanishes at infinity, it follows that $S_{(\mu_n)_{n \in I}}(y) \to S_{\nu}(y)$ and therefore $S_{\nu}(y) = S_{\mu}(y),$ for all $y \in Z.$ Since the functions are holomorphic, we establish that $S_\mu = S_{\nu}.$ By the retrieval of measure identities, \eqref{retr1} and \eqref{retr2} we conclude that $\mu = \nu$. For any other subsequence $(\mu_n)_{n \in J}, J \subset \mathbb{N}$ we get by the same arguments, that every further subsequence should converge to $\mu$ vaguely. Therefore, $\mu_n \to \mu,$ vaguely. Because all measures involved are probability measures, by using Lemma \ref{vague-weak}, we conclude that $\mu_n \to \mu$ weakly, as we want.
\end{proof}

\begin{prop}[Convergence theorem for random measures] Let $(\mu_n)_{n=1}^\infty$ be a sequence of random probability measures on $\mathcal{M}_1(Y)$ and $\mu$ a deterministic probability measure on the same space. Then we have the following equivalences:
\begin{itemize}
    \item $\mu_n \to \mu$ \textit{weakly in expectation} $\Leftrightarrow \mathbb{E}S_{\mu_n}(y) \to S_{\mu}(y)$ \textit{for all} $y \in Z .$
    \item $\mu_n \to \mu$ \textit{weakly in probability} $\Leftrightarrow S_{\mu_n}(y) \to S_\mu(y)$  \textit{in probability for all} $y \in Z .$
    \item $\mu_n \to \mu$ \textit{weakly almost surely} $\Leftrightarrow S_{\mu_n}(y) \to S_{\mu}(y)$ \textit{almost surely for all} $y \in Z.$
\end{itemize}
\end{prop}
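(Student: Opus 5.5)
In all three items the direction ``$\Rightarrow$'' is the easy one. The plan for ``$\Leftarrow$'' is to reduce each statement to the deterministic Convergence theorem \ref{converge}. For $y \in Z$ the kernel $w \mapsto \frac{1}{w-y}$ is continuous and bounded on $\mathrm{supp}(\mu)$. Strictly, one needs it bounded on a set carrying all the $\mu_n$; for real measures and $y\in\mathbb{C}\setminus\mathbb{R}$ this holds with bound $1/|\mathrm{Im}\, y|$, and I would take $Z$ of this type. It can then be used as a test function after splitting into real and imaginary parts.

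\textbf{Expectation.} I would first note, by Fubini applied to Definition \ref{exp-m}, that $\mathbb{E} S_{\mu_n}(y) = S_{\overline{\mu_n}}(y)$, where $\overline{\mu_n}$ is the expected measure. This is valid because the integrand is bounded by $1/|\mathrm{Im}\,y|$. The first equivalence is then exactly Theorem \ref{converge} applied to the deterministic sequence $\overline{\mu_n}$, with the deterministic limit $\overline{\mu}=\mu$.

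\textbf{Almost surely.} Take $Z$ countable with an accumulation point; this costs nothing, since a convergence set $Z$ can be shrunk to a countable subset that still accumulates. Discarding a null set for each $y\in Z$, there is a single event of probability one on which $S_{\mu_n(\omega)}(y)\to S_\mu(y)$ for every $y\in Z$ simultaneously. Theorem \ref{converge} then gives $\mu_n(\omega)\to\mu$ weakly for each such $\omega$. Hence $\langle \mu_n,f\rangle\to\langle\mu,f\rangle$ almost surely for every $f\in C_b$.

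\textbf{In probability.} This is the main obstacle, since Theorem \ref{converge} is pointwise and ``in probability'' is not an $\omega$-wise notion. I would use the subsequence characterization of convergence in probability. Fix $f\in C_b$ and an arbitrary subsequence $J$. Enumerate a countable $Z'=\{y_1,y_2,\dots\}\subset Z$ with an accumulation point. By a diagonal argument, extract a further subsequence $I\subset J$ along which $S_{\mu_n}(y_k)\to S_\mu(y_k)$ almost surely for every $k$. Along $I$, the almost sure item just proved gives $\langle\mu_n,f\rangle\to\langle\mu,f\rangle$ almost surely. Since every subsequence has such a further subsequence, $\langle\mu_n,f\rangle\to\langle\mu,f\rangle$ in probability. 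The same logic as Lemma \ref{subseq} applies, now at the level of real random variables.
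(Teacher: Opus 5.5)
Your proposal is correct. For the expectation and almost-sure items it is the paper's proof. In the first item, the identity $\mathbb{E}S_{\mu_n}(y)=S_{\overline{\mu_n}}(y)$ reduces the claim to Theorem \ref{converge} applied to the deterministic expected measures. In the third item, the paper likewise fixes a sequence in $Z$ with a limit point, intersects the corresponding probability-one events, and applies Theorem \ref{converge} $\omega$ by $\omega$.

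The genuine difference is the ``$\Leftarrow$'' direction of the in-probability item. The paper does not prove it; it cites \cite{anderson2010introduction}. Your argument has three steps: the subsequence criterion for convergence in probability, a diagonal extraction over a countable accumulating set $\{y_k\}$, and then the almost-sure item along the extracted subsequence. This is self-contained. It shows that the in-probability case is a formal consequence of the almost-sure one, and it needs the hypothesis only at countably many points. The citation is shorter, but it leaves unproved in the text the one direction that is not an $\omega$-wise application of Theorem \ref{converge}.

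Your restriction to measures on $\mathbb{R}$ with $Z\subset\mathbb{C}\setminus\mathbb{R}$ is also the right one, not a loss of generality. The paper's claim that $w\mapsto\frac{1}{w-y}$ is bounded holds only in that setting. For real $y\notin\mathrm{supp}(\mu)$, the measures $\mu_n=(1-\frac1n)\mu+\frac1n\delta_{y+n^{-2}}$ converge weakly to $\mu$ while $|S_{\mu_n}(y)|\to\infty$, so ``$\Rightarrow$'' fails. For measures on $\mathbb{C}$, Theorem \ref{converge} itself fails: the uniform measure on the unit circle and $\delta_0$ both have transform $-1/y$ on $\{|y|>1\}$. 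So your version covers exactly the setting in which the proposition is true.
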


\begin{proof}
The first statement comes from our main theorem in this section, considering that:
\[
\mathbb{E} S_{\mu_n}(y) = \mathbb{E} \int_{\mathrm{supp}(\mu)}\frac{1}{w-y}d\mu(w) \overset{(*)}{=} \int_{\mathrm{supp}(\mu)}\frac{1}{w-y}d \mathbb{E} \mu(w) = S_{\E \mu_n}(y),
\]
where $d \mathbb{E} \mu(w)$ denotes the "expected measure" as defined in \ref{exp-m}. The $(*)$ equality is justified because it holds generally that:
\[
\E \int f d\mu = \int f d\E \mu,
\]
where $\mu$ is a random probability measure, $\E d \mu$ its expected measure and $f: Y \mapsto Y$ any bounded measurable function. This can be found in \cite{kallenberg2017random}, page 53.

For the second statement, the $"\Rightarrow"$ direction is obvious since the function $y \mapsto \frac{1}{y-w}$ is continuous and bounded. The proof of the $"\Leftarrow"$ direction can be found in 
\cite{anderson2010introduction}, page 45.

For the third statement we work as follows. If $\mu_n \to \mu$ weakly in a measurable set $A \subset \Omega$ such that $\mathbb{P}(A)=1,$ then on $A$ we also have that $S_{\mu_n}(y) \to S_{\mu}(y)$ for all $y \in Z,$ by our main theorem. This proves the $"\Rightarrow"$ direction. For the $"\Leftarrow"$ direction, we fix a sequence $(z_k)_{k=1}^\infty$ in $Z$ that converges to some $z \in Y \setminus \mathrm{supp}(\mu).$ For each $k \in \mathbb{N}$ we find a measurable set $A_k$ with $\mathbb{P}(A_k)=1$ and $S_{\mu_n}(z_k) \to S_{\mu}(z_k).$ Then $A:= \bigcap \limits_{k \in \mathbb{N}} A_k$ is measurable with $\mathbb{P}(A)=1$ and for all $y \in \{z_k : k \in \mathbb{N} \}$ we have $S_{\mu_n}(y) \to S_{\mu}(y).$ Since this set has an accumulation point, we find by our main theorem that $\mu_n \to \mu$ weakly on $A$, as we want.
\end{proof}

\end{section}

\begin{section}{The imaginary part of the Stieltjes transform} \label{s:ImS}
In this section we analyze further the retrieval of measure identity from Corollary \ref{re-cor} for measures on the real line. According to this identity, the function $E \mapsto \frac{1}{\pi} \mathrm{Im} S_\mu (E+i\eta)$ should represent a density that approximates $\mu$ well as $\eta \to 0^+.$

To analyze this property we introduce the concept of the convolution of probability measures as well as the kernel density estimators.

\begin{definition}[Convolution] Let $\mu$ and $\nu$ be two probability measures on $(\mathbb{R},\mathcal{B}_{\mathbb{R}})$ and $f,g$ be two probability density functions.
\begin{itemize}
    \item (convolution of measures). The convolution of the two measures is defined as $\mu \ast \nu := (\mu \otimes \nu)^+,$ where $\mu \otimes \nu$ is the product measure and the addition in the exponent represents the push-forward of the product measure over the addition map.
    
    \item (convolution of density with measure). The convolution of the density $f$ and the probability measure $\nu$ is defined as the function $f \ast \nu: \mathbb{R} \to \mathbb{R}$ with:
    \[
    (f \ast \nu)(x) := \int_{\mathbb{R}} f(x-y) d\nu(y)
    \]
    
    \item (convolution of densities). The convolution of the densities $f$ and $g$ is the function $f \ast g: \mathbb{R} \to \mathbb{R}$ with:
    \[
    (f \ast g)(x) := \int_{\mathbb{R}} f(x-y) g(y) dy
    \]
\end{itemize}
\end{definition}

Some remarks about the definitions are the following. For the first definition, for an arbitrary set $B \in \mathcal{B}_{\mathbb{R}},$ we have that:
\[
(\mu \ast \nu)(B) = (\mu \otimes \nu)(\{(x,y) \in \mathbb{R}^2: x+y \in B \}).
\]
If $f: \mathbb{R} \to \mathbb{R}$ is $\mu \otimes \nu$-integrable, we then obtain that:
\[
\int_{\mathbb{R}} f d(\mu \ast \nu) = \int_{\mathbb{R}^2} (f \circ +) d (\mu \otimes \nu) = \int_{\mathbb{R}^2} f(x+y) d(\mu \otimes \nu) (x,y) = \int_{\mathbb{R}} \int_{\mathbb{R}} f(x+y) d\mu(x) d\nu(y).
\]
Particularly, for an indicator function $f= \mathbf{1}_B$ for some $B \in \mathcal{B}_{\mathbb{R}}$ we get that:
\[
(\mu \ast \nu)(B) = \int_{\mathbb{R}} \mathbf{1}_{B} d(\mu \ast \nu) = \int_{\mathbb{R}} \int_{\mathbb{R}} \mathbf{1}_B (x+y) d\mu(x) d\nu(y) = \int_{\mathbb{R}} \mu(B - y) d\nu(y).
\]
The equality of the first with the third term shows that the convolution function is commutative in the space of probability measures. We denote by $f dx$ the probability measure with density $f,$ where $f$ is a probability density function. We then have the following properties of the convolution:
\begin{itemize}
    \item The convolution is a commutative binary operation in the space of probability measures where the neutral element is given by the Dirac probability measure $\delta_0.$
    \item The function $f \ast \nu$ is a probability density for the convolution measure $(f dx \ast \nu),$ that is $fdx \ast \nu = (f \ast \nu ) dx .$
    \item The function $f \ast g$ is a probability density for the convolution measure $(f dx \ast g dx),$ that is $f dx \ast g dx = (f \ast g) dx,$ where $g$ is another probability density function.
    \item The convolution of probability measures on $(\mathbb{R},\mathcal{B}_{\mathbb{R}})$ is continuous with respect to weak convergence of measures. That is, if $\mu$ , $\nu$ , $(\mu_n)_{n=1}^\infty$ , $(\nu_n)_{n=1}^\infty $ are probability measures on $(\mathbb{R},\mathcal{B}_{\mathbb{R}})$ with $\mu_n \to \mu$ weakly and $\nu_n \to \nu$ weakly, then $\mu_n \ast \nu_n \to \mu \ast \nu $ weakly.
\end{itemize}
We introduce here the kernel density estimators and particularly focus on the Cauchy kernel.
\begin{definition}[Cauchy kernel]
For all $\eta>0,$ we define the Cauchy kernel as the function $P_\eta : \mathbb{R} \to \mathbb{R}$ with
\[
P_\eta (x) = \frac{1}{\pi} \frac{\eta}{x^2+\eta^2},
\]
which is the density function of the Cauchy distribution with scale parameter $\eta.$
\end{definition}
For the Cauchy kernel, the following important identity holds:
\[
\lim \limits_{\eta \to 0^+} (P_\eta dx) = \delta_0,
\]
where $\delta_0$ denotes the Dirac measure centered at 0. That is, for every $B \in \mathcal{B}_{\mathbb{R}},$
\[
\delta_0(B) = \begin{cases}
    1 \ , \ \text{if} \ 0 \in B \\
    0 \ , \ \text{if} \ 0 \notin B.
\end{cases}
\]
This is because the characteristic function of the Cauchy distribution with parameter $\eta$ is given by $t \mapsto e^{-\eta |t|}$ and so letting $\eta \to 0^+$ yields the identity.

For the imaginary part of the Stieltjes transform we have the following fundamental property for any real bounded measure $\mu$:
\[
\frac{1}{\pi} \mathrm{Im} S_\mu(E+i\eta) = \int_{\mathbb{R}} \frac{1}{\pi} \frac{\eta}{(E-x)^2 + \eta^2} d\mu(x) = (P_\eta \ast \mu)(E)
\]
This property means that the function $\frac{1}{\pi}\mathrm{Im}S_\mu(\cdot + i \eta)$ is the convolution of the kernel density $P_\eta$ with $\mu$ and thus a density for the measure $(P_\eta dx ) \ast \mu.$ As $\eta \to 0^+,$ we have that:
\[
\frac{1}{\pi} \mathrm{Im} S_\mu(\cdot+i\eta)dx = (P_\eta dx) \ast \mu \to \delta_0 \ast \mu = \mu .
\]

Assume now that $\{ \sigma_n \}_{n=1}^\infty$ is a sequence of empirical spectral distributions of $\textit{Hermitian}$ random matrices so that $\sigma_n$ converges to a \textit{real} deterministic measure $\sigma$. We now take the convolution of $\sigma_n$ with the Cauchy kernel $P_\eta.$ Since $\sigma_n \to \sigma,$ we get by the previous analysis that $(P_{\eta_n} \ast \sigma_n)dx \to \sigma ,$ for any sequence $\eta_n \to 0.$ However, if $\sigma$ has some density $\sigma = f_\sigma dx,$ this is not enough to deduce that $P_{\eta_n} \ast \sigma_n \to f_\sigma$ for example in the supremum norm. This would allow \textit{local estimation} by $\sigma_n$ through the Cauchy kernel about the density $f_\sigma .$ If $\eta = \eta_n$ drops too quickly to zero as $n \to \infty$ then $(P_{\eta_n} \ast \sigma_n)$ will have steep peaks at each eigenvalue and thus will not approximate the density of $\sigma.$ This phenomenon is typical for kernel density estimators. We introduce them next in their generality.

\begin{definition}[Kernel] A kernel $K$ is a real probability density function. If $K$ is a kernel and $h>0$ a real parameter, we define $K_h$ as the kernel with $K_h(x) = \frac{1}{h} K(\frac{x}{h})$ for all $x \in \mathbb{R}$ and call $K_h $ "the kernel $K$ at bandwidth $h."$
\end{definition}

It is clear that $K_h$ is a kernel if $h>0$ and $K$ is a kernel. An example of a kernel is the previously defined Cauchy kernel $P_{\eta}$. We have for all $x \in \mathbb{R}$ and $\eta>0:$
\[
P(x) = \frac{1}{\pi} \frac{1}{x^2+1} \ \ \text{and} \ \  P_\eta(x) = \frac{1}{\pi \eta} \frac{1}{ \left (\frac{x}{\eta} \right )^2+1} = \frac{1}{\pi} \frac{\eta}{x^2 + \eta^2}.
\]
We are interested in constructing a density function $f$ that describes the experiment of drawing at random from the real-valued observations $u = (u_1,...,u_N)$, in other words that approximates the empirical probability measure:
\[ 
f dx \approx \rho_N = \frac{1}{N} \sum \limits_{i=1}^N \delta_{u_i}
\]
This can be done with the help of a kernel $K_h,$ which is usually chosen to be unimodal and symmetric around zero, see for example \cite{parzen1962estimation}.

\begin{definition}[Kernel density estimator] The kernel density estimator with kernel $K$ and bandwidth $h>0$ for an empirical measure $\rho_N$ is the density function given by the convolution $K_h \ast \rho_N$.
\end{definition}

We therefore have the density function estimator $K_h \ast \rho_N: \mathbb{R} \to \mathbb{R}$ with
\[
x \mapsto (K_h \ast \rho_N)(x) = \frac{1}{N} \sum \limits_{i=1}^N K_h(x-u_i) = \frac{1}{N h} \sum \limits_{i=1}^N K \left ( \frac{x - u_i}{h} \right ).
\]
The center of the kernel is placed upon each observation, whose influence is smoothed out over its neighbourhood. The size of this neighborhood is governed by the bandwidth. A small bandwidth will retain the probability mass of $1/N$ to be closer to its observation, whereas a larger bandwidth will result in a wider spread of probability mass. Therefore, a smaller bandwidth will result in a peaky density function (with steep peaks at each observation), whereas a larger one will result in a smoother density function.

Assume now we are given an empirical spectral distribution $\rho_N$ which comes from a Hermitian $N \times N$ matrix $X_N.$ The kernel density estimator at the location $E \in \mathbb{R}$ for $\rho_N$ with Cauchy kernel $P$ at bandwidth $\eta > 0$ is then given by:
\begin{align*}
(P_\eta \ast \rho_N)(E) &= \frac{1}{N\eta} \sum \limits_{i=1}^N \frac{1}{\pi} \frac{1}{\left ( \frac{E - \lambda_i^{X_N}}{\eta} \right )^2 + 1} \\
&= \frac{1}{\pi N} \sum \limits_{i=1}^N \frac{\eta}{(E - \lambda_i^{X_N})^2 + \eta^2} \\
&= \frac{1}{\pi} \mathrm{Im} S_{\rho_N} (E+i\eta) .
\end{align*}
This gives the imaginary part of the Stieltjes transform the role of the kernel density estimator, see for example \cite{benaych2016lectures} pages 7-8. Choosing different values of $\eta$ we get different estimations of the empirical spectral distribution. For a very small parameter of $\eta$, we may not obtain a useful approximation of the density, whereas for a bigger one the estimation would be better. In the next chapter, we will find the optimal value of $\eta$ so that the estimation is still useful for the approximation of the empirical eigenvalue density of the Marchenko-Pastur law for covariance matrices.
\end{section}

\pagebreak

\begin{section}{The Wirtinger derivative of the Stieltjes transform} \label{wirtinger}
In this section we analyze further the retrieval of measure identity from Corollary \ref{re-comp} for measures on the complex plane. According to this identity, we have a density given by the function $z \mapsto \frac{1}{\pi} \partial_{\overline{z}} S_\mu(z)$ which matches the density of $\mu$ on the complex plane.

We will analyze the generalized Cauchy theorem as well as convolutions with the Cauchy kernel, see for example \cite{lebl2019tasty}, pages 109-110.
\begin{thm}[Generalized Cauchy theorem]
Let $D$ be a disk on $\mathbb{C}$ and $f$ a complex-valued $C^1$ function on the closure of $D.$ Then:
\[
\forall \zeta \in D: f(\zeta) = \frac{1}{2\pi i} \int_{\partial D} \frac{f(z)}{z - \zeta}dz - \frac{1}{\pi} \iint_D \frac{\partial }{\partial \overline{z}} \frac{f(z)}{z - \zeta} d^2 z .
\]
\end{thm}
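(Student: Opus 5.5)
The plan is to derive the formula from Stokes' theorem, written in complex form as $\int_{\partial U} g\,dz = 2i\iint_U \partial_{\overline{z}} g\, d^2z$ for a $C^1$ function $g$ on the closure of a region $U$ with piecewise smooth boundary. Fix $\zeta \in D$ and a small $\epsilon>0$ such that the closed disk $\overline{B_\epsilon(\zeta)}$ lies inside $D$. Put $U_\epsilon := D\setminus \overline{B_\epsilon(\zeta)}$ and $g(z) := \frac{f(z)}{z-\zeta}$; this $g$ is $C^1$ on $\overline{U_\epsilon}$. First I verify the complex form of Stokes' theorem. Writing $dz = dx + i\,dy$ and applying Green's theorem to $g\,dx + ig\,dy$ gives
\[
\int_{\partial U} g\,dz=\iint_U \left(i\frac{\partial g}{\partial x}-\frac{\partial g}{\partial y}\right)dx\,dy = 2i\iint_U \partial_{\overline{z}} g\,d^2z ,
\]
which uses the definition of $\partial_{\overline{z}}$ from the paper.

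Next, the factor $\frac{1}{z-\zeta}$ is holomorphic on $U_\epsilon$, so the Leibniz rule gives $\partial_{\overline z}g = \frac{\partial_{\overline z} f}{z-\zeta}$. This is the meaning of the integrand $\frac{\partial}{\partial\overline z}\frac{f(z)}{z-\zeta}$ in the statement. Since $\partial U_\epsilon = \partial D - \partial B_\epsilon(\zeta)$ with the small circle oriented counterclockwise, we get
\[
\int_{\partial D}\frac{f(z)}{z-\zeta}dz-\int_{\partial B_\epsilon(\zeta)}\frac{f(z)}{z-\zeta}dz = 2i\iint_{U_\epsilon}\frac{\partial_{\overline z} f(z)}{z-\zeta}\,d^2z .
\]

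Finally, let $\epsilon\to0$. The limiting step is the only delicate one. Parametrizing the circle as $z=\zeta+\epsilon e^{it}$ turns the small-circle integral into $i\int_0^{2\pi} f(\zeta+\epsilon e^{it})\,dt$. By continuity of $f$ this tends to $2\pi i f(\zeta)$. On the right-hand side, $\partial_{\overline z} f$ is bounded on $\overline D$ and $\frac{1}{|z-\zeta|}$ is locally integrable in the plane, since in polar coordinates $\iint_{B_r}\frac{d^2z}{|z-\zeta|} = 2\pi r$. Dominated convergence therefore sends the integral over $U_\epsilon$ to the integral over $D$. Dividing through by $2\pi i$ and noting $\frac{2i}{2\pi i}=\frac1\pi$ gives
\[
f(\zeta) = \frac{1}{2\pi i}\int_{\partial D}\frac{f(z)}{z-\zeta}dz - \frac1\pi\iint_D \frac{\partial_{\overline z}f(z)}{z-\zeta}d^2z ,
\]
which is the claimed identity.
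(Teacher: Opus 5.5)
Your argument is correct, and it is the standard proof of the Cauchy--Pompeiu formula. The paper gives no proof of its own; it defers to Lebl (pp.~109--110), where the identity is obtained exactly as you do, by applying Stokes' theorem to $\frac{f(z)}{z-\zeta}\,dz$ on $D$ with a small disk around $\zeta$ removed and then letting the radius tend to zero. You also correctly read the integrand $\frac{\partial}{\partial \overline{z}}\frac{f(z)}{z-\zeta}$ pointwise on $D\setminus\{\zeta\}$ as $\frac{\partial_{\overline{z}} f(z)}{z-\zeta}$, which is the reading under which the stated formula, with the $-\frac{1}{\pi}$ coefficient, holds.
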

Notice that if $f$ is complex-differentiable (holomorphic) then the identity reduces to the well-known Cauchy formula. By saying that $f$ is a $C^1$ function we just mean that the derivatives $\frac{\partial f}{\partial x }$ and $\frac{\partial f}{\partial y }$ exist and they are continuous functions.

Notice also that if we pick a function $f \in C_c(\mathbb{C})$ with compact support, then the identity reduces to:
\[
\forall \zeta \in \mathrm{supp}(f): f(\zeta) = - \frac{1}{\pi} \iint_{\mathbb{C}} \frac{\partial }{\partial \overline{z}} \frac{f(z)}{z - \zeta} d^2 z = \frac{1}{2\pi i} \iint_{\mathbb{C}} \frac{\partial }{\partial \overline{z}} \frac{f(z)}{z - \zeta} dz \wedge d\overline{z},
\]
where we defined the \textit{wedge product} as $dz \wedge d\overline{z} := -2i \ d^2 z .$ This is a \textit{complex measure}, the space of which is a linear space over the complex numbers and also a Banach space, see \cite{rudin86real}, pages 116-119.

We now turn our attention to convolutions of complex functions and measures on the complex plane.
\begin{definition}[Convolution of complex measures and densities] Let $\mu$ and $\nu$ be two probability measures on $(\mathbb{C},\mathcal{B}_{\mathbb{C}})$ and $f,g$ two complex probability density functions. We can define again the following convolutions:
\begin{itemize}
    \item (\textit{convolution of measures}). The convolution of the two complex measures is defined as in the real case in which we recall that we defined them in a general setting.
    \item (\textit{convolution of density with measure}). The convolution of the complex density $f$ and probability measure $\nu$ is defined as the complex function $f \ast \nu: \mathbb{C} \to \mathbb{C}$ with:
    \[
    \forall z \in \mathbb{C}: (f \ast \nu)(z) := \int_{\mathbb{C}} f(z-w)d\nu(w).
    \]
    \item (\textit{convolution of densities).} The convolution of the complex densities $f$ and $g$ is the function $f \ast g: \mathbb{C} \to \mathbb{C}$ with:
    \[
    \forall z \in \mathbb{C}: ( f \ast g )(z) := \int_{\mathbb{C}} f(z-w) g(w)dw.
    \]
\end{itemize}
\end{definition}
The same remarks that hold for the real measures and densities also hold for the complex ones.

We introduce now the Cauchy kernel in complex analysis, see for example \cite{lebl2019tasty}, page 10.
\begin{definition}[Cauchy kernel] We define the complex Cauchy kernel as the function $k: \mathbb{C} \setminus \{0\} \to \mathbb{C}$ with:
\[
k(z) := \frac{1}{2\pi i} \ \frac{1}{z}.
\]
\end{definition}
For the Stieltjes transform of a complex measure $\mu$ we now have the following expression for it as a convolution with the complex Cauchy kernel:
\[
S_\mu (z) = \int_{\mathbb{C}} \frac{1}{w-z} d\mu(w) = - 2\pi i \int_{\mathbb{C}} k(z-w) d\mu(w) = - 2\pi i \cdot (k \ast \mu)(z).
\]
Suppose now that the measure $\mu$ has a complex density $f$ such that $d\mu = f(z) \ d^2 z .$ Then through the Stieltjes transform of $\mu$ we can retrieve the density $f$ by the Cauchy-Riemman equation:
\[
\frac{1}{\pi} \frac{\partial}{\partial \overline{z}} ( S_\mu) = f .
\]
In a more general setting, we have that the convolution of a measure $\mu$ with complex density $\phi (z, \overline{z})$ such that $d\mu = \phi(z,\overline{z}) \ dz \wedge d \overline{z}$ with the Cauchy kernel solves the equation:
\[
\frac{\partial}{\partial \overline{z}} ( k \ast \mu) = \phi .
\]
This means that we have the following retrieval of measure identity which we prove through the generalized Cauchy theorem:
\[
d\mu = \left ( \frac{1}{\pi} \frac{\partial}{\partial \overline{z}} S_\mu \right ) d^2 z .
\]
\begin{proof}
We pick a complex test function $f$ with compact support. By the analysis of the generalized Cauchy formula, we recall that:
\[
\forall \zeta \in \mathrm{supp}(f): f(\zeta) = \frac{1}{2\pi i} \iint_{\mathbb{C}} \frac{\partial }{\partial \overline{z}} \frac{f(z)}{z - \zeta} dz \wedge d\overline{z} .
\]
Suppose now we have the C-R equation:
\[
 \frac{\partial}{\partial \overline{z}} ( v ) = g ,
\]
where $g$ is a complex continuous function with compact support and $v$ is an unknown complex function. Suppose that there exists a complex measure $\mu$ such that $d\mu = g(z,\overline{z}) \ dz \wedge d \overline{z}.$ The existence of a solution $v$ for the C-R problem is established for example in \cite{voisin_2002}, pages 35-36. We show that if such a solution exists then we must have $v = k \ast \mu$. Indeed, by the generalized Cauchy formula, we get that:
\begin{align*}
(k \ast \mu ) (z, \overline{z}) &= \frac{1}{2\pi i}\int_{\mathbb{C}} \frac{1}{w-z}d\mu(w) =  \frac{1}{2\pi i }\int_{\mathbb{C}} \frac{g (w,\overline{w})}{w-z}dw \wedge d \overline{w} \\ \\
&= \frac{1}{2 \pi i}  \int_{\mathbb{C}} \frac{\partial }{\partial \overline{w}} \frac{v(w,\overline{w})}{w-z}dw \wedge d \overline{w} = v(z,\overline{z}) . \qedhere
\end{align*}
\end{proof}
In particular, for the Stieltjes transform, we get that:
\[
\frac{1}{\pi} \frac{\partial}{\partial \overline{z}} S_\mu = -2i \frac{\partial}{\partial \overline{z}} ( k \ast \mu ) = -2i \frac{d\mu}{dz \wedge d \overline{z}} =  \frac{d\mu}{d^2 z} .
\]
Assume now that $\{ \sigma_n \}_{n=1}^\infty$ is a sequence of empirical spectral distributions of \textit{non-Hermitian} random matrices, so that $\sigma_n$ converges to a \textit{complex} deterministic measure $\sigma.$ If we take the convolution of $\sigma_n$ with the complex Cauchy kernel $k$, then to show the convergence $\sigma_n \to \sigma$, it is enough to establish that $\sigma_n \ast k \to \sigma \ast k.$

\end{section}
\chapter{Local Marchenko-Pastur law}
We let $X$ be an $ M\times N$ matrix with complex components $x_{ij} = \mathrm{Re} (x_{ij}) + i \mathrm{Im} (x_{ij})$, for $i=1,...,M$ and $j=1,...,N.$ Assume that $\mathrm{Re} (x_{ij})$ and $\mathrm{Im} (x_{ij})$ are independent and identically distributed real random variables with mean zero and variance $\frac12$ so that:
\beq \label{e:XN1}
\E (x_{ij}) =0 \quad \textrm{and} \quad \E|x_{ij}|^2 =1, \ \  \textrm{for} \ \ i=1,...,M, \, j = 1, \dots, N.
\eeq
We denote by $X_N$ the scaled matrix:
\beq
\label{e:XN}
X_N :=X/\sqrt{N}.
\eeq
In this chapter, we will analyze asymptotics of the empirical spectral measure (see section \ref{ESD}) of the matrix $X_N^* X_N$ for $N\to \infty$, when $M=N.$ We define:
\beq \label{e:Kxx}
K_{XX} := X_N^* X_N \in \mathbb{C}^{N \times N}
\eeq
For general $M$ and $N$ we can consider the collection of the complex random variables $\{x_{ij}\}$ as $M$ \textit{observations} of $\mathbb{C}^N$-valued normalized random variables, where we can consider $N$ as the number of \textit{features} of each observation. Hence we have the name \textit{Sample Coviariance Matrix} for the random matrix $X_N^*X_N,$ as its matrix-expectation gives the covariances of the sample of $M$ observations of the $\mathbb{C}^N$-valued random variables.

The Sample Covariance Matrix $K_{XX}$ is always Hermitian and {positive semi-definite}. This is because:
\[
K_{XX}^* = (X_N^* X_N)^* = X_N^* X_N = K_{XX},
\]
where we used the property $(AB)^* = B^* A^*,$ valid for any complex matrices $A,B$ such that their multiplication makes sense. We also have that:
\[
\mathbf{x}^* K_{XX} \mathbf{x} = \mathbf{x}^* X_N^* X_N \mathbf{x} = ( X_N \mathbf{x} )^* X_N \mathbf{x} = \| X_N \mathbf{x} \|^2 \geq 0,
\]
where we used the usual Euclidean norm for the vector $X_N \mathbf{x}$ and $\mathbf{x}$ is any vector in $\mathbb{C}^N.$

We remind that a matrix $A \in \mathbb{C}^{N \times N}$ is called \textit{positive semi-definite} when
\[
\forall \mathbf{x} \in \mathbb{C}^N : \mathbf{x}^* A \mathbf{x} \geq 0.
\]
For a Hermitian matrix $H \in \mathbb{C}^{N \times N}$ the following relation holds:
\[
H \textrm{ is positive semi-definite} \Leftrightarrow \textrm{All eigenvalues of } H \textrm{ are non-negative}.
\]
We denote by $\lambda_i,$ $i=1,...N$ the eigenvalues of the matrix $K_{XX}.$ Since $K_{XX}$ is Hermitian and positive semi-definite, we can assume that:
\[
0\leq \lambda_1 \leq \lambda_2 \leq ... \leq \lambda_N .
\]
We denote by $\rho_N$ the empirical spectral cumulative density of these eigenvalues, according to the definition in section \ref{ESD}, applied to the Borel sets $B_x := (-\infty,x] \subset \mathbb{R}:$
\beq \label{rhoN}
\rho_N (x) := \frac{1}{N} \# \{ i \leq N : \lambda_i \leq x \}
\eeq
It was first established in 1967 in \cite{marchenko1967distribution} that the limiting measure of $\rho_N dx$ is the following measure $\rho^d dx$ which is defined according to the parameter:
\beq \label{d-param}
d:= \lim \limits_{M,N \to \infty} \frac{M}{N} \in (0,1].
\eeq
Specifically, we have that $\rho_N dx \to \rho^d dx$ weakly almost surely, where:
\beq \label{M-P}
\rho^d(x) := \frac{1}{2\pi d } \frac{\sqrt{(\lambda_+-x)(x-\lambda_-)}}{ x}, \quad x \in [\lambda_-,\lambda_+]
\eeq
and $\lambda_- := (1-\sqrt{d})^2 , \quad \lambda_+ := (1+\sqrt{d})^2 .$

This has been called the Marchenko-Pastur distribution. In the case of a square matrix $X$ in which the number of observations is equal to the number of features, we have that $M=N \Rightarrow d=1$ and so the limiting density becomes:
\beq \label{M-P1}
\rho(x) = \left\{
\begin{aligned}
&\frac{1}{2\pi} \sqrt{\frac{4}{x}-1} , & \qquad 0<x\leq 4 \\
&0 , &\textrm{otherwise}
\end{aligned} \right.
\eeq
This is the case when the limiting measure has a square root singularity near $0$. We call this point the \textit{hard edge} of the density, while the point $4$ on the real line will be called the \textit{soft edge} of the limiting density. The space between $0$ and $4$ will be called the \textit{bulk} of the density, as there will be a "bulk amount" of eigenvalues there.

\begin{section}{Resolvent identities for Sample Covariance Matrices} \label{s:Resol}

The matrix transformation that is closely related to the Stieltjes transform of the empirical spectral measure is called the \textit{resolvent matrix}.

For a matrix $X$ we define its resolvent transformation as the matrix:
\[
G_z^X := (X - zI)^{-1},
\]
where $z \in \mathbb{C}$ is a complex parameter related to the complex parameter of the Stieltjes transformation.

We have the following identity relating the Stieltjes transform of the empirical spectral measure with the resolvent matrix:
\beq \label{TrG}
S_{N}(z) := S_{\mu_N}(z) = \int_{\mathbb{R}} \frac{1}{x-z}d\mu_N(x) = \frac{1}{N} \sum \limits_{i=1}^N \frac{1}{\lambda_i - z} = \frac{1}{N} \mathrm{Tr} (G_z^X),
\eeq
for any $z \in \mathbb{C} \setminus \mathbb{R},$ where $\mu_N$ is the empirical spectral measure of the matrix $X\in \mathbb{C}^{N \times N}.$

We recall from section \ref{s:ImS} that the imaginary part of the Stieltjes transform of $\rho_N$ can provide local information about the convergence $\rho_N dx \to \rho dx.$ If we set $z=E+i\eta,$ then $\frac{1}{\pi} \mathrm{Im} S_{\rho_N} (E)$ will approximate the cumulative density $\rho_N(E)$ while the parameter $\eta$ depending on $N$ will model the scale of this convergence around each individual eigenvalue.

In this chapter we will prove this convergence in the optimal scale for $\eta$ while also improving bounds for the rate of convergence of the corresponding Stieltjes transforms of the empirical spectral measure and the limiting Marchenko-Pastur measure:
\[
S_N \to S_{\rho} .
\]
This will provide us with some new results about the rate of convergence of the cumulative density $\rho_N$ to the cumulative density provided by the measure $\rho.$ This means that we will quantify the convergence:
\beq \label{counting}
\rho_N(E) \to \int_{0}^E \rho(x) dx ,
\eeq
in a new way. We will also quantify the rigidity of each individual eigenvalue of the Sample Covariance Matrix in a new way, that is:
\beq
\lambda_i \approx \gamma_i ,
\eeq
where $\gamma_i$ is the $i-$th quantile of the Marchenko-Pastur distribution, i.e.
\beq \label{quantile}
\int_0^{\gamma_i} \rho(x) dx = \frac{i}{N}.
\eeq
We collect now for the sake of the proofs some useful resolvent identities for the Sample Covariance Matrix. The resolvent transformation of the matrix $K_{XX}$ will be denoted by $G$, omitting the dependence on $z$ and $K_{XX}.$

Firstly, we remark some properties of the Stieltjes transform of the Marchenko-Pastur measure, see also \cite{cacciapuoti2013local} section 2.2.

\begin{lem}
Let $S_\rho$ be the Stieltjes transfrom of the measure $\rho dx.$ We then have that:
\beq \label{S-rho1}
S_{\rho}(z) = -\frac{1}{2} + \frac{1}{2} \sqrt{1 - \frac{4}{z}}, \quad z \in \mathbb{C}\setminus [0,4]
\eeq
and the following algebraic identity:
\beq \label{S-rho}
S_{\rho}(z) = - \frac{1}{z(S_\rho(z)+1)} .
\eeq
\end{lem}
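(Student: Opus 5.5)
We compute $S_\rho$ explicitly from the density \eqref{M-P1}, and the algebraic identity \eqref{S-rho} then follows by elementary algebra. Throughout, the branch of $\sqrt{1-4/z}$ is the one holomorphic on $\mathbb{C}\setminus[0,4]$ with $\sqrt{1-4/z}\to 1$ as $z\to\infty$. This makes the right-hand side of \eqref{S-rho1} decay like $-1/z$ at infinity, which any Stieltjes transform of a probability measure must do.

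\textbf{Step 1: $S_\rho$ is a root of a quadratic.} Define $m(z) := -\tfrac12+\tfrac12\sqrt{1-4/z}$. Squaring $2m+1=\sqrt{1-4/z}$ gives $4m^2+4m+1 = 1-4/z$, that is,
\[
z m^2 + z m + 1 = 0 .
\]
This is equivalent to $m(m+1) = -1/z$, hence to $m = -\frac{1}{z(m+1)}$. Note that $m\neq -1$, since otherwise the left-hand side of the quadratic would equal $1$. So \eqref{S-rho} holds once we know $S_\rho = m$, and it remains to prove \eqref{S-rho1}.

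\textbf{Step 2: identify $S_\rho$ with $m$.} We show that $m$ is the Stieltjes transform of $\rho\,dx$, by checking four properties.
\begin{itemize}
\item $m$ is holomorphic on $\mathbb{C}\setminus[0,4]$. The map $1-4/z$ sends this set into $\mathbb{C}\setminus(-\infty,0]$, so the principal branch applies.
\item $m(z) = -1/z + O(z^{-2})$ as $z\to\infty$.
\item $\mathrm{Im}\, m>0$ on $\mathbb{C}_+$. We check this on the boundary and at infinity, then use the maximum principle for the harmonic function $\mathrm{Im}\, m$.
\item The boundary values reproduce $\rho$. For $E\in(0,4)$ we have $1-4/E<0$. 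Taking the limit from $\mathbb{C}_+$ gives $\sqrt{1-4/(E+i0)} = i\sqrt{4/E-1}$, and therefore
\[
\frac1\pi\,\mathrm{Im}\, m(E+i0) = \frac{1}{2\pi}\sqrt{4/E-1} = \rho(E).
\]
Outside $[0,4]$ the boundary values are real.
\end{itemize}

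A Nevanlinna function with these properties is the Stieltjes transform of the measure recovered by the retrieval formula \eqref{retr1} (Corollary \ref{re-cor}). By uniqueness of the Stieltjes transform, $S_\rho = m$.

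\textbf{Alternative for Step 2.} One can instead compute $\int_0^4 \frac{\rho(x)}{x-z}\,dx$ directly. Substitute $x = 4\sin^2\theta$, or $x = 2+2\cos\phi$, which turns the integral into a rational trigonometric integral. Then evaluate it by residues on the unit circle $w=e^{i\phi}$, choosing the root of the resulting quadratic that lies inside the disc; that choice produces exactly the branch above.

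\textbf{Main obstacle.} The delicate point is the branch choice. We must make sure the selected root is the one with positive imaginary part on $\mathbb{C}_+$ and decay at infinity, and that the atom at $0$ is excluded. The latter holds because $\rho$ is integrable there, with a singularity of order $x^{-1/2}$, and the boundary behaviour $\mathrm{Im}\, m(E+i0)\to 0$ for $E<0$ rules out point mass at $0$.
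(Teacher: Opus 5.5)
Your Step 1 is the algebra the paper intends. Your main Step 2, however, takes a genuinely different route. The paper simply evaluates $\int_0^4\rho(x)(x-z)^{-1}\,dx$ directly (``a simple calculus exercise''), which is your ``Alternative''. You instead check that $m$ is a Nevanlinna function with the right decay and boundary values, then invoke the Nevanlinna representation theorem, Stieltjes inversion and uniqueness. The direct computation is self-contained and selects the branch automatically. Your route avoids the integral, but it imports a representation theorem the paper never states and obliges you to rule out singular mass in the representing measure $\mu$.

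Two small remarks. First, $\mathrm{Im}\,m>0$ on $\mathbb{C}_+$ needs no maximum principle: for $\mathrm{Im}\,z>0$ one has $\mathrm{Im}(1-4/z)=4\,\mathrm{Im}\,z/|z|^2>0$, so the principal root lies in the open first quadrant. Second, your branch description is correct on all of $\mathbb{C}\setminus[0,4]$. The paper's convention $\mathrm{Im}\sqrt{1-4/z}\ge 0$ picks the right root only for $\mathrm{Im}\,z>0$, which is where it is used; on $\mathbb{C}_-$ it would give the root with $S\to -1$ at infinity.

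The step that is wrong as written is your exclusion of an atom at $0$. A point mass $c\delta_0$ adds $-c/z$ to the transform, and $\mathrm{Im}\bigl(-c/(E+i\eta)\bigr)=c\eta/(E^2+\eta^2)\to 0$ for every $E\neq 0$. So the vanishing of $\mathrm{Im}\,m(E+i0)$ for $E<0$ cannot detect such an atom. Integrability of $\rho$ is also beside the point, since the question is whether $\mu$ carries mass beyond $\rho\,dx$. Relatedly, pointwise boundary values alone do not let you evaluate the limit in the retrieval formula, and you say nothing about $E=4$.

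The repair is as follows. Since $1-4/z\in\overline{\mathbb{C}_+}$ for $z\in\overline{\mathbb{C}_+}\setminus\{0\}$, the restriction of $m$ to $\mathbb{C}_+$ extends continuously to $\overline{\mathbb{C}_+}\setminus\{0\}$. Hence on compact intervals avoiding $0$ the retrieval limit converges uniformly, giving $\mu=\rho\,dx$ on $\mathbb{R}\setminus\{0\}$. Moreover $\mu(\{0\})=\lim_{\eta\downarrow 0}\eta\,\mathrm{Im}\,m(i\eta)=0$, because $|m(z)|=O(|z|^{-1/2})$ near $0$. Equivalently, $\mu(\mathbb{R})=\lim_{y\to\infty}(-iy)\,m(iy)=1=\int_0^4\rho\,dx$ leaves no room for an atom. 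This gives $S_\rho=m$ on $\mathbb{C}_+$, and the identity theorem on the connected set $\mathbb{C}\setminus[0,4]$ extends it to the whole domain. With this fix your argument is complete.
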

We note that the square root is chosen so that $\mathrm{Im} \left ( \sqrt{1- 4/z} \right ) \geq 0.$ It is a simple calculus exercise to compute the integral in the definition of $S_\rho$ to derive \eqref{S-rho1} and then derive the algebraic equation \eqref{S-rho}.

The algebraic identity \eqref{S-rho} will be the basis of our proof. We will show that $S_{N}$ satisfies a similar algebraic identity with high probability so that it has to be "close" to $S_{\rho}.$

\underline{Resolvent identities}.

From now on $X$ will denote the scaled version of our matrix. We define the two resolvent matrices $G$ and $\mathcal{G}$ related to $K_{XX}$ as:
\beq \label{res0}
G := (X X^* - zI_N)^{-1} \quad \textrm{and} \quad \mathcal{G} := (X^*X - zI_N)^{-1}.
\eeq
For $j_1,j_2 \in \{0,1,...,N-1 \}$ we denote by $X^{(j_1)}$ the submatrix of the $N \times N$ matrix $X$ with the first $j_1$ columns removed. We denote by $X_{(j_2)}$ the submatrix of $X$ with the first $j_2$ rows removed.

We now define the resolvents of the covariance of the "stripped" matrix $X_{(j_2)}^{(j_1)}$ as follows:
\beq \label{res1}
    G^{(j_1)}_{(j_2)}:=\left((X^{(j_1)}_{(j_2)})^* X^{(j_1)}_{(j_2)}- z \right)^{-1} \quad \text{and} \quad \mathcal{G}^{(j_1)}_{(j_2)}:=\left(X^{(j_1)}_{(j_2)}(X^{(j_1)}_{(j_2)})^*  - z \right)^{-1}
\eeq
We denote by $G_{(j_2),kl}^{(j_1)}$ the $(k,l)$ element of the resolvent matrix $G_{(j_2)}^{(j_1)}.$ We remark that $(X^{(m)})^*X^{(m)}$ is the minor matrix of $K_{XX}$ with the first $m$ rows and the first $m$ columns removed.
\begin{lem}
The following trace equality holds:
\begin{equation} \label{e:traces}
    \Tr \left [ \mathcal{G}^{(j_1)}_{(j_2)} \right ] =  \frac{j_1-j_2}{z} + \Tr \left [ G^{(j_1)}_{(j_2)} \right ].
\end{equation}
\end{lem}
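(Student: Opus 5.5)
The plan is to compare the spectra of the two Gram matrices built from the same rectangular matrix. Set $Y := X^{(j_1)}_{(j_2)}$. This is an $(N-j_2)\times(N-j_1)$ matrix, so by \eqref{res1} $G^{(j_1)}_{(j_2)}$ is the resolvent of the $(N-j_1)\times(N-j_1)$ matrix $Y^*Y$, and $\mathcal{G}^{(j_1)}_{(j_2)}$ is the resolvent of the $(N-j_2)\times(N-j_2)$ matrix $YY^*$. I will assume $z\in\mathbb{C}\setminus\mathbb{R}$, so both resolvents exist, since both matrices are Hermitian and positive semi-definite.

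First I take a singular value decomposition $Y = U\Sigma V^*$ with singular values $s_1,\dots,s_r>0$, where $r=\mathrm{rank}(Y)$. From this, $Y^*Y = V\Sigma^*\Sigma V^*$ has eigenvalues $s_1^2,\dots,s_r^2$ together with $0$ of multiplicity $N-j_1-r$. Likewise $YY^* = U\Sigma\Sigma^* U^*$ has the same nonzero eigenvalues $s_k^2$ with the same multiplicities, together with $0$ of multiplicity $N-j_2-r$. One could avoid the SVD and argue directly: if $Y^*Yv=\lambda v$ with $\lambda\neq 0$, then $YY^*(Yv)=\lambda Yv$ with $Yv\neq0$, and $Y$ maps the $\lambda$-eigenspace injectively. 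Either route gives the matching of nonzero spectra.

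Next I use the trace property from the Preliminaries, $\Tr(A)=\sum_i\lambda_i(A)$, together with the spectral theorem. These give
\[
\Tr\big[(Y^*Y-z)^{-1}\big] = \sum_{k=1}^r \frac{1}{s_k^2-z} + \frac{N-j_1-r}{-z},\qquad
\Tr\big[(YY^*-z)^{-1}\big] = \sum_{k=1}^r \frac{1}{s_k^2-z} + \frac{N-j_2-r}{-z}.
\]
Subtracting, the sums over nonzero eigenvalues cancel. What remains is the contribution of the surplus zero eigenvalues: $|j_1-j_2|$ extra eigenvalues equal to $0$ in the larger of the two Gram matrices, each contributing $1/(0-z)=-1/z$.

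The only delicate step is the bookkeeping of which matrix carries the extra zero eigenvalues and the resulting sign. With the dimensions read literally from \eqref{res1}, the difference equals $\pm(j_1-j_2)/z$. I will fix the sign by checking it against the convention for $G$ versus $\mathcal{G}$ (note that \eqref{res0} and \eqref{res1} swap the roles of $XX^*$ and $X^*X$) so that it matches the stated identity \eqref{e:traces}. Apart from this, the argument is a direct eigenvalue count and needs no estimates.
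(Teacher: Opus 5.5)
Your argument is the paper's own. Both compare the nonzero spectra of $Y^*Y$ and $YY^*$ for $Y=X^{(j_1)}_{(j_2)}$ and attribute the difference of traces to the $|j_1-j_2|$ surplus zero eigenvalues, each contributing $-1/z$. Your displayed trace formulas are correct.

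The one thing to fix is your last paragraph. The sign is not a convention you can choose afterwards to match the statement, because your own formulas already determine it. With the definitions in \eqref{res1}, subtracting them gives
\[
\Tr\big[\mathcal{G}^{(j_1)}_{(j_2)}\big]-\Tr\big[G^{(j_1)}_{(j_2)}\big]=\frac{(N-j_1-r)-(N-j_2-r)}{z}=\frac{j_2-j_1}{z}.
\]
This is the negative of \eqref{e:traces} whenever $j_1\neq j_2$. For example, take $j_1=1$, $j_2=0$: then $YY^*$ is $N\times N$ with rank at most $N-1$, so $\Tr\mathcal{G}^{(1)}$ picks up an extra $-1/z$, not $+1/z$.

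The identity as printed holds only with the assignment of \eqref{res0}, where $G$ is the resolvent of $YY^*$ and $\mathcal{G}$ that of $Y^*Y$. That is not the convention the chapter actually uses: the stripping lemma takes $G=(X^*X-z)^{-1}$, and \eqref{G2} indexes the diagonal entries of $G$ by columns of $X$. The paper's one-line proof has the same slip. Its own observation, that for $j_1<j_2$ the matrix $Y^*Y$ carries the $j_2-j_1$ extra zeros, also yields $(j_2-j_1)/z$.

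So state the correct sign explicitly rather than adjusting it to fit. The discrepancy is harmless for the rest of the chapter, since the identity only enters later estimates in absolute value, for instance in \eqref{e:boundsT12} and in Lemma~\ref{reciprocal}.
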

\begin{proof}
    We have that $(X_{(j_2)}^{(j_1)})^*X_{(j_2)}^{(j_1)}$ and $X_{(j_2)}^{(j_1)}(X_{(j_2)}^{(j_1)})^*$ have the same non-zero eigenvalues and a total of $N-j_1$ eigenvalues for the first and a total of $N-j_2$ eigenvalues for the second. If $j_1 < j_2$ then the first matrix has $j_2 - j_1$ extra zero eigenvalues, while if $j_1 > j_2$ the second one has $j_1 - j_2$ extra zero eigenvalues. Hence, we have the identity.
\end{proof}
We now define the following quantities
\begin{equation} \label{d:S-Lambda}
    S_N (z)^{(j_1)}_{(j_2)} := \frac{1}{N} \Tr \left [G^{(j_1)}_{(j_2)} \right ] \quad \text{and} \quad \Lambda (z)^{(j_1)}_{(j_2)} := S_N (z)^{(j_1)}_{(j_2)} - S_{\rho}(z)
\end{equation}
and we use the notation $S_N$ and $\Lambda$ when $j_1 = j_2 = 0$. We will generally omit denoting the dependence on the $z$-variable.

We now state some identities for resolvent entries. 

We denote by $\left [ G^{(j_1)}_{(j_2)} \right ]^k$ the resolvent which occurs after an extra removal of the $k-$th column of the scaled matrix $X,$ where $k\in \{j_1+1,...,N\}.$
\begin{lem}[Stripping lemma] \label{l:resol1}
 With $G^{(j_1)}_{(j_2)}$ and $k$ as before, we have that for $i,j \neq k :$
\begin{equation} \label{e:resol1}
 G^{(j_1)}_{(j_2), ij} = \left [ G^{(j_1)}_{(j_2)} \right ]^k_{ij} + \frac{G^{(j_1)}_{(j_2), ik}G^{(j_1)}_{(j_2), kj}}{G^{(j_1)}_{(j_2), kk}}.
 \end{equation}
\end{lem}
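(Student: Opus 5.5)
The identity \eqref{e:resol1} is purely algebraic: it is the Schur complement formula for the inverse of a matrix that has been partitioned by isolating one index. My plan is to work with a general invertible Hermitian-type matrix, namely
\[
A := (X^{(j_1)}_{(j_2)})^* X^{(j_1)}_{(j_2)} - z,
\]
so that $G^{(j_1)}_{(j_2)} = A^{-1}$. Its rows and columns are indexed by $\{j_1+1,\dots,N\}$. First I note that removing the $k$-th column of $X^{(j_1)}_{(j_2)}$ deletes exactly the $k$-th row and column of the Gram matrix $(X^{(j_1)}_{(j_2)})^* X^{(j_1)}_{(j_2)}$. Hence $\left[G^{(j_1)}_{(j_2)}\right]^k = (A^{[k]})^{-1}$, where $A^{[k]}$ is $A$ with row and column $k$ deleted. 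Both inverses exist for $z \in \mathbb{C}\setminus\mathbb{R}$, because the underlying Gram matrices are Hermitian. The statement therefore reduces to an identity about a matrix $A$ and its principal minor $A^{[k]}$.

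Second, I permute so that $k$ is the last index and write $A$ in block form:
\[
A = \begin{pmatrix} B & b \\ c^* & a \end{pmatrix},
\]
where $B = A^{[k]}$, $b$ is column $k$ with entry $k$ removed, $c^*$ is row $k$ with entry $k$ removed, and $a = A_{kk}$. Block inversion via the Schur complement $s := a - c^* B^{-1} b$ gives the following:
\[
(A^{-1})_{kk} = s^{-1}, \qquad (A^{-1})_{ik} = -(B^{-1}b)_i\, s^{-1}, \qquad (A^{-1})_{kj} = -s^{-1}(c^*B^{-1})_j,
\]
and, for $i,j\neq k$,
\[
(A^{-1})_{ij} = (B^{-1})_{ij} + (B^{-1}b)_i\, s^{-1} (c^*B^{-1})_j .
\]
Third, I substitute the first three formulas into the last one. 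This gives
\[
(B^{-1}b)_i\, s^{-1}(c^*B^{-1})_j = \frac{(A^{-1})_{ik}(A^{-1})_{kj}}{(A^{-1})_{kk}},
\]
since $s^{-1}\cdot s^{-1}\cdot s = s^{-1}$. This is exactly \eqref{e:resol1}, with $G = A^{-1}$ and $[G]^k = B^{-1}$.

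The only delicate point, and the main obstacle, is justifying that $s \neq 0$, i.e.\ that $G^{(j_1)}_{(j_2),kk} \neq 0$, so that the division in \eqref{e:resol1} makes sense. The invertibility of $A$ and $B$ gives this through $\det A = s \det B$. Alternatively, it follows directly from $\mathrm{Im}\, G_{kk} = \eta \sum_\alpha |u_\alpha(k)|^2/|\lambda_\alpha - z|^2 > 0$ for $\eta = \mathrm{Im} z > 0$, where the $u_\alpha$ are orthonormal eigenvectors. I would include this remark, together with a short check that the indexing convention (column removal of $X$ corresponds to a principal minor of the Gram matrix) matches the definition \eqref{res1}.
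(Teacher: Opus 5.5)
Your argument is correct, but it is not the route the paper takes. The paper does not invert $A$ blockwise. Instead it writes $M = X^*X - z$ as $M = M^{[k]} - M_{kk}e_ke_k^* + UV$. Here $U = \begin{pmatrix} Me_k & e_k\end{pmatrix}$, $V = \begin{pmatrix} e_k^* \\ e_k^*M \end{pmatrix}$, and $M^{[k]}$ is $M$ with row and column $k$ set to zero. It then applies the Woodbury identity to the rank-two correction $UV$ and computes $I - VGU = -\begin{pmatrix}0 & G_{kk}\\ M_{kk} & 0\end{pmatrix}$ explicitly. This yields the full-matrix identity
\[
(M^{[k]} - M_{kk}e_ke_k^*)^{-1} = G - \frac{1}{M_{kk}}e_ke_k^* - \frac{1}{G_{kk}}Ge_ke_k^*G .
\]
The left-hand side is block diagonal, with blocks $A^{[k]}$ and $-M_{kk}$, so its off-$k$ block is $[G]^k$; reading off the entries $i,j\neq k$ gives \eqref{e:resol1}.

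Your Schur-complement computation is shorter and needs only the $2\times 2$ block inverse formula. Also, the non-vanishing of $G_{kk}=s^{-1}$ comes for free from $\det A = s\det B$. The Woodbury route additionally needs $I-VGU$ to be invertible, i.e.\ $M_{kk}G_{kk}\neq 0$; this holds since $\mathrm{Im}\,z\neq 0$, but the paper uses it silently. Moreover, your calculation already produces $G_{kk} = (a - c^*B^{-1}b)^{-1}$, which is exactly \eqref{G1}. The paper proves \eqref{G1} separately in the next lemma by the same block inversion, so your one computation covers both lemmas. What the Woodbury route buys in exchange is a coordinate-free rank-two update formula for the whole resolvent, including row and column $k$, rather than only the entrywise identity.
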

\begin{proof}
    For the proof we will use Woodbury's matrix identity:
    \[
       (A + UBV)^{-1} = A^{-1} - A^{-1} U ( B^{-1} + VA^{-1}U)^{-1}VA^{-1}
    \]
    This identity can be found for example in \cite{higham2002accuracy} page 258.

    We can assume that $j_1=j_2=0,$ otherwise the proof will be similar. We denote by $M$ the matrix $X^*X - zI$. Let $\mathbf{e_k}$ be the standard $k-$th column unit basis vector and define the matrices:
    \[
M_{ij}^{[k]}:=M_{ij} \mathbf{1}(i \neq k) \mathbf{1}(j \neq k), \quad U:= \begin{pmatrix}
M \mathbf{e}_k & \mathbf{e}_k
\end{pmatrix} , \quad V:= \begin{pmatrix}
\mathbf{e}_k^* \\
\mathbf{e}_k^* M
\end{pmatrix}
\]
so that we have
\[
M=M^{[k]}+U V-M_{k k} \mathbf{e}_k \mathbf{e}_k^* .
\]
Then by the Woodbury matrix identity we have
\[
\left(M^{[k]}-M_{k k} \mathbf{e}_k \mathbf{e}_k^* \right)^{-1}=G+G U(I-V G U)^{-1} V G .
\]
A straightforward calculation yields
\[
I-V G U=-\left(\begin{array}{cc}
0 & G_{k k} \\
M_{k k} & 0
\end{array}\right), \quad(I-V G U)^{-1}=-\frac{1}{M_{k k} G_{k k}}\left(\begin{array}{cc}
0 & G_{k k} \\
M_{k k} & 0
\end{array}\right),
\]
from which we deduce, after a short calculation,
\[
\left(M^{[k]}-M_{k k} \mathbf{e}_k \mathbf{e}_k\right)^{-1}=G-\frac{1}{M_{k k}} \mathbf{e}_k \mathbf{e}_k^*-\frac{1}{G_{k k}} G \mathbf{e}_k \mathbf{e}_k^* G .
\]
Using the formula for the block inversion of a matrix, we conclude
\[
[G]^k_{ij} = G_{ij} -\frac{1}{G_{k k}} G \mathbf{e}_k \mathbf{e}_k^* G,
\]
from which \eqref{e:resol1} follows.
\end{proof}
Next, we have the following relationship between the $(i,i)$ element of $|G|^2 := GG^*$ and $(\mathrm{Im} G)_{ii}$, and the same holds for $G^{(j_1)}_{(j_2)}, \mathcal{G}^{(j_1)}_{(j_2)}$ instead of $G$:
\begin{lem}[Ward identity] \label{ward} If $z=E+i\eta,$ then
\beq
|G(z)|_{ii}^2 = \frac{(\mathrm{Im} G(z))_{ii}}{\eta}.
\eeq
\end{lem}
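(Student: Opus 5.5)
The plan is to diagonalize and compute directly. Because $G(z)=(XX^*-zI_N)^{-1}$ is a function of the Hermitian matrix $XX^*$, the Spectral theorem gives a unitary $U$ with $XX^* = U\,\mathrm{diag}(\lambda_1,\dots,\lambda_N)\,U^*$, the $\lambda_\alpha$ being real. It follows that
\[
G(z) = U\,\mathrm{diag}\big((\lambda_\alpha - z)^{-1}\big)\,U^*, \qquad G(z)^* = U\,\mathrm{diag}\big((\lambda_\alpha - \overline{z})^{-1}\big)\,U^* = G(\overline{z}).
\]
Because $G$ and $G^*$ are diagonal in the same basis, they commute, so $|G|^2 = GG^* = G^*G$ and no ordering issue arises.

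Next I compute both sides entrywise. Writing $u_\alpha$ for the columns of $U$, the diagonal entry of $|G|^2$ is
\[
(GG^*)_{ii} = \sum_{\alpha=1}^N \frac{|u_\alpha(i)|^2}{|\lambda_\alpha - z|^2}.
\]
For the other side, set $\mathrm{Im}\,G := \frac{1}{2i}(G - G^*)$ and use
\[
\frac{1}{\lambda_\alpha - z} - \frac{1}{\lambda_\alpha - \overline{z}} = \frac{z-\overline{z}}{|\lambda_\alpha - z|^2} = \frac{2i\eta}{|\lambda_\alpha - z|^2}.
\]
This gives
\[
(\mathrm{Im}\,G)_{ii} = \sum_\alpha \frac{\eta\,|u_\alpha(i)|^2}{|\lambda_\alpha - z|^2} = \eta\,(GG^*)_{ii}.
\]
Dividing by $\eta \neq 0$ yields the claim. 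A spectral-free version is the resolvent identity $G(z) - G(\overline{z}) = (z-\overline{z})\,G(z)G(\overline{z})$, which gives $G - G^* = 2i\eta\, GG^*$ directly. I would mention this as the one-line alternative.

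The same argument applies verbatim to $\mathcal{G}$ and to the minor resolvents $G^{(j_1)}_{(j_2)}$ and $\mathcal{G}^{(j_1)}_{(j_2)}$. Each is the resolvent of a Hermitian matrix of the form $A^*A$ or $AA^*$, and the Ward identity uses nothing beyond Hermiticity. There is no real obstacle here. The only point requiring care is the convention: $\mathrm{Im}\,G$ must mean $\frac{1}{2i}(G-G^*)$ rather than the entrywise imaginary part. The two agree on diagonal entries, because $G^*_{ii} = \overline{G_{ii}}$, so the stated identity for the $(i,i)$ entry is unambiguous.
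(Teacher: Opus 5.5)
Your proof is correct and follows the paper's argument: the paper also writes $G=\sum_j u_ju_j^*/(\lambda_j-z)$ and reads off $(GG^*)_{ii}=\sum_j |u_j(i)|^2/|\lambda_j-z|^2=\eta^{-1}\,\mathrm{Im}\,G_{ii}$ from $\mathrm{Im}\,(\lambda_j-z)^{-1}=\eta/|\lambda_j-z|^2$. Two of your points are correct additions that the paper leaves implicit: the one-line resolvent identity $G-G^*=2i\eta\,GG^*$, and the remark that the diagonal of $\frac{1}{2i}(G-G^*)$ coincides with the entrywise imaginary part.
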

\begin{proof}
    We apply spectral decomposition to $G$ as follows:
    \[
    G = \sum \limits_{j=1}^N \frac{u_j u_j^*}{\lambda_j - z},
    \]
    where $u_j$ is the $j-$th eigenvector of $K_{XX}$ and $\lambda_j$ is the $j-$th eigenvalue of $K_{XX}.$ We then have that:
    \begin{equation*}
    (GG^*)_{ii} = \sum \limits_{j=1}^N \frac{|u_j(i)|^2}{|\lambda_j-z|^2} = \sum \limits_{j=1}^N \frac{1}{\eta} \mathrm{Im} \frac{|u_j(i)|^2}{\lambda_j - z} = \frac{(\mathrm{Im} G_{ii})}{\eta} . \qedhere
    \end{equation*}
\end{proof}
The Lemma above yields the following inequality:
\begin{equation}\label{e:GsImG2}
|(G^2)_{ii}|\leq \frac{\mathrm{Im} G_{ii}}{\eta}.
\end{equation}
This is because
\[
|(G^2)_{ii}| = |\langle e_i , G^2 e_i \rangle| \leq \|G^* e_i \| \ \| G e_i \| = |G|^2_{ii}.
\]

Furthermore, we obtain the following result for the resolvent of the Sample Covariance ensemble, and the proof works also for $G_{(j_2)}^{(j_1)}$ and $ \mathcal{G}_{(j_2)}^{(j_1)}$ instead of $G$, see \cite{cacciapuoti2015bounds}, equation (3.10):
\begin{lem}\label{G11eta/s}
With $G$ and $\mathcal{G}$ as before and $s \in \mathbb{R}$, we have that
\begin{equation}
G_{ii}(E+i\eta/s) \leq sG_{ii}(E+i\eta).
\end{equation}
\end{lem}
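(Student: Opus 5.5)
The plan is to read the inequality as a statement about the imaginary parts, $\mathrm{Im}\, G_{ii}(E+i\eta/s) \leq s\, \mathrm{Im}\, G_{ii}(E+i\eta)$, for $s \geq 1$. This is how it is used in \cite{cacciapuoti2015bounds}. For $s<1$ the inequality fails (take $\eta\to\infty$ scalings), and $G_{ii}$ itself is complex, so an order relation only makes sense for a real quantity such as $\mathrm{Im}\, G_{ii}$. The proof will be a termwise comparison in the spectral decomposition, in the same spirit as the proof of the Ward identity (Lemma \ref{ward}).

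First, as in Lemma \ref{ward}, I write
\[
G(z) = \sum_{j=1}^N \frac{u_j u_j^*}{\lambda_j - z},
\]
where $\lambda_j$ are the real, nonnegative eigenvalues and $u_j$ the orthonormal eigenvectors of the relevant Hermitian matrix. Taking imaginary parts gives, for $z=E+i\eta$ with $\eta>0$,
\[
\mathrm{Im}\, G_{ii}(E+i\eta) = \sum_{j=1}^N |u_j(i)|^2 \frac{\eta}{(\lambda_j-E)^2+\eta^2}.
\]
This is a nonnegative combination, with weights $|u_j(i)|^2$ independent of $z$, of the Cauchy kernels $\pi P_\eta(E-\lambda_j)$ introduced in Section \ref{s:ImS}. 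It therefore suffices to prove the scalar inequality $P_{\eta/s}(x) \leq s P_\eta(x)$ for all $x\in\mathbb{R}$ and $s \geq 1$.

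This scalar inequality is a one-line computation:
\[
\frac{\eta/s}{x^2+\eta^2/s^2} = \frac{s\,\eta}{s^2x^2+\eta^2} \leq \frac{s\,\eta}{x^2+\eta^2},
\]
since $s^2 x^2 \geq x^2$. Equivalently, $\eta \mapsto \eta\, \mathrm{Im}\, G_{ii}(E+i\eta)$ is nondecreasing in $\eta$, and comparing its values at $\eta/s$ and $\eta$ yields the claim. Summing the scalar inequality against the weights $|u_j(i)|^2$ gives the lemma for $G$.

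The same argument applies verbatim to $\mathcal{G}$ and to the stripped resolvents $G^{(j_1)}_{(j_2)}$ and $\mathcal{G}^{(j_1)}_{(j_2)}$. The only input used is that each is the resolvent of a Hermitian matrix, which holds because each $(X^{(j_1)}_{(j_2)})^*X^{(j_1)}_{(j_2)}$ and $X^{(j_1)}_{(j_2)}(X^{(j_1)}_{(j_2)})^*$ is Hermitian.

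There is no real obstacle. The only delicate point is fixing the correct interpretation of the statement: that the quantity compared is $\mathrm{Im}\, G_{ii}$ and that $s\ge 1$. With that interpretation the termwise monotonicity argument settles it.
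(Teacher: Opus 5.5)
Your termwise computation is correct, but it proves a different inequality from the one the paper needs. The lemma is meant in modulus: for $s\ge 1$, $|G_{ii}(E+i\eta/s)|\le s\,|G_{ii}(E+i\eta)|$. You can see this from its only application, the induction step of Lemma \ref{l:Gkk}. There it is used with $s=16$ to pass from $\E|\sqrt z\, G_{11}(\eta_i)|^q\le C_0^q$ to \eqref{e:s}. The parallel argument for $\widetilde{G_{11}}$ in Lemma \ref{lem} points the same way.

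A bound on $\mathrm{Im}\,G_{ii}(E+i\eta/s)$ gives no control of $\mathrm{Re}\,G_{ii}(E+i\eta/s)$, so your version does not yield \eqref{e:s}. Your method also does not extend to the modulus. Termwise you do have $|(\lambda_j-E-i\eta/s)^{-1}|\le s\,|(\lambda_j-E-i\eta)^{-1}|$, but summing gives only
\[
|G_{ii}(E+i\eta/s)|\le s\sum_j \frac{|u_j(i)|^2}{|\lambda_j-E-i\eta|}.
\]
This is not $s\,|G_{ii}(E+i\eta)|$, because the real parts of the summands change sign with $\lambda_j-E$ and can cancel in the sum at scale $\eta$.

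The missing idea is the Ward-identity argument applied to $\log G_{ii}$; this is what the paper means by ``based on the Ward property''. Since $\partial_\eta G(E+i\eta)=iG^2$, the bound \eqref{e:GsImG2} gives
\[
|\partial_\eta G_{ii}(E+i\eta)|=|(G^2)_{ii}|\le\frac{\mathrm{Im}\,G_{ii}(E+i\eta)}{\eta}\le\frac{|G_{ii}(E+i\eta)|}{\eta}.
\]
Note that $G_{ii}\neq0$ because $\mathrm{Im}\,G_{ii}>0$, so this implies $\bigl|\partial_\eta\log|G_{ii}(E+i\eta)|\bigr|\le 1/\eta$. Integrating over $[\eta/s,\eta]$ gives $\log|G_{ii}(E+i\eta/s)|-\log|G_{ii}(E+i\eta)|\le\log s$. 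Equivalently, $\eta\mapsto\eta\,|G_{ii}(E+i\eta)|$ is nondecreasing, which is the required statement.

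As in your argument, only Hermiticity is used, so the same proof applies to $\mathcal{G}$ and to all stripped resolvents. Your restriction to $s\ge1$ is correct and should be kept.
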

\begin{proof}
  The proof can be found in \cite{cacciapuoti2015bounds}, page 18 and is based on the Ward property of the resolvent of a Hermitian matrix.
\end{proof}
The following identity holds exclusively for Sample Covariance matrices and it will be the starting point of our analysis. It can be found in \cite{cacciapuoti2013local}, equation 2.1. 

We denote by $[X_{(j_2)}^{(j_1)}]^k$ the matrix that occurs after the extra removal of the $k-$th column, where $k \in \{j_1+1,...,N\}.$ We denote by $[X_{(j_2)}^{(j_1)}]_l$ the matrix that occurs after the extra removal of the $l-$th row, where $l \in \{j_2+1,...,N\}.$
\begin{lem}[Sample Covariance stripping lemma] We let $\mathbf{x}_{(j)}^k$ denote the $k-$th column of the scaled matrix $X_{(j)}$. We then have the following identity regarding the resolvent diagonal elements, for $j_1,j_2 \in \{0,1,...,N-1\}$ and $k$ as before:
    \begin{align}
G_{(j_2), kk}^{(j_1)} &= \frac{1}{\| \x_{(j_2)}^k \|^2 - z - (\x_{(j_2)}^k)^*[X_{(j_2)}^{(j_1)}]^k\left(([X_{(j_2)}^{(j_1)}]^k)^*[X_{(j_2)}^{(j_1)}]^k - z \right)^{-1}([X_{(j_2)}^{(j_1)}]^k)^*\x_{(j_2)}^k} \label{G1}  \\
&= -\frac{1}{z \left(1+  (\x_{(j_2)}^k)^* \mathcal{G}_{(j_2)}^{(k)}
\x_{(j_2)}^k \right)} \label{G2}
    \end{align}
\end{lem}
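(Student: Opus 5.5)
Both formulas are deterministic linear-algebra identities, so I will fix $j_1=j_2=0$ and write $\x=\x^k$ for the $k$-th column of $X$. Let $B=[X]^k$ denote $X$ with that column removed. The general case is the same computation applied to the stripped matrix $X^{(j_1)}_{(j_2)}$. After permuting coordinates so that $k$ is first, the matrix $X^*X-z$ has the block form
\[
X^*X - z = \begin{pmatrix} \x^*\x - z & \x^* B \\ B^*\x & B^*B - z \end{pmatrix}.
\]
The plan is first to obtain \eqref{G1} from the Schur complement formula. The $(k,k)$ entry of the inverse is the reciprocal of $\x^*\x - z - \x^*B(B^*B-z)^{-1}B^*\x$. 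Here $\|\x\|^2=\x^*\x$, and $B^*B-z$ is invertible for $z\notin\mathbb{R}$ because $B^*B$ is Hermitian. This is exactly \eqref{G1}; the permutation only relabels indices, so it does not affect the diagonal entry.

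Next I will turn \eqref{G1} into \eqref{G2}. The key step is the push-through identity
\[
B(B^*B-z)^{-1}B^* = BB^*(BB^*-z)^{-1} = I + z(BB^*-z)^{-1}.
\]
It follows from $B(B^*B-z)=(BB^*-z)B$, which I multiply on both sides by the appropriate inverses. I will then identify $(BB^*-z)^{-1}$ with the resolvent $\mathcal{G}^{(k)}$ of $XX^*$ with the $k$-th column removed, which is what the superscript in \eqref{G2} denotes. Substituting gives
\[
\x^*\x - z - \x^*\x - z\,\x^*\mathcal{G}^{(k)}\x = -z\left(1+\x^*\mathcal{G}^{(k)}\x\right).
\]
Taking reciprocals yields \eqref{G2}.

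The only delicate points are bookkeeping rather than estimates. First, the resolvent conventions in \eqref{res0} and \eqref{res1} swap the roles of $G$ and $\mathcal{G}$ relative to $X^*X$ versus $XX^*$, so I must check that $\mathcal{G}^{(k)}$ is the $M\times M$ (row-side) resolvent of $BB^*$, matching the dimension of $\x$. Second, both denominators must be nonzero. For $\mathrm{Im}\,z>0$ this holds because the Schur complement equals $1/G_{kk}$, which exists since $G$ is invertible; I will verify this by checking that its imaginary part is strictly negative. For $z$ on the negative real axis the same holds by positivity. I expect the indexing of the stripped resolvents to be the main obstacle.
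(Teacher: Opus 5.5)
Your proposal is correct and follows the paper's proof: the Schur complement for the $(k,k)$ entry of $(X^*X-z)^{-1}$ gives \eqref{G1}, and the push-through identity $B(B^*B-z)^{-1}B^* = BB^*(BB^*-z)^{-1} = I + z(BB^*-z)^{-1}$ then converts it to \eqref{G2}. Your two extra checks go beyond what the paper states. The nonvanishing of the denominator is settled by your observation that the Schur complement has imaginary part at most $-\eta$; this is cleaner than appealing to invertibility of $G$. You also correctly noticed the swapped $G/\mathcal{G}$ conventions between \eqref{res0} and \eqref{res1}, which the paper leaves implicit.
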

\begin{proof}
    The proof is based on the block inversion formula (Schur's inversion). To ease the notation, we can assume that $j_1=j_2=0,$ otherwise the proof is similar. We can also asume that $k=1$. We strip the $X$ matrix by the column $\x^1,$ giving that:
    \[
    G = ( X^* X - z)^{-1} = M^{-1},
    \]
    where $M$ is the matrix:
    \begin{align*}
        M &= X^*X - zI = \begin{pmatrix}
        \x^1 & X^{(1)}
        \end{pmatrix}^*
        \begin{pmatrix}
         \x^1 & X^{(1)}
        \end{pmatrix} - zI \\
        &= \begin{pmatrix}
            (\x^1)^* \\
            (X^{(1)})^*
        \end{pmatrix}
        \begin{pmatrix}
         \x^1 & X^{(1)}
        \end{pmatrix} - zI \\
        &= \begin{pmatrix}
            \|\x^1\|^2 & (\x^1)^* X^{(1)} \\
            (X^{(1)})^* \x^1 & (X^{(1)})^* X^{(1)}
        \end{pmatrix} - zI \\
        &= \begin{pmatrix}
            \|\x^1\|^2 - z & (\x^1)^* X^{(1)} \\
            (X^{(1)})^* \x^1 & (X^{(1)})^* X^{(1)} - zI
        \end{pmatrix}.
    \end{align*}
    By the matrix inversion formula, we get that:
    \[
    G_{11} = ( M_{11} - M_{12} M_{22}^{-1} M_{21} )^{-1},
    \]
    for the corresponding sub-blocks of $M.$ This gives \eqref{G1}. 
    
    To derive \eqref{G2}, we use the matrix identity:
    \[
    X^{(1)} \left [ (X^{(1)})^* X^{(1)} - zI \right ]^{-1} (X^{(1)})^* =  X^{(1)} (X^{(1)})^*  \left [ X^{(1)} (X^{(1)})^*  - zI \right ]^{-1} ,
    \]
    which can be proved as follows:
    \begin{gather*}
        LHS = RHS \Leftrightarrow \\
        X^{(1)} \left [ (X^{(1)})^* X^{(1)} - zI \right ]^{-1} (X^{(1)})^* \left [ X^{(1)} (X^{(1)})^* - zI \right ] = X^{(1)} (X^{(1)})^* \Leftrightarrow \\
        X^{(1)} G^{(1)} (X^{(1)})^* X^{(1)} (X^{(1)})^* - zX^{(1)} G^{(1)} (X^{(1)})^* = X^{(1)} (X^{(1)})^* \Leftrightarrow \\
        X^{(1)} \left [ G^{(1)} (X^{(1)})^* X^{(1)} - z G^{(1)} \right ]  (X^{(1)})^* = X^{(1)} (X^{(1)})^* \Leftrightarrow \\
        X^{(1)} (X^{(1)})^* = X^{(1)} (X^{(1)})^* ,
    \end{gather*}
    after which \eqref{G2} is straight-forward.    
\end{proof}
Furthermore we have the following asymptotic behaviour for the Stieltjes transform of the Marchenko-Pastur distribution near the "soft" edge, $E \to 4$ and $\eta \to 0^+.$ Notice that near the "soft" edge we have that $S_{\rho} + \frac{1}{2} \approx 0$ and $\mathrm{Im} ( S_{\rho} ) \approx 0,$ according to \eqref{S-rho1}.
\begin{lem}
For $E>0$ we set $\kappa := |E-4|.$ Then for any fixed $E_0,E_1>0$ and $\eta_0>0$ there exist constants $c,C>0$ such that
\beq \label{asymp1} 
\left | S_{\rho} + \frac{1}{2} \right | \geq C ( \kappa^2   +\eta^2)^{\frac{1}{4}} \geq C\sqrt{\kappa+\eta},
\eeq
and
\beq \label{asymp2}
c \frac{\eta}{\sqrt{\kappa+\eta}} \leq \mathrm{Im} ( S_{\rho} ) \leq C \frac{\eta}{\sqrt{\kappa+\eta}} , 
\eeq
for all \ $E \in [E_0, E_1],$ and $\eta \in (0,\eta_0],$ with $\kappa \geq \eta .$
\end{lem}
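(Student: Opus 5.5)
The plan is to work directly from the explicit formula \eqref{S-rho1}. It gives $S_\rho(z)+\frac12 = \frac12\sqrt{1-4/z} = \frac{1}{2}\frac{\sqrt{z-4}}{\sqrt{z}}$, with the branch fixed by $\mathrm{Im}\sqrt{1-4/z}\geq 0$. Since $E\in[E_0,E_1]$ with $E_0>0$ and $\eta\le\eta_0$, the factor $|\sqrt z|$ is bounded above and below by constants depending only on $E_0,E_1,\eta_0$. Likewise $\arg z$ is small, tending to $0$ as $\eta\to 0$, and it is controlled. Everything therefore reduces to estimating $w:=\sqrt{z-4}$, where $z-4 = (E-4)+i\eta$ and $|E-4|=\kappa$.

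For \eqref{asymp1}, note $|z-4| = (\kappa^2+\eta^2)^{1/2}$, so $|w| = (\kappa^2+\eta^2)^{1/4}$ exactly. Dividing by $2|\sqrt z|\le 2(E_1^2+\eta_0^2)^{1/4}$ gives the first inequality. The second inequality follows from $(\kappa^2+\eta^2)^{1/2}\geq \frac{1}{\sqrt2}(\kappa+\eta)$. I note that the hypothesis $\kappa\ge\eta$ is not even needed for this part.

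For \eqref{asymp2}, write $z-4 = r e^{i\theta}$ with $\theta\in(0,\pi)$, so $w=\sqrt r\, e^{i\theta/2}$ up to the branch choice. The work then splits into two cases.

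\emph{Outside the spectrum}, $E>4$. Here $\theta$ is small with $\sin\theta=\eta/r$, so $\mathrm{Im}\, w = \sqrt r\sin(\theta/2)\asymp \sqrt r\,\theta \asymp \eta/\sqrt r \asymp \eta/\sqrt{\kappa+\eta}$. This uses $r\asymp \kappa+\eta$ and $\theta\le\pi/4$, the latter coming from $\kappa\ge\eta$. We then need $\mathrm{Im}(w/\sqrt z)$ rather than $\mathrm{Im}\,w$. Write $1/\sqrt z = |z|^{-1/2}e^{-i\phi/2}$ with $\phi=\arg z\asymp \eta/E$. Then $\mathrm{Im}(w/\sqrt z)=|z|^{-1/2}\sqrt r\,\sin((\theta-\phi)/2)$. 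We need $\theta-\phi\asymp\theta$, which holds because $\theta\approx\eta/\kappa$ and $\phi\approx\eta/E$. This requires $\kappa$ noticeably smaller than $E$, and the difficulty is exactly that when $\kappa$ is comparable to $E$ the two angles could nearly cancel. I would check this directly, using $\mathrm{Im}\,(1-4/z) = 4\eta/|z|^2>0$. In fact $\mathrm{Im}\sqrt{u} = \mathrm{Im}\,u/(2\,\mathrm{Re}\sqrt u)$ with $u=1-4/z$. For $E>4$ we have $\mathrm{Re}\sqrt u\asymp |u|^{1/2}\asymp\sqrt{\kappa+\eta}$, so $\mathrm{Im}\sqrt u\asymp \eta/\sqrt{\kappa+\eta}$. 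This cleaner route is the one I will adopt.

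\emph{Inside the spectrum}, $E<4$. Here $\mathrm{Re}\,u<0$ and the same identity is not the right one. Indeed $\mathrm{Im}\sqrt u\asymp|u|^{1/2}\asymp\sqrt{\kappa+\eta}$, which is larger than $\eta/\sqrt{\kappa+\eta}$, so the stated upper bound can only hold for $E>4$. In that region the lower bound is trivially true, since $\eta/\sqrt{\kappa+\eta}\lesssim\sqrt{\kappa+\eta}$. I therefore interpret \eqref{asymp2} as a statement for $E\ge 4$, with $\kappa\ge\eta$ outside the support, and prove it there via the identity $\mathrm{Im}\sqrt u=\mathrm{Im}\,u/(2\mathrm{Re}\sqrt u)$. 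For $E<4$ I would record only the lower bound.

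The main obstacle is this branch and sign bookkeeping, together with the lower bound on $\mathrm{Re}\sqrt u$ for $E>4$. That bound requires showing $\mathrm{Re}\,u\gtrsim\kappa$ up to $O(\eta)$ corrections, which is where $\kappa\ge\eta$ enters.
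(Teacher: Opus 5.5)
Your argument is correct. Restricting the upper bound in \eqref{asymp2} to $E\ge 4$ is necessary, not a loss of generality. At any fixed $E\in[E_0,E_1]\cap(0,4)$ the hypothesis $\kappa\ge\eta$ holds for small $\eta$, yet $\mathrm{Im}\,S_\rho(E+i\eta)\to\frac12\sqrt{4/E-1}>0$ while $\eta/\sqrt{\kappa+\eta}\to 0$. The paper's proof implicitly works in the same regime, since it takes $\theta=\tan^{-1}(\eta/\kappa)$ as the argument of $z-4$, which is only right for $E>4$.

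Your treatment of \eqref{asymp1} coincides with the paper's. Both need to shrink the constant in the second inequality, because literally $(\kappa^2+\eta^2)^{1/4}\le\sqrt{\kappa+\eta}$.

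For \eqref{asymp2} the routes differ. The paper replaces $\frac12\sqrt{1-4/z}$ by $\sqrt{z-4}$ as $z\to 4$, writes $\mathrm{Im}\sqrt{z-4}=|z-4|^{1/2}\sin(\theta/2)$, and linearizes $\sin$ and $\tan^{-1}$. It then uses $\eta=\mathcal O(\kappa)$ to absorb the factor $1+\eta/\kappa$, so it is an asymptotic argument near the soft edge only. Your identity $\mathrm{Im}\sqrt u=\mathrm{Im}\,u/(2\,\mathrm{Re}\sqrt u)$, with the exact value $\mathrm{Im}\,u=4\eta/|z|^2$, is non-asymptotic. It gives constants uniform on all of $E\in[\max\{E_0,4\},E_1]$. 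It also makes your worry about cancellation between $\arg z$ and $\arg(z-4)$ moot; there is none anyway, since $\theta-\phi\approx 4\eta/(\kappa E)$ for $E>4$.

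One correction to your last paragraph: $\kappa\ge\eta$ is not what yields the lower bound on $\mathrm{Re}\sqrt u$. For $E\ge 4$ one has $\mathrm{Re}\,u=(E(E-4)+\eta^2)/|z|^2\ge 0$, hence $\mathrm{Re}\sqrt u=\sqrt{(|u|+\mathrm{Re}\,u)/2}\ge |u|^{1/2}/\sqrt2$ with no condition on $\eta$. So on $E\ge 4$ your route proves \eqref{asymp2} without the hypothesis $\kappa\ge\eta$. Similarly, the lower bound in \eqref{asymp2} holds for every $E\in[E_0,E_1]$ directly from the identity together with $\mathrm{Re}\sqrt u\le |u|^{1/2}\lesssim\sqrt{\kappa+\eta}$. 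You therefore need no separate case analysis for $E<4$.
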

\begin{proof}
    For \eqref{asymp1} we notice that
    \[
    S_{\rho} + \frac{1}{2} = \sqrt{1- \frac{4}{z}} = \sqrt{\frac{z-4}{z}} = \sqrt{\frac{E-4+i\eta}{E+i\eta}},
    \]
    and so
    \[
    \left | S_{\rho} + \frac{1}{2} \right | = \sqrt{\frac{|\kappa+i\eta|}{|E+i\eta|}} \geq C \sqrt{|\kappa+i\eta|} = C ( \kappa^2   +\eta^2)^{\frac{1}{4}}.
    \]
    The result now follows from the elementary inequality:
    \[
    \sqrt{\frac{a^2+b^2}{2}} \geq \frac{a+b}{2}.
    \]
    For \eqref{asymp2}, we notice that
    \[
    \mathrm{Im} ( S_{\rho} ) = \frac{1}{2} \mathrm{Im} \sqrt{1 - \frac{4}{z}} \overset{(z \to 4)}{\sim} \mathrm{Im} \sqrt{z-4}.
    \]
    If we set $w:= z-4,$ with $|w|=|\kappa+i\eta|,$ we then get that:
    \[
    \mathrm{Im} ( S_{\rho} ) \overset{(z \to 4)}{\sim} \mathrm{Im} ( \sqrt{w} ) = \sqrt{|w|} \sin \left (\frac{\theta}{2} \right ),
    \]
    where \[
    \theta := \tan^{-1} \left ( \frac{\eta}{\kappa} \right ).
    \]
    By the asymptotics $\tan^{-1}(x) \overset{x \to 0}{\sim} x$ and $\sin(x) \overset{x \to 0}{\sim} x,$ we get that:
    \begin{align*}
    \mathrm{Im} ( S_{\rho} ) &\overset{(z \to 4)}{\sim} (\kappa^2 + \eta^2)^{1/4} \frac{\eta}{\kappa} \\ 
    &\overset{(\kappa, \eta \to 0)}{\sim} \sqrt{\kappa + \eta} \frac{\eta}{\kappa} \\
    &= \frac{\eta}{\sqrt{\kappa+\eta}} \left ( 1 + \frac{\eta}{\kappa} \right ) \\
    &\overset{(\kappa, \eta \to 0)}{\sim} \frac{\eta}{\sqrt{\kappa+\eta}},
     \end{align*}
     where we used the fact that $\eta = \mathcal{O}(\kappa).$ This concludes the proof of \eqref{asymp2}.
\end{proof}
\end{section}

\begin{section}{Main theorems}

To state our theorems we define the domain for our parameters where we obtain our results:
\begin{equation}\label{e:etae}
Z_{E, \eta}:= \left \{ 4|\eta| \geq c(E^2 + \eta^2  - 4E ) \ \middle | \ E,\eta \in \mathbb{R} \right \}
\end{equation}
for some $c >0$. This domain is chosen so that $ | \mathrm{Im} (S_{\rho} + 1/2)^2 | \geq c |\mathrm{Re} (S_{\rho} + 1/2)^2 |$ which we need for the proof of Proposition \ref{b-R}. While all the proofs work for all $c>0$ not dependent on $N$, we will specifically work with $c=1$ to allow us the opportunity to illustrate it in the following picture, Figure \ref{f:domains}.
\begin{figure}[ht] 
\centering{\includegraphics[scale=0.5]{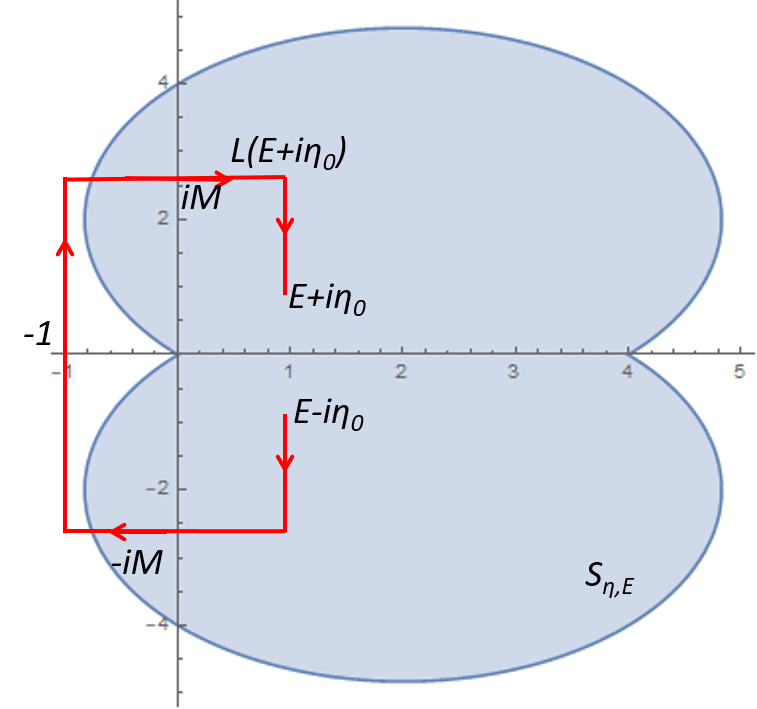}
\caption{The set $Z_{E,\eta}$ shaded in blue and the integration contour $L(z_0)$ from \eqref{e:pleijel} in red.} \label{f:domains}
} 
\end{figure}
 
We assume that
\begin{equation} \label{4moment}
\E|x_{11}|^4 =: \mu_4 < \infty,\end{equation} 
and that there exists a constant $D>0$ such that for all $N \in \mathbb{N}$ and $q \geq 1$:
\begin{equation}\label{truncation} \E |x_{11}|^{4q} \leq D^{4(q-1)} N^{q-1} \mu_4. 
\end{equation} 
These assumptions are inspired by the papers of G\"otze-Tikhomirov \cite{gotze2014rate, gotze2016optimal}, where they assume four-moment bounds \textit{just for independent random matrix entries} and not necessarily identically distributed. Notice that by the identical-distribution assumption, $x_{11}$ can be replaced by any $x_{ij}, \ i,j=1,...,N.$
%

We are now ready to state our first theorem.
\begin{thm}
\label{t:sti}
Let $X_N$ be a $N\times N $ matrix as described in equations \eqref{e:XN1} and \eqref{e:XN} and assume \eqref{4moment} and \eqref{truncation} for its matrix entries. Let $S_N$ and $S_{\rho}$ be the Stieltjes transforms as defined in equations \eqref{TrG} and \eqref{S-rho1}. Moreover set $z = E + i \eta$, with $\frac{N\eta}{|\sqrt{z}|} \geq M$  for some suitably large $M$. Then there exist positive constants $c_0$, $C$ such that for each $K>0$ and $ 1\leq q \leq c_0\left ( \frac{N\eta}{|\sqrt{z}|} \right )^{1/8} $ and $z \in Z_{E,\eta}$ \ or $E < 0$:
\beq
\mathbb{P} \left( \left | S_N(z)-S_{\rho}(z) \right | \geq  \frac{K}{N\eta} \right) \leq \frac{(Cq)^{cq^2}}{K^q}.
\eeq
Furthermore, for any $E \in \mathbb{R}$ and $\eta>0$ such that $\frac{N\eta}{|\sqrt{z}|} \geq M$ we have that:
\beq
 \mathbb{P} \left( \left | \mathrm{Im} S_N(z) - \mathrm{Im} S_{\rho} (z) \right) | \geq \frac{K}{N\eta} \right) \leq \frac{(Cq)^{cq^2}}{K^q}.
\eeq
\end{thm}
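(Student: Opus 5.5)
The plan is to follow the strategy of \cite{cacciapuoti2015bounds}: derive an approximate self-consistent equation for $S_N$ from the stripping lemma, and close it by a moment bound on $\Lambda = S_N - S_\rho$ rather than by a continuity argument. We write $S_N = \frac1N\sum_k G_{kk}$ and apply \eqref{G2} to each diagonal entry, giving $G_{kk} = -1/(z(1+(\x^k)^*\mathcal{G}^{(k)}\x^k))$. We then decompose $(\x^k)^*\mathcal{G}^{(k)}\x^k = \frac1N\Tr\mathcal{G}^{(k)} + Z_k$, where $Z_k$ is the centered quadratic-form fluctuation. By \eqref{e:traces}, the stripping lemma \ref{l:resol1} and interlacing, $\frac1N\Tr\mathcal{G}^{(k)} = S_N + O\big(\frac{1}{N\eta}\big) + O\big(\frac{1}{N|z|}\big)$. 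Comparing with \eqref{S-rho} then gives
\[
\Lambda\Big(S_\rho+\tfrac12\Big)\cdot 2 + \Lambda^2 = \mathcal{E},
\]
where the error $\mathcal{E}$ is a weighted average of the terms $z S_\rho G_{kk}(Z_k + O(1/(N\eta)))$. Equivalently, the relation can be written as $\Lambda = \mathcal{E}/(2S_\rho+1+\Lambda)$.

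Next we bound $\E|\mathcal{E}|^q$. The key input is the quadratic form estimate of G\"otze--Tikhomirov type, extended using the truncation assumption \eqref{truncation}:
\[
\E|Z_k|^{2q} \le (Cq)^{cq}\Big(\frac{\E(\mathrm{Im}\,\Tr \mathcal{G}^{(k)})^q}{(N^2\eta)^{q}} + \dots\Big).
\]
This follows from Ward's identity \ref{ward} together with Rosenthal/Burkholder inequalities, with higher moments controlled by \eqref{truncation}. This bounds $\E|Z_k|^{2q}$ by powers of $\frac{\mathrm{Im} S_\rho + |\Lambda|}{N\eta}$. The diagonal entries $|G_{kk}|$ are controlled in high moments by the Schur formula together with Lemma \ref{G11eta/s}, which lets us pass from a large scale $\eta$ down to smaller ones by a factor $s$. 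Combining, we aim for the recursive moment inequality
\[
\E|\Lambda|^{q} \lesssim (Cq)^{cq}\,\frac{\E\big(|\mathrm{Im} S_\rho|+|\Lambda|\big)^{q/2}}{(N\eta)^{q/2}|2S_\rho+1|^{q}} + \dots,
\]
together with the a priori bound $|\Lambda|\le 2/\eta$.

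The main obstacle is the stability step: dividing by $2S_\rho+1+\Lambda$. Near the soft edge, $|2S_\rho+1|\sim\sqrt{\kappa+\eta}$ is small by \eqref{asymp1}. Near the hard edge, the relevant scale is $|\sqrt z|$, which is why the hypothesis is stated as $N\eta/|\sqrt z|\ge M$. The domain $Z_{E,\eta}$, or $E<0$, guarantees $|\mathrm{Im}(S_\rho+\tfrac12)^2|\gtrsim|\mathrm{Re}(S_\rho+\tfrac12)^2|$. This prevents $\Lambda$ from sitting on the wrong root of the quadratic, and gives $|2S_\rho+1+\Lambda|\gtrsim|2S_\rho+1|+|\Lambda|$. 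We then run an induction on $q$, bootstrapping from exponent $q/2$ to $q$, which produces the $(Cq)^{cq^2}$ constant. The restriction $q\le c_0(N\eta/|\sqrt z|)^{1/8}$ is what keeps the induction closing. The outcome is $\E|\Lambda|^q\le (Cq)^{cq^2}(N\eta)^{-q}$, and Markov's inequality then yields the first claim.

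For the imaginary-part statement at arbitrary $E$, we take imaginary parts of the self-consistent equation, without using the domain restriction. Since $\mathrm{Im}\,\Lambda$ enters multiplied by a factor whose modulus is bounded below, using \eqref{asymp2} and the positivity of $\mathrm{Im} S_N$ and $\mathrm{Im} S_\rho$, the same moment recursion goes through for $\mathrm{Im}\,\Lambda$ alone.
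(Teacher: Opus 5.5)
\textbf{Gap: the final step does not give the $1/(N\eta)$ rate.} The first half of your outline matches the paper: the self-consistent equation from \eqref{G2}, the stability analysis via the branch choice (Proposition~\ref{b-R}), the quadratic-form bound of Lemma~\ref{l:Upsilonbound}, and resolvent moments via Lemma~\ref{G11eta/s}. The problem is the passage from your recursion to $\E|\Lambda|^q\le (Cq)^{cq^2}(N\eta)^{-q}$.

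Each centered quadratic form $Z_k$ (the paper's $\Upsilon^{(k)}$) genuinely has size $\sqrt{\mathrm{Im}\,S_N/(N\eta)}$. You estimate $\mathcal{E}=\frac1N\sum_k zS_\rho G_{kk}(Z_k+\dots)$ only through moments of individual summands. In the bulk, where $\mathrm{Im}\,S_\rho\asymp 1$ and $|2S_\rho+1|\asymp 1$, the best this gives is what your own recursion states: $\E|\Lambda|^q\lesssim (Cq)^{cq}(N\eta)^{-q/2}$. An induction on the exponent cannot upgrade this to $(N\eta)^{-q}$. The $\mathrm{Im}\,S_\rho$ contribution on the right-hand side does not shrink when $\Lambda$ does, so the recursion is not self-improving. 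What you actually obtain is a weak local law (compare the corollary after Lemma~\ref{l:Gkk}), not Theorem~\ref{t:sti}.

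\textbf{Missing idea: fluctuation averaging.} You need to show that $\frac1N\sum_k \sqrt z\,\Upsilon^{(k)}\sqrt z\,G_{kk}$ is of order $(\mathrm{Im}\,S_\rho+|\Lambda|)/(N\eta)$ plus small terms. This is much smaller than a single summand, because the $Z_k$ for different $k$ are almost uncorrelated. In the paper this is Proposition~\ref{p:optimalbound}, proved as follows.
\begin{enumerate}
\item Split $W_k=\sqrt z\Upsilon^{(k)}\sqrt zG_{kk}$ into $\E_kW_k$ and $(\mathbb{I}-\E_k)W_k$.
\item $\E_kW_k$ is quadratic in $\Upsilon^{(k)}$ by \eqref{4.3}, and Lemma~\ref{lem} controls its denominator.
\item $\E|\frac1N\sum_k(\mathbb{I}-\E_k)W_k|^{2q}$ is handled by the matrix expansion algorithm of \cite{cacciapuoti2015bounds}. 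Each factor is expanded in the other columns $\x^{k_r}$ via \eqref{e:algorithmnew}, so every term either vanishes under $\E_{k_r}$ or carries extra small off-diagonal factors.
\item The inputs are the $\sqrt z$-normalized high-moment bounds on the quantities in \eqref{e:quantities}: Lemmas~\ref{l:Gkk}, \ref{off-diag}, \ref{reciprocal}, \ref{lem} and \eqref{e:oneover}. The $\sqrt z$ normalization is what keeps everything uniform at the hard edge.
\end{enumerate}
The result is $\E|zR|^{2q}\le (Cq)^{cq^2}\mathcal{E}_{4q}^{1/2}$, with $\mathcal{E}_q$ the control parameter of \eqref{d:controlparam}. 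Combined with $|\Lambda|\le C|R|/|S_\rho+\frac12|$, this yields the $1/(N\eta)$ rate. The factor $(Cq)^{cq^2}$ comes from counting the terms generated when $2q$ factors are expanded, not from an induction on $q$.

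\textbf{An alternative that would also close your argument.} Use a direct decoupling of moments: write $\E|\Lambda|^{2q}$ through $\mathcal{E}\,\Lambda^{q-1}\bar\Lambda^{q}/(2S_\rho+1+\Lambda)$, replace $\Lambda$ in the $k$-th summand by the $\x^k$-independent $\Lambda^{(k)}$, and use $\E_{\x^k}Z_k=0$. Only second-order remainders then survive. Either way, some mechanism exploiting cancellation across $k$ has to be supplied.

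\textbf{The imaginary-part claim.} Taking imaginary parts of $\Lambda(2S_\rho+1+\Lambda)=\mathcal E$ does not isolate $\mathrm{Im}\,\Lambda$ times a factor bounded below. The paper instead uses the Lipschitz bound \eqref{sqrt2} for $\mathrm{Im}\sqrt{\cdot}$, which holds for all $a,b\in\mathbb{C}$ and gives \eqref{e:imLambdabound} at every $E$. This step still depends on the same averaged bound on $R$.
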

Some remarks about this theorem are the following. Firstly, if $\eta \sim 1,$ we have the so-called "global" law. Notice that in this case we also have that $|\sqrt{z}| \sim 1.$ Then, for a fixed and large enough $N$ we can pick a large enough $K(N) > 0$ such that $\frac{K}{N} \sim N^{-c},$ for some $c>0$ and also choose $q(N)$ such that $\frac{(Cq)^{cq^2}}{K^q} \sim N^{-d}$ for some $d>0.$ This gives the standard convergence $S_N \to S_{\rho}$ on "global" scales $\eta \sim 1$.

For "local" scales, $\eta \to 0^+ ,$ we have to consider two cases, whether we are near the "hard" edge, $E \to 0^+$, or somewhere else, $E \sim 1.$ Notice that in the second case, we once more have that $|\sqrt{z}| \sim 1$ and we must have $N\eta$ to be large enough, so that we can choose $K(N)$ such that $\frac{K}{N\eta} \sim N^{-c}$ for some $c>0.$ This means that we can optimally have $\eta \sim \frac{1}{N^{1-\epsilon}}, $ for any small $\epsilon > 0.$ Choosing also a suitable $q(N)$ gives that $\frac{(Cq)^{cq^2}}{K^q} \sim N^{-d}$, for some $d>0.$ Thus, we have achieved the "local" convergence $S_N \to S_{\rho}$ up to the scale $\eta \sim \frac{1}{N}$ away from the "hard" edge.

What is more important in this theorem, is the case when $E,\eta \to 0^+ ,$ which means that $|\sqrt{z}| \to 0^+$ and we get close to the "hard" edge of the Marchenko-Pastur distribution. Notice that in this case, the quantity $\frac{N\eta}{|\sqrt{z}|}$ needs to be large enough. Now, because of the singularity on the "hard" edge, the convergence $S_N \to S_{\rho}$ is still valid up to scales $\eta \sim \frac{1}{N}$ but if we want to go further down to scales $\eta \sim \frac{1}{N^2},$ we have to correct it as $|\sqrt{z}| \left | S_N - S_{\rho} \right | \to 0.$ Notice that:
\[
\left | S_N(z)-S_{\rho}(z) \right | \geq  \frac{K}{N\eta} \Leftrightarrow |\sqrt{z}| \left ( S_N - S_{\rho} \right ) \geq K \frac{|\sqrt{z|}}{N\eta}.
\]
We now want $\frac{N\eta}{|\sqrt{z}|} \geq M ,$ for some large enough $M.$ This means that:
\begin{align*}
     &\frac{N\eta}{(E^2+\eta^2)^{1/4}} \geq M \Leftrightarrow \\ 
     &\frac{\eta}{\sqrt{E+\eta}} \gtrsim \frac{1}{N} \Leftrightarrow \\
     &E+\eta \lesssim (N\eta)^2.
\end{align*}
If now $\eta \sim N^{-a},$ for some $a > 0,$ we must then have that:
\[
 -a \leq 2 - 2a \Leftrightarrow a \leq 2 ,
\]
 so that we can have this convergence up to the scale $\eta \sim \frac{1}{N^2}.$ Notice that we must also have that $E = \mathcal{O} \left (\frac{1}{N^2} \right )$ for these $\eta$ - scales. Thus, we have proved the convergence $|\sqrt{z}| \left | S_N - S_{\rho} \right | \to 0,$ optimally for $\eta \sim \frac{1}{N^{2-\epsilon}}$ for any small $\epsilon > 0$ since we can choose $K(N)>0$ large enough such that $\frac{K}{\frac{N\eta}{|\sqrt{z}|}} \sim N^{-c},$ for some $c>0$ and choose $q(N)$ such that $\frac{(Cq)^{cq^2}}{K^q} \sim N^{-d},$ for some $d>0.$ To summarize, we have proved the convergence $|\sqrt{z}| \left | S_N - S_{\rho} \right | \to 0,$ up to the scale $\eta \sim \frac{1}{N^2}$ near the "hard" edge for $E = \mathcal{O} \left (\frac{1}{N^2} \right ).$

We now state our second theorem which is a result of theorem \ref{t:sti}. We obtain fluctuation estimates on the counting cumulative function as stated next. Letting:
\begin{equation}
P (E) := \int_0^E \rho(x)dx,
\end{equation}
we compare it to $\rho_N(E)$, as defined in \eqref{rhoN}.

\pagebreak

\begin{thm}\label{t:counting}
With assumptions as in Theorem \ref{t:sti}, there exist constants $M_0, N_0, C, c >0$ such that for any $K > 0$ and $E \geq \frac{M_0}{N^2}$:
\begin{equation}\label{e:counting}
\mathbb{P}\left(|\rho_N(E) - P(E)| \geq K \min \left\{\sqrt {E}, \frac{ \log N}{N}\right\}\right) \leq \frac{(Cq)^{cq^2}}{K^q},
\end{equation}
for all $E\in \mathbb{R}$, $K>0, N>N_0, q\in \mathbb{N}$.
\end{thm}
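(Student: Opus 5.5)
We plan to derive Theorem \ref{t:counting} from Theorem \ref{t:sti} with a Helffer--Sjöstrand (Pleijel-type) contour argument, in the spirit of \cite{cacciapuoti2015bounds}. The contour is the curve $L(z_0)$ in Figure \ref{f:domains}. We treat two regimes separately.

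\emph{Hard-edge regime, $M_0/N^2 \leq E \lesssim (\log N/N)^2$.} Here the target bound is $K\sqrt E$, which is crude. Since $P(E)\sim \frac{1}{\pi}\sqrt E$ near $0$, it is enough to show $\rho_N(E)\lesssim K\sqrt E$ with the stated probability. We bound the counting function by the Stieltjes transform: choose $\eta = E$ and use
\[
\rho_N(E) \leq \frac{2E}{N}\sum_{\lambda_i\leq E}\frac{\eta}{(\lambda_i-E)^2+\eta^2}\leq 2\eta\,\mathrm{Im}S_N(E+i\eta).
\]
The second statement of Theorem \ref{t:sti} applies because $N\eta/|\sqrt z| \sim N\sqrt E \geq \sqrt{M_0} \geq M$. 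It gives $\mathrm{Im} S_N \leq \mathrm{Im} S_\rho + K/(N\eta)$, and $\mathrm{Im} S_\rho(E+iE)\lesssim E^{-1/2}$ from \eqref{S-rho1}. Hence $\rho_N(E)\lesssim \sqrt E + K/N \lesssim K\sqrt E$, using $E\geq M_0/N^2$.

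\emph{Bulk regime, $E\gtrsim (\log N/N)^2$.} Write $\rho_N(E)-P(E)$ as $\int f\,d(\mu_N-\mu)$ with $f=\mathbf 1_{(-\infty,E]}$. Since $\mathrm{supp}(\mu_N)\subset[0,\infty)$, we may replace $f$ by a smoothed indicator $f_E$ of $[-1,E]$: it equals $1$ on $[-1/2,E-\eta_E]$, it is smoothed over a window of width $\eta_E$ at $E$, and $\eta_E$ is chosen so that $N\eta_E/\sqrt{E}\asymp \log N$ (in the bulk, $\eta_E\sim \log N/N$). The Helffer--Sjöstrand formula gives
\[
\int f_E\, d(\mu_N-\mu) = \frac{1}{\pi}\int_{\mathbb C} \partial_{\bar z}\tilde f_E(z)\,(S_N(z)-S_\rho(z))\,d^2z ,
\]
where $\tilde f_E$ is an almost-analytic extension supported in $|\mathrm{Im} z|\leq 1$. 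Note also that $S_N-S_\rho$ is analytic off $[0,4]$, which matches the domain of Theorem \ref{t:sti}. We split the integral into three parts:
\begin{itemize}
\item a large-$\eta$ part, controlled by the global law;
\item the strip $\eta\lesssim \eta_E$ near $E$, handled by the $\mathrm{Im}$ statement of Theorem \ref{t:sti} together with the monotonicity Lemma \ref{G11eta/s}, which lets us push estimates from scale $\eta_E$ down to smaller $\eta$;
\item the smoothing error, $|\rho_N(E\pm\eta_E)-\rho_N(E)|$, bounded by $\eta_E\,\mathrm{Im}S_N(E+i\eta_E)\lesssim \eta_E\rho(E)+K/N$.
\end{itemize}
The bulk of the integral runs over $\eta$ from $\eta_E$ to $1$ with weight $\eta\,|f_E''|$ or $|f_E'|$. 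Plugging in $|S_N-S_\rho|\leq K/(N\eta)$ gives $\int_{\eta_E}^1 \frac{d\eta}{N\eta}\sim \frac{\log N}{N}$, which is the origin of the $\log N$ factor.

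\emph{Main obstacle.} We need the bound $K/(N\eta)$ simultaneously for all $z$ on the integration region, whereas Theorem \ref{t:sti} holds pointwise. We would first take a polynomial-size lattice of points, use a union bound, and invoke the Lipschitz continuity $|\partial_z S_N|\leq \eta^{-2}$ between lattice points. The union bound costs a factor $N^{C}$ in probability, which we absorb by replacing $K$ with $K N^{\epsilon}$. If that loss is unacceptable, we would instead take $q$-th moments of the integral directly and use Minkowski's inequality, $\|\int g\|_q\leq \int\|g\|_q$. The tail bound $\frac{(Cq)^{cq^2}}{K^q}$ is equivalent to a moment bound $\E|N\eta(S_N-S_\rho)|^{q'}\leq (Cq)^{cq^2}$ up to adjusting constants, so Minkowski then yields $\E|\rho_N(E)-P(E)|^{q}\leq (Cq)^{cq^2}(\log N/N)^q$, and Markov's inequality gives \eqref{e:counting}.

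This moment route avoids any union bound and is what we would use. The parts of the region outside $Z_{E,\eta}$ with $E>0$ are covered only by the $\mathrm{Im}$ statement. We would therefore orient the contour, as in Figure \ref{f:domains}, so that it stays in $Z_{E,\eta}\cup\{E<0\}$, and use the real-part estimate only there.
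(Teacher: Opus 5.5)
\paragraph{Comparison with the paper's route.}
Your argument is essentially correct, but it follows a different route from the paper's. The paper never smooths. It applies Pleijel's formula \eqref{e:pleijel} to the sharp indicator with $\eta_0=M\sqrt E/N$. The near-axis contribution is then absorbed into the explicit terms $\frac{\eta_0}{\pi}\mathrm{Re}\,\Lambda(z_0)$ and $\mathcal O(\eta_0\,\mathrm{Im}\,S(z_0))$. The $\log N$ comes from $\int_{\eta_0}^Q dy/(Ny)$ along the vertical side $\mathrm{Re}\,z=E$ of $L(z_0)$, with $q$-th moments handled by H\"older exactly as in your Minkowski step. The $\sqrt E$ bound comes from a second formula \eqref{e:pleijel2} on $[-E,E]$, which integrates $\mathrm{Im}\,\Lambda(x+i\eta_0)$.

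Your Helffer--Sj\"ostrand version has pieces in one-to-one correspondence with these. Your smoothing error plus the sub-$\eta_E$ strip corresponds to the $\mathcal O(\eta_0\,\mathrm{Im}\,S)$ term. Your boundary term at $\eta_E$ corresponds to $\eta_0\,\mathrm{Re}\,\Lambda(z_0)$. Your route costs more machinery (almost-analytic extension, monotonicity of $\eta\,\mathrm{Im}\,S_N$, conjugation symmetry), but it does not rely on the Pleijel error term.

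Your hard-edge step is simpler than the paper's and valid. The bound $\rho_N(E)\le 2E\,\mathrm{Im}\,S_N(E+iE)$ needs the imaginary-part statement only at a single point. There $N\eta/|\sqrt z|\asymp N\sqrt E\ge\sqrt{M_0}$, and $K/N\le K\sqrt E/\sqrt{M_0}$.

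\paragraph{Points to fix.}
First, for $\eta\ge\eta_E$ you must commit to the weight $|f_E'|$. You get it by integrating by parts in $x$ and then in $\eta$, using Cauchy--Riemann ($\partial_x\mathrm{Im}\,\Lambda=-\partial_\eta\mathrm{Re}\,\Lambda$). With the literal weight $\eta|f_E''|$ you only get $K/(N\eta_E)$, since $\int|f_E''|\,dx\sim\eta_E^{-1}$. The integration by parts produces the boundary term $\int f_E'(x)\,\eta_E\,\mathrm{Re}\,\Lambda(x+i\eta_E)\,dx=O(K/N)$. This is exactly where the full bound of Theorem \ref{t:sti} is used, not just the imaginary-part bound.

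Second, in Helffer--Sj\"ostrand there is no contour to orient. The region is fixed by $\mathrm{supp}\,\partial_{\bar z}\tilde f_E$, and it lies in $Z_{E,\eta}\cup\{E<0\}$ only when $E\le 4$. For $E>4$ the smoothing window sits at $x>4$, where points with $\eta\lesssim x-4$ fall outside $Z_{E,\eta}$. There you need the paper's reduction
\[
|\rho_N(E)-P(E)|=1-\rho_N(E)\le P(4)-\rho_N(4).
\]

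Finally, like the paper's argument, your moment route yields \eqref{e:counting} only for $q$ up to a small power of $N\eta_E/\sqrt E$. This restriction is inherited from Theorem \ref{t:sti}, so the statement is not obtained literally for all $q\in\mathbb N$.
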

We use the above theorem \ref{t:counting} to obtain rigidity estimates, that is how far each eigenvalue can fluctuate away from its "classical" location.

We define the "classical" locations of the eigenvalues, predicted by the Marchenko-Pastur distribution, as the quantiles $\gamma_i$, for $i=1,...,N$, such that:
\[
\int_0^{\gamma_i} \rho(x) dx = \frac{i}{N}.
\]
\begin{thm}\label{t:rigidity}
With assumptions as in Theorem \ref{t:sti}, there exist constants $C,c,N_0,\epsilon>0$ such that:
\begin{equation} \label{e:rigiditybulk}\mathbb{P} \left ( | \lambda_i  - \gamma_i | \geq K \frac{ \log N}{N} \left ( \frac{i}{N}  \right )\right) \leq \frac{(Cq)^{cq^2}}{K^q} \end{equation}
for $i=1,...,\lceil N/2 \rceil$, $N>N_0$, $K>0$, and $q \in \mathbb{N}$ with $q\leq N^\epsilon.$
Furthermore, for $i \leq \log N$ we have that
\begin{equation} \label{e:rigidityedge}\mathbb{P} \left ( | \lambda_i  - \gamma_i | \geq K \left ( \frac{ i}{N}\right)^2 \right ) \leq \frac{(Cq)^{cq^2}}{K^{q/2}}. 
\end{equation}
\end{thm}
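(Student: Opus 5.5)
The plan is to derive Theorem \ref{t:rigidity} from the counting estimate of Theorem \ref{t:counting}, by inverting the distribution function near the hard edge. The key observation is that near $0$ the density behaves like $\rho(x)\approx \frac{1}{\pi\sqrt{x}}$. Hence
\[
P(E)\approx \tfrac{2}{\pi}\sqrt{E},
\]
and on $[0,\gamma_{\lceil N/2\rceil}]$ (a compact subset of $[0,4)$) we have $P(E)\asymp \sqrt E$ and $P'(E)=\rho(E)\asymp E^{-1/2}$. It follows that
\[
\gamma_i\asymp (i/N)^2, \qquad \rho(\gamma_i)\asymp N/i .
\]
So a counting error of size $\delta$ near $\gamma_i$ translates into a location error of size roughly $\delta/\rho(\gamma_i)\asymp \delta\, i/N$. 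With $\delta = K\log N/N$ this is exactly the bound in \eqref{e:rigiditybulk}.

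\textbf{Bulk estimate \eqref{e:rigiditybulk}.} Set $\Delta := K\frac{\log N}{N}\frac{i}{N}$ and $E_\pm:=\gamma_i\pm\Delta$. The two events $\lambda_i>E_+$ and $\lambda_i<E_-$ are handled separately.
\begin{itemize}
\item If $\lambda_i>E_+$, then $\rho_N(E_+)\le (i-1)/N$. By the mean value theorem, $P(E_+)-i/N\ge \rho(\xi)\Delta$ for some $\xi\in[\gamma_i,E_+]$, and $\rho(\xi)\gtrsim N/i$ as long as $\Delta\lesssim\gamma_i$. Hence $P(E_+)-\rho_N(E_+)\gtrsim K\log N/N$.
\item If $\lambda_i<E_-$, then $\rho_N(E_-)\ge i/N$, and symmetrically $P(E_-)\le i/N - c\,\rho(\gamma_i)\Delta$, giving a counting deviation of the same order.
\end{itemize}
In both cases Theorem \ref{t:counting}, applied at the single point $E_\pm$ with $K$ replaced by $cK$, gives the probability bound $(Cq)^{cq^2}/K^q$. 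Two side issues remain.
\begin{itemize}
\item When $\Delta\gtrsim\gamma_i$ (that is, $K\log N\gtrsim i$), the lower event $\lambda_i<\gamma_i-\Delta$ is empty since $\lambda_i\ge0$. For the upper event we use monotonicity of $P$ together with $P(E_+)\gtrsim\sqrt{\Delta}$.
\item We need $E_\pm\ge M_0/N^2$ in order to invoke Theorem \ref{t:counting}. When $E_-$ falls below this threshold, we instead compare with the point $M_0/N^2$, using that $\rho_N$ is monotone.
\end{itemize}
The constraint $q\le N^\epsilon$ comes from the admissible range of $q$ in Theorem \ref{t:sti} at scale $\eta\sim N^{-2+\epsilon}$.

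\textbf{Edge estimate \eqref{e:rigidityedge}, for $i\le \log N$.} Here the $\log N/N$ counting bound is too weak, so I would use the $\sqrt E$ branch of the minimum in \eqref{e:counting}. Take $E_+=\gamma_i+K(i/N)^2$. Then $\sqrt{E_+}\asymp \sqrt{K}\, i/N$, and the gap satisfies
\[
P(E_+)-i/N\gtrsim \sqrt{K}\,\tfrac iN .
\]
Applying Theorem \ref{t:counting} with threshold $K'\sqrt{E_+}$, where $K'\asymp 1$, does not directly produce a large parameter. Instead, the right choice is
\[
K' \asymp \frac{P(E_+)-i/N}{\sqrt{E_+}}\cdot \frac{1}{\text{const}} .
\]
The main obstacle is that $P(E_+)$ and $\sqrt{E_+}$ are of the same order, so this ratio stays bounded. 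To fix this, I would first establish the sharper fact, available from the proof of Theorem \ref{t:counting} via Theorem \ref{t:sti}, that at hard-edge scale the counting error is $K\sqrt{E}/\sqrt{N^2E}$-type, effectively giving $K'\sim\sqrt K$. This is what produces the $K^{q/2}$ in the statement. The lower deviation $\lambda_i<\gamma_i-K(i/N)^2$ is again empty for $K\gtrsim1$, since $\gamma_i\asymp (i/N)^2$. The precise quantitative inversion at the hard edge, matching $\sqrt K$ against the counting estimate at $E\sim K i^2/N^2$, is the step I expect to require the most care.
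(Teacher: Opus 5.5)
Your bulk argument is the paper's argument. On $\{\lambda_i<\gamma_i-\Delta\}$ and $\{\lambda_i>\gamma_i+\Delta\}$ you compare $\rho_N$ with $P$ at the displaced point, use the mean value theorem with $\rho(\gamma_i)\asymp N/i$ and monotonicity of $\rho$, and invoke Theorem~\ref{t:counting}. The lower deviation is fine, and so is the upper deviation when $\Delta\lesssim\gamma_i$ (that is, $i\gtrsim K\log N$).

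\textbf{Gap 1: the upper deviation when $\Delta\gtrsim\gamma_i$, i.e.\ $i\lesssim K\log N$.} In this regime $P(E_+)-i/N\asymp\sqrt{\Delta}=\sqrt{Ki\log N}/N$. This is at most of order $K\log N/N$ precisely because $i\lesssim K\log N$. So the $\log N/N$ branch of Theorem~\ref{t:counting} only gives a tail of order $(Cq)^{cq^2}(\log N/(Ki))^{q/2}$, not $K^{-q}$. The $\sqrt E$ branch gives nothing, because on this event $0\le P(E_+)-\rho_N(E_+)\le P(E_+)\le C\sqrt{E_+}$, so the deviation is never a large multiple of $\sqrt{E_+}$. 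You also cannot import this case from the paper. Its case $\gamma_i+\ell>y$ asserts $P(y)-P(\gamma_i)\ge\rho(x^*)\ell$, but the mean value theorem only gives $\rho(x^*)(y-\gamma_i)$, and $y-\gamma_i<\ell$ in that case.

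\textbf{Gap 2: the edge estimate \eqref{e:rigidityedge}.} Your diagnosis is correct: $P(E_+)$ and $\sqrt{E_+}$ are comparable, so undercounting can never be a large multiple of $\sqrt{E_+}$. The same issue affects the paper's chain. The paper uses the bound $cKi/N\ge c\sqrt K\sqrt{\gamma_i+\ell}$ only in the sub-case $\gamma_i+\ell\le y$. With $\ell=K(i/N)^2$, that sub-case forces $K\lesssim1$, where the bound is trivial; for large $K$ the paper falls back on the flawed case above. Your remedy is a hard-edge counting error of order $1/N$, giving $K'\sim\sqrt K$. It is asserted rather than derived, and it is the entire content of \eqref{e:rigidityedge}. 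It is not contained in Theorem~\ref{t:counting}, and the natural route from Theorem~\ref{t:sti} does not produce it either:
\begin{itemize}
\item Apply \eqref{e:pleijel2} on $[-E,E]$ with $\eta_0=M'\sqrt E/N$. This gives an error $\sqrt E/M'+M'/N$.
\item The second term comes from the Pleijel remainder $\eta_0|S_\rho|\approx\eta_0/\sqrt E$, so the sum cannot be made $O(1/N)$.
\item Optimizing in $M'$ gives $(N^2E)^{1/4}/N\approx K^{1/4}\sqrt i/N$ at $E\approx K(i/N)^2$.
\item Against a deviation of order $\sqrt K\, i/N$, this yields at best a $K^{-q/4}$ tail, not $K^{-q/2}$.
\end{itemize}

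As written, your proposal proves neither \eqref{e:rigidityedge} nor \eqref{e:rigiditybulk} for $i\lesssim K\log N$. Closing these cases needs a genuinely sharper hard-edge input than Theorem~\ref{t:counting} provides.
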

\end{section}
\begin{section}{A quadratic formula for the difference of the Stieltjes transforms}

We have defined the difference of the Stieltjes tranform $S_{\rho}(z)$ from its approximation $S_N(z) $ as follows:
\begin{equation} \label{lambda}
    \Lambda_{(j_2)}^{(j_1)} := S_N(z)^{(j_1)}_{ (j_2)} - S_{\rho}(z).
\end{equation}
The fixed-point relation for $S_{\rho}(z)$ is:
\begin{equation} \label{equation-S}
  S_{\rho}(z) = - \frac{1}{z (S_{\rho}(z) +1)}  .
\end{equation}
We will try to deduce something similar for $S_N(z)$. We denote by $\mathbf{x}_k^{(j)}$ the $k$-th row of the scaled matrix $X^{(j)}$ and by $\mathbf{x}_{(j)}^k$ the $k$-th column of the scaled matrix $X_{(j)}.$ We start with the following lemma, after we define the following quantities:
\begin{equation} \label{T-T}
\left [\mathcal{T}_{(j_2)}^{(j_1)} \right ]^k := \frac{1}{N} \Tr \left [ \mathcal{G}_{(j_2)}^{(j_1)} \right ]^k - \frac{1}{N}\Tr \left [ G_{(j_2)}^{(j_1)} \right ] \quad \text{,} \quad \left [T_{(j_2)}^{(j_1)} \right ]_l := \frac{1}{N} \Tr \left [ G_{(j_2)}^{(j_1)} \right ]_l - \frac{1}{N}\Tr \left [ \mathcal{G}_{(j_2)}^{(j_1)} \right ]
\end{equation}
\begin{equation} \label{Y-Y}
\left [\Upsilon_{(j_2)}^{(j_1)} \right ]^k := ( \mathbb{I} - \E_{\textbf{x}_{(j_2)}^k}) (\textbf{x}_{(j_2)}^k)^* \left [\mathcal{G}_{(j_2)}^{(j_1)} \right ]^k \textbf{x}_{(j_2)}^k \quad \text{,} \quad \left [Y_{(j_2)}^{(j_1)} \right ]_l := ( \mathbb{I} - \E_{\textbf{x}_l^{(j_1)}}) (\textbf{x}_l^{(j_1)})^* \left [G_{(j_2)}^{(j_1)} \right ]_l \textbf{x}_l^{(j_1)}
\end{equation}
where $\E_{\textbf{x}^k_{(j)}}$ denotes the expectation with respect only to the randomness of the $k-$th column of $X_{(j)}$ while $\E_{\textbf{x}_l^{(j)}}$ denotes the expectation with respect only to the randomness of the $l-$th row of $X^{(j)}$. The term $\mathbb{I}$ denotes the identity operator applied to a scalar random variable, i.e $\mathbb{I}(a) := a$. The superscipt $k$ in the resolvents denotes an extra removal of the $k-$th column of the scaled matrix $X,$ where $k\in \{j_1+1,...,N \}$. The subscript $l$ in the resolvents denotes an extra removal of the $l-$th row of the scaled matrix $X,$ where $l \in \{j_2+1,...N\}.$
\begin{lem}[Probabilistic fixed-point relation]
For any $z \in Z_{E,\eta}$ and $N \in \mathbb{N}$ we have the following almost fixed point equation for $S_N:$
\begin{align}
{S_N}^{(j_1)}_{(j_2)} &= - \frac{1}{N}\sum \limits_{k=j_1+1}^N
\frac{1}{ z \left(1+{S_N}^{(j_1)}_{(j_2)} + [\mathcal{T}_{(j_2)}^{(j_1)}]^k + [\Upsilon_{(j_2)}^{(j_1)}]^k \right)}  \label{equation-SN1} \\ 
&= - \frac{1}{N} \sum \limits_{l=j_2+1}^N \frac{1}{ z \left(1+ {S_N}^{(j_1)}_{(j_2)} + [T_{(j_2)}^{(j_1)}]_l + [Y_{(j_2)}^{(j_1)}]_l \right)}, \label{equation-SN2}
\end{align}
where the quantities $[T_{(j_2)}^{(j_1)}]_l, [\Upsilon_{(j_2)}^{(j_1)}]^k, [\mathcal{T}_{(j_2)}^{(j_1)}]^k, [Y_{(j_2)}^{(j_1)}]_l,$ are the previous probabilistic error terms.
\end{lem}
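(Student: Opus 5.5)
The plan is to derive \eqref{equation-SN1} by writing the normalized trace as a sum of diagonal entries and expanding each entry with the Sample Covariance stripping lemma. Then I would split the resulting quadratic form into its conditional mean and a fluctuation. Throughout, $z$ is not on the nonnegative real axis, so all the resolvents $G^{(j_1)}_{(j_2)}$, $\mathcal{G}^{(j_1)}_{(j_2)}$ and their minors are well defined.

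\emph{Step 1 (the column identity).} The matrix $G^{(j_1)}_{(j_2)}$ is indexed by $k=j_1+1,\dots,N$. Hence
\[
{S_N}^{(j_1)}_{(j_2)}=\frac1N\Tr G^{(j_1)}_{(j_2)}=\frac1N\sum_{k=j_1+1}^N G^{(j_1)}_{(j_2),kk}.
\]
For each $k$, \eqref{G2} gives
\[
G^{(j_1)}_{(j_2),kk}=-\frac{1}{z\bigl(1+(\x^k_{(j_2)})^*[\mathcal{G}^{(j_1)}_{(j_2)}]^k\x^k_{(j_2)}\bigr)},
\]
where the resolvent in the quadratic form is built from the matrix with column $k$ also removed. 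That resolvent is therefore independent of the column $\x^k_{(j_2)}$. The column has independent centred entries of variance $1/N$, so
\[
\E_{\x^k_{(j_2)}}\bigl[(\x^k_{(j_2)})^*A\,\x^k_{(j_2)}\bigr]=\frac1N\Tr A
\]
for any $A$ independent of $\x^k_{(j_2)}$. Applying this with $A=[\mathcal{G}^{(j_1)}_{(j_2)}]^k$ and using the definition \eqref{Y-Y}, the quadratic form equals $\frac1N\Tr[\mathcal{G}^{(j_1)}_{(j_2)}]^k+[\Upsilon^{(j_1)}_{(j_2)}]^k$. By \eqref{T-T}, $\frac1N\Tr[\mathcal{G}^{(j_1)}_{(j_2)}]^k={S_N}^{(j_1)}_{(j_2)}+[\mathcal{T}^{(j_1)}_{(j_2)}]^k$. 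Substituting and summing over $k$ gives \eqref{equation-SN1} exactly.

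\emph{Step 2 (the row identity).} I would repeat the argument with the roles of rows and columns swapped. That is, I would apply the block (Schur) inversion to $\mathcal{G}^{(j_1)}_{(j_2)}=(X^{(j_1)}_{(j_2)}(X^{(j_1)}_{(j_2)})^*-z)^{-1}$ by stripping the row $\x^{(j_1)}_l$. Using the same push-through identity $X(X^*X-z)^{-1}X^*=XX^*(XX^*-z)^{-1}$ as in the proof of \eqref{G2}, this gives
\[
\mathcal{G}^{(j_1)}_{(j_2),ll}=-\frac{1}{z\bigl(1+\x_l^{(j_1)}[G^{(j_1)}_{(j_2)}]_l(\x_l^{(j_1)})^*\bigr)}.
\]
Taking the partial expectation in the row as in Step 1 turns the quadratic form into $\frac1N\Tr[G^{(j_1)}_{(j_2)}]_l+[Y^{(j_1)}_{(j_2)}]_l$. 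By \eqref{T-T} this is $\frac1N\Tr\mathcal{G}^{(j_1)}_{(j_2)}+[T^{(j_1)}_{(j_2)}]_l$. Summing over $l=j_2+1,\dots,N$ gives $\frac1N\Tr\mathcal{G}^{(j_1)}_{(j_2)}$ on the left-hand side.

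\emph{Step 3 (matching the two traces).} To convert $\frac1N\Tr\mathcal{G}^{(j_1)}_{(j_2)}$ into ${S_N}^{(j_1)}_{(j_2)}$, on both sides of the row identity, I would invoke the trace identity \eqref{e:traces}. This produces a deterministic shift $\frac{j_1-j_2}{Nz}$. The shift vanishes when $j_1=j_2$, which is the case used later. For $j_1\neq j_2$ I would either absorb the shift into the error term $[T^{(j_1)}_{(j_2)}]_l$ or keep it explicitly. This bookkeeping is the only delicate point I expect, since the statement as written suppresses it. The rest is algebraic identities plus the elementary conditional-expectation formula for quadratic forms.
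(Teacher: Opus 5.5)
Your proposal is correct and is essentially the paper's own proof. The paper obtains \eqref{equation-SN1} from \eqref{G2}, the partial-expectation identity \eqref{exp-c-r} and summation over $k$, and obtains \eqref{equation-SN2} from the row-stripping formula \eqref{e:calG} combined with the trace relation \eqref{e:traces}, just as you do.

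Your caveat about the deterministic shift of size $\frac{|j_1-j_2|}{N|z|}$ is accurate, and it is exactly what the paper passes over with ``recalling \eqref{e:traces}''. The shift inside the denominators can be absorbed additively into $[T^{(j_1)}_{(j_2)}]_l$, but the one on the left-hand side cannot. So \eqref{equation-SN2} holds verbatim only for $j_1=j_2$, or with that term kept explicit.
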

\begin{proof} To derive the first equality \eqref{equation-SN1} we use \eqref{G2}:
\begin{equation*}
G_{(j_2), kk}^{(j_1)} = -\frac{1}{z \left(1+  (\x_{(j_2)}^k)^* [\mathcal{G}_{(j_2)}^{(j_1)}]^k
\x_{(j_2)}^k \right)}.
\end{equation*}
It is now enough to show that (for $j_1=j_2=0$):
\beq \label{exp-c-r}
\mathbb{E}_{\x^{k}} \left [ (\textbf{x}^k)^* \mathcal{G}^{k} \textbf{x}^k \right ] = \frac{1}{N} \Tr \left [ \mathcal{G}^{k} \right ].
\eeq
The first identity will then follow, because we can take summation in \eqref{G2} and then divide by $N$.

To prove \eqref{exp-c-r}, we notice that:
\[
\mathbb{E}_{\x^{k}} \left [ (\textbf{x}^k)^* \mathcal{G}^{k} \textbf{x}^k \right ] = \mathbb{E}_{\x^{k}} \left [ \sum \limits_{i,j=1}^N \overline{x_{ki}} \mathcal{G}_{ij}^{k} x_{jk} \right ] = \frac{1}{N} \sum_{i=1}^N \mathcal{G}_{ii}^{k},
\]
where we used the fact that the entries of $X$ are independent with expectation $0$ and variance $\frac{1}{N}$. We also used the fact that the matrix $\mathcal{G}^{k}$ is independent of the column $\x^{k}.$

The second equality $\eqref{equation-SN2}$ comes from the following modified stripping identity, the proof of which is similar to the original:

\begin{equation}\label{e:calG}
\mathcal{G}_{(j_2), ll}^{(j_1)}
= -\frac{1}{z \left(1+  (\textbf{x}_l^{(j_1)}) \left(([X_{(j_2)}^{(j_1)}]_l)^*[X_{(j_2)}^{(j_1)}]_l- z \right)^{-1} (\textbf{x}_l^{(j_1)})^* \right)},
\end{equation}
where $l$ is defined as before. It similarly yields the second part \eqref{equation-SN2}, recalling \eqref{e:traces}.
\end{proof}
We have an easy deterministic bound for the quantities $[\mathcal{T}_{(j_2)}^{(j_1)}]^k$ and $[T_{(j_2)}^{(j_1)}]_l$ in the following lemma:
\begin{lem}
\begin{equation}
\label{e:boundsT12}
\left |[T_{(j_2)}^{(j_1)}]_l \right |, \left |[\mathcal{T}_{(j_2)}^{(j_1)}]^k \right | \leq \frac{|j_1 - j_2|+1}{N\eta} .
\end{equation}
\end{lem}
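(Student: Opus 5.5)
The plan is to reduce each quantity to a trace identity plus a rank-one perturbation bound, so that both $[\mathcal{T}_{(j_2)}^{(j_1)}]^k$ and $[T_{(j_2)}^{(j_1)}]_l$ split into a deterministic "dimension mismatch" term of size at most $|j_1-j_2|/(N\eta)$ and a "perturbation" term of size at most $1/(N\eta)$.

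\textbf{Splitting $[\mathcal{T}]^k$.} Write
\[
[\mathcal{T}_{(j_2)}^{(j_1)}]^k=\frac1N\Big(\Tr[\mathcal{G}_{(j_2)}^{(j_1)}]^k-\Tr\mathcal{G}_{(j_2)}^{(j_1)}\Big)+\frac1N\Big(\Tr\mathcal{G}_{(j_2)}^{(j_1)}-\Tr G_{(j_2)}^{(j_1)}\Big).
\]
By the trace equality \eqref{e:traces}, the second bracket equals $(j_1-j_2)/z$. Since $|z|\ge \eta$, this term is bounded by $|j_1-j_2|/(N\eta)$.

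\textbf{Bounding the perturbation term.} It remains to show $|\Tr[\mathcal{G}]^k-\Tr\mathcal{G}|\le 1/\eta$, where indices are suppressed.
\begin{itemize}
\item Removing the column $\x=\x^k_{(j_2)}$ from $X^{(j_1)}_{(j_2)}$ changes $XX^*$ by the rank-one term $\x\x^*$, namely $XX^*=[X]^k([X]^k)^*+\x\x^*$.
\item Sherman--Morrison then gives
\[
\Tr\mathcal{G}-\Tr[\mathcal{G}]^k=-\frac{\x^*([\mathcal{G}]^k)^2\x}{1+\x^*[\mathcal{G}]^k\x}.
\]
\item Use the spectral decomposition of $[\mathcal{G}]^k$, exactly as in the Ward identity (Lemma \ref{ward}). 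This gives $|\x^*([\mathcal{G}]^k)^2\x|\le \x^*|[\mathcal{G}]^k|^2\x=\eta^{-1}\,\mathrm{Im}\,\x^*[\mathcal{G}]^k\x$.
\item Hence $|\x^*([\mathcal{G}]^k)^2\x|\le \eta^{-1}\,\mathrm{Im}(1+\x^*[\mathcal{G}]^k\x)\le \eta^{-1}|1+\x^*[\mathcal{G}]^k\x|$, which bounds the quotient by $1/\eta$.
\end{itemize}
Adding the two terms yields $|[\mathcal{T}]^k|\le(|j_1-j_2|+1)/(N\eta)$.

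\textbf{The row case.} For $[T_{(j_2)}^{(j_1)}]_l$ the argument is symmetric. Removing the row $\x_l^{(j_1)}$ changes $X^*X$ by the rank-one term $(\x_l^{(j_1)})^*\x_l^{(j_1)}$, and the same Sherman--Morrison plus Ward argument gives $|\Tr[G]_l-\Tr G|\le 1/\eta$. Then \eqref{e:traces} again gives $|\Tr G-\Tr\mathcal{G}|/N\le|j_1-j_2|/(N\eta)$.

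The only delicate point is the Ward-type inequality for the quadratic form $\x^*\mathcal{G}^2\x$ in the perturbation step. It follows from the eigen-expansion $\x^*\mathcal{G}^2\x=\sum_j|\langle u_j,\x\rangle|^2/(\lambda_j-z)^2$ together with $|\lambda_j-z|^{-2}=\eta^{-1}\mathrm{Im}(\lambda_j-z)^{-1}$. Everything else is bookkeeping of dimensions.
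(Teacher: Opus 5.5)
Your argument is correct, but it splits $N[\mathcal{T}_{(j_2)}^{(j_1)}]^k$ differently from the paper.

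\textbf{The paper's route.} The paper writes the quantity as $(\Tr[\mathcal{G}]^k-\Tr[G]^k)+(\Tr[G]^k-\Tr G)$. The column is removed on the $G$ side, where $([X]^k)^*[X]^k$ is a principal minor of $X^*X$. The stripping lemma \eqref{e:resol1} then gives the exact identity $\Tr[G]^k-\Tr G=-(G^2)_{kk}/G_{kk}$, which is bounded by $1/\eta$ through \eqref{e:GsImG2}. The dimension count is done between $[\mathcal{G}]^k$ and $[G]^k$.

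\textbf{Your route.} You remove the column on the $\mathcal{G}$ side, where it is a rank-one update at fixed dimension. Sherman--Morrison and a Ward bound for the quadratic form replace the stripping lemma. The dimension count is done between $\mathcal{G}$ and $G$ before any stripping.

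\textbf{Why your splitting matters.} Your splitting is the one that actually delivers the constant in \eqref{e:boundsT12}.
\begin{itemize}
\item $[\mathcal{G}]^k$ and $[G]^k$ have sizes $N-j_2$ and $N-j_1-1$. So along the paper's route the trace-mismatch term has modulus $|j_1+1-j_2|/(N|z|)$, not $|j_1-j_2|/(N|z|)$ as written in the paper's display.
\item Already for $N=1$, $j_1=j_2=0$, the paper's displayed identity drops a $-1/z$.
\item Carried out carefully, the paper's route therefore only yields $(|j_1-j_2|+2)/(N\eta)$. This is harmless for the later estimates, but weaker than stated.
\item In your decomposition that extra $-1/z$ sits inside the rank-one term, $\Tr[\mathcal{G}]^k-\Tr\mathcal{G}=-1/z-(G^2)_{kk}/G_{kk}$, and your Sherman--Morrison argument bounds that term by $1/\eta$ as a whole.
\end{itemize}

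\textbf{Trade-off.} The paper's route reuses the stripping lemma it has already proved, and gives an exact formula in terms of $G_{kk}$ and $(G^2)_{kk}$, the same entries that appear in $R$. Yours is self-contained and gives the stated constant exactly. The row case goes through symmetrically in both approaches.
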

\begin{proof}
We prove the bound for $[\mathcal{T}_{(j_2)}^{(j_1)}]^k.$ We use the stripping lemma \ref{l:resol1} for the proof. We use the notation $G = {G}_{(j_2)}^{(j_1)}.$
\begin{equation*}
\begin{split}
&[\mathcal{T}_{(j_2)}^{(j_1)}]^k
= -\frac{j_1 - j_2 }{N z} +\frac{1}{N} \left ( \sum \limits_{i \neq k} G_{ii}^{k} - \sum \limits_{i=1}^N G_{ii} \right ) \\
&= -\frac{j_1- j_2}{N z} +\frac{1}{N} \left( \sum \limits_{i \neq k} \left ( G_{ii} - \frac{G_{ik}G_{ki}}{G_{kk}} \right) - \sum \limits_{i=1}^N G_{ii} \right )
\\
&= -\frac{j_1 - j_2}{N z} -\frac{1}{N} \left( \sum \limits_{i\neq k} \frac{G_{ik}G_{ki}}{G_{kk}} -G_{kk} \right)
= -\frac{ j_1 - j_2 }{N z} -\frac{1}{N} \frac{1}{G_{kk}}\sum \limits_{i=1}^N G_{ik}G_{ki} \\
&= -\frac{j_1 - j_2 }{N z} -\frac{(G^2)_{kk}}{NG_{kk}}.
\end{split}
\end{equation*}
We now use \eqref{e:GsImG2} to obtain
\begin{equation*}
\left |[T_{(j_2)}^{(j_1)}]^k \right | \leq \frac{|j_1-j_2|}{N|z|} + \frac{\mathrm{Im} G_{kk}}{|G_{kk}| N \eta}
\end{equation*}
yielding that
\begin{equation}\label{e:minor}
 \left | [T_{(j_2)}^{(j_1)}]^k \right | \leq \frac{|j_1-j_2|+1}{N \eta}.
\end{equation}
A similar argument also works for the quantity $[{T}_{(j_2)}^{(j_1)}]_l,$ replacing  ${G}$ with $\mathcal{G}$ and using the modified stripping lemma for $\mathcal{G}:$
\begin{equation} \label{e:resol2}
    \mathcal{G}^{(j_1)}_{(j_2), ij} = \left [ \mathcal{G}^{(j_1)}_{(j_2)} \right ]_{l,ij} + \frac{\mathcal{G}^{(j_1)}_{(j_2), il}\mathcal{G}^{(j_1)}_{(j_2), li}}{\mathcal{G}^{(j_1)}_{(j_2), ll}},
\end{equation}
which holds for $l \in \{j_2+1,...,N\}$ and $i,j \neq l$.
\end{proof}
It is highly non-trivial to bound the terms $[\Upsilon_{(j_2)}^{(j_1)}]^k$ and $[Y_{(j_2)}^{(j_1)}]_l$ if we only have a four moment bound as an assumption for the matrix entries of $X$. We will bound them in section \ref{Qforms}. In the case of a more powerful assumption like a sub-gaussian decay of the entries, bounds on these quantities would be almost trivial.

We now connect the two similar fixed-point equations yielding a quadratic formula for the difference $\Lambda = S_N - S_{\rho}$ involving a new probabilistic error term $R$ which combines the previous ones and prevents the equation from being trivial with a zero solution.

We start with the equation \eqref{equation-SN1} and modify it by including in it the information provided by equation \eqref{S-rho} to deduce an equation for $\Lambda.$

We will use the identity
\[
\frac{1}{A+\epsilon} = \frac{1}{A} - \frac{\epsilon}{A(A+\epsilon)},
\]
where $\epsilon$ will represent the sum of the two probabilistic error terms plus the quantity $\Lambda$, all multiplied by $z$.

We obtain that (also for any $j_1, j_2$ and so we suppress here the notation):
\begin{equation}
\begin{split}
S_N
&
= - \frac{1}{N} \sum_{k=1}^N
\frac{1}{z (1+ S_{\rho}) + z \Lambda  + z (T^k + \Upsilon^{k})
}
\\
&
=-  \frac{1}{z (1+S_{\rho})} + \frac{1}{N} \sum_{k=1}^N
\frac{1}{z (1+S_{\rho})}\,\frac{ z \Lambda  + {z}(T^k + \Upsilon^{k})}{z \left(1+S_N + T^k + \Upsilon^{k} \right)}
\\
&
= S_{\rho} + \frac{S_{\rho}}{N} \sum_{k=1}^N z \Lambda G_{kk}  - \frac{S_{\rho}}{N} \sum_{k=1}^N G_{kk} z (T^k + \Upsilon^{k})
\\
&
= S_{\rho} + S_{\rho} z \Lambda S_N  - \frac{S_{\rho}}{N} \sum_{k=1}^N G_{kk} z (T^k + \Upsilon^{k}) .
\end{split}\end{equation}
This yields that
\begin{equation} \label{l-eq}
\Lambda =
 z S_{\rho} \Lambda (S_{\rho} + \Lambda)  -  \frac{S_{\rho}}{N} \sum_{k=1}^N G_{kk} z (T^k + \Upsilon^{k} ) .
\end{equation}
We can now define the error term $R$ we were talking about, as:
\begin{align}
R &:= \frac{1}{N} \sum_{k=1}^N G_{kk} (T^k + \Upsilon^{k}).
\end{align}
Of course we can similarly define the error term $R^{(j_1)}_{(j_2)}$.
Now \eqref{l-eq} together with the definition of $R$ yields the following quadratic equation for $\Lambda$:
\begin{equation} \label{e:quadlambda}
z S_{\rho} \Lambda^2 + (z S_{\rho}^2 - 1)\Lambda + z S_{\rho} R = 0.
\end{equation}
Dividing by $z S_{\rho}$, using that $z S_{\rho} = -\frac{1}{1+S_{\rho}}$ and the quadratic formula, yields
\begin{equation}
-(S_{\rho} + 1/2) \pm \sqrt{(S_{\rho} + 1/2)^2 - R}
\end{equation}
as two solutions for our estimated difference $ S_N - S_{\rho}.$ From the definition of $\Lambda$ in \eqref{d:S-Lambda}, it follows that $\mathrm{Im} ( \Lambda ) \geq -\mathrm{Im}(S_{\rho})$. Thus, if we take the branch cut of the square root to be on the positive reals so that the imaginary part of the square root is always positive, we obtain that:
\begin{equation}
\Lambda =  -(S_{\rho} + 1/2) + \sqrt{(S_{\rho} + 1/2)^2 - R}
\end{equation}
We also notice that the second solution, call it $\tilde{\Lambda}$, to  \eqref{e:quadlambda} is given by:
\beq\label{d:tildeLambda}
\tilde{\Lambda} = -\Lambda - 2S_{\rho} - 1.
\eeq
\end{section}

\begin{section}{A first look at the error term \texorpdfstring{$R$}{R} }

We explain here how exactly the probabilistic error term $R$ quantifies the distance of $\Lambda$ from $0$ according to our quadratic equation. We get the following estimates:

\begin{prop} \label{b-R}
Let $z = E + i\eta$. There exists a constant $C>0$, such that:
\begin{equation} \label{e:Lambdabound}
|\Lambda| \leq C \min \left \{ \frac{|R|}{|S_{\rho} + \frac{1}{2}|} , \sqrt{|R|} \right \} ,
\end{equation}
for all $(E, \eta) \in Z_{E, \eta} $ or $E<0$. Furthermore, for any $E\in \mathbb{R}$ and $\eta > 0$ we have that:
\begin{equation}\label{e:imLambdabound}
|\mathrm{Im} \Lambda| \leq C\min \left\{ \frac{|R|}{|S_{\rho} + \frac{1}{2}|} , \sqrt{|R|} \right \}
\end{equation}
and
\begin{equation}\label{e:twosolutionsbound}
\min\{|\Lambda|, |\tilde{\Lambda}|\} \leq C\sqrt{|R|}.
\end{equation}
Analogous statements hold for $\Lambda_{(j_2)}^{(j_1)}$ with $R_{(j_2)}^{(j_1)}$.
\end{prop}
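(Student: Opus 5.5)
The whole argument rests on the algebraic structure of \eqref{e:quadlambda}. Set $a := S_{\rho}+\frac12$. Dividing \eqref{e:quadlambda} by $zS_{\rho}$ and using $zS_{\rho} = -1/(1+S_{\rho})$ turns it into the monic quadratic
\[
\Lambda^2 + 2a\Lambda + R = 0 .
\]
Its two roots are $\Lambda = -a + w$ and $\tilde\Lambda = -a - w$, where $w := \sqrt{a^2 - R}$ is taken with $\mathrm{Im}\, w \geq 0$. By Vieta's formulas we have
\[
\Lambda\tilde\Lambda = R \quad \text{and} \quad \Lambda + \tilde\Lambda = -2a .
\]
From these two facts I would read off the elementary consequences:
\[
\min\{|\Lambda|,|\tilde\Lambda|\}^2 \leq |\Lambda\tilde\Lambda| = |R| , \qquad \max\{|\Lambda|,|\tilde\Lambda|\} \geq |a| .
\]
The first gives \eqref{e:twosolutionsbound} with $C=1$. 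Combining them, the smaller root is at most $|R|/|a|$ and at most $\sqrt{|R|}$. So each claim reduces to showing that the quantity in question is comparable to the \emph{smaller} root.

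For \eqref{e:imLambdabound} I would use signs only. Since $\mathrm{Im}\, S_{\rho}\geq 0$, we have $\mathrm{Im}\, a \geq 0$, and by the branch choice $\mathrm{Im}\, w\geq 0$. Hence
\[
|\mathrm{Im}\,\Lambda| = |\mathrm{Im}\, w - \mathrm{Im}\, a| \leq \mathrm{Im}\, w + \mathrm{Im}\, a = |\mathrm{Im}\,\tilde\Lambda| .
\]
Trivially $|\mathrm{Im}\,\Lambda| \leq |\Lambda|$, so $|\mathrm{Im}\,\Lambda| \leq \min\{|\Lambda|,|\tilde\Lambda|\}$. This minimum is at most $\sqrt{|R|}$, and it also equals $|R|/\max\{|\Lambda|,|\tilde\Lambda|\} \leq |R|/|a|$. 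This holds for every $E$ and every $\eta>0$.

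For \eqref{e:Lambdabound} I must show that the physical root $\Lambda$ is, up to a constant, the smaller one, i.e.\ $|\Lambda|\leq C|\tilde\Lambda|$. For this I split into two cases according to the size of $|R|$ relative to $|a|^2$, with a small fixed $\varepsilon$.

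\emph{Case $|R| \geq \varepsilon|a|^2$.} Here $|\Lambda| \leq |a| + |w| \leq 2|a| + \sqrt{|R|} \leq C_\varepsilon\sqrt{|R|}$. Also $\sqrt{|R|}\leq \varepsilon^{-1/2}|R|/|a|$, so both expressions in the minimum are comparable and the bound follows.

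\emph{Case $|R| < \varepsilon|a|^2$.} I write $w = \pm a\sqrt{1-R/a^2}$ using the principal root near $1$, and the key point is that the sign is $+$. On $Z_{E,\eta}$ the defining inequality gives $|\mathrm{Im}\, a^2| \geq c|\mathrm{Re}\, a^2|$. Hence $\arg a$ lies in a closed sector strictly inside the open first quadrant, so $\mathrm{Im}\, a \geq c'|a|$. A perturbation of relative size $\sqrt\varepsilon$ cannot push $a\sqrt{1-R/a^2}$ across the real axis, so the branch $\mathrm{Im}\, w\geq0$ selects $w\approx a$. Then $|\Lambda| = |w-a| \leq C|R|/|a|$ and $|\tilde\Lambda|\approx 2|a|$, as required.

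For $E<0$ the sector argument fails, because $a$ is then close to the positive real axis, which is the branch cut. Instead I would use positivity of real parts. For $E<0$ every $\lambda_i - E >0$, so $\mathrm{Re}\, S_N > 0$. On the other hand \eqref{d:tildeLambda} gives $S_{\rho}+\tilde\Lambda = -S_N - 1$, which has real part $\leq -1$. Thus $S_{\rho}+\Lambda = S_N$ and $S_{\rho}+\tilde\Lambda$ lie in different half-planes separated by distance $1$. Since $|a|\gtrsim 1$ there (because $|1-4/z|\geq 1$), this forces $\Lambda$ to be the root that tends to $0$ as $R\to0$, and the same two-case estimate goes through. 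The main obstacle I expect is exactly this branch identification: rigorously showing the $+$ sign in the small-$R$ case, uniformly on $Z_{E,\eta}$ and for $E<0$. The proofs for $\Lambda^{(j_1)}_{(j_2)}$ with $R^{(j_1)}_{(j_2)}$ are verbatim the same.
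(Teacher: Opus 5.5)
Your proofs of \eqref{e:twosolutionsbound} and \eqref{e:imLambdabound}, and of \eqref{e:Lambdabound} on $Z_{E,\eta}$, are sound. For the imaginary part, your sign argument is more elementary than the paper's, which quotes the second bullet of Lemma~\ref{sqrt} from \cite{cacciapuoti2015bounds}: $\mathrm{Im}\,a,\mathrm{Im}\,w\ge 0$ gives $|\mathrm{Im}\,\Lambda|\le|\mathrm{Im}\,\tilde\Lambda|$, hence $|\mathrm{Im}\,\Lambda|\le\min\{|\Lambda|,|\tilde\Lambda|\}$, and Vieta finishes. Your two-case analysis on $Z_{E,\eta}$ is in effect an inline proof of the first bullet of that lemma, which the paper cites. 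One minor point: $Z_{E,\eta}$ as written in \eqref{e:etae} contains the bulk $E^2+\eta^2<4E$. There $\mathrm{Re}\,a^2<0$ and $\arg a\to\pi/2$ as $\eta\to 0$, so $a$ is not confined to a closed sub-sector of the open first quadrant. You only use $\mathrm{Im}\,a\ge c'|a|$, though, and that still follows from $\mathrm{Im}\,a^2>0$ together with $\mathrm{Im}\,a^2\ge c\,\mathrm{Re}\,a^2$.

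The gap is the case $E<0$, which is exactly the step you flagged. The reason you give, $|a|\gtrsim 1$, does not identify the branch. Near the hard edge $|a|\asymp|z|^{-1/2}$ is unbounded. In your small-$R$ case the small root $a(\sqrt{1-R/a^2}-1)$ is only controlled by $|R|/|a|<\varepsilon|a|$, which need not be small compared with the unit gap between $\mathrm{Re}(S_\rho+\Lambda)>0$ and $\mathrm{Re}(S_\rho+\tilde\Lambda)<-1$. So that gap plus $|a|\gtrsim 1$ does not exclude $\Lambda\approx -2a$ with an $\varepsilon$ uniform in $z$; you need real-part information on $a$ itself.

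This is what the paper uses, and it removes the case split altogether. For $E<0$ you have $\mathrm{Re}\,a=\mathrm{Re}\,S_\rho+\frac12>0$, because $\rho\,dx$ is supported in $[0,4]$. You also have $\mathrm{Re}\,w=\mathrm{Re}(\Lambda+a)=\mathrm{Re}\,S_N+\frac12>0$. Your imaginary-part sign trick then applies verbatim to real parts: $|\mathrm{Re}\,\Lambda|=|\mathrm{Re}\,w-\mathrm{Re}\,a|\le \mathrm{Re}\,w+\mathrm{Re}\,a=|\mathrm{Re}\,\tilde\Lambda|$. Hence $|\Lambda|\le|\tilde\Lambda|$, and your Vieta consequences give $|\Lambda|\le\min\{\sqrt{|R|},\,|R|/|a|\}$. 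Equivalently, as the paper phrases it, $a$ and $w$ both lie in the closed first quadrant, so $|\Lambda|=|R|/|a+w|\le |R|/|a|$.
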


\pagebreak

We will use the first bound near the "hard" edge, because $|S_{\rho} + \frac{1}{2}| \to \infty$ when $z \to 0$ and so it will be most useful there. The $\sqrt{|R|}-$ bound will be most useful in the "bulk" and near the "soft" edge, where $z \sim 1.$
\begin{proof}
We apply the following Lemma which can be found in \cite{cacciapuoti2015bounds}, page 12:
\begin{lem}  \label{sqrt} We denote by $\sqrt{w}$ the square root of $w$ with $\mathrm{Im} \sqrt{w} \geq 0. $
    \begin{itemize}
        \item For any fixed $c>0,$ there exists a constant $C>0$ such that:
        \beq \label{sqrt1}  
            \left |\sqrt{a+b} - \sqrt{a} \right | \leq C \frac{|b|}{\sqrt{|a|+|b|}},
        \eeq
        for all $a,b \in \mathbb{C}$ with $|\mathrm{Im} (a)| \geq c \mathrm{Re} (a)$.
        \item There exists a constant $C>0$ such that:
        \beq \label{sqrt2}
            \left |\mathrm{Im} \left ( \sqrt{a+b} \right ) - \mathrm{Im} ( \sqrt{a} )  \right | \leq C \frac{|b|}{\sqrt{|a|+|b|}},
        \eeq
        for all $a,b \in \mathbb{C}$.
    \end{itemize}
\end{lem}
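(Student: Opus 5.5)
We need to prove Lemma \ref{sqrt}, the statement immediately above, which gives two bounds on differences of square roots; we work throughout with the branch $\mathrm{Im}\sqrt{w}\geq 0$.

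The starting point for both bounds is the algebraic identity
\[
\sqrt{a+b}-\sqrt{a} = \frac{b}{\sqrt{a+b}+\sqrt{a}},
\]
valid whenever the denominator is nonzero. The first bound \eqref{sqrt1} then reduces to the lower bound
\[
|\sqrt{a+b}+\sqrt{a}| \geq c'\sqrt{|a|+|b|}.
\]
Both $\sqrt{a}$ and $\sqrt{a+b}$ lie in the closed upper half plane, so
\[
|\sqrt{a+b}+\sqrt{a}| \geq \mathrm{Im}\sqrt{a+b}+\mathrm{Im}\sqrt{a}.
\]
The key observation is the role of the hypothesis $|\mathrm{Im}(a)|\geq c\,\mathrm{Re}(a)$. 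It keeps $a$ outside a sector around the positive real axis, which is exactly where the branch cut sits. Concretely, $\arg a\in[\theta_0,2\pi-\theta_0]$ for some $\theta_0=\theta_0(c)>0$, so $\arg\sqrt a\in[\theta_0/2,\pi-\theta_0/2]$. This gives
\[
\mathrm{Im}\sqrt a\geq \sin(\theta_0/2)\sqrt{|a|}.
\]
We then split into two cases.
\begin{itemize}
\item If $|b|\leq 2|a|$, the lower bound $\mathrm{Im}\sqrt{a}\gtrsim\sqrt{|a|}\gtrsim\sqrt{|a|+|b|}$ alone gives the claim.
\item If $|b|>2|a|$, we do not use the identity. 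Instead we bound crudely:
\[
|\sqrt{a+b}-\sqrt a|\leq \sqrt{|a|+|b|}+\sqrt{|a|}\leq 2\sqrt{|a|+|b|}\leq 3\frac{|b|}{\sqrt{|a|+|b|}},
\]
where the last step uses $|a|+|b|\leq \tfrac32|b|$.
\end{itemize}
This completes \eqref{sqrt1}.

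For the second bound \eqref{sqrt2} there is no sector hypothesis on $a$, so we cannot bound the sum of square roots from below, and the main obstacle is how to proceed without it. The plan is to exploit that $w\mapsto \mathrm{Im}\sqrt w = \sqrt{(|w|-\mathrm{Re}\, w)/2}$ is a continuous function on all of $\mathbb{C}$: it does not see the branch cut at all, since both sides of the cut have the same imaginary part up to sign, and our branch gives $\mathrm{Im}\sqrt w=|\mathrm{Im}\text{ of the principal root}|$.

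The large-$b$ case $|b|>2|a|$ is handled exactly as before, by the crude bound. For $|b|\leq 2|a|$ the natural move is to reduce to \eqref{sqrt1} by a reflection, using that the reflection $w\mapsto\bar w$ leaves $\mathrm{Im}\sqrt{w}$ invariant:
\begin{itemize}
\item If $a$ satisfies the sector condition with, say, $c=1$, apply \eqref{sqrt1} directly together with $|\mathrm{Im}\,x-\mathrm{Im}\,y|\leq|x-y|$.
\item Otherwise $a$ lies near the positive real axis, $\arg a\in(-\pi/4,\pi/4)$. Here use the principal branch $\sqrt{\cdot}_p$, which is holomorphic on $\{\mathrm{Re}\, w>0\}$ and has $|\sqrt{\cdot}_p|$ comparable to $\sqrt{|a|}$ on a sector around $a$. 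On that sector the same identity gives $|\sqrt{a+b}_p-\sqrt a_p|\lesssim |b|/\sqrt{|a|}$, provided $a+b$ stays in the right half-plane sector, which holds if $|b|\leq |a|/2$. Since $\mathrm{Im}\sqrt w=|\mathrm{Im}\sqrt w_p|$ and $\big||x|-|y|\big|\leq|x-y|$, the bound transfers to our branch.
\end{itemize}

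This leaves the intermediate range $|a|/2<|b|\leq 2|a|$. There $|b|/\sqrt{|a|+|b|}\sim\sqrt{|a|}$, while $\mathrm{Im}\sqrt{a+b}$ and $\mathrm{Im}\sqrt a$ are each at most $\sqrt{3|a|}$, so the claim is trivial. Gluing the three ranges gives \eqref{sqrt2} with a universal constant $C$.

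The delicate point is the middle case: one must make sure the principal-branch argument is genuinely applied only when $a+b$ stays in the right half plane away from the cut, which the restriction $|b|\leq|a|/2$ guarantees.
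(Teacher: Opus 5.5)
Your proof is correct. The paper gives no argument for this lemma of its own; it quotes it from \cite{cacciapuoti2015bounds} (pp.~12--13). Your argument is therefore a complete self-contained replacement. It rationalizes $\sqrt{a+b}-\sqrt a=b/(\sqrt{a+b}+\sqrt a)$, bounds the denominator below by $\mathrm{Im}\sqrt a\gtrsim\sqrt{|a|}$ in the sector, uses the crude bound $2\sqrt{|a|+|b|}$ when $|b|>2|a|$, and uses $\mathrm{Im}\sqrt w=|\mathrm{Im}\sqrt{w}_p|$ to handle $a$ near the positive real axis.

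Two small remarks on the principal-branch step. First, comparable moduli of $\sqrt{a+b}_p$ and $\sqrt a_p$ would not by themselves prevent cancellation in their sum. The correct reason is positivity of real parts, $|\sqrt{a+b}_p+\sqrt a_p|\ge\mathrm{Re}\sqrt a_p\ge\cos(\pi/8)\sqrt{|a|}$, which mirrors the $\mathrm{Im}$-positivity you used in the first part.

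Second, $\mathrm{Re}\sqrt{w}_p\ge 0$ and $\mathrm{Im}\sqrt w=|\mathrm{Im}\sqrt w_p|$ hold for every $w\in\mathbb{C}$. So this lower bound needs no restriction on $b$, and the condition $|b|\le|a|/2$ and the separate intermediate range are unnecessary. The point you flagged as delicate is not actually delicate.
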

We apply this Lemma with $a= (S_{\rho} + \frac{1}{2})^2$ and b = $ - R $. 

Since $\mathrm{Im}(S_{\rho} + \frac{1}{2}) \geq 0$, with our choice of branch cut we have that $\sqrt{(S_{\rho} +1/2)^2} = S_{\rho}+1/2$, and we recall that we defined $Z_{E, \eta}$ in \eqref{e:etae} to be exactly the set where $\left | \mathrm{Im} \left [(S_{\rho} + 1/2)^2 \right ] \right | \geq c \left |\mathrm{Re} \left [(S_{\rho} + 1/2)^2 \right ] \right |$ for some $c>0$. These observations are enough for the proof of \eqref{e:Lambdabound} according to the proof of Lemma \ref{sqrt} in \cite{cacciapuoti2015bounds} on page 13.

Analogously, \eqref{e:imLambdabound} follows directly from \eqref{sqrt2} and again the fact that $\sqrt{(S_{\rho} +1/2)^2} = S_{\rho}+1/2$. For the $\sqrt{R}-$ bound, it is again enough the follow the proof of \ref{sqrt} in \cite{cacciapuoti2015bounds} on page 13.

The proof of \eqref{e:twosolutionsbound} follows from Vieta's formula:
\[
|\Lambda | | \tilde{\Lambda} | = | R |,
\]
so that we must have:
\[
\min\{|\Lambda|, |\tilde{\Lambda}|\} \leq \sqrt{|R|}.
\]
We now extend \eqref{e:Lambdabound} for $E<0$. Recalling that $S_N(z) = \frac{1}{N}\sum \limits_{i=1}^N \frac{1}{\lambda_i - E - i\eta}$ and noting that for $E < 0$ the real part of each summand is positive we conclude that $\mathrm{Re}( S_N ) > 0$ for $E < 0$, and similarly to our argument about the imaginary part of $\Lambda$, we see from \eqref{d:S-Lambda} that $\mathrm{Re} (\Lambda) > -\mathrm{Re} (S_{\rho})$ while from \eqref{d:tildeLambda} we see that $\mathrm{Re}( \tilde{\Lambda}) < -\mathrm{Re}(S_{\rho}) - 1$. Since we have that:
\begin{align*}
\mathrm{Re} (\Lambda) &= -\mathrm{Re} (S_{\rho}+1/2) + \mathrm{Re}\left (\sqrt{(S_{\rho} + 1/2)^2 - R} \right) \\
\mathrm{Re} (\tilde\Lambda) &= -\mathrm{Re} (S_{\rho}+1/2) -\mathrm{Re} \left( \sqrt{(S_{\rho} + 1/2)^2 - R} \right),
\end{align*}
we see that $\mathrm{Re}\left(\sqrt{(S_{\rho} + 1/2)^2 - R}\right) > 0$ and thus $|\mathrm{Re} (\Lambda) | < |\mathrm{Re} (\tilde \Lambda) |$ and thus one part of \eqref{e:Lambdabound} follows from \eqref{e:twosolutionsbound}. For the other part of \eqref{e:Lambdabound}, we estimate that:
\begin{equation}
|\Lambda| = \left|\frac{R}{\sqrt{(S_{\rho} + 1/2)^2 - R} + (S_{\rho} + 1/2)}\right| \leq \left|\frac{R}{S_{\rho} + 1/2}\right|,
\end{equation}
where the last inequality follows since both the real and imaginary parts of both summands in the denominator are positive. The fact that $\mathrm{Re} (S_{\rho}+\frac{1}{2})\geq 0$ comes from the definition of the Stieltjes transform which integrates a measure that has a non-negative support.
\end{proof}
What remains now is to focus on the error term
\begin{equation} \label{main-error}
R := \frac{1}{N} \sum_{k=1}^N G_{kk} (T^k + \Upsilon^{k})
\end{equation}
and deduce some bounds for it (also for $R_{(j_2)}^{(j_1)}$). To obtain our optimal bounds, the deterministic bounds on $T^k$ are enough, $\Upsilon^{k}$ will be regarded as a quadratic form and will get bounded in section \ref{Qforms}. The resolvent entry $G_{kk}$ will get bounded in section \ref{RBounds}.
\end{section}

\begin{section}{The four-moments condition and bounds for quadratic forms} \label{Qforms}

Here we obtain bounds for the quadratic form  $ \Upsilon := \frac{1}{N}(\mathbb{I} - \E_{\mathbf{x}} ) \mathbf{x}^* \mathcal{G} \mathbf{x}, $ where $\x$ is a column vector of $X$.

We remark that we only have a four-moment condition as an assumption for the entries of $X$. If instead, we had a sub-gaussian decay assumption for the entries, a bound for $\Upsilon$ would be much easier to deduce by using the Hanson-Wright inequality for quadratic forms as for example in \cite{cacciapuoti2013local}, page 5. The proof here heavily relies on the Ward identity \eqref{ward} which is true both for $G$ and $\mathcal{G}$.

We simplify the notation by taking $j_1=j_2=0,$ but everything would work out for a different case as well.

We prove the following Lemma:
\begin{lem} \label{l:Upsilonbound}
Let $\mathcal{G} = \mathcal{G}$ or $G$. Let
$ \Upsilon := \frac{1}{N}(\mathbb{I} - \E_{\mathbf{x}} ) \mathbf{x}^* \mathcal{G} \mathbf{x} $, assuming \eqref{4moment} and \eqref{truncation} for the elements of $\mathbf{x}$. Then, for any $q \geq 1,$ we have that
\begin{equation}\label{e:upsilonbound1} \E|\Upsilon|^{2q} \leq (Cq)^{cq}\left(\frac{\E \left (\mathrm{Im} \Tr \mathcal{G} \right )^{q}}{N^q(N\eta)^q} +   \frac{\E|\mathcal{G}_{11}|^{2q}}{N^q}+    \frac{\E|\mathcal{G}_{11}|^{q}}{(N\eta)^q}\right). 
\end{equation}
Moreover, we have the more precise inequality:
\begin{equation}\label{e:upsilonbound2} \E|\Upsilon|^{2q} \leq \frac{(Cq)^{cq}}{(N\eta)^q} \left(\E \left (\frac{\mathrm{Im} \Tr \mathcal{G} }{N}\right )^{q} +   \E\left| \frac{ \mathcal{G}_{11} }{\sqrt N}\right|^{q}\right) + (Cq)^{cq} \frac{\E |\mathcal{G}_{11}|^{2q}}{N^q}. \end{equation}
\end{lem}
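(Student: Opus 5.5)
We write $\mathbf{x}=(x_1,\dots,x_N)$ for the unscaled column, with i.i.d. entries satisfying \eqref{e:XN1}, \eqref{4moment} and \eqref{truncation}, and we write $\mathcal{G}$ for the resolvent, which is independent of $\mathbf{x}$. The plan is to condition on $\mathcal{G}$ and decompose the centred quadratic form into a diagonal part and an off-diagonal part:
\[
N\Upsilon=\sum_{i}(|x_i|^2-1)\mathcal{G}_{ii}+\sum_{i\neq j}\overline{x_i}\mathcal{G}_{ij}x_j=:D+O .
\]
We then bound $\E_{\mathbf{x}}|D|^{2q}$ and $\E_{\mathbf{x}}|O|^{2q}$ separately and take the full expectation at the end. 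The estimates for $D$ should produce the $\E|\mathcal{G}_{11}|^{2q}/N^q$ term and part of the $\E|\mathcal{G}_{11}|^q/(N\eta)^q$ term. The estimates for $O$ should produce the $\mathrm{Im}\Tr\mathcal{G}$ term, via the Ward identity (Lemma \ref{ward}).

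\emph{Diagonal part.} $D$ is a sum of independent centred variables $\xi_i=(|x_i|^2-1)\mathcal{G}_{ii}$. By Rosenthal's inequality,
\[
\E_{\mathbf{x}}|D|^{2q}\le (Cq)^{2q}\Big[\Big(\sum_i \E|\xi_i|^2\Big)^{q}+\sum_i\E|\xi_i|^{2q}\Big].
\]
For the first bracket we use $\mu_4$ and get $(\sum_i|\mathcal{G}_{ii}|^2)^q$. For the second we use \eqref{truncation} with exponent $4q$ replaced by $2q$, i.e. $\E|x|^{4q'}\le D^{4q'}N^{q'-1}\mu_4$ with $q'=q/2$, which gives $\sum_i N^{q/2-1}|\mathcal{G}_{ii}|^{2q}$. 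After dividing by $N^{2q}$, we take full expectations and use exchangeability of the diagonal entries, so that $\E|\mathcal{G}_{ii}|^p=\E|\mathcal{G}_{11}|^p$ by symmetry of the i.i.d. model. Jensen then gives $\E(N^{-1}\sum_i|\mathcal{G}_{ii}|^2)^q\le\E|\mathcal{G}_{11}|^{2q}$. Hence
\[
\E|D/N|^{2q}\le (Cq)^{cq}\big(\E|\mathcal{G}_{11}|^{2q}N^{-q}+N^{-3q/2}\E|\mathcal{G}_{11}|^{2q}\big),
\]
and the second piece is absorbed into the first.

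\emph{Off-diagonal part.} This is the main obstacle, since only four moments are available and Hanson--Wright is not. We would decouple, replacing $O$ by $\sum_{i\ne j}\overline{x_i}\mathcal{G}_{ij}y_j$ with $y$ an independent copy, at the cost of a constant $C^q$. We then apply Rosenthal in $y$ conditionally on $x$ (a linear form $\sum_j a_jy_j$ with $a_j=\sum_{i\neq j}\overline{x_i}\mathcal{G}_{ij}$), and then again in $x$.

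The Gaussian-like leading term is $(\sum_j|a_j|^2)^q$. Its expectation in $x$ is governed by $\sum_{i,j}|\mathcal{G}_{ij}|^2=\Tr|\mathcal{G}|^2=\mathrm{Im}\Tr\mathcal{G}/\eta$ by Ward. Taking the $q$-th moment of $\sum_j|a_j|^2$ (itself a quadratic form in $x$, bounded by iterating the same argument or by a moment recursion in $q$) yields $(\mathrm{Im}\Tr\mathcal{G}/\eta)^q$ plus corrections. After division by $N^{2q}$ this becomes $\E(\mathrm{Im}\Tr\mathcal{G}/N)^q/(N\eta)^q$.

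The heavy-tail terms $\sum_j\E|a_j|^{2q}\E|y_j|^{2q}$ use \eqref{truncation}. They involve rows $\sum_i|\mathcal{G}_{ij}|^2=(\mathrm{Im}\mathcal{G})_{jj}/\eta\le|\mathcal{G}_{jj}|/\eta$, again by Ward. The powers of $N$ from \eqref{truncation} are exactly compensated to leave $\E|\mathcal{G}_{11}/\sqrt N|^q/(N\eta)^q$ and $\E|\mathcal{G}_{11}|^q/(N\eta)^q$.

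The delicate bookkeeping is in two places: tracking the $N^{q-1}$ growth in \eqref{truncation} against the $N^{-2q}$ normalisation, and handling the nested moment of $\sum_j|a_j|^2$. We expect the latter to need an induction on $q$, giving the $(Cq)^{cq}$ constants. Summing the contributions of $D$ and $O$ gives \eqref{e:upsilonbound2}, and \eqref{e:upsilonbound1} follows from it by coarsening (Jensen/Hölder on the middle term).
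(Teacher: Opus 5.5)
\textbf{There is a genuine gap: the central step of the lemma is announced but not carried out.} Your skeleton matches the paper's: the diagonal part is handled by Rosenthal and \eqref{truncation}, and the off-diagonal part by a recursive moment scheme driven by the Ward identity. Decoupling followed by conditional Rosenthal is a legitimate replacement for the paper's Burkholder martingale inequality. The problem comes after conditioning on $\mathbf{x}$, where you must bound $\E_{\mathbf{x}}\big(\sum_j|a_j|^2\big)^q$. Here $\sum_j|a_j|^2=\|B\bar{\mathbf{x}}\|^2=\bar{\mathbf{x}}^*P\bar{\mathbf{x}}$, with $B=\mathcal{G}^T-\mathrm{diag}(\mathcal{G}_{jj})$ and $P=B^*B\succeq 0$. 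Apart from a diagonal part that Rosenthal handles, its fluctuation is again an off-diagonal chaos, now with coefficient matrix $P$ and exponent $q$. Treating it the same way produces a new positive semidefinite form, with exponent $q/2$, whose trace and diagonal entries are bounded by those of $P^2$. This continues for about $\log_2 q$ levels.

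To close the recursion you need three things:
\begin{enumerate}
\item[(a)] At every level, bounds on the trace and diagonal entries of the iterated matrices purely in terms of $\mathrm{Im}\Tr\mathcal{G}/\eta$ and $\mathrm{Im}\mathcal{G}_{rr}/\eta$. This is the analogue of \eqref{e:arr}; in your setting it follows from positivity via $\Tr P^2\le(\Tr P)^2$ and $(P^2)_{rr}\le(\Tr P)\,P_{rr}$. Done this way, it would give a somewhat cleaner recursion than the paper's triangular arrays $a^{(\nu)}$.
\item[(b)] Control of the heavy-tail terms created at each level, the analogues of $Q_{\nu2}$ and the $J_\nu$. These mix powers of $\mathrm{Im}\Tr\mathcal{G}/N$ and $\mathrm{Im}\mathcal{G}_{11}$ and have to be split by Young's inequality as in \eqref{JN}.
\item[(c)] Termination at the last level by a first-moment bound as in \eqref{QL}.
\end{enumerate}
This is exactly what the paper's $Q_\nu$, $a^{(\nu)}$ scheme does, from \eqref{d:Qs} to \eqref{e:Q0bound}. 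Your proposal replaces all of it with ``iterating the same argument or a moment recursion in $q$''. Nor can it be short-circuited. Bounding $\bar{\mathbf{x}}^*P\bar{\mathbf{x}}\le\|\mathbf{x}\|^2\|P\|$ with $\|P\|\le\Tr P$ loses a factor $N^q$. With $\|P\|\lesssim\eta^{-2}$ it loses a factor of order $\eta^{-q}$.

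\textbf{Two smaller points.}

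First, in the diagonal part, $\E\big||x_i|^2-1\big|^{2q}$ involves the $4q$-th moment, so \eqref{truncation} must be used with $q$ itself. That gives $N^{q-1}$, not $N^{q/2-1}$. The heavy-tail piece is then $N^{-q}\E|\mathcal{G}_{11}|^{2q}$, the same order as the main piece rather than $N^{-3q/2}\E|\mathcal{G}_{11}|^{2q}$. The conclusion is unaffected; compare \eqref{e:epsilon1bound}.

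Second, consider the top-level heavy-tail term of the off-diagonal part, $\mu_{2q}^2\sum_{j\neq k}|\mathcal{G}_{jk}|^{2q}/N^{2q}$. By your own accounting it yields $\E|\mathcal{G}_{11}|^q/(N\eta)^q$. Using $\sum_k|\mathcal{G}_{jk}|^{2q}\le(\mathrm{Im}\mathcal{G}_{jj}/\eta)^q$ improves this only by a factor $N^{-1}$. For $q>2$ that is still not of the form allowed in \eqref{e:upsilonbound2}. So what your argument delivers is \eqref{e:upsilonbound1}, and the closing claim that the contributions sum to \eqref{e:upsilonbound2} does not follow. The paper's \eqref{e:epsilon2bound} carries the same term, so this looseness is shared with the written proof rather than specific to your route. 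It should still not be asserted away.
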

\begin{proof}
We start by the decomposition:
\begin{equation*}
\Upsilon = \frac{1}{N} \sum \limits_{j\neq l} \overline{x_{j}} x_{l} \mathcal{G}_{jl}  + \frac{1}{N} \sum \limits_{j} (|x_{j}|^2-1) \mathcal{G} _{jj} =\epsilon_{2} + \epsilon_{1},
\end{equation*}
where \begin{equation}\label{d:epsilonk12}
\epsilon_{2}  := \frac{1}{N} \sum \limits_{j\neq l} \overline{x_{j}} x_{l} \mathcal{G}_{jl}  \quad \text{ and } \quad
\epsilon_{1} := \frac{1}{N} \sum \limits_{j} (|x_{j}|^2-1) \mathcal{G} _{jj}.\end{equation}
We use Rosenthal's inequality (Lemma \ref{l:rosenthal}) to obtain for any $q\geq 1,$ that:
\begin{equation}\label{e:Rosenthal} \E | \epsilon_{1} |^{2q} \leq (Cq)^{2q} N^{-2q}  \left [ \left ( \mu_4 \sum \limits_{j=1}^N  \E |\mathcal{G}_{jj}  |^2 \right )^q + \sum \limits_{j=1}^N \E | x_{j} |^{4q}\E | \mathcal{G}_{jj}  |^{2q} \  \right ] .\end{equation}
We use our four-moment assumption \eqref{truncation}:
\begin{equation*}
\E |x_{j}|^{4q} \leq D^{4q-4} N^{q-1}\mu_4,
\end{equation*}
which yields that
\beq \label{e:epsilon1bound}
\E | \epsilon_{1} |^{2q} \leq (Cq)^{2q} N^{-q} \E|\mathcal{G}_{11} |^{2q} .
\eeq
For $\epsilon_{2}$ we will systematically use both Burkholder's and Rosenthal's inequalities, expressed for complex random variables in the Lemmas \ref{l:rosenthal} and \ref{l:burkholder}. Using Burkholder's inequality we obtain:
\begin{multline} \label{burk}
\E | \epsilon_{2} |^{2q}
\leq
 N^{-2q} (C_1q)^{2q} \left [ \E \left ( \sum \limits_{j=2}^N \left | \sum_{k=1}^{j-1}  {x_{k}} \mathcal{G}_{jk} \right |^2 \right )^{q} + \max \limits_{k} \E | x_{k}|^{2q} \sum \limits_{j=2}^N \E \left | \sum_{k=1}^{j-1} x_{k} \mathcal{G}_{jk}  \right |^{2q} \right ] \\
 + N^{-2q} (C_1q)^{2q} \left [ \E \left ( \sum \limits_{j=2}^N \left | \sum_{1\leq k\leq j-1}  {x_{k}} \mathcal{G}_{kj} \right |^2 \right )^{q} + \max \limits_{k} \E | x_{k}|^{2q} \sum \limits_{j=2}^N \E \left | \sum_{k=1}^{j-1} x_{k} \mathcal{G}_{kj}  \right |^{2q} \right ]
\end{multline}
We define the quantities:
\begin{equation}\label{d:Q0}
Q_0  := \sum \limits_{j=2}^N \left | \sum_{k=1}^{ j-1}  {x_{k}} \frac{\mathcal{G}_{jk}}{\sqrt{N}} \right |^2 \quad\text{and}\quad \widehat{Q}_0  := \sum \limits_{j=2}^N \left | \sum_{k=1}^{j-1}  {x_{k}} \frac{\mathcal{G}_{kj}}{\sqrt{N}}  \right |^2 
\end{equation}
The difficult part of the proof will be to bound expectations of powers of these quantities. For the other terms we apply Rosenthal's inequality and \eqref{4moment} getting that:
\begin{align} \label{e:firststep}
&\E | \epsilon_{2}|^{2q} \leq (Cq)^{2q} N^{-q} (\E |Q_0 |^q+\E |\widehat{Q}_0 |^q) +
(Cq)^{4q} N^{-\frac{3q}{2}-1} \times \\
&\left [ \sum \limits_{j=2}^N \E\left \{ \left ( \sum_{k=1}^{j-1}  | \mathcal{G}_{jk}  |^2  \right )^q + \left ( \sum_{k=1}^{j-1}  | \mathcal{G}_{kj}  |^2  \right )^q \right \} + \sum \limits_{j=2}^N  \sum_{k=1}^{ j-1} \E | x_{k} |^{2q} (\E | \mathcal{G}_{jk}  |^{2q} +
\E | \mathcal{G}_{kj}  |^{2q}) \right ] \nonumber
\end{align}
For the last terms we observe that
\begin{equation} \label{e:boundoffdiag}
|\mathcal{G}_{jk} | \leq \frac{1}{2} \sqrt{ \frac{\mathrm{Im}\mathcal{G}_{jj} }{\eta} } + \frac{1}{2} \sqrt{\frac{\mathrm{Im}\mathcal{G}_{kk} }{\eta}},
\end{equation}
which can be obtained as follows. Let $(u_i)_{i=1}^N$ be the normalized eigenvectors of $\mathcal{G} $ and $\lambda_0= 0$. Then
\begin{align*}
|\mathcal{G}_{lj} |= \left|\sum_{q=0}^N \frac{u_{lq}u_{qj}}{\lambda_q - z}\right| &\leq \sum_{q=0}^N \frac{|u_{lq}u_{qj}|}{|\lambda_q - z|}
\leq \frac{1}{2}\sum_{q=0}^N \frac{|u_{lq}|^2 + |u_{qj}|^2}{|\lambda_q - z|} \\ 
&\leq \frac{1}{2}\sqrt{\sum_{q=0}^N \frac{|u_{lq}|^2} {|\lambda_q - z|^2}} + \frac{1}{2}\sqrt{\sum_{q=0}^N \frac{|u_{qj}|^2} {|\lambda_q - z|^2}},
\end{align*}
where in the last step we recall that the eigenvectors are normalized and use Jensen's inequality. Then \eqref{e:boundoffdiag} follows. Using for the first terms the Ward identity \eqref{ward}, $\sum \limits_{l=1}^{N} | \mathcal{G}_{jl}  |^2 \leq \eta^{-1} \mathrm{Im} \ \mathcal{G}_{jj}$, we now get that
\begin{equation}\label{e:epsilon2bound}
\E | \epsilon_{2} |^{2q} \leq (Cq)^{2q} N^{-q}  \left [ \E |Q_0 |^q  + \E |\widehat{Q}_0 |^q \right ]+ \frac{(Cq)^{4q}}{ (N\eta)^{q}}  \E | \mathrm{Im}\mathcal{G}_{11} |^{q}.
\end{equation}

We will now bound the quantity $\E|Q_0 |^q$, and note that $\E|\widehat{Q}_0 |^q$ is similar. We will implement an induction scheme on the quantity $\E|Q_0 |^q$ to gradually decrease its exponent $q$ and finally remove it.
The technique is similar to the one in \cite{gotze2016optimal, gotze2014rate} but we extend Rosenthal and Burkholder inequalities to complex entries and improve the bound near the "soft" edge, where $\frac{\mathrm{Im} \Tr \mathcal{G}}{N} = \mathrm{Im} S_N \approx \mathrm{Im} S_{\rho} \to 0,$ see \eqref{e:upsilonbound1} and \eqref{e:upsilonbound2}. Notice that this improvement is captured only on \eqref{e:upsilonbound2}.

Similarly to \cite{gotze2016optimal} we define the following quantities. This analysis will let us exploit the resolvent identities for the resolvent entries that appear on the quadratic forms $Q_0$ and $\widehat{Q}_0$. Also, notice the decomposition \eqref{e:decomp}.

\pagebreak

\begin{multline}\label{d:Qs} Q_\nu  := \sum \limits_{j=2}^N \left | \sum_{ k=1}^{j-1} x_{j} a_{jk}^{(\nu)} \right |^2,
\quad
 Q_{\nu1}  := \sum \limits_{l=1}^N a_{ll}^{(\nu+1)} ,
 \\
 Q_{\nu2}  := \sum \limits_{l=1}^N \left ( |x_{l}|^2 -1 \right ) a_{ll}^{(\nu+1)}, \quad \text{and} \quad
 Q_{\nu3}  := \sum \limits_{l \neq j} x_{l}\overline{x_{j}} a_{lj}^{(\nu+1)}, 
\end{multline}
where $a_{jk}^{(\nu)}$ are defined recursively via
\begin{equation}\label{d:as}
a_{jk}^{(0)}:=  \frac{\mathcal{G}_{jk}}{\sqrt{N}}  \quad
\text{and}
\quad a_{jk}^{(\nu+1)} := \sum \limits_{l = \max\{j,k\}+1}^{N}  a_{jl}^{(\nu)}\overline{a_{kl}^{(\nu)}}\end{equation}
for $\nu = 0,1,...,L-1, L$ where $L$ is an integer such that $2^{L-1} < q \leq 2^{L}$. 

From \cite{gotze2016optimal} Lemma 5.1 and Corollaries 5.2 and 5.3, we have the following bounds:
\begin{equation}
\label{e:arr}\max \left \{|a_{rr}^{(\nu+1)}|,\; \sum_j |a_{jr}^{( \nu)}|^2 \right \}\leq \left(\frac{\mathrm{Im} \Tr \mathcal{G}  }{N\eta}\right)^{2^\nu-1}\, \frac{\mathrm{Im} \mathcal{G}_{rr} }{N\eta}\end{equation}
To set up our induction scheme we expand the absolute value square and interchange the order of summations in $Q_\nu $:
\begin{align}\label{e:decomp}
Q_{\nu}  &= \sum \limits_{j=2}^N  \sum_{ 1 \leq k_1, k_2 \leq j-1}x_{k_1} \overline{x_{k_2}}  a_{k_1j}^{(\nu)}\overline{a_{k_2j}^{(\nu)}}
= \sum_{1 \leq k_1, k_2 \leq N-1}x_{k_1} \overline{x_{k_2}} \sum \limits_{j= \max\{k_1, k_2\}+1}^{N}  a_{k_1j}^{(\nu)}\overline{a_{k_2j}^{(\nu)}} \nonumber
  \\
  &= \sum_{1 \leq j_1, j_2 \leq N}x_{j_1} \overline{x_{j_2}}   a_{j_1j_2}^{(\nu+1)}
   = Q_{\nu 1}  + Q_{\nu 2}  + Q_{\nu 3}.
\end{align}
We now take the power $2^{L-\nu}$ so as to match the case $\E |Q_0|^{2^L},$ for $\nu = 0,$ after we gradually increase the powers under an induction scheme from $\nu = L$ to $\nu = 0.$ The base case will be $\E |Q_L|$.

Taking power $2^{L-\nu}$ and expectation we obtain that:
\begin{equation} \label{QN}
\E | Q_{\nu}  |^{2^{L-\nu}} \leq 3^{2^{L-\nu}} \left ( \E | Q_{\nu1}  |^{2^{L-\nu}} + \E | Q_{\nu2}  |^{2^{L-\nu}} + \E | Q_{\nu3}  |^{2^{L-\nu}} \right ).
\end{equation}
Firstly, by the definition of $Q_{\nu1} $ and the definition of the coefficients $a_{ll}^{(\nu+1)}$ and \eqref{e:arr} we can check that:
\beq \label{Q1}
\E | Q_{\nu1}  |^{2^{L-\nu}} \leq \E \left [\frac{\mathrm{Im} \Tr \mathcal{G}  }{N\eta}  \right ]^{2^L}.
\eeq
We apply Rosenthal's inequality for the quantity
$ Q_{\nu 2} $ getting that:
\begin{multline}
\E | Q_{\nu2}  |^{2^{L-\nu}}
\leq  (Cq)^q \left [\E \left ( \sum \limits_{l=1}^N  | a_{ll}^{(\nu+1)} |^2  \right )^{2^{L-(\nu+1)}} + \sum \limits_{l=1}^N \E | |x_{l}|^2 -1 |^{2^{L-\nu}} |a_{ll}^{(\nu+1)} |^{2^{L-\nu}} \right ]
\\
\leq (Cq)^q \left [ \E \left ( \sum \limits_{l=1}^N | a_{ll}^{(\nu+1)} |^2 \right )^{2^{L-(\nu+1)}} + N^{2^{L-(\nu+1)}} \frac{1}{N} \sum \limits_{l=1}^N  \E |a_{ll}^{(\nu+1)} |^{2^{L-\nu}} \right ] ,
\end{multline}
where we used \eqref{truncation} in the last line. We notice that the first term is bounded above by the second term by Jensen's inequality, so we obtain that:
\begin{equation}
\E | Q_{\nu2}  |^{2^{L-\nu}}\leq (Cq)^q N^{2^{L-(\nu+1)}} \frac{1}{N} \sum \limits_{l=1}^N  \E |a_{ll}^{(\nu+1)} |^{2^{L-\nu}}
\leq (Cq)^q J_\nu,
\end{equation}
where we used \eqref{e:arr} and introduced the notation
\begin{equation}
J_{\nu}: = N^{-2^{L- (\nu+1)}}\E \left [ \left| \frac{\mathrm{Im} \Tr \mathcal{G}  }{N\eta} \right|^{2^L- 2^{L-\nu}}\left|\frac{\mathrm{Im} \mathcal{G}_{11} (z)}{\eta}\right|^{2^{L-\nu}} \right ].
\end{equation}
Now we apply Burkholder's inequality to $\E|Q_{\nu3}|^q$ to obtain a bound which involves $\E|Q_{\nu+1}|^{q/2}$ and $\E|\widehat{Q}_{\nu+1}|^{q/2}$ as in \eqref{burk}. We use Rosenthal's inequality to bound the other term arising from the application of Burkholder's inequality:
\begin{align}
\E|Q_{\nu3}|^{2^{L-\nu}} &\leq \E \left |\sum_{ j_1 \neq j_2}x_{j_1} \overline{x_{j_2}}   a_{j_1j_2}^{(\nu+1)} \right |^{2^{L-\nu}} \nonumber
\\
&\leq
 (Cq)^{q} \left (\E |Q_{\nu+1} |^{2^{L-(\nu+1)}} + \E |\widehat{Q}_{\nu+1} |^{2^{L-(\nu+1)}} \right ) \nonumber \\
 &+ (Cq)^q
 \E|x_1|^{2^{L-\nu}} \sum \limits_{j=2}^n
 \E\left( \left | \sum \limits_{k=1}^{j-1} a_{jk}^{(\nu+1)} x_k \right |^{2^{L-\nu}} + \left | \sum \limits_{k=1}^{j-1} a_{kj}^{(\nu+1)} x_k \right |^{2^{L-\nu}}\right) \nonumber
 \\
&\leq
 (Cq)^{q} \left ( \E |Q_{\nu+1} |^{2^{L-(\nu+1)}} + \E |\widehat{Q}_{\nu+1} |^{2^{L-(\nu+1)}} \right ) \nonumber \\
&+ (Cq)^{2q} N^{2^{L-(\nu+2)}} \frac{1}{N} \sum \limits_{j=2}^N \E\left ( \sum_{1\leq l\leq N, l \neq j} | a_{jl}^{(\nu+1)}|^2 \right )^{2^{L-(\nu+1)}} \nonumber
\\
&+ (Cq)^{q}  N^{2^{L-(\nu+1)}} \frac{1}{N^2} \sum \limits_{j=2}^N \sum_{1\leq l\leq N, l \neq j} \E | a_{jl}^{(\nu+1)} |^{2^{L-\nu}}
\end{align}
The resulting terms are bounded by \eqref{e:arr} and the following argument. By H\"older inequality and the definition \eqref{d:as}, we obtain that:
\begin{align} \label{e:arjhighpower}
&\frac{1}{N^2} \sum \limits_{j=2}^N \sum_{1\leq r\leq N, r\neq j}|a_{rj}^{(\nu+1)}|^{2^{L-\nu}} 
\leq
\frac{1}{N^2} \sum \limits_{j=2}^N  \sum_{1\leq r\leq N, r\neq j} \left|\sum_{l_1} |a_{r{l_1}}^{(\nu+1)}|^2 \sum_{l_2} |a_{{l_2}j}^{(\nu+1)}|^2\right|^{2^{L-(\nu+1)}}  \nonumber \\
 &\leq
  \frac{1}{N^2} \sum \limits_{j=2}^N  \sum_{1\leq r\leq N, r\neq j} \left|a_{rr}^{(\nu)}a_{jj}^{( \nu)}\right|^{2^{L-(\nu+1)}} \leq \left(\frac{1}{N}\sum_j |a_{jj}^{(\nu)}|^{2^{L-(\nu+1)}}\right)^2 \nonumber \\
 &\leq
    \frac{1}{N}\sum_j |a_{jj}^{(\nu)}|^{2^{L-\nu}} ,
\end{align}
where in the last step we used Jensen's inequality. This yields that:
\begin{align}
\E|Q_{\nu3}|^{2^{L-\nu}}
&\leq
(Cq)^{q} \left (\E |Q_{\nu+1} |^{2^{L-(\nu+1)}} + \E |\widehat{Q}_{\nu+1} |^{2^{L-(\nu+1)}} \right ) \nonumber
\\
&+
 (Cq)^{2q} N^{2^{L-(\nu+2)}} \E \left |  \left(\frac{\mathrm{Im} \Tr \mathcal{G}  }{N\eta}\right)^{2^{\nu+1}-1} \frac{\mathrm{Im} \mathcal{G}_{11} }{N\eta} \right |^{2^{L-(\nu+1)}} \nonumber \\
&+ (Cq)^{q}  N^{2^{L-(\nu+1)}} \E   \left|\left(\frac{\mathrm{Im} \Tr \mathcal{G}  }{N\eta}\right)^{2^{\nu}-1}\, \frac{\mathrm{Im} \mathcal{G}_{11} }{N\eta} 
\right|^{2^{L-\nu}} \nonumber
\\
&\leq
(Cq)^{q} \left (\E |Q_{\nu+1} |^{2^{L-(\nu+1)}} + \E |\widehat{Q}_{\nu+1} |^{2^{L-(\nu+1)}} \right )+
 (Cq)^{2q}(J_{\nu+1} + J_{\nu}).
\end{align}
Same bounds hold for $\widehat{Q}_{\nu}$ with $\widehat{Q}_{\nu1}$ , $\widehat{Q}_{\nu2}$ and $\widehat{Q}_{\nu3}$ defined analogously. Thus, by \eqref{QN}, we have proved that:
\begin{align*}
&\E | Q_{\nu}  |^{2^{L-\nu}} \leq 3^{2^{L-\nu}} \left ( \E | Q_{\nu1}  |^{2^{L-\nu}} + \E | Q_{\nu2}  |^{2^{L-\nu}} + \E | Q_{\nu3}  |^{2^{L-\nu}} \right ) \leq \\
&(Cq)^{2q} \left (  \E \left [\frac{\mathrm{Im} \Tr \mathcal{G}  }{N\eta}  \right ]^{2^L} +  J_{\nu} +  \E |Q_{\nu+1} |^{2^{L-(\nu+1)}} + \E |\widehat{Q}_{\nu+1} |^{2^{L-(\nu+1)}} + (J_{\nu+1} + J_{\nu})  \right ),
\end{align*}
for any $\nu = 0,1,...,L.$
By an induction argument we obtain that:
\begin{align} \label{induction}
 &\E(|Q_0|^{2^L}+|\widehat{Q}_0|^{2^L}) \leq \nonumber
 \\
 &\leq (Cq)^{cq}\left(\E(|Q_{L}|+|\widehat{Q}_{L}|) +  \sum_{\nu=0}^{L-1} \left( \E |Q_{\nu1}|^{2^{L-\nu}}+\E| \widehat{Q}_{\nu1}|^{2^{L-\nu}}+ \E|Q_{\nu2}|^{2^{L-\nu}}+\E|\widehat{Q}_{\nu2}|^{2^{L-\nu}}
 + J_\nu + J_{\nu+1}\right)\right) \nonumber
 \\
 &\leq
  (Cq)^{cq}\left(\E|Q_{L}| + \E |\widehat{Q}_{L}| + \sum_{\nu=0}^{L-1} \E \left (|Q_{\nu1}|^{2^{L-\nu}}+|\widehat{Q}_{\nu1}|^{2^{L-\nu}} \right )+ \sum_{\nu=0}^{L}
 J_\nu\right).
\end{align}
Now, we bound the three terms on the RHS. For $0 \leq \nu \leq L$ we have that
\begin{align} \label{JN}
J_\nu &= \eta^{-2^L}\E \left [ \left( \frac{\mathrm{Im} \Tr \mathcal{G}  }{N} \right)^{2^L}\left|\frac{\mathrm{Im} \mathcal{G} _{11}}{\sqrt N}\, \frac{N}{\mathrm{Im} \Tr \mathcal{G}  }\right|^{2^{L-\nu}} \right ] \nonumber
\\
&\leq \eta^{-2^L}\E\left[ \left( \frac{\mathrm{Im} \Tr \mathcal{G}  }{N}\right)^{2^{L}} + \left|\frac{\mathrm{Im} \mathcal{G} _{11}}{\sqrt N}\right|^{2^{L}}\right ] ,
\end{align}
where in the last step we used Young's inequality and $0 \leq \nu \leq L.$

Using \eqref{e:decomp} and \eqref{e:arr} we obtain
\begin{equation} \label{QL}
\E |Q_L| = \E  \left | \sum_{1 \leq j_1, j_2 \leq N}x_{j_1} \overline{x_{j_2}}   a_{j_1j_2}^{(L+1)} \right | = \sum_{j=1}^N \E |a_{jj}^{(L+1)}| \leq \E\left(\frac{\mathrm{Im} \Tr \mathcal{G}  }{N\eta}\right)^{2^L},
\end{equation}
and a similar bound for $\E |\widehat{Q}_L|.$ 

Upon substitution of \eqref{Q1}, \eqref{JN} and \eqref{QL} into \eqref{induction}, we get that:
\begin{equation}\label{e:Q0bound}
\E |Q_0|^q +\E |\widehat{Q}_0|^q\leq (Cq)^{cq}\eta^{-q}\left( \E\left(\frac{\mathrm{Im} \Tr \mathcal{G}  }{N}\right)^{q} + \E\left|\frac{\mathrm{Im} \mathcal{G} _{11}}{\sqrt N}\right|^{q}\right),
\end{equation} 
We can now go back to \eqref{e:epsilon2bound} and replace this bound on the RHS. The desired bounds \eqref{e:upsilonbound1} and \eqref{e:upsilonbound2} on $\E |\Upsilon|^{2q}$  will follow from \eqref{e:epsilon1bound} and \eqref{e:epsilon2bound}.
\end{proof}

\pagebreak

\underline{Rosenthal's and Burkholder's inequalities.}

Here we state Rosenthal's and Burkholder's inequalities adapted to complex variables and non-Hermitian bilinear forms (useful for $\mathcal G$). We are given complex random variables $x_1, ..., x_N$ with i.i.d. real and imaginary parts and $\E \left [ \mathrm{Re} (x_j) \right ] = \E \left [ \mathrm{Im} (x_j) \right ] = 0$ and $\E \left | \mathrm{Re} (x_j) \right |^2 = \E \left | \mathrm{Im} (x_j) \right |^2 = 1/2$, for $j=1,...,N$ as in our setup. We assume that for $j=1,...,N: \E |x_j|^p \leq \mu_{p}$ for any $p\geq 1$, so that all the moments exist and are bounded by these quantities. In our setup, they may also depend on $N.$

The following Lemma is our version of Rosenthal's inequality. Here we have a vector $\mathbf{a}=(a_1,...,a_N)$ of complex scalars. It is easy to prove by separating real and imaginary parts of the random variables and using Lemma 7.1 of \cite{gotze2016optimal}:
\begin{lem}[Rosenthal's inequality] \label{l:rosenthal} For any $p \geq 1,$ there exists a constant $C_1$ such that
\beq
\E \left | \sum_{j=1}^N a_j x_j \right |^{p} \leq (C_1 p)^p \left [ \left(\sum_{j=1}^N |a_j|^2\right)^{p/2}+ \mu_{p}\sum_{j=1}^N|a_j|^p \right ].
\eeq
\end{lem}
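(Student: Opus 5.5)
The plan is to reduce the complex statement to the real Rosenthal inequality, Lemma 7.1 of \cite{gotze2016optimal}, by splitting both the coefficients $a_j$ and the variables $x_j$ into real and imaginary parts. Write $x_j = u_j + i v_j$, with $u_j,v_j$ i.i.d. real, centered, of variance $1/2$, and $a_j = \alpha_j + i\beta_j$. Then
\[
\sum_j a_j x_j = \sum_j (\alpha_j u_j - \beta_j v_j) + i \sum_j (\beta_j u_j + \alpha_j v_j).
\]
Using $|w|^p \le 2^{p-1}(|\mathrm{Re}\, w|^p + |\mathrm{Im}\, w|^p)$ and then $|s+t|^p \le 2^{p-1}(|s|^p+|t|^p)$, the $p$-th moment is bounded by $4^{p}$ times a sum of four terms of the form $\E|\sum_j c_j w_j|^p$. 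Here $c_j \in \{\pm\alpha_j, \pm\beta_j\}$ is real and $w_j \in \{u_j, v_j\}$ is a family of independent, centered, real random variables.

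For each of these four real sums I apply the classical real Rosenthal inequality for independent centered variables. This gives
\[
\E\Big|\sum_j c_j w_j\Big|^p \le (C p)^p \Big[ \Big(\sum_j c_j^2 \E w_j^2\Big)^{p/2} + \sum_j |c_j|^p \E|w_j|^p \Big].
\]
To pass back to the complex quantities, I use three elementary comparisons: $\E w_j^2 = 1/2$, $c_j^2 \le |a_j|^2$, and $|c_j|^p \le |a_j|^p$. I also need $\E|w_j|^p \le \E|x_j|^p \le \mu_p$, which holds because $|u_j|,|v_j| \le |x_j|$. Summing the four contributions and absorbing the factors $4^p$ and $2^{-p/2}$ into the constant, via $4^p (Cp)^p \le (C_1 p)^p$, yields the stated bound.

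The only point needing care is the form of the constant. The real Rosenthal inequality is often stated with constant $(Cp/\log p)^p$ or $C^p p^{p}$, and I need the version in which the constant grows at most like $(Cp)^p$ uniformly in $p \ge 1$; this is how it is formulated in \cite{gotze2016optimal}. It should also be checked that the inequality is applied with the variance normalization $\E w_j^2 = 1/2$ rather than $1$, but this only changes the constant. No further obstacle is expected, since independence across $j$ is preserved under the real/imaginary splitting. Within each of the four sums only one of $u_j, v_j$ appears per index, so the dependence between $u_j$ and $v_j$ (they are in fact independent here anyway) never enters.
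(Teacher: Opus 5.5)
Your proposal is correct and follows the paper's route. The paper's proof is just the instruction to split $x_j$ into real and imaginary parts and apply Lemma 7.1 of \cite{gotze2016optimal}; your argument fills in the details of that step. You also split $a_j=\alpha_j+i\beta_j$, reduce to four real sums at a cost of $4^p$, and use $|\mathrm{Re}\,x_j|,|\mathrm{Im}\,x_j|\le|x_j|$ to bound the $p$-th moments by $\mu_p$. The uniform constant is harmless in the range $1\le p<2$, because there the real bound follows directly from Jensen, $\E|\sum_j c_jw_j|^p\le(\sum_j c_j^2\,\E w_j^2)^{p/2}$.
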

\begin{proof}
    We let $x_j = \mathrm{Re}(x_j) + i \mathrm{Im} (x_j)$ for $j=1,..,N$ and then use Lemma 7.1 of \cite{gotze2016optimal}.
\end{proof}
The following Lemma is our version of Burkholder's inequality. Here, we have a family of complex scalars $(a_{ij})_{i,j=1}^N$. This is an extension of Lemma 7.3 of \cite{gotze2016optimal} for complex entries and non-Hermitian quadratic forms. Let $$Q := \sum \limits_{j \neq k}a_{jk}x_j \overline{x_k}.$$
\begin{lem}[Burkholder's Inequality]\label{l:burkholder}
For any $q\geq 1$, there exist absolute constants $C_1, C_2$ such that:
\begin{align*}
\E |Q|^q &\leq (C_1q)^q \left ( \E \left [ \sum \limits_{j=2}^n \left | \sum \limits_{k=1}^{j-1} a_{jk} x_k \right |^2 \right ]^{q/2} + \mu_{q} \sum \limits_{j=2}^n \E \left | \sum \limits_{k=1}^{j-1} a_{jk} x_k \right |^q \right ) \\
&+ (C_2q)^q \left ( \E \left [ \sum \limits_{j=2}^n \left | \sum \limits_{k=1}^{j-1} a_{kj} x_k \right |^2 \right ]^{q/2} + \mu_{q} \sum \limits_{j=2}^n \E \left | \sum \limits_{k=1}^{j-1} a_{kj} x_k \right |^q \right ) .
\end{align*}
\end{lem}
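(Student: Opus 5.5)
We propose to prove the complex Burkholder inequality by splitting the non-Hermitian quadratic form $Q$ into its strictly lower and strictly upper triangular parts,
\[
Q = \sum_{j=2}^N \overline{x_j}\sum_{k=1}^{j-1} a_{kj} x_k \;+\; \sum_{j=2}^N x_j \sum_{k=1}^{j-1} a_{jk}\overline{x_k} =: Q^{(1)} + Q^{(2)},
\]
(up to relabelling which index carries the conjugate). Since $|Q|^q \leq 2^{q-1}(|Q^{(1)}|^q+|Q^{(2)}|^q)$, it suffices to bound each piece separately. Each piece will produce one of the two brackets in the statement, with constants $C_1$ and $C_2$.

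For a fixed piece, say $Q^{(2)}$, we will use the martingale structure in $j$. Let $\mathcal{F}_j := \sigma(x_1,\dots,x_j)$ and set $d_j := x_j \sum_{k<j} a_{jk}\overline{x_k}$. The variable $x_j$ is independent of $\mathcal{F}_{j-1}$ and has mean zero, and the inner sum is $\mathcal{F}_{j-1}$-measurable. Hence $(d_j)$ is a martingale difference sequence. We will apply the real Burkholder--Rosenthal martingale inequality (as in Lemma 7.3 of \cite{gotze2016optimal}) separately to the real and imaginary parts of $\sum_j d_j$; both are real martingales with respect to the same filtration. This gives
\[
\E\Big|\sum_j d_j\Big|^q \leq (Cq)^q\Big(\E\Big(\sum_j \E_{j-1}|d_j|^2\Big)^{q/2} + \sum_j \E|d_j|^q\Big).
\]
The conditional variance is $\E_{j-1}|d_j|^2 = \E|x_j|^2\,|\sum_{k<j}a_{jk}\overline{x_k}|^2 = |\sum_{k<j}a_{jk}\overline{x_k}|^2$. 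By independence, $\E|d_j|^q=\E|x_j|^q\,\E|\sum_{k<j}a_{jk}\overline{x_k}|^q \le \mu_q\E|\cdot|^q$. The same computation applied to $Q^{(1)}$ yields the terms with $a_{kj}$.

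It remains to reconcile the conjugation pattern with the displayed statement, which writes $x_k$ rather than $\overline{x_k}$ inside the inner sums. Two facts should handle this. First, the real and imaginary parts are i.i.d., so $\overline{x_k}$ has the same law as $x_k$ jointly across $k$ (the map $\mathrm{Im}\mapsto-\mathrm{Im}$ preserves the joint distribution only if $\mathrm{Im}\,x$ is symmetric). Second, and more safely, Rosenthal's inequality (Lemma \ref{l:rosenthal}) bounds both expressions identically whenever they are further estimated, which is how the lemma is actually used in the proof of Lemma \ref{l:Upsilonbound}.

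The main obstacle is the careful passage from the complex martingale to two real ones. One must check that splitting into real and imaginary parts costs only a factor $2^q$, and that the square function of each real part is dominated by the complex conditional variance $\sum_j \E_{j-1}|d_j|^2$. This holds because $|\mathrm{Re}\, d_j|,|\mathrm{Im}\, d_j|\le |d_j|$. After that, the constants can be absorbed into $(C_1q)^q$ and $(C_2q)^q$.
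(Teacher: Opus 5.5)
Your route is the paper's route. You split $Q$ into its strictly lower and strictly upper triangular parts and observe that each is a sum of martingale differences. You then apply a Burkholder--Rosenthal inequality and evaluate each increment using $\E|x_j|^2=1$, $\E|x_j|^q\le\mu_q$ and the independence of $x_j$ from $x_1,\dots,x_{j-1}$.

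In two places you are more careful than the paper:
\begin{itemize}
\item You keep the conjugates in the decomposition. The paper writes $Q=\sum_j\xi_j+\sum_j\widehat{\xi}_j$ with the unconjugated $\xi_j=x_j\sum_{k<j}a_{jk}x_k$, which is not literally $Q$.
\item You condition on $\sigma(x_1,\dots,x_{j-1})$, with respect to which the inner sum is measurable. The paper's $\sigma(\xi_1,\dots,\xi_{j-1})$ does not guarantee this.
\end{itemize}
The martingale inequality you need is the general one of \cite{osekowski2012note}, the analogue of Lemma 7.2 of \cite{gotze2016optimal}. Lemma 7.3 there is the real quadratic-form statement that is being generalized.

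The step that does not go through as written is the reconciliation of the conjugates. Your first ``fact'' is false in general, as your own parenthetical concedes. Your second does not prove the lemma as stated; it only argues that a different lemma would suffice downstream.

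The fix is to swap real and imaginary parts instead of conjugating. The variable $i\overline{x_k}=\mathrm{Im}\,x_k+i\,\mathrm{Re}\,x_k$ has the same law as $x_k=\mathrm{Re}\,x_k+i\,\mathrm{Im}\,x_k$, because the real and imaginary parts are i.i.d. By independence across $k$, the vectors $(i\overline{x_1},\dots,i\overline{x_N})$ and $(x_1,\dots,x_N)$ are then equal in law. Since $|\sum_{k<j}a_{jk}\overline{x_k}|=|\sum_{k<j}a_{jk}\,i\overline{x_k}|$, both $\E[\sum_j|\sum_{k<j}a_{jk}\overline{x_k}|^2]^{q/2}$ and $\E|\sum_{k<j}a_{jk}\overline{x_k}|^q$ equal the corresponding expressions with $x_k$ in place of $\overline{x_k}$. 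That is exactly the $a_{jk}$-bracket of the statement.

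Your other piece, $\sum_j\overline{x_j}\sum_{k<j}a_{kj}x_k$, already has unconjugated inner sums and matches the $a_{kj}$-bracket directly. With this replacement your proof is complete.
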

\begin{proof}
We introduce for $j=1,2,...,N$ the random variables
\beq
\xi_j := x_j \sum_{k=1}^{j-1} a_{jk} x_k \ , \ \widehat{\xi_j} := x_j \sum_{k=1}^{j-1} a_{kj} x_k
\eeq
We let $\mathcal{R}_j := \sigma( \xi_1,...,\xi_j )$ be the sigma-algebra generated by the first $j$ random variables $\xi_1,...,\xi_j$. We observe that the $\xi_j$ and $\widehat{\xi}_j $ are $\mathcal{R}_j-$measurable with $\E [ \xi_j | \mathcal{R}_{j-1} ] = 0$ and $\E [ \widehat{\xi_j} | \mathcal{R}_{j-1} ] = 0,$ which means that they form martingale differences. Next, we write $Q$ as
\beq
Q = \sum \limits_{j=2}^{n} \xi_j + \sum \limits_{j=2}^n \widehat{\xi_j}
\eeq
and so,
$$ \E |Q|^q \leq C^q \E \left | \sum \limits_{j=2}^n \xi_j \right |^q + C^q \E \left | \sum \limits_{j=2}^n \widehat{\xi_j} \right |^q .$$
We now apply a general Burkholder-Rosenthal Inequality as seen in \cite{osekowski2012note}, analogous to Lemma 7.2 from \cite{gotze2016optimal},  to the martingale-difference sequences $\xi_1,...,\xi_n$ and $\widehat{\xi_1},..., \widehat{\xi_n}.$ We just have to evaluate $\E[ \xi_j^2 | \mathcal{R}_{j-1} ]$ and $\E |\xi_j |^q ,$ for $j=1,2,...,N.$ The case is similar for the $\widehat{\xi_1},..., \widehat{\xi_n} $ random variables. We therefore observe that:
\begin{align*}
\E \left [ \left | \xi_j|^2 \right | \mathcal{R}_{j-1} \right ] &= \E|x_j|^2  \left | \sum \limits_{k=1}^{j-1} a_{jk} x_k \right |^2 = \left | \sum \limits_{k=1}^{j-1} a_{jk} x_k \right |^2, \\
\E | \xi_j |^q &= \E | \zeta_j|^q \E \left | \sum \limits_{k=1}^{j-1} a_{jk} x_k \right |^q \leq \mu_{q} \ \E \left | \sum \limits_{k=1}^{j-1} a_{jk} x_k \right |^q,
\end{align*}
and the lemma follows after applying the inequality seen in \cite{osekowski2012note}.
\end{proof}
\end{section}

\begin{section}{The resolvent bounds} \label{RBounds}

In this section we will bound the quantities $\mathbb{E} | G_{kk}|^q $ that appear in the main error term $R$ defined in \eqref{main-error}.

We remind that our strategy is to bound high powers of the expected value of this term (i.e. bound $\mathbb{E}|R|^q $) and use Markov's inequality to show that it is small in probability.

Let
\begin{equation}\label{d:lambda}
\lambda^{(j_1)}_{(j_2)} : = \max \left \{ \left |\Lambda^{(j_1)}_{(j_2)} \right |\chi_{S_{E,\eta}}, \min \left \{ \left |\Lambda^{(j_1)}_{(j_2)} \right |, \left |\tilde \Lambda^{(j_1)}_{(j_2)} \right | \right \}, \left | \mathrm{Im} \Lambda^{(j_1)}_{(j_2)} \right | \right \}
\end{equation}
By Proposition \ref{b-R},
$
\E \left | \lambda^{(j_1)}_{(j_2)} \right |^{2q} \leq C^{2q} \E \left | R^{(j_1)}_{(j_2)} \right |^q.
$
Taking expectation of a power $q\geq 1$ of $\left | R^{(j_1)}_{(j_2)} \right |$ we obtain (as in \cite{cacciapuoti2015bounds}), using \eqref{e:minor}, \eqref{e:upsilonbound1}, \eqref{e:traces}, Jensen's and Cauchy-Schwartz inequalities:
\begin{align*}\label{e:calc}
&\E \left | R^{(j_1)}_{(j_2)} \right |^q \leq
 \frac{1}{N} \sum_{k= j_1 +1}^N \E \left |\left ( [{T}_{(j_2)}^{(j_1)}]^k
  + [ \Upsilon^{(j_1)}_{(j_2)} ]^k \right)G^{(j_1)}_{(j_2) kk} \right|^q \leq
  \\
&\E\left|\left( [T_{(j_2)}^{(j_1+1)}]
  + [\Upsilon^{(j_1+1)}_{(j_2)}] \right) G^{(j_1)}_{(j_2), 11}\right|^q
  \\
&\leq \frac{\E \left |C (|j_1- j_2|+1) G^{(j_1)}_{(j_2),11}\right|^q}{(N\eta)^q} + \left|C \right|^q\sqrt{\E|G^{(j_1)}_{(j_2), 11}|^{2q}\E\left|
  [\Upsilon^{(j_1+1)}_{(j_2)}] \right|^{2q}}
  \\
&\leq \frac{\E \left |C (|j_1- j_2|+1) G^{(j_1)}_{(j_2),11}\right|^q}{(N\eta)^q} + \left|Cq\right|^{cq}\sqrt{ \E|G^{(j_1)}_{(j_2),11}|^{2q}\frac{\E|\mathcal{G}^{(j_1+1)}_{(j_2),11}|^{2q}}{N^q}}
\\
&+
 \left|C q\right|^{cq}\sqrt{\E|G^{(j_1)}_{(j_2),11}|^{2q}   \left(   \frac{\E ( \mathrm{Im} \Tr \mathcal{G}^{(j_1+1)}_{(j_2)})^q}{(N\eta)^qN^q} +  \frac{\E|\mathcal{G}^{(j_1+1)}_{(j_2),11}|^{q}}{(N\eta)^q}      \right)}
\\
&\leq \frac{\E \left |C (|j_1- j_2|+1) G^{(j_1)}_{(j_2),11}\right|^q}{(N\eta)^q} + \left|C q\right|^{cq} \sqrt{ \E|G^{(j_1)}_{(j_2),11}|^{2q}\frac{\E|\mathcal{G}^{(j_1+1)}_{(j_2),11}|^{2q}}{N^q}}
\\
&+
 C^q\sqrt{ \E|G^{(j_1)}_{(j_2),11}|^{2q}
\frac{(Cq)^{cq}}{(N\eta)^q} \left( \left ( \frac{\left|j_1+1-j_2 \right|}{N\eta} \right)^{q} +  \E \left(\mathrm{Im} S_{\rho} +  \mathrm{Im} {\Lambda}^{(j_1+1)}_{(j_2)}\right)^q +   \E|\mathcal{G}^{(j_1+1)}_{(j_2),11}|^{q}\right)}
\\
&\leq \frac{ \left |C (|j_1- j_2|+1) \right |^q \sqrt{ \E |G^{(j_1)}_{(j_2),11}|^{2q} } }{(N\eta)^q} + \left|C q\right|^{cq} \sqrt{ \E|G^{(j_1)}_{(j_2),11}|^{2q} } \sqrt{ \frac{\E|\mathcal{G}^{(j_1+1)}_{(j_2),11}|^{2q}}{N^q}} \numberthis
\\
&+
 \left|Cq\right|^{cq}\frac{ \sqrt{ \E|G^{(j_1)}_{(j_2),11}|^{2q}}} {(N\eta)^{\frac{q}{2}}} \left[ \left ( \frac{\left|j_1+1- j_2 \right|}{N\eta} \right )^{q/2} +  \sqrt{\left ( \frac{C}{\sqrt{|z|}} \right )^q +\E|\lambda_{(j_2)}^{(j_1)}|^q} + \sqrt{\E|\mathcal{G}^{(j_1+1)}_{(j_2),11}|^{q}}\right] .
\end{align*}
In the second to last line, the term $\frac{|j_1+1-j_2|}{N|z|}$ arises from equation \eqref{e:traces} and $| \lambda_{(j_2)}^{(j_1)} - \lambda_{(j_2)}^{(j_1+1)}| = \mathcal{O} \left ( \frac{1}{N\eta} \right )$ similarly with the proof of \eqref{e:minor}. We notice here that in order to get finite bounds near the "hard" edge, we have to extinguish the $\frac{1}{\sqrt{|z|}}$ term, that came from the imaginary part of $S_{\rho}.$ This suggests that we should multiply everything with $|z|^q$ and actually get bounds for the quantity $\E |z R|^q$, which in turn suggests bounds for the resolvent entries of the form $\E|\sqrt{z} G_{11}|^q$ and $\E|\sqrt{z} \mathcal{G}_{11} |^q$.

Multiplying by $|z|^q$, \eqref{e:calc} becomes:
\begin{align*}
    &\E \left | z R^{(j_1)}_{(j_2)} \right |^q \leq \\
    &\frac{ \left |C \sqrt{z} (|j_1- j_2|+1) \right |^q \sqrt{ \E |\sqrt{z} G^{(j_1)}_{(j_2),11}|^{2q} } }{(N\eta)^q} + \left|C q\right|^{cq} \sqrt{ \E|\sqrt{z} G^{(j_1)}_{(j_2),11}|^{2q} } \sqrt{ \frac{\E|\sqrt{z} \mathcal{G}^{(j_1+1)}_{(j_2),11}|^{2q}}{N^q}} \\
    &+
    \left|Cq\right|^{cq}\frac{ |z|^{q/4} \sqrt{ \E|\sqrt{z} G^{(j_1)}_{(j_2),11}|^{2q}}} {(N\eta)^{\frac{q}{2}}} \left[ \left|j_1+1- j_2 \right|^{q/2} +  \sqrt{\E|\sqrt{z} \lambda_{(j_2)}^{(j_1)}|^q} + \sqrt{\E| \sqrt{z} \mathcal{G}^{(j_1+1)}_{(j_2),11}|^{q}}\right] \\
    &\leq
    C^q \frac{ |z|^{q/2} \left ( |j_1- j_2|+1 \right )^q  }{(N\eta)^q} \sqrt{ \E |\sqrt{z} G^{(j_1)}_{(j_2),11}|^{2q} } +  \left(C q\right)^{cq} \sqrt{ \E|\sqrt{z} G^{(j_1)}_{(j_2),11}|^{2q}  \frac{\E|\sqrt{z} \mathcal{G}^{(j_1+1)}_{(j_2),11}|^{2q}}{N^q} } \\
    &(Cq)^{cq} \frac{ |z|^{q/4} \sqrt{ \E|\sqrt{z} G^{(j_1)}_{(j_2),11}|^{2q}}} {(N\eta)^{\frac{q}{2}}} \left[ |j_1 - j_2 +1|^{q/2}+\sqrt{\E|\sqrt{z} \lambda_{(j_2)}^{(j_1)}|^q} + \sqrt{\E| \sqrt{z} \mathcal{G}^{(j_1+1)}_{(j_2),11}|^{q}}\right] . \numberthis
\end{align*} 

We now try to deduce a bound for $\E |\sqrt{z}\lambda_{(j_2)}^{(j_1)}|^{2q}$. Since for any $x, \delta>0$, 
$$x^{1/4} \leq \delta x + \delta^{-1/3},$$
for $x = \E|\sqrt{z} \lambda_{(j_2)}^{(j_1)}|^{2q} $ by Cauchy-Schwartz we obtain that:
\begin{align*}
     x &\leq (Cq)^q \frac{ |z|^{q/2} \left ( |j_1- j_2|+1 \right )^q  }{(N\eta)^q} \sqrt{ \E |\sqrt{z} G^{(j_1)}_{(j_2),11}|^{2q} } + \left(C q\right)^{cq} \sqrt{ \E|\sqrt{z} G^{(j_1)}_{(j_2),11}|^{2q} } \sqrt{ \frac{\E|\sqrt{z} \mathcal{G}^{(j_1+1)}_{(j_2),11}|^{2q}}{N^q}} \\
     &+
     (Cq)^{cq} \frac{ |z|^{q/4} \sqrt{ \E|\sqrt{z} G^{(j_1)}_{(j_2),11}|^{2q}}} {(N\eta)^{\frac{q}{2}}} \left[ \delta x + \delta^{-1/3} + \sqrt{\E| \sqrt{z} \mathcal{G}^{(j_1+1)}_{(j_2),11}|^{q}} + |j_1-j_2+1|^{q/2} \right] .
\end{align*}
Writing this as
\begin{align*}
    &x \leq A + B \left [ \delta x + \delta^{-1/3} + \Gamma \right ] \Leftrightarrow \\
    &(1 - B \delta ) x \leq A + B \delta^{-1/3} + B \Gamma,
\end{align*}
we see that we can set 
$$ \delta = \frac{B^{-1}}{2} = \left [ 2(Cq)^{cq} \frac{ \sqrt{ \E|\sqrt{z} G^{(j_1)}_{(j_2),11}|^{2q}}} {(N\eta)^{\frac{q}{2}}} \right ]^{-1} ,$$
to get that:
\begin{align}\label{e:bb}
\E |\sqrt{z}\lambda_{(j_2)}^{(j_1)}|^{2q} 
&\leq (Cq)^{cq} \frac{|z|^{q/2}(|j_1-j_2|+1)^{cq}}{(N\eta)^{q}} \left((\E|\sqrt{z} {G}^{(j_1)}_{(j_2),11}|^{2q})^{1/2} \right) \nonumber \\ 
&+ (Cq)^{cq} \frac{|z|^{q/3} \left( \E|\sqrt{z} {G}^{(j_1)}_{(j_2),11}|^{2q} \right )^{2/3}}{(N\eta)^{2q/3}} + \left (C q\right)^{cq} \sqrt{ \E|\sqrt{z} G^{(j_1)}_{(j_2),11}|^{2q} } \sqrt{ \frac{\E|\sqrt{z} \mathcal{G}^{(j_1+1)}_{(j_2),11}|^{2q}}{N^q}} \nonumber \\
&+\left(C q \right )^{cq} \sqrt{ \frac{ |z|^{q/2} \E|\sqrt{z}G^{(j_1)}_{(j_2),11}|^{2q}}{ (N\eta)^q}} \left [ \sqrt{\E|\sqrt{z}\mathcal{G}^{(j_1+1)}_{(j_2),11}|^{q}} + |j_1-j_2+1|^{q/2} \right ] .
\end{align}
\begin{lem}[Resolvent bounds] \label{l:Gkk}
Let $E \leq 4, \eta \leq \eta_0$ and $q \leq \left (\frac{N \eta}{|\sqrt{z}|} \right )^{1/4}$ with $\frac{N\eta}{|\sqrt{z}|} \geq M$ for some suitable large constant $M>0$. Assume that $j_1, j_2\in \{0,...,N-1\}$ are such that $0 \leq | j_1 - j_2| \leq  C_1 q$ for a uniform constant $C_1$. Then with definitions as before, 
\[
\E|\sqrt z G^{(j_1)}_{(j_2), 11}|^{q} \leq C^{q} \quad \text{and} \quad \E|\sqrt z \mathcal{G}^{(j_1+1)}_{(j_2), 11}|^{q} \leq C^{q},
\]
for some constant $C$.
\end{lem}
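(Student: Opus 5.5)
We would follow the multiscale bootstrap scheme of \cite{cacciapuoti2015bounds}: prove the bound first at a large scale $\eta \sim \eta_0$, where it is trivial, and then propagate it downward in $\eta$ by geometric steps, using Lemma \ref{G11eta/s} to pass between neighbouring scales and the self-consistent bound \eqref{e:bb} to restore the constant at each new scale. At scale $\eta \geq \eta_0$ we have $|G_{11}| \leq \eta^{-1}$. Since we only need $E \leq 4$, $|z|$ is bounded there, so $|\sqrt z G_{11}| \leq C$ deterministically. The same holds for $\mathcal{G}^{(j_1+1)}_{(j_2)}$ and for every minor.

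\textbf{Representation of $\sqrt z G_{11}$.} By \eqref{G2},
\[
\sqrt z\, G_{11} = -\frac{1}{\sqrt z\,(1 + S_N^{(1)} + T + \Upsilon)},
\]
with $S_N^{(1)}$ the trace of the appropriate minor, $T$ bounded by \eqref{e:boundsT12}, and $\Upsilon$ the centred quadratic form. We write $1 + S_N^{(1)} = 1 + S_\rho + \Lambda^{(1)}$. From \eqref{S-rho1}, $\sqrt z\,(1+S_\rho) = \tfrac12\sqrt z + \tfrac12\sqrt{z-4}$, and this is bounded below in modulus by a constant for $E \leq 4$ and small $\eta$. The hard-edge singularity is therefore exactly cancelled by the $\sqrt z$ prefactor.

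On the event
\[
\Omega_z := \bigl\{ |\sqrt z\,(\Lambda^{(1)} + T + \Upsilon)| \leq c \bigr\},
\]
with $c$ small, we get $|\sqrt z G_{11}| \leq C$. On the complement we use the crude bound $|\sqrt z G_{11}| \leq |\sqrt z|/\eta$. By Markov's inequality, $\mathbb{P}(\Omega_z^c)$ is controlled by high moments of $\sqrt z\,\Lambda^{(1)}$, $\sqrt z\, T$ and $\sqrt z\,\Upsilon$.
\begin{itemize}
\item $|\sqrt z\, T| \leq C(|j_1-j_2|+1)|\sqrt z|/(N\eta) \leq C_1 q/M$ deterministically, which is small because $|j_1 - j_2| \leq C_1 q$ and $N\eta/|\sqrt z| \geq M$.
\item $\sqrt z\,\Upsilon$ is bounded by Lemma \ref{l:Upsilonbound}, which requires moments of $\sqrt z\,\mathcal{G}_{11}$ at the current scale.
\item $\sqrt z\,\Lambda^{(1)}$ is bounded by \eqref{e:bb}, which requires moments of $\sqrt z\, G_{11}$ and $\sqrt z\,\mathcal{G}_{11}$ at the current scale.
\end{itemize}

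\textbf{Induction.} Set $\eta_k = \eta_0 s^{-k}$ with $s$ a fixed constant, say $s = 2$. Assume the bound $C^{p}$ at scale $\eta_k$ for all minors with index difference up to $C_1 q + k'$, where $k'$ absorbs the index growth incurred along the scheme. Lemma \ref{G11eta/s} then gives an a priori bound $(sC)^{p}$ at $\eta_{k+1}$. This is only an inflated constant, but it is enough to feed into \eqref{e:bb} and Lemma \ref{l:Upsilonbound}. Those yield
\[
\E|\sqrt z\,\lambda|^{2p} \lesssim (Cp)^{cp} \left( \frac{|\sqrt z|}{N\eta} \right)^{p/2}
\]
up to lower-order terms. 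Choose $p$ of order $q$; the requirement $q \leq (N\eta/|\sqrt z|)^{1/4}$ makes $(Cp)^{cp}(N\eta/|\sqrt z|)^{-p/2}$ small. The splitting above then restores the original constant $C^{q}$, independent of $k$.

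The contribution from $\Omega_z^c$ is
\[
\E\bigl[ |\sqrt z G_{11}|^{q}\,\mathbf{1}_{\Omega_z^c} \bigr] \leq \left( \frac{|\sqrt z|}{\eta} \right)^{q} \mathbb{P}(\Omega_z^c)^{1/2}\,(\ldots).
\]
To absorb the factor $\eta^{-q}$, which is at most polynomial in $N$, we estimate $\mathbb{P}(\Omega_z^c)$ using moments of order of a large multiple of $q$. It is also convenient to use Cauchy--Schwarz on $\E |\cdot|^{2q}$ and run the whole induction at doubled exponent. Since $\mathcal{G}^{(j_1+1)}_{(j_2)}$ obeys the same stripping identity, via \eqref{e:calG} and \eqref{e:traces} with the extra $(j_1 - j_2 + 1)/z$ term, the two bounds are proved simultaneously.

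\textbf{Main obstacle.} The hardest part is closing the induction without the constant degrading across the roughly $\log N$ scales. Two issues must be handled.
\begin{itemize}
\item The index difference $|j_1 - j_2|$ grows by one whenever $\mathcal{G}^{(j_1+1)}$ is invoked. This is why the hypothesis allows $|j_1 - j_2| \leq C_1 q$, and we would do the bookkeeping so that only $O(q)$ strippings are ever needed.
\item The bad event must be controlled using moment orders compatible with $q \leq (N\eta/|\sqrt z|)^{1/4}$.
\end{itemize}
We would first try fixing the threshold $c$ and using \eqref{e:bb} at exponent $2q$, and check that the factor $(Cq)^{cq}$ is beaten by $(N\eta/|\sqrt z|)^{-q/3}$.
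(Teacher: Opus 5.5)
Your overall scheme matches the paper: a downward induction in $\eta$, with Lemma \ref{G11eta/s} passing to the next scale at an inflated constant, and \eqref{e:bb} plus Lemma \ref{l:Upsilonbound} restoring it. The step that restores the constant fails as you set it up, however.

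On $\Omega_z^c$ you use the deterministic bound $|\sqrt z\, G_{11}| \leq |\sqrt z|/\eta = N/\Psi$, where $\Psi := N\eta/|\sqrt z|$. You would therefore need $\mathbb{P}(\Omega_z^c)$ to be of order $(\Psi/N)^{q}$. At this stage, $\Lambda$, $\mathcal{T}$ and $\Upsilon$ are controlled only through moments of order at most $\Psi^{1/4}$, with decay that is a power of $\Psi^{-1}$ times $(Cq)^{cq}$. The lemma only assumes $\Psi \geq M$ with $M$ a fixed constant. At such scales both the admissible moment order and the decay are $O(1)$. Markov's inequality then bounds $\mathbb{P}(\Omega_z^c)$ only by a constant depending on $M$, nowhere near $N^{-q}$. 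This is not just a weakness of the estimates: when $\Psi \sim M$, the term $\sqrt z\,\Upsilon$ is typically of size $M^{-1/2}$. So one should not expect the probability of exceeding a fixed threshold $c$ to decay in $N$ at all. Moments ``of order a large multiple of $q$'' are not available there, and the factor $(N/M)^{q}$ cannot be absorbed.

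The missing idea is to never use the crude $1/\eta$ bound. The paper writes the stripping formula in the linear form
$G^{(k_1)}_{(k_2),11} = S_\rho + zS_\rho\bigl(\Lambda^{(k_1)}_{(k_2)} + [\mathcal{T}^{(k_1)}_{(k_2)}]^N + [\Upsilon^{(k_1)}_{(k_2)}]^N\bigr)G^{(k_1)}_{(k_2),11}$, together with the analogous identity with $\tilde\Lambda$ so that only $\min\{|\Lambda|,|\tilde\Lambda|\}$ enters. This gives, pointwise and with no event splitting, $|\sqrt z\, G_{11}| \leq C\bigl[1 + |\sqrt z\, G_{11}|\,\varepsilon\bigr]$, where $\varepsilon := |\sqrt z|\min\{|\Lambda|,|\tilde\Lambda|\} + |\sqrt z\,\mathcal{T}| + |\sqrt z\,\Upsilon|$.

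Taking $q$-th moments and applying Cauchy--Schwarz, the dangerous term becomes $(\E|\sqrt z\, G_{11}|^{2q})^{1/2}(\E\varepsilon^{2q})^{1/2}$. The first factor is bounded by the inflated a priori bound $(16C_0)^{q}$ from Lemma \ref{G11eta/s}. The second is small by \eqref{e:bb} and Lemma \ref{l:Upsilonbound}. In your language: bound the bad-event contribution by $(\E|\sqrt z\, G_{11}|^{2q})^{1/2}\,\mathbb{P}(\Omega_z^c)^{1/2}$ using the a priori moment, not $|\sqrt z|/\eta$.

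This uses the induction hypothesis at exponent $2q$ on the coarser scale, as does your own use of \eqref{e:bb}. That is why the paper steps $\eta_{i+1} = \eta_i/16$: then $2q \leq (N\eta_i/|\sqrt E|)^{1/4}$ is the same as $q \leq (N\eta_{i+1}/|\sqrt E|)^{1/4}$, so the admissible range reproduces itself. With your ratio $s=2$, the range $q \leq \Psi^{1/4}$ loses a factor $2^{3/4}$ at each of the roughly $\log N$ steps, and the induction does not close.
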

\begin{proof}
We will implement an induction argument similar to \cite{cacciapuoti2015bounds, gotze2016optimal}.  The induction hypothesis will be that for $\eta_i = \eta_0/16^i$ for some constant $\eta_0$ and any $j_1,j_2 $ with $|j_1-j_2| \leq L_i := C_1 \left (\frac{N \eta_i}{|\sqrt{E}|} \right )^{1/4}.$
\begin{equation}\label{e:inductionhyp}
\E|\sqrt{z} G^{(j_1)}_{(j_2), 11}(\eta_i)|^q \leq C_0^{q} \quad \text{and} \quad \E|\sqrt{z} \mathcal{G}^{(j_1+1)}_{(j_2), 11}(\eta_i)|^q \leq C_0^{q}
\end{equation}
for $q \leq \left (\frac{N \eta_i}{|\sqrt{E}|} \right )^{1/4}$ for a universal constant $C_0$. 

\underline{Induction basis:} We notice that this holds to initiate our induction for $\eta_0$ as a constant. This can be proved for example by the inequality $|S_N(z)| \leq \frac{1}{\eta}$ as seen in section \ref{S-Re}, point 4.

\underline{Induction step:} Letting $\eta_{i+1} = \eta_i/16$ and $L_{i+1} =  L_i/2 $ we will show that inequality \eqref{e:inductionhyp} taken at $\eta_i$ implies the same inequality with the same constant $C_0$ for $\eta_{i+1}$. 

From the induction hypothesis and Lemma \ref{G11eta/s} we see that
\begin{equation}
\begin{split}\label{e:s}
\E|\sqrt z G^{(j_1)}_{(j_2), 11}(\eta_{i+1})|^{q} \leq (16C_0)^{q} \quad \text{and} \quad \E| \sqrt{z}\mathcal{G}^{(j_1+1)}_{(j_2), 11}(\eta_{i+1})|^{q} \leq (16C_0)^{q}
\end{split}
\end{equation}
for any $j_1, j_2$ with $| j_1 - j_2 | \leq L_i$. This will need to be improved to the bound $C_0^q$ for any $k_1, k_2$ with $|k_1-k_2| \leq 2^{-1} L_i$.

We will use the inequality $|k_1 - k_2| \leq L_{i+1}$ in equation \eqref{e:lambdabootstrap}.

From \eqref{G2}, \eqref{T-T}, \eqref{Y-Y}, \eqref{S-rho} and \eqref{d:tildeLambda} we obtain that (for $\eta_{i+1})$:
\begin{align}
&G^{(k_1)}_{(k_2), 11} = S_{\rho} + zS_{\rho} \left (\Lambda^{(k_1)}_{(k_2)} + [\mathcal{T}_{(k_2)}^{(k_1)}]^N + [\Upsilon^{(k_1)}_{(k_2)}]^N \right ) G^{(k_1)}_{(k_2), 11}  \\
&G^{(k_1)}_{(k_2),11} = S_{\rho} - zS_{\rho} \left (\tilde{\Lambda}^{(k_1)}_{(k_2)}- [\mathcal{T}_{(k_2)}^{(k_1)}]^N - [\Upsilon^{(k_1)}_{(k_2)}]^N \right ) G^{(k_1)}_{(k_2), 11} \nonumber \\ 
&+ zS_{\rho}(2S_{\rho} + 1) G^{(k_1)}_{(k_2), 11}.
\end{align}
The analogous statements for $\mathcal{G}^{(k_1)}_{(k_2),11}$ follow similarly from \eqref{e:calG}, \eqref{T-T}, \eqref{Y-Y}, \eqref{S-rho} and \eqref{d:tildeLambda} (for $\eta_{i+1}$):
\begin{align} \label{e:bootstrapcalG}
&\mathcal{G}^{(k_1)}_{(k_2), 11} = S_{\rho} +z S_{\rho} \left (\Lambda^{(k_1)}_{(k_2)} + [T_{(k_2)}^{(k_1)}]_N + [Y^{(k_1)}_{(k_2)}]_N \right] \mathcal{G}^{(k_1)}_{(k_2), 11} \\
&\mathcal{G}^{(k_1)}_{(k_2), 11} = S_{\rho} - z S_{\rho} \left (\tilde{\Lambda}^{(k_1)}_{(k_2)} - [T_{(k_2)}^{(k_1)}]_N - [Y^{(k_1)}_{(k_2)}]_N \right] \mathcal{G}^{(k_1)}_{(k_2), 11} \nonumber \\
&+ zS_{\rho}(2S_{\rho} + 1) \mathcal{G}^{(k_1)}_{(k_2), 11}.
\end{align}
This yields that:
\begin{align*}
&|G^{(k_1)}_{(k_2),11}|\leq |S_{\rho}| + \left |G^{(k_1)}_{(k_2),11} \right | \left ( \left |\sqrt{z}
\Lambda^{(k_1)}_{(k_2)} \right |+ \left | \sqrt{z}[\mathcal{T}^{(k_1)}_{(k_2)}]^N \right|+\left |\sqrt{z}[\Upsilon^{(k_1)}_{(k_2)}]^N \right | \right )|\sqrt{z} S_{\rho}|
\\
&|G^{(k_1)}_{(k_2),11}|\leq \left|\frac{S_{\rho}}{1-z S_{\rho}(2S_{\rho} +1)}\right| + \left |G^{(k_1)}_{(k_2),11} \right | \left ( \left |\sqrt{z} \tilde{\Lambda}^{(k_1)}_{(k_2)} \right |+ \left |\sqrt{z} [T^{(k_1)}_{(k_2)}]^N \right |+ \left |\sqrt{z}[\Upsilon^{(k_1)}_{(k_2)}]^N \right | \right ) \times \\
&\left | \frac{ \sqrt{z} S_{\rho}}{1-z S_{\rho}(2S_{\rho} +1)} \right |,
\end{align*}
and using \eqref{S-rho} we see that $1-z S_{\rho}(2S_{\rho} +1) = 2 -zS_{\rho}^2$ and thus $\frac{S_{\rho}}{1-z S_{\rho}(2S_{\rho} +1)}= \frac{S_{\rho}}{2-zS_{\rho}^2}$.

We use the bounds $|{z} S_{\rho}^2 - 2|\geq c_1$ and $|\sqrt{z} S_{\rho}| \leq C_2$, valid in our domain, and let $C = \max \{ C_2 , C_2 / c_1 \}  .$

We then have that:
\begin{equation*}
|\sqrt{z}G^{(k_1)}_{(k_2),11}|
\leq C \left [ 1+ \left |\sqrt{z} G^{(k_1)}_{(k_2),11} \right | \left (|\sqrt{z}| \min\ \left \{|\Lambda^{(k_1)}_{(k_2)}|, |\tilde\Lambda^{(k_1)}_{(k_2)}| \right \} + \left |\sqrt{z}[T^{(k_1)}_{(k_2)}]^N \right |+ \left |\sqrt{z}[\Upsilon^{(k_1)}_{(k_2)}]^N \right | \right ) \right ]
\end{equation*}
and taking power $q$, expectation, and using Jensen, Cauchy-Schwarz and \eqref{e:minor} we get at $\eta_{i+1}$:
\begin{multline} \label{3.5}
\mathbb{E}|\sqrt{z}G^{( k_1)}_{( k_2), 11}|^q
\leq C^q \Bigg [ 1+ \sqrt{\mathbb{E}|\sqrt{z}G^{( k_1)}_{( k_2), 11}|^{2q}} \sqrt{\mathbb{E}|\sqrt{z}\lambda^{( k_1)}_{( k_2)}|^{2q}}
\\
+ \frac{|\sqrt{z}|^q (|k_1-k_2|+1)^q}{(N\eta_{i+1})^q} \mathbb{E}|\sqrt{z}G^{( k_1)}_{( k_2), 11}|^{q} +\sqrt{\mathbb{E}|\sqrt{z}G^{( k_1)}_{( k_2), 11}|^{2q}} \sqrt{\mathbb{E}|\sqrt{z}[\Upsilon^{( k_1 )}_{( k_2)}]^N|^{2q}} \Bigg ]
\end{multline}
Using the above, Lemma \ref{l:Upsilonbound} and a calculation similar to \eqref{e:calc} we obtain again at $\eta_{i+1}$:
\begin{align*}\label{e:bootstrap}
 &\mathbb{E}|\sqrt{z}G^{(k_1)}_{(k_2), 11}|^q \leq C^{cq} \Bigg [ 1+ \sqrt{\mathbb{E}|\sqrt{z}G^{( k_1)}_{( k_2), 11}|^{2q}} \sqrt{\mathbb{E}|\sqrt{z}\lambda^{( k_1)}_{( k_2)}|^{2q}} \\
 &+
  \frac{|\sqrt{z}|^q (|k_1-k_2|+1)^q}{(N\eta_{i+1})^q} \mathbb{E}|\sqrt{z}G^{( k_1)}_{( k_2), 11}|^{q} \\
  &+ (Cq)^{cq} \sqrt{\mathbb{E}|\sqrt{z}G^{( k_1)}_{( k_2), 11}|^{2q}} \frac{|\sqrt{z}|^{q/4}}{(N\eta_{i+1})^{q/2}} \sqrt{1 + \E |\sqrt z \lambda^{(k_1)}_{(k_2)}| ^{q} +  \E|\sqrt z \mathcal{G}^{(k_1+1)}_{(k_2), 11}|^{q}} \\
  &+  \sqrt{\E|\sqrt{z}G^{(k_1)}_{(k_2), 11}|^{2q}\frac{\E|\sqrt{z} \mathcal{G}^{(k_1+1)}_{(k_2), 11}|^{2q}}{N^q}} \Bigg ] \numberthis
\end{align*}
We use \eqref{e:s} to bound the terms $\mathbb{E}|\sqrt{z}G^{(k_1)}_{(k_2), 11}|^{2q}$ and $\E| \sqrt{z} \mathcal{G}^{(k_1+1)}_{(k_2), 11}|^{2q}$ in the above inequality, noting that $|k_1 - k_2| \leq L_{i+1} \leq L_i$. To use \eqref{e:s} we need $2q \leq  \left (\frac{N \eta_{i+1}}{|\sqrt{E}|} \right )^{1/4}$, which gives us $q \leq \left(\frac{N \eta_i}{16|\sqrt{E}|} \right )^{1/4} $, which is what we need. Then using \eqref{e:s} on equation \eqref{e:bb} at $\eta_{i+1}$ and recalling that $|k_1-k_2| \leq C_1q$ we obtain:
\begin{align*}\label{e:lambdabootstrap}
\E | \sqrt{z} \lambda^{(k_1)}_{(k_2)} |^q
&\leq  (Cq)^{cq} \frac{|z|^{q/2}}{(N\eta_{i+1})^q} (16C_0)^{q/2} + (Cq)^{cq} \frac{|z|^{q/3}}{(N\eta_{i+1})^{2q/3}} (16C_0)^{2q/3} \\
&+ (Cq)^{cq} \frac{|z|^{q/4}}{(N\eta_{i+1})^{q/2}} (16C_0)^{3q/4} + (Cq)^{cq} \frac{(16C_0)^q}{N^{q/2}} . \numberthis
\end{align*}

\pagebreak

Since we will choose a large enough $M>0$ such that $\frac{N\eta_{i+1}}{|\sqrt{z}|} \geq M$ to work the argument, we can assume that $\frac{N\eta_{i+1}}{|\sqrt{z}|} > 1,$ so that \eqref{e:lambdabootstrap} becomes:
\begin{equation} \label{e:lambda-boot}
    \E | \sqrt{z} \lambda^{(k_1)}_{(k_2)} |^q \leq  (Cq)^{cq} \frac{|z|^{q/4}}{(N\eta_{i+1})^{q/2}} (16C_0)^{3q/4} + (Cq)^{cq} \frac{(16C_0)^q}{N^{q/2}} . 
\end{equation}

Substituting this into \eqref{e:bootstrap}, we obtain that at $\eta_{i+1}$:
\begin{align*}
&\E |\sqrt{z}G^{(k_1)}_{(k_2), 11}|^q \leq C^{cq} \Bigg \{ 1 + (16C_0)^q (Cq)^{cq} \left [ \frac{|\sqrt{z}|^{q/2}} {(N\eta_{i+1})^{q/2}}  (16C_0 )^{q/4} + \frac{(16C_0)^{q}}{N^{q/2}} \right ] +
\\
 &+
 (16C_0)^q  (Cq)^{cq} \frac{|\sqrt{z}|^{q/2}}{(N\eta_{i+1})^{q/2}} \sqrt{1 + \frac{|\sqrt{z}|^{q/2}}{(N\eta_{i+1})^{q/2}} (16C_0)^{q/4} + \frac{(16C_0)^{q}}{N^{q/2}}+ (16C_0)^{q}} + \frac{(16C_0)^{2q}}{N^{q/2}}
\Bigg \} \\
&\leq C^{cq} \left [ 1 + K^q \left( \frac{ |\sqrt{z}| } {N\eta_{i+1}} \right)^{q/4}  \right ] , \end{align*}
where we used the bound for $q$. Here we have a constant $K>0$ depending on $C_0$ and $C$. We can choose $C_0 > 2C^{c}$ and $ \frac{ N \eta_{i+1}}{|\sqrt{z}|} > M > K^{4}$, so that $ K^q \left( \frac{|\sqrt{z}|} {N\eta_{i+1}} \right)^{q/4} <1$ and therefore $\E|\sqrt{z} G^{(k_1)}_{(k_2),11}(\eta_{i+1})]^q \leq C_0^q$ as required. 

We notice that all the steps are identical for $\mathcal{G}^{(k_1+1)}_{(k_2), 11}$ using  \eqref{e:bootstrapcalG} and exactly one row gets stripped as well as exactly one column so that $|k_1+1 -(k_2+1)| = |k_1 - k_2|\leq L_{i+1}.$
\end{proof}
Combining \ref{l:Gkk} with \eqref{e:bb}, we can deduce a first bound for $\Lambda$, which will be optimized in the next section.
\begin{cor}[Weak local law] Let $z \in Z_{E,\eta}$ such that $\frac{N\eta}{|\sqrt{z}|} \geq M,$ for some suitably large constant $M$. Then, there exist $C,C_1>0$ such that:
\[
\E | \Lambda |^{2q} \leq \frac{(Cq)^{cq}}{(N\eta)^{q/2}},
\]
for any $0\leq q \leq C_1 \left (\frac{N\eta}{|\sqrt{z}|} \right )^{1/4}.$
\end{cor}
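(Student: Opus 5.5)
The plan is to combine the resolvent bounds of Lemma \ref{l:Gkk} with the self-consistent estimate \eqref{e:bb}, taking $j_1=j_2=0$, and then pass from $\lambda$ to $\Lambda$ using Proposition \ref{b-R}.

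First, for $z\in Z_{E,\eta}$ the definition \eqref{d:lambda} contains the term $|\Lambda|\chi_{S_{E,\eta}}$, where $S_{E,\eta}$ is evidently the paper's name for the domain $Z_{E,\eta}$. Hence $|\Lambda|\le \lambda$ pointwise, and it suffices to bound $\E|\lambda|^{2q}$. Since $|\sqrt z|\gtrsim \eta$ is not small at the scales of interest, I will bound $\E|\sqrt z\lambda|^{2q}$ and divide by $|z|^q$ at the end. Alternatively, I can note directly that the $|z|$ powers produced cancel favourably.

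Next, I apply Lemma \ref{l:Gkk} with $q$ replaced by $2q$. This is admissible once $C_1$ in the range $q\le C_1(N\eta/|\sqrt z|)^{1/4}$ is chosen small enough that $2q$ lies in the lemma's range. It gives $\E|\sqrt z G_{11}|^{2q}\le C^{2q}$ and $\E|\sqrt z\mathcal G^{(1)}_{11}|^{2q}\le C^{2q}$, with $|j_1-j_2|=0$, and also $|j_1+1-j_2|=1$ for the $\mathcal G^{(1)}$ term.

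I then substitute these into \eqref{e:bb}. The right-hand side consists of four terms of orders $|z|^{q/2}(N\eta)^{-q}$, $|z|^{q/3}(N\eta)^{-2q/3}$, $N^{-q/2}$ and $|z|^{q/4}(N\eta)^{-q/2}$, each multiplied by $(Cq)^{cq}$. Because $N\eta/|\sqrt z|\ge M\ge 1$, each of the first, second and fourth terms is dominated by $(|\sqrt z|/(N\eta))^{q/2}$ times a power of $|z|^{q/2}$. The term $N^{-q/2}$ is at most $(|z|^{1/2}/(N\eta))^{q/2}$ up to constants in the relevant regime $\eta\lesssim |\sqrt z|$, which holds since $\eta\le |z|$ and $|z|$ is bounded. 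This gives
\[
\E|\sqrt z\,\lambda|^{2q}\le (Cq)^{cq}\,|z|^{q}\,\frac{1}{(N\eta)^{q/2}}\cdot C^{q},
\]
after absorbing the bounded factors $|z|^{\alpha q}\le C^q$, since $E\le 4$ and $\eta\le\eta_0$. Dividing by $|z|^q$ yields $\E|\Lambda|^{2q}\le (Cq)^{cq}(N\eta)^{-q/2}$.

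The main obstacle is this bookkeeping of the $|z|$ powers: near the hard edge I must check that, after dividing by $|z|^q$, no negative power of $|z|$ survives.

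The potentially bad terms are the ones with the fewest powers of $|z|$, namely $|z|^{q/4}(N\eta)^{-q/2}$ and $N^{-q/2}$. For the first, dividing $\E|\sqrt z\lambda|^{2q}$ by $|z|^q$ leaves $|z|^{-3q/4}(N\eta)^{-q/2}$. If this leftover is not acceptable, I would instead work directly from the un-multiplied estimate \eqref{e:calc} for $\E|R|^q$, inserting $\E|G_{11}|^{2q}\le C^{2q}|z|^{-q}$, and use Proposition \ref{b-R} in the form $|\Lambda|\le C|R|/|S_\rho+\tfrac12|$, since $|S_\rho+\tfrac12|\sim|z|^{-1/2}$ near $0$. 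This recovers the missing factor $|z|^{q}$.

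I would first try to close the argument via this $R/(S_\rho+\tfrac12)$ route, which is precisely where the factor $\sqrt z$ is compensated.
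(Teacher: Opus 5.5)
Your main route is the paper's own argument (its proof is just ``combine Lemma~\ref{l:Gkk} with \eqref{e:bb}''), and it works where $|z|\gtrsim 1$. The gap is at the hard edge, and your displayed intermediate bound hides it. Absorbing $|z|^{\alpha q}\le C^q$ only discards positive powers of $|z|$. It cannot produce the prefactor $|z|^{q}$ in $\E|\sqrt z\,\lambda|^{2q}\le (Cq)^{cq}|z|^{q}(N\eta)^{-q/2}$. To see what the substitution actually gives, use $N\eta\ge M|\sqrt z|$ for the first two terms of \eqref{e:bb} and $\eta\le|z|\lesssim|\sqrt z|$ for the $N^{-q/2}$ term. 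The result is
\[
\E|\sqrt z\,\Lambda|^{2q}\le \E|\sqrt z\,\lambda|^{2q}\le (Cq)^{cq}\left(\frac{|\sqrt z|}{N\eta}\right)^{q/2},
\qquad\text{i.e.}\qquad
\E|\Lambda|^{2q}\le (Cq)^{cq}\,|z|^{-3q/4}(N\eta)^{-q/2}.
\]
This is the stated bound only when $|z|$ is bounded below, and the excess factor is precisely the leftover you flagged.

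Your fallback does not remove it. Run \eqref{e:calc} at power $2q$ with $\E|G_{11}|^{4q}\le C^{4q}|z|^{-2q}$ and $|S_\rho+\tfrac12|^{-1}\lesssim|z|^{1/2}$. The $T^k$ term then contributes $(N\eta)^{-2q}$ to $\E|\Lambda|^{2q}$. The $\mathrm{Im}\,S_\rho\lesssim|z|^{-1/2}$ term coming from the $\Upsilon$-bound contributes $|z|^{-q/2}(N\eta)^{-q}$. Both are $\le (N\eta)^{-q/2}$ only when $|z|N\eta\gtrsim 1$, and you would still need the $\delta$-absorption of the $\E|\lambda|^q$ that reappears on the right.

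In fact no argument covers the full range. The hypothesis $N\eta\ge M|\sqrt z|$ admits $E\sim\eta\sim M^2N^{-2}$, where $N\eta\ll 1$. There the claim would give $|\Lambda|\lesssim (N\eta)^{-1/4}$, strictly better than the rate $1/(N\eta)$ of Theorem~\ref{t:sti}. That rate is the natural fluctuation size of $\Lambda$ in this regime, since each eigenvalue within distance $\eta$ of $E$ contributes $\asymp 1/(N\eta)$ to $\mathrm{Im}\,S_N$. The paper's one-line proof glosses over the same point.

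What you can prove, and what is actually used later (cf.\ \eqref{e:lambda-boot} and the quantity $\E|z\Lambda|^q$ in \eqref{d:controlparam}), is the $\sqrt z$-weighted bound displayed above. Near the hard edge your fallback, run with the weight, even improves it to $\E|\sqrt z\,\Lambda|^{2q}\le (Cq)^{cq}(|\sqrt z|/(N\eta))^{q}$. So either restrict the corollary to $|z|\ge c>0$ or state it in weighted form.
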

\end{section}
\begin{section}{Optimal bound for the Stieltjes transform} \label{Expand}

In this section we prove Theorem \ref{t:sti}. We will use the matrix expansion algorithm from \cite{cacciapuoti2015bounds}, which carries over directly as it is based entirely on linear algebra of resolvents. We will make a note of the important modifications. We note, importantly, that as we expand resolvent entries, we will be removing columns of $X_N$ and we never need to remove rows. The expansion algorithm yields results in terms of high moments of the following quantities:
\begin{equation}\label{e:quantities} | \sqrt{z} G_{kk}^{(\J)} | , \left | \frac{1}{\sqrt{z}G_{kk}^{(\J)}} \right | ,\left| (\mathbb{I}-\E_k) \frac{1}{\sqrt{z} G_{kk}^{(\J)}} \right |, |\sqrt{z} G_{kl}^{(\J)}|,  
\end{equation}
where $\J$ is any subset of $\{1,..,N\}$ and $G^{(\J)}$ means that we remove all columns of the scaled matrix $X$ that belong on that subset $\J$ and then take the resolvent of $K_{XX}$. Analogously, $G_{(\J)}$ means that we remove all rows of the scaled matrix $X$ that belong on that subset $\J$ and then take the resolvent of $K_{XX}$.

To obtain optimal bounds on $\Lambda$, we will use the precise inequality in $\eqref{e:upsilonbound2}.$

We begin this section by estimating high moments of the quantities in \eqref{e:quantities}. We will use \eqref{e:upsilonbound1} to obtain bounds on $\E |\sqrt{z} G_{(j_2), kl}^{(j_1)}|$ as well as $|\E\sqrt{z} \mathcal{G}_{(j_2), kl}^{(j_1)}|$.

For convenience of notation, we introduce the control parameter:
\begin{equation}\label{d:controlparam} \mathcal{E}_q := \frac{1}{N^{q} |z|^{q/2}} + \max \left \{\frac{[\mathrm{Im} (|z| S_{\rho} )]^{q} + \E |z \Lambda|^q}{(N\eta)^q} , \frac{|z|^{q}}{(N\eta)^{2q}} \right \}  .
\end{equation}
We now show how to estimate the last quantity in \eqref{e:quantities}, using the following formulas (see for example (2.20) of \cite{pillai2014universality} ) (valid also for any $j_1, j_2$, with $G=G^{(j_1)}_{(j_2)}$ or $\mathcal{G}=\mathcal{G}^{(j_1)}_{(j_2)}$ with $k, l > \max \{ j_1 , j_2 \}$):
\begin{equation} \label{e:form-off-diag}
\begin{split}
&G_{kl} = 	z G_{ll} G_{kk}^{(\{l\})} \left [ (\mathbf{x}^{l})^* \mathcal{G}^{(\{k, l\}) }\mathbf{x}^{k} \right ] =: \sqrt{z} G_{ll} \sqrt{z} G_{kk}^{(\{l\})} K_{kl}
\\
&\mathcal{G}_{kl} = z \mathcal{G}_{ll} \mathcal{G}_{(\{l\}),kk} \left [ \mathbf{x}_{l} G_{(\{k, l\}) }(\mathbf{x}_{k})^* \right ] =: \sqrt{z}\mathcal{G}_{ll} \sqrt{z} \mathcal{G}_{(\{l\}), kk} \mathcal{K}_{kl} .
\end{split}
\end{equation}
We can define $K^{(j_1)}_{(j_2), kl}, \mathcal{K}^{(j_1)}_{(j_2), kl}$ analogously. The following lemma provides the necessary bound on $\E |  K_{kl} |^{2q}$.
\begin{lem} \label{off-diag}
Assume \eqref{4moment} and \eqref{truncation} for the entries of the matrix $X_N$ as before and let $z=E+i\eta$. Then there exist constants $c,C,M_1,M_2>0$ such that
\begin{equation}\label{e:Kkl}
|z|^q \max\{\E |  K_{kl} |^{2q}, \E |  \mathcal{K}_{kl} |^{2q}\} \leq (Cq)^{cq} \mathcal{E}_q \end{equation}
for $E,\eta \in Z_{E,\eta}$, $N>M_1$ , $\frac{N\eta}{|\sqrt{z}|} > M_2$, $k\neq l\in \{1,...,N\}$, $q \in \mathbb{N}$ with $q\leq c \left ( \frac{N\eta}{|\sqrt{z}|} \right )^{1/4}.$ Assuming $|j_1 - j_2| \leq Cq$ for some constant $C$, the same inequality holds for $K^{(j_1)}_{(j_2), kl}, \mathcal{K}^{(j_1)}_{(j_2), kl}.$ with $k,l>\max\{j_1,j_2\}.$
\end{lem}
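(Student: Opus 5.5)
We treat $K_{kl}=\sqrt{z}\,(\mathbf{x}^{l})^*\mathcal{G}^{(\{k,l\})}\mathbf{x}^{k}$ as a bilinear form in two \emph{independent} columns $\mathbf{x}^k,\mathbf{x}^l$, both independent of the resolvent $\mathcal{G}^{(\{k,l\})}$. Since $\E_{\mathbf{x}^k}K_{kl}=0$, no diagonal term survives. The plan is to condition on $\mathcal{G}^{(\{k,l\})}$ and on $\mathbf{x}^l$, and apply Rosenthal's inequality (Lemma \ref{l:rosenthal}) in $\mathbf{x}^k$ with coefficient vector $a_j=\sqrt{z}\,\big((\mathbf{x}^l)^*\mathcal{G}^{(\{k,l\})}\big)_j$. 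Here the $1/\sqrt N$ scaling of the entries is absorbed by writing $\mathbf{x}=\mathbf{y}/\sqrt N$. This gives
\[
\E|K_{kl}|^{2q}\le (Cq)^{2q}N^{-2q}|z|^{q}\,\E\Big[\big(\|(\mathcal{G}^{(\{k,l\})})^{*}\mathbf{y}^l\|^2\big)^{q}+\mu_{2q}\sum_j|((\mathbf{y}^l)^*\mathcal{G}^{(\{k,l\})})_j|^{2q}\Big].
\]
The second term is handled using \eqref{truncation}, which gives $\mu_{2q}\lesssim D^{2q}N^{q/2}$.

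\textbf{Quadratic form.} The main term is a quadratic form in $\mathbf{y}^l$ with matrix $|\mathcal{G}|^2$, where $\mathcal{G}=\mathcal{G}^{(\{k,l\})}$. We split it into its expectation $\Tr|\mathcal{G}|^2=\eta^{-1}\mathrm{Im}\Tr\mathcal{G}$ (Ward identity, Lemma \ref{ward}) and a fluctuation. For the fluctuation we rerun the argument of Lemma \ref{l:Upsilonbound}, with $|\mathcal{G}|^2$ in place of $\mathcal{G}$, using \eqref{e:boundoffdiag} and Ward again. This yields, up to $(Cq)^{cq}$,
\[
|z|^q\E|K_{kl}|^{2q}\lesssim \frac{|z|^{q}\,\E(\mathrm{Im}\,S_N^{(\{k,l\})})^{q}}{(N\eta)^q}+\frac{|z|^q\,\E|\mathrm{Im}\,\mathcal{G}_{11}|^{q}}{N^{q/2}(N\eta)^q}+\text{(lower order)}.
\]
Next we write $\mathrm{Im}\,S_N^{(\{k,l\})}=\mathrm{Im}S_\rho+\mathrm{Im}\Lambda+O(1/(N|z|)+1/(N\eta))$, using \eqref{e:traces} and \eqref{e:boundsT12} to remove the two columns. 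This produces exactly the pieces $[\mathrm{Im}(|z|S_\rho)]^q+\E|z\Lambda|^q$ over $(N\eta)^q$, together with $|z|^q/(N\eta)^{2q}$, which together make up $\mathcal{E}_q$.

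\textbf{Remaining terms.} The terms involving the diagonal entry, $|z|^{q/2}\E|\sqrt z\,\mathcal{G}_{11}|^q$ and its $2q$-th moment, are bounded by $C^q$ via Lemma \ref{l:Gkk}. This applies because $q\le c(N\eta/|\sqrt z|)^{1/4}$ and because $j_1,j_2$ change by at most $2$, so the condition $|j_1-j_2|\le C_1q$ remains valid. After the $N^{-1/2}$ factors are collected, these terms fall into $N^{-q}|z|^{-q/2}$ or into $|z|^q/(N\eta)^{2q}$, since $N\eta\ge M|\sqrt z|$. The bound for $\mathcal{K}_{kl}$ is identical: we strip rows instead of columns and use $G_{(\{k,l\})}$.

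\textbf{Main obstacle.} The hardest step is controlling the fluctuation of $(\mathbf{y}^l)^*|\mathcal{G}|^2\mathbf{y}^l$ with only four moments. The difficulty is getting a factor $\mathrm{Im}\Tr\mathcal{G}$ rather than $|\Tr\mathcal{G}|$, which is needed for the soft-edge improvement. I would try to rerun the $Q_\nu$-induction of Lemma \ref{l:Upsilonbound}, where the estimates \eqref{e:arr} hold with $|\mathcal{G}|^2=\eta^{-1}\mathrm{Im}\,\mathcal{G}$. If that fails, a cruder bound via \eqref{e:upsilonbound1} only loses within the $|z|^q/(N\eta)^{2q}$ term of the maximum in $\mathcal{E}_q$.
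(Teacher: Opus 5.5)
Your route is viable, but it differs from the paper's.

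\textbf{The paper's route.} The paper never decouples the two columns. It feeds the bilinear form $(\mathbf{x}^l)^*\mathcal{G}^{(\{k,l\})}\mathbf{x}^k$ straight into the machinery of Lemma~\ref{l:Upsilonbound}: Rosenthal for the equal-index terms (as for $\epsilon_1$), and Burkholder plus the $Q_\nu$-induction for the rest (as for $\epsilon_2$). It first uses \eqref{e:upsilonbound1} and Lemma~\ref{l:Gkk} to get the crude bound \eqref{e:Kklbb}, $|z|^q\E|K_{kl}|^{2q}\le (Cq)^{cq}|z|^{q/2}(N\eta)^{-q}$. It then upgrades to $\mathcal{E}_q$ through the precise form \eqref{e:upsilonbound2}, \eqref{e:upsilonboundoptimal}. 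So the paper reuses an existing lemma as a black box, and the $\mathrm{Im}$-improvement must come from that precise bound.

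\textbf{Your route.} Your conditional Rosenthal in $\mathbf{x}^k$ instead reduces everything to a quadratic form in $\mathbf{x}^l$ with the positive matrix $|\mathcal{G}|^2$, where $\mathcal{G}=\mathcal{G}^{(\{k,l\})}$. Positivity makes the $\mathrm{Im}$ structure automatic:
\[
\Tr|\mathcal{G}|^2=\eta^{-1}\mathrm{Im}\Tr\mathcal{G},\qquad (|\mathcal{G}|^4)_{rr}\le\eta^{-3}\mathrm{Im}\,\mathcal{G}_{rr},\qquad |(|\mathcal{G}|^2)_{jm}|^2\le(|\mathcal{G}|^2)_{jj}(|\mathcal{G}|^2)_{mm}.
\]
So the step you call the main obstacle is easier than you fear. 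The analogue of \eqref{e:arr} only needs row-sum and trace inputs of this kind, so the $Q_\nu$-induction goes through for $|\mathcal{G}|^2$. Since only the $q$-th moment of the fluctuation is needed, its leading contribution is $(Cq)^{cq}|z|^q\E(\mathrm{Im}S)^{q/2}(N\eta)^{-3q/2}$, with $S=\frac1N\Tr\mathcal{G}^{(\{k,l\})}$. The price is re-running that induction rather than quoting a lemma.

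\textbf{Points to fix when you write it out.}

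First, in the paper $K_{kl}$ carries no $\sqrt z$, and the target is $\E|\sqrt z\,K_{kl}|^{2q}$. Your first display bounds exactly this, but your second multiplies by $|z|^q$ again.

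Second, \eqref{truncation} gives $\mu_{2q}\le D^{2q}N^{q/2-1}\mu_4$, not $N^{q/2}$. The factor $N^{-1}$ is what absorbs the sum over $j$ in the Rosenthal remainder; without it that term does not close.

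Third, the fluctuation term is not lower order by itself. You need the split
\[
\frac{|z|^q\,\E(\mathrm{Im}S)^{q/2}}{(N\eta)^{3q/2}}\le\frac12\Big(\frac{|z|^q\,\E(\mathrm{Im}S)^{q}}{(N\eta)^{q}}+\frac{|z|^q}{(N\eta)^{2q}}\Big),
\]
which uses precisely the two entries of the maximum in $\mathcal{E}_q$.

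Fourth, the inner Rosenthal for $\sum_j\E|((\mathbf{y}^l)^*\mathcal{G})_j|^{2q}$ leaves, besides diagonal terms, the off-diagonal remainder $N^{-q}|z|^q\E|\mathcal{G}^{(\{k,l\})}_{12}|^{2q}$. Controlled only by \eqref{e:boundoffdiag} and Lemma~\ref{l:Gkk}, it is $\lesssim|z|^{q/2}(N\eta)^{-q}$. That is not below $\mathcal{E}_q$ near the soft edge, where $\mathrm{Im}S_\rho$ is small, so calling it ``lower order'' is not justified there. Close it with two passes:
\begin{itemize}
\item A first pass with crude bounds gives \eqref{e:Kklbb} for $K$ and $\mathcal{K}$, including for the minors.
\item Via \eqref{e:form-off-diag}, H\"older and Lemma~\ref{l:Gkk}, this gives the same bound for $\E|\sqrt z\,\mathcal{G}^{(\{k,l\})}_{12}|^{2q}$.
\item Reinserted with the prefactor $N^{-q}$, it is at most $(Cq)^{cq}N^{-q}|z|^{-q/2}$, since $|z|\le N\eta$.
\end{itemize}
The paper meets the same term (it is the last term of \eqref{e:epsilon2bound}), and it is what the refined bound \eqref{e:upsilonbound2} has to absorb. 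A crude-then-refined structure like the paper's is therefore the natural fix. With that in place, your fallback via \eqref{e:upsilonbound1} is unnecessary, and as stated it is not justified anyway.
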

\begin{proof}
The following argument is identical for $K^{(j_1)}_{(j_2), kl}, \mathcal{K}^{(j_1)}_{(j_2), kl},$ so we work with $K_{kl}$. By the definition of  $K_{kl}$ and using the notation $\epsilon_{k1}, \epsilon_{k_2}$ for $\epsilon_1$ and $\epsilon_2$ as in \eqref{d:epsilonk12} we get that:
\begin{equation} \label{e:Kklbb}
|z|^q \E | K_{kl} |^{2q} \leq \frac{(Cq)^{cq}}{N^{2q}} \left( \E |\epsilon_{k2}|^{2q} + \E \sum_j|\mathcal{G}^{(kl)}_{jj}x_{kj}x_{lj}|^{2q}\right)
\leq
 \frac{(Cq)^{cq}}{(N\eta)^q} |z|^{q/2}
\end{equation}
where $\E |\epsilon_{k2}|$ is bounded using \eqref{e:epsilon2bound}, \eqref{e:Q0bound} and $\E \sum \limits_j |\mathcal{G}^{(kl)}_{jj}x_{kj}x_{lj}|^{2q}$ is bounded by Rosenthal's inequality like $\E|\epsilon_{k1}|^{2q}$ in \eqref{e:Rosenthal}. We also used Lemma \ref{l:Gkk} to bound $\E |\mathcal{G}_{kk}|^{2q}$. In summary, we used the bound \eqref{e:upsilonbound1} to bound this quantity.

We now improve this to \eqref{e:Kkl}. We use the bound \eqref{e:upsilonbound2}, after we modify it using Lemma \ref{l:Gkk} and the fact that: $$ \frac{2}{(N\eta)^qN^{q/2}} \leq \frac{1}{(N\eta)^{2q}}+ \frac{1}{N^{q}},$$ to obtain:
\begin{equation}\label{e:upsilonboundoptimal} \E|\Upsilon|^{2q} \leq \frac{(Cq)^{cq}}{(N\eta)^{2q}}\E|\mathrm{Im}\Tr \mathcal{G}|^q   + (Cq)^{cq}\left(\frac{1}{N^q}+\frac{1}{(N\eta)^{2q}}\right)\end{equation}
and using this, we can improve the bound on \eqref{e:Kklbb}, which yields \eqref{e:Kkl}.
\end{proof}
This gives the desired bound:
\begin{equation} \label{off-diag1}
    \E |\sqrt{z} G_{12}^{(\J)}|^{2q} \leq (Cq)^{cq} \mathcal{E} _{q},
\end{equation}
where $|\mathbb{J} | \leq Cq.$
\begin{lem} \label{reciprocal}
Assume \eqref{4moment} and \eqref{truncation} for the entries of $X_N$ as before and let $z=E+i\eta \in Z_{E,\eta}$. Then, there exist constants $c,C,M>0$ such that:
$$ \E \frac{1}{| \sqrt{z} G_{11}^{(\mathbb{J})}|^{2q } } \leq C^q ,$$
for $ \frac{N\eta}{\sqrt{|z|}} > M$, $q\leq \left (\frac{N\eta}{|\sqrt{z}|} \right )^{1/4} $, with $|\mathbb{J}| \leq cq.$
\end{lem}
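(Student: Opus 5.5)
We start from the Schur complement formula \eqref{G2}, applied to the stripped matrix with column set $\J$ removed:
\[
\frac{1}{\sqrt{z}\,G^{(\J)}_{11}} = -\sqrt{z}\Big(1+(\x^1)^*\mathcal{G}^{(\J\cup\{1\})}\x^1\Big).
\]
Next we split the quadratic form into its partial expectation and its fluctuation. The partial expectation is $\frac1N\Tr\mathcal{G}^{(\J\cup\{1\})}$. By \eqref{e:traces}, this trace can be rewritten as $S_N^{(\J\cup\{1\})} + \frac{|\J|+1}{Nz}$, up to the precise index shift. Writing $S_N^{(\J\cup\{1\})} = S_\rho + \Lambda^{(\J\cup\{1\})}$ then gives
\[
\left|\frac{1}{\sqrt{z}G^{(\J)}_{11}}\right| \le |\sqrt{z}(1+S_\rho)| + |\sqrt{z}\,\Lambda^{(\J\cup\{1\})}| + \frac{|\J|+1}{N|\sqrt{z}|} + |\sqrt{z}\,\Upsilon^{(\J\cup\{1\})}|,
\]
where $\Upsilon$ is the centered quadratic form from Lemma \ref{l:Upsilonbound}, without the $1/N$ prefactor as normalized in \eqref{Y-Y}. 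We then take the $2q$-th power and expectation, and use $(a+b+c+d)^{2q}\le 4^{2q}(a^{2q}+\dots)$ to bound each term separately.

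The deterministic term is controlled by the algebraic identity \eqref{S-rho}. This gives $\sqrt z(1+S_\rho) = -1/(\sqrt z S_\rho)$. From \eqref{S-rho1} we have $\sqrt{z}S_\rho = \tfrac12(-\sqrt z+\sqrt{z-4})$, which is bounded on bounded sets, consistent with the bound $|\sqrt z S_\rho|\le C_2$ already used. The remaining question is whether $|\sqrt z S_\rho|$ is bounded \emph{below}. It tends to $1$ as $z\to0$ and it stays away from $0$ on the bounded part of $Z_{E,\eta}$ and for $E<0$, because $S_\rho$ has no zeros. Hence $|\sqrt z(1+S_\rho)|\le C$. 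The third term is at most $Cq/(N|\sqrt z|)$. This is $\le C$ once we write it as $\frac{Cq}{N\eta}\cdot\frac{\eta}{|\sqrt z|}$ and use $N\eta/|\sqrt z|\ge M$ together with $\eta\le|z|$. More simply, $N|\sqrt z|\ge N\eta/|\sqrt z|\ge M$ when $|z|\ge\eta$, and $q\le (N\eta/|\sqrt z|)^{1/4}$.

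For the $\Lambda$ term, we invoke the weak local law (the Corollary following Lemma \ref{l:Gkk}), or equivalently \eqref{e:lambda-boot} for the stripped index set, with $|\J|\le cq$. This gives $\E|\sqrt z\Lambda^{(\J\cup\{1\})}|^{2q}\le C^q$. One caveat is that Proposition \ref{b-R} only controls $\lambda$, that is, the minimum of $|\Lambda|$ and $|\tilde\Lambda|$ off $Z_{E,\eta}$. Here $z\in Z_{E,\eta}$, so $\lambda\ge|\Lambda|$ and this is fine.

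For the fluctuation term we use \eqref{e:upsilonbound1} for the matrix $\mathcal{G}^{(\J\cup\{1\})}$, after multiplying by $|z|^q$. The terms involving $\E|\sqrt z\mathcal{G}_{11}|^{2q}$ are bounded by Lemma \ref{l:Gkk}, which gives $C^q$. They are then multiplied by $N^{-q}$ or $(N\eta)^{-q}|z|^{q/2}$, both of which are small. For the $\mathrm{Im}\Tr$ term we write $\frac1N\mathrm{Im}\Tr\mathcal{G} = \mathrm{Im}S_\rho+\mathrm{Im}\Lambda+O(q/(N\eta))$. Then $|z|^{q}(\mathrm{Im}S_\rho)^q/(N\eta)^q \le (|\sqrt z|/(N\eta))^q C^q$, since $|\sqrt z S_\rho|\le C$, and this is at most $C^q$.

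The \textbf{main obstacle} is making sure that all the $(Cq)^{cq}$ prefactors are absorbed into $C^q$. This needs the restriction $q\le (N\eta/|\sqrt z|)^{1/4}$: each term carries a factor $(|\sqrt z|/N\eta)^{q/2}$ or similar, and $(Cq)^{cq}(|\sqrt z|/N\eta)^{q/2}\le C^q$ only when $q^{c}\lesssim (N\eta/|\sqrt z|)^{1/2}$. If $c$ is too large for this, we would instead use the finer bound \eqref{e:upsilonboundoptimal} and accept a smaller exponent in the constraint on $q$.
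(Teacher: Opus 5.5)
Your proof is correct at the paper's own level of rigor, but it handles the partial-expectation (drift) term differently from the paper.

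\textbf{What is shared.} You and the paper both start from \eqref{G2}. Both split $(\x^1)^*\mathcal{G}\x^1$ into its $\E_{\x^1}$-part and its fluctuation, and both control the fluctuation with Lemma \ref{l:Upsilonbound} plus Lemma \ref{l:Gkk}.

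\textbf{How the drift is handled.} The paper never subtracts $S_\rho$. It bounds $|\sqrt z|\le C$ on the bounded domain and rewrites $\frac1N\Tr\mathcal{G}^{(1)}=\frac{1}{Nz}+\frac1N\Tr G^{(1)}$ by \eqref{e:traces}. It then controls $\E|\sqrt z\,\frac1N\Tr G^{(1)}|^{2q}$ by Jensen and exchangeability, directly from Lemma \ref{l:Gkk}, using $|S_N^{(1)}-S_N|\le 1/(N\eta)$ and $N\eta/|\sqrt z|>M$. You instead write the drift as $-1/(\sqrt z S_\rho)-\sqrt z\Lambda^{(\J\cup\{1\})}-\frac{|\J|+1}{N\sqrt z}$ and invoke the weak local law.

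\textbf{What each route buys.}
\begin{itemize}
\item The paper's route is shorter, and the drift comes out as a clean $C^q$ with no $q$-dependent prefactor. The only place where $(Cq)^{cq}$ must be absorbed via $q\le (N\eta/|\sqrt z|)^{1/4}$ is then the fluctuation term, which the paper asserts exactly as in the proof of Lemma \ref{l:Gkk}.
\item Your route imports the weak local law, which was itself derived from Lemma \ref{l:Gkk} via \eqref{e:bb}. That brings an extra $(Cq)^{cq}$ into the drift, so the ``main obstacle'' you flag is partly of your own making.
\item Your lower bound on $|\sqrt z S_\rho|$ is unnecessary: \eqref{S-rho1} gives $|\sqrt z(1+S_\rho)|\le\frac12(|z|^{1/2}+|z-4|^{1/2})$ directly.
\item The same shortcut applies to the $\mathrm{Im}\Tr\mathcal{G}$ term of \eqref{e:upsilonbound1}. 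Jensen plus Lemma \ref{l:Gkk} gives $|z|^q\E(\frac1N\mathrm{Im}\Tr\mathcal{G})^q/(N\eta)^q\le C^q(|\sqrt z|/(N\eta))^{q}$, with no need to pass through $S_\rho+\Lambda$.
\item In exchange, your decomposition gives a sharper picture. It shows that $1/(\sqrt z G^{(\J)}_{11})$ concentrates around the deterministic value $-1/(\sqrt z S_\rho)$, not merely that its moments are bounded.
\end{itemize}

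\textbf{On your fallback.} Shrinking the exponent in the constraint on $q$ would change the statement. On the paper's route this is not needed for the drift. For the fluctuation term you rely on the same absorption the paper uses throughout.
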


\begin{proof}
We can take $|\mathbb{J}|=0$ as the argument is similar in the general case. We use \eqref{G2} to get that:
\begin{multline*} \E \frac{1}{| \sqrt{z} G_{11}|^{2q } } = \E \left |\sqrt{z} (1+ (\mathbf{x}^1)^*\mathcal{G}^{(1)}\mathbf{x}^1 ) \right |^{2q} \leq C^q + C^q|z|^q \E \left | (\mathbf{x}^1)^*\mathcal{G}^{(1)}\mathbf{x}^1 \right |^{2q}
\\
\leq C^q \left (1+ |z|^q\E \left |(\mathbf{x}^1)^*\mathcal{G}^{(1)}\mathbf{x}^1  -\E_{\xx^1} (\mathbf{x}^1)^*\mathcal{G}^{(1)}\mathbf{x}^1 \right |^{2q} +\E \left | \E_{\xx^1} \sqrt{z}(\mathbf{x}^1)^*\mathcal{G}^{(1)}\mathbf{x}^1 \right |^{2q} \right ) .
\end{multline*}
The second term on the RHS is small by Lemma \ref{l:Upsilonbound}. For the third term, we find that:
\begin{equation}
\E \left | \E_{\xx^1} \sqrt{z}(\mathbf{x}^1)^*\mathcal{G}^{(1)}\mathbf{x}^1 \right |^{2q} = \E \left |\frac{1}{N} \sqrt{z} \mathrm{Tr} (\mathcal{G}^{(1)}) \right|^{2q} = \E \left | \frac{1}{N} \sqrt{z} \left ( \frac{1}{z} + \mathrm{Tr} (G^{(1)}) \right) \right |^{2q} \leq C^q ,
\end{equation}
where we used Lemma \ref{l:Gkk} and that $|S_N^{(1)} - S_{N}|\leq \frac{1}{N\eta} $ as in \eqref{e:minor}, as well as the bound $ \frac{N\eta}{\sqrt{|z|}} > M.$
\end{proof}
To estimate the third quantity in \eqref{e:quantities}, we find by \eqref{e:upsilonboundoptimal} that:
\begin{equation}\label{e:oneover}\E \left| (\mathbb{I}-\E_{\xx^k}) \frac{1}{\sqrt{z} G_{kk}}\right|^{2q} = \E \left|-\sqrt{z} \Upsilon^{(\{k\})} \right|^{2q} \leq (Cq)^{cq} \mathcal{E}_q,
\end{equation}
where we used \eqref{G2} and the precise bound \eqref{e:upsilonbound2} along with Lemma \ref{l:Gkk}.

\pagebreak

Lastly, we also need a bound on $\E \left | \frac{1}{\E_{\xx^1} \frac{1}{\sqrt{z} G_{11}}} \right |^q$ which we obtain in the following Lemma:

\begin{lem} \label{lem}
Let $E,\eta \in Z_{E,\eta}$, where $z =E+i\eta$. There exist constants $c,C,M>0$ such that:
$$ \E \left | \frac{1}{\E_{\xx^1} \frac{1}{\sqrt{z} G_{11}}} \right |^q \leq C^q ,$$
for $N \eta \geq |\sqrt{z}| M $ and for $q \in \mathbb{N}$ with $q\leq c \left (\frac{N\eta}{|\sqrt{z}|} \right )^{1/4} .$
\end{lem}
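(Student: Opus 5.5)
By \eqref{G2}, $\frac{1}{\sqrt z G_{11}} = -\sqrt z\,(1+(\xx^1)^*\mathcal G^{(1)}\xx^1)$. Taking the partial expectation $\E_{\xx^1}$ and using \eqref{exp-c-r} together with the trace identity \eqref{e:traces}, we get
\[
\E_{\xx^1}\frac{1}{\sqrt z G_{11}} = -\sqrt z\Big(1+\tfrac1N\Tr\mathcal G^{(1)}\Big) = -\sqrt z\Big(1+S_N^{(1)}+\tfrac{1}{Nz}\Big).
\]
The plan is to compare this quantity with its deterministic counterpart. By \eqref{S-rho}, $-\sqrt z(1+S_\rho) = \frac{1}{\sqrt z S_\rho}$. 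Hence
\[
\E_{\xx^1}\frac{1}{\sqrt z G_{11}} = \frac{1}{\sqrt z S_\rho} - \sqrt z\,\Lambda^{(1)} - \frac{1}{N\sqrt z},
\]
where $\Lambda^{(1)}=S_N^{(1)}-S_\rho$.

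The first term is bounded away from zero in our domain, uniformly on $Z_{E,\eta}$ with $\eta\le\eta_0$: $|\sqrt z S_\rho|\le C_2$, which was already used in the proof of Lemma \ref{l:Gkk}. The last term is deterministic and small, since $\frac{1}{N|\sqrt z|}\le \frac{\eta}{N\eta\,|\sqrt z|}\cdot\frac{1}{\eta}$; more simply, $|\sqrt z|\ge \sqrt\eta$ and $N\eta\ge M|\sqrt z|$ give $N|\sqrt z|\ge N\eta/|\sqrt z|\cdot |z|/\eta\ge M$. So $\frac{1}{N|\sqrt z|}\le 1/M$, which is small for large $M$.

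For the random term, I would first use the minor bound \eqref{e:minor}, $|\Lambda^{(1)}-\Lambda|\le \frac{1}{N\eta}$, so that $|\sqrt z(\Lambda^{(1)}-\Lambda)|\le |\sqrt z|/(N\eta)\le 1/M$. Then I would control $\sqrt z\Lambda$ through the weak local law, or more precisely through \eqref{e:lambda-boot} combined with Lemma \ref{l:Gkk}. These give $\E|\sqrt z\lambda|^q\le (Cq)^{cq}\big((|\sqrt z|/N\eta)^{q/2}+N^{-q/2}\big)$, which is small for $q\le c(N\eta/|\sqrt z|)^{1/4}$. On $Z_{E,\eta}$, $\lambda\ge|\Lambda|$, so this controls $|\sqrt z\Lambda|$ as well.

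The main obstacle is that $1/|\cdot|$ is unbounded, so smallness of $\sqrt z\Lambda$ in moments does not immediately control the $q$-th moment of the reciprocal. I would split on the event $\Omega=\{|\sqrt z\Lambda^{(1)}|\le \frac{1}{4C_2}\}$.

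On $\Omega$, with $M$ large, we have $|\E_{\xx^1}(\sqrt zG_{11})^{-1}|\ge \frac{1}{2C_2}$, so the reciprocal is at most $2C_2$.

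On $\Omega^c$ there is a deterministic fallback. Since $\mathrm{Im}\,S_N^{(1)}\ge0$, and for $E<0$ also $\mathrm{Re}$ has a sign, $1+S_N^{(1)}+\frac{1}{Nz}$ can be bounded below. I would try the crude bound $|1+S^{(1)}_N+\tfrac{1}{Nz}|\ge \mathrm{Im}(\cdot)$, which is at least of order $\eta\cdot\frac1N\sum|\lambda_i-z|^{-2}\gtrsim \eta/(\|X\|^2+|z|)^2$. This gives a polynomial bound, say $N^{C}$, for the reciprocal; the norm is controlled via a truncation, or via the trivial Hilbert–Schmidt bound in $L^p$.

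Combining the two events through Cauchy–Schwarz, $\E[|\cdot|^q\mathbf 1_{\Omega^c}]\le (\E|\cdot|^{2q}\mathbf1_{\Omega^c})^{1/2}\PP(\Omega^c)^{1/2}$, and Markov's inequality with a high moment of $\sqrt z\Lambda$ gives $\PP(\Omega^c)\le (Cq')^{cq'}M^{-q'/4}$. To beat the polynomial factor one needs a moment $q'$ of order $\log N$ larger than $q$, or the range restriction on $q$. If this deterministic fallback proves too lossy, my alternative is to mimic \cite{cacciapuoti2015bounds}: use $\mathrm{Im}\,\E_{\xx^1}(\sqrt zG_{11})^{-1}$ bounded below via $\mathrm{Im}(\sqrt z(1+S_\rho))$ on $Z_{E,\eta}$, which requires checking the sign of the argument of $\sqrt z$ there.
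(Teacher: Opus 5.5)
Your good-event computation is correct. From \eqref{G2}, \eqref{exp-c-r} and \eqref{S-rho} you get $\E_{\xx^1}(\sqrt z G_{11})^{-1} = (\sqrt z S_\rho)^{-1} - \sqrt z\Lambda^{(1)} + O(1/(N|\sqrt z|))$, and on $\Omega$ the reciprocal is at most $2C_2$. The proof breaks on $\Omega^c$, and for a structural reason, not a technical one.

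The moment bounds you invoke for $\sqrt z\Lambda$ (\eqref{e:lambda-boot} with Lemma \ref{l:Gkk}) hold only for exponents $q'\le c(N\eta/|\sqrt z|)^{1/4}$. The lemma must hold down to $N\eta/|\sqrt z| = M$, a fixed constant. At that scale Markov bounds $\PP(\Omega^c)$ only by something like $(Cq')^{cq'}M^{-q'/2}$ with $q'=O(1)$, which does not decay in $N$; moments of order $\log N$ are not available there.

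On the other hand, every single-scale deterministic bound on $|\E_{\xx^1}(\sqrt zG_{11})^{-1}|^{-1}$ is polynomial in $N$. This holds for your bound via $\eta/(\|X\|^2+|z|)^2$. It also holds for the sharper bound from the zero eigenvalue of $X^{(1)}(X^{(1)})^*$, namely $|\sqrt z(1+\frac1N\Tr\mathcal G^{(1)})|\ge \eta/(N|z|^{3/2})$, which still allows a reciprocal of size $N^2|z|/M$. Hence $\E[|\cdot|^q\mathbf 1_{\Omega^c}]$ is not $O(C^q)$ uniformly in $N$.

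Your imaginary-part alternative does not rescue this. The quantity $\mathrm{Im}\bigl(-\sqrt z(1+\frac1N\Tr\mathcal G^{(1)})\bigr)$ has no definite sign for $E>0$. For $E<0$, the bound $\mathrm{Re}(1+\frac1N\Tr\mathcal G^{(1)})\ge 1$ only gives a reciprocal of size $|z|^{-1/2}$, which degenerates at the hard edge.

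The missing ingredient is a fallback that loses only a constant factor. The paper gets this from a multiscale induction in $\eta$.

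Set $\widetilde{G_{11}} := (\E_{\xx^1}G_{11}^{-1})^{-1} = -\bigl(z(1+\frac1N\Tr\mathcal G^{(1)})\bigr)^{-1}$, so your quantity is $\sqrt z\,\widetilde{G_{11}}$. The Ward identity gives $|\partial_\eta\Tr\mathcal G^{(1)}|\le\eta^{-1}\mathrm{Im}\Tr\mathcal G^{(1)}\le\eta^{-1}|N+\Tr\mathcal G^{(1)}|$. Hence $|\partial_\eta\log\widetilde{G_{11}}|\le 2/\eta$, and deterministically $|\widetilde{G_{11}}(E+i\eta/s)|\le s^2|\widetilde{G_{11}}(E+i\eta)|$.

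Along $\eta_{i+1}=\eta_i/16$ this costs a factor $16^2$ per step. That loss is then removed using the identity \eqref{e:tildeGkk}, H\"older, Lemma \ref{l:Gkk} and \eqref{e:oneover}, which give $\E|\sqrt z\widetilde{G_{11}}|^q\le C^q+(Cq)^{cq}\bigl(\E|\sqrt z\widetilde{G_{11}}|^{3q}\bigr)^{1/3}|z|^{q/2}/(N\eta)^q$. The $3q$-th moment on the right is controlled by the induction hypothesis at $\eta_i$. The small factor absorbs the constant loss once $M$ is large.

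Your split can be repaired in the same spirit. On $\Omega^c$, use $|\sqrt z\,\widetilde{G_{11}}(E+i\eta)|\le s^2|\sqrt{z'}\,\widetilde{G_{11}}(z')|$ with $z'=E+is\eta$, together with the bound already established at the larger scale. This turns your argument into an induction on scales. As written, with a polynomial fallback, the argument does not close.
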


\begin{proof}
The proof is similar to Lemma 5.1 in \cite{cacciapuoti2015bounds}.
We define:
$$\widetilde{G_{11}}=\frac{1}{\E_{\xx^1} \frac{1}{G_{11}}}= - \frac{1}{ z \left (1+\frac{\Tr\mathcal{G}^{(\{1\})}}{N} \right )}. $$
We calculate that: 
$$ \left| \frac{d}{d\eta} \log \widetilde{G_{11}}(E+i\eta) \right|= \left| \frac{d}{d\eta} \log \left(\frac{1}{z} \right) + \frac{d}{d\eta} \log \left ( \frac{1}{1+ \Tr \frac{\mathcal{G}^{(\{1\})}}{N}} \right ) \right|
=
 \left| - \frac{i}{z} - \frac{\frac{d}{d\eta}\Tr\mathcal{G}^{(\{1\}) } }{N+\Tr\mathcal{G}^{(\{1\}) }} \right| .$$
We show that $ \left | \frac{d}{d\eta} \Tr\mathcal{G}^{(\{1\}) } \right | \leq \frac{\mathrm{Im} \Tr\mathcal{G}^{(\{1\}) }}{\eta} $ as follows:
\begin{align*}
\frac{d}{d\eta} \Tr \mathcal{G}^{(1)} &= \sum \limits_{k=1}^N \frac{d}{d\eta} \mathcal{G}^{(\{1\})}_{kk}(\theta) = \sum \limits_{k=1}^N i  ((\mathcal{G}^{(\{1\})})^{2})_{kk}= \sum \limits_{k=1}^N i \langle e_k, (\mathcal{G}^{(\{1\})})^2 e_k \rangle
\\
 &\Rightarrow  \left | \frac{d}{d\eta} \mathcal{G}^{(\{1\})} \right | \leq \sum \limits_{k=1}^N \| (\mathcal{G}^{(\{1\})})^* e_k \| \ \| \mathcal{G}^{(\{1\})} e_k \| \leq \sum \limits_{k=1}^N ( (\mathcal{G}^{(\{1\})})^*\mathcal{G}^{(\{1\})} )_{kk} 
 \\
 &= \sum \limits_{k=1}^N \frac{\mathrm{Im} (\mathcal{G}^{(\{1\})})_{kk}}{\eta} = \frac{\mathrm{Im} \Tr \mathcal{G}^{(\{1\})}}{\eta} . \numberthis
 \end{align*}
We conclude that
\begin{equation}  \left | \frac{d}{d\eta} \log \widetilde{G_{11}} \right | \leq \frac{1}{|z|} + \frac{\mathrm{Im} \Tr\mathcal{G}^{(\{1\})}}{\eta |N+\Tr\mathcal{G}^{(\{1\})}|} \leq \frac{2}{\eta} ,
\end{equation}
yielding that:
\begin{equation} \left |\log \widetilde{G_{11}}(E+i\eta) - \log \widetilde{G_{11}}(E+i\eta/s) \right |= \left | \int_{\eta/s}^{\eta} \frac{d}{d\nu} \log \widetilde{G_{11}}(E+i\nu) d\nu \right | \leq \int_{\eta/s}^{\eta} \frac{2}{\nu} d\nu = \log s^2 \end{equation}
and thus $$| \widetilde{G_{11}}(E+i\eta) | \leq s^2 | \widetilde{G_{11}}(E+i\eta/s ) | .$$
The proof now proceeds with induction on $\eta$ just like in the proof of Lemma \ref{l:Gkk} using the identity
\begin{equation}\label{e:tildeGkk}\sqrt{z}\widetilde{G_{11}} = \sqrt{z} G_{11} + \sqrt{z} G_{11} \sqrt{z} \widetilde{G_{11}} (\mathbb{I}-\E_{\xx^1} ) (\sqrt z G_{11})^{-1} .
\end{equation}
as well as \eqref{e:oneover} and the results of Lemma \ref{l:Gkk}. Specifically, we notice that:
\[
\E |\sqrt{z} \widetilde{G_{11}}|^q \leq C^q + (Cq)^{cq} \E \left ( |\sqrt{z} \widetilde{G_{11}}|^{3q} \right )^{1/3} \frac{|z|^{q/2}}{(N\eta)^q} ,
\]
after which the induction is straight-forward just like Lemma \ref{l:Gkk}.
\end{proof}
Lastly, we use the matrix expansion algorithm to take advantage of the fluctuations. This technique was firstly introduced in the papers \cite{erdos2010universality} and \cite{erdHos2012rigidity} and was later simplified and improved in the papers \cite{erdHos2013averaging} and \cite{erdHos2013local}. We will follow the analysis in \cite{cacciapuoti2015bounds}.

Hence the following proposition, analogous to Lemma 4.1 of \cite{cacciapuoti2015bounds}:
\begin{prop} \label{p:optimalbound}
 Let $\mathcal{E}_q$ be the control parameter as in \eqref{d:controlparam}. There exist constants $C, M, c_0 >0$ such that:
\begin{equation} \label{4.2}
\E \left | \frac{1}{N} \sum \limits_k \sqrt{z} \Upsilon^{(\{k\})}  \sqrt{z} G_{kk} \right |^{2q} \leq (Cq)^{cq^2} \mathcal{E}_{4q}^{1/2} ,
\end{equation}
for $1 \leq q \leq c_0 \left (\frac{N\eta}{|\sqrt{z}|} \right )^{1/8} $ , $\frac{N\eta}{\sqrt{|z|}}\geq M $ , $K>0$ , $z = E + i \eta \in Z_{E, \eta} $.
\end{prop}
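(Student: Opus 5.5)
We follow the fluctuation-averaging scheme of Lemma 4.1 of \cite{cacciapuoti2015bounds}, rescaled by $\sqrt z$. First we write $\sqrt z\Upsilon^{(\{k\})} = -(\mathbb{I}-\E_{\xx^k})\frac{1}{\sqrt z G_{kk}}$, which follows from \eqref{G2}. Writing $Q_k := \mathbb{I}-\E_{\xx^k}$, the quantity to bound is
\[
Z := \frac1N\sum_k \sqrt z G_{kk}\, Q_k\Big(\frac{1}{\sqrt z G_{kk}}\Big).
\]
We expand $\E|Z|^{2q}=N^{-2q}\sum_{k_1,\dots,k_{2q}}\E\prod_{s\le q}X_{k_s}\prod_{s>q}\overline{X_{k_s}}$, where $X_k := \sqrt z G_{kk}\,Q_k(1/(\sqrt z G_{kk}))$, and sort the index tuples according to how many distinct indices they contain.

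The key step is to make the independence from each column visible. For each factor $X_{k_s}$ and each other index $k_r$ appearing in the tuple, we expand $\sqrt z G_{k_sk_s}$ and $1/(\sqrt z G_{k_sk_s})$ by the stripping identity \eqref{e:resol1}. In the form $G_{ii}=G_{ii}^{(\{k\})}+G_{ik}G_{ki}/G_{kk}$, this splits each factor into a part independent of $\xx^{k_r}$ plus corrections. Each correction carries two off-diagonal factors $\sqrt z G_{ik}$. The expansion is iterated until every term is either \emph{maximally expanded}, meaning it is independent of all other columns in the tuple, or contains enough off-diagonal factors. Two facts then control the terms.

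First, if an index $k_s$ appears exactly once and the term is maximally expanded with respect to $k_s$, then the factor $Q_{k_s}(\cdot)$ is the only place $\xx^{k_s}$ enters. Taking $\E_{\xx^{k_s}}$ kills such a term.

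Second, every surviving term with $m$ lone indices carries at least $m$ extra off-diagonal factors. We bound these factors with H\"older's inequality using:
\begin{itemize}
\item \eqref{off-diag1}, which gives $\E|\sqrt z G_{kl}^{(\J)}|^{2q}\le (Cq)^{cq}\mathcal E_q$, together with \eqref{e:form-off-diag} and Lemma \ref{off-diag};
\item \eqref{e:oneover}, which gives a factor $\mathcal E_q^{1/2}$ for each $Q_k$;
\item Lemma \ref{l:Gkk}, Lemma \ref{reciprocal} and Lemma \ref{lem}, which show that the diagonal factors $\sqrt z G_{kk}^{(\J)}$ and their reciprocals are $O(1)$ in every moment.
\end{itemize}
Throughout, $|\J|\le Cq$, which is consistent with the hypotheses of those lemmas.

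For the counting, let $p$ be the number of distinct indices in a tuple. Such tuples contribute at most $N^{p-2q}$ in number. Each surviving term carries roughly $\mathcal E^{(2q-p)+\text{(lone-index gain)}}$, in the sense that each collapsed index pair costs a factor $1/N$ and each lone index gains a factor $\mathcal E^{1/2}$ from an off-diagonal entry. Using $1/N\lesssim \mathcal E_1$, which is arranged by the first term of \eqref{d:controlparam} after the $\sqrt z$ normalisation, every configuration is bounded by $\mathcal E$ to total power $q$. This becomes $\mathcal E_{4q}^{1/2}$ after H\"older's inequality applied to products of $O(q)$ factors, each estimated at moment order about $4q$.

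The expansion generates at most $(Cq)^{cq^2}$ terms, which accounts for the combinatorial prefactor. The same growth forces the restriction $q\le c_0(N\eta/|\sqrt z|)^{1/8}$: moments up to order about $q^2$ (equivalently $(N\eta/|\sqrt z|)^{1/4}$) must stay within the range where Lemmas \ref{l:Gkk}–\ref{lem} apply.

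The main obstacle is ensuring that the $\sqrt z$ normalisation stays consistent through the expansion near the hard edge. Every resolvent entry must be paired with exactly one $\sqrt z$: diagonal entries must appear as $\sqrt z G$ or $(\sqrt z G)^{-1}$, and off-diagonal entries as $\sqrt z G_{kl}$. Otherwise a stray factor $1/\sqrt{|z|}$ appears that is not controlled by $\mathcal E_q$. Since \eqref{e:resol1} is homogeneous, we rewrite it as $\sqrt zG_{ij}=\sqrt zG^{(k)}_{ij}+(\sqrt zG_{ik})(\sqrt zG_{kj})/(\sqrt zG_{kk})$, so the normalisation is preserved at every step. We also check that removing only columns, and never rows, keeps $|j_1-j_2|\le Cq$ as required in Lemma \ref{l:Gkk}.
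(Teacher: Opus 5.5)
There is a genuine gap: your first ``fact'' is false, and the argument breaks there. The factor $X_k=\sqrt z G_{kk}\,Q_k\bigl(1/(\sqrt z G_{kk})\bigr)$ is not of the form $Q_k(\cdot)$. The prefactor $\sqrt z G_{kk}$ depends on $\xx^k$ through \eqref{G2}, however far you expand the other factors of the tuple. So for a lone index $k_s$, once the remaining factors are independent of $\xx^{k_s}$, taking $\E_{\xx^{k_s}}$ leaves those factors multiplied by $\E_{k_s}X_{k_s}$, and this does not vanish. To see this, put $B:=1/(\sqrt zG_{kk})$ and use $1/B=1/\E_kB-Q_kB/(B\,\E_kB)$; this gives $\E_kX_k=-\E_kW_k=-\E_k[\sqrt zG_{kk}(\sqrt z\Upsilon^{(\{k\})})^2]/\E_kB$, which is \eqref{4.3}. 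Your second claim fails for the same reason. For a tuple of $2q$ distinct indices, the leading unexpanded term survives the expectation without any off-diagonal factor. It is small only because $\E_kX_k$ is quadratic in $\sqrt z\Upsilon^{(\{k\})}$, and your scheme never establishes or uses this.

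The missing step is the split \eqref{e:eW2q}--\eqref{W2q}, $W_k=(\mathbb I-\E_k)W_k+\E_kW_k$. The paper applies the expansion algorithm only to $(\mathbb I-\E_k)W_k$, written as in \eqref{formexpand}. Then, for a lone index, the outer $\mathbb I-\E_{k_s}$ really does annihilate the term once the other factors no longer depend on $\xx^{k_s}$. The conditional mean is bounded separately through \eqref{4.3}, by H\"older at moment order $8q$ with Lemma \ref{l:Gkk}, \eqref{e:oneover} and Lemma \ref{lem}, giving $(Cq)^{cq}\mathcal E_{4q}^{1/2}$. Note that Lemma \ref{lem} is exactly the bound on the denominator $1/\E_{\xx^k}(1/\sqrt zG_{kk})$ that appears there. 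It is not a moment bound on reciprocals of diagonal entries; that is Lemma \ref{reciprocal}. In your plan Lemma \ref{lem} is listed, but it is never used in the one place where it is indispensable.
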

\begin{proof}
To match notation in \cite{cacciapuoti2015bounds}, we introduce $W_k= \sqrt{z} \Upsilon_{k} \sqrt{z} G_{kk}$ and we split:
\beq
\label{e:eW2q} \frac{1}{N} \sum \limits_k  W_k = \frac{1}{N} \sum \limits_k (\mathbb{I}-\E_k)W_k + \frac{1}{N}\sum \limits_k \E_k W_k . \eeq
By H\"older's inequality,
\beq \label{W2q} \E \left | \frac{1}{N} \sum \limits_k  W_k \right |^{2q} \leq C^q \E \left | \frac{1}{N} \sum \limits_k  ( \mathbb{I} - \E_k) W_k \right |^{2q} + C^q \E |\E_1 W_1|^{2q} .
\eeq
To bound the second term in \eqref{W2q} above, using that $\sqrt{\theta}\Upsilon^{(\{k\})}=-(\mathbb{I}-\E_k)\frac{1}{\sqrt{z}G_{kk}}$, we obtain
\begin{equation} \label{4.3}
\E_k W_k = \frac{\E_k [\sqrt{z} G_{kk} (\sqrt{z}\Upsilon^{(\{k\})})^2]}{\left (\E_k\frac{1}{\sqrt{z} G_{kk}}\right )}.
\end{equation}
and applying Lemma \ref{lem} to \eqref{4.3}, we get that:
\begin{align*}
\E | \E_1 W_1 |^{2q} &\leq (\E |\sqrt{z} G_{11}|^{8q})^{\frac{1}{4}} \left ( \E \left | \frac{1}{\E_1 \frac{1}{\sqrt{z} G_{11}}} \right |^{8q} \right )^{\frac{1}{4}} ( \E | \sqrt{z} \Upsilon^{(\{1\})} |^{8q} )^{\frac{1}{2}} \\
&\leq (Cq)^{cq} \left ( \mathcal{E}_{4q} \right )^{1/2} ,
\end{align*}
which is what we want.

In order to handle the first term of \eqref{W2q}, we use the matrix expansion algorithm as in Section 5.2 of \cite{cacciapuoti2015bounds}. We notice that equations (5.7), (5.8), and (5.9) are the basis of the expansion algorithm, and they are equivalent to the following (see e.g. (2.18) in \cite{pillai2014universality}):
\beq \label{e:algorithmnew}\begin{split}
&\sqrt z G_{ij}^{(\T)} = \sqrt z G_{ij}^{(\T k)} + \frac{\sqrt z G_{ik}^{(\T)}\sqrt z G_{kj}^{(\T)}}{\sqrt z G_{kk}^{(\T)}} \text{ for } i, j, k \notin \T \text{ and } i,j \neq k,
\\
&\frac{1}{\sqrt z G_{ii}^{(\T)}} = \frac{1}{\sqrt z G_{ii}^{(\T k)}} - \frac{\sqrt z G_{ik}^{(\T)}\sqrt z G_{ki}^{(\T)}}{\sqrt z G_{ii}^{(\T)}\sqrt z G_{ii}^{(\T k)}\sqrt z G_{kk}^{(\T)}} \text{ for } i, k \notin \T \text{ and } i \neq k
\end{split}
\eeq
Using the above equation \eqref{e:algorithmnew}, we see that in our case the steps of the expansion algorithm (5.13), (5.14), (5.15) in \cite{cacciapuoti2015bounds} are the same except that each resolvent entry is multiplied by a factor of $\sqrt z$. Using our definition of $W$, equation (5.6) in \cite{cacciapuoti2015bounds} becomes analogous to
\begin{equation} \label{formexpand}
( \mathbb{I}-\E_{k_s})W_{k_s} = ( \mathbb{I}-\E_{k_s}) \left [ ( \mathbb{I}-\E_{k_s}) \frac{1}{\sqrt{z}G_{k_s k_s}} \right ] \sqrt{z} G_{k_s k_s} , s=1,...,2q ,
\end{equation}
so the initial terms of the algorithm are $A^{r}:=\sqrt z G_{k_r k_r}$ and $B^{r} :=\frac{1}{\sqrt{z}G_{k_r k_r}}$ are the same as (5.16), (5.17) of \cite{cacciapuoti2015bounds} except that each resolvent entry is multiplied by a $\sqrt z$.
Then (5.18), (5.19), and (5.20) of \cite{cacciapuoti2015bounds} carry over directly as well as properties (1) through (5) of relevant strings. We then obtain the desired result
\[
\E \left|\frac{1}{N}\sum_k( \mathbb{I}-\E_{k})W_{k}\right|^{2q} \leq (Cq)^{cq^2} \mathcal{E}_{4q}^{1/2}
\]
using the proof of (5.32) of \cite{cacciapuoti2015bounds}. It relies on counting the types of terms that result from the expansion algorithm. Since our algorithm yields the same type and number of terms in each step, the proof in our case will be identical.
In \cite{cacciapuoti2015bounds}, we notice the use of bounds (3.9) and Lemma 5.2 in (5.44) as well as in Case 2, bounds (5.26) and (3.4) in (5.43) and (5.49). We can replace (3.9), Lemma 5.2, (5.26), and (3.4) of \cite{cacciapuoti2015bounds} by our bounds on the relevant quantities in \eqref{e:quantities} as well as our (5.4).
\end{proof}
\begin{proof}[Proof of Theorem \ref{t:sti}]
 By Proposition \ref{b-R}, in order to control $\Lambda$, we need to control high moments of $R=N^{-1} \sum \limits_{k=1}^N G_{kk}(T_k + \Upsilon^{(\{k\})})$. Taking expectation of $2q$  power we obtain:
\begin{equation}\label{e:Ypsilonpower}\E| z R|^{2q} \leq  C^q \left ( \E \left | \frac{1}{N} \sum \limits_k \sqrt{z} T_k \sqrt{z} G_{kk} \right |^{2q} + \E \left | \frac{1}{N} \sum \limits_k \sqrt{z} \Upsilon^{(\{k\})} \sqrt{z} G_{kk} \right |^{2q} \right )  .\end{equation}
For the first term by \eqref{e:minor}, we obtain
\begin{align} \label{4.1}
\E \left| \frac{1}{N} \sum \limits_{k} \sqrt{z}T_k \sqrt{z}G_{kk} \right|^{2q}\leq C^q \frac{1}{N^{2q} |z|^q} .
\end{align}
while the second term is handled in Proposition \ref{p:optimalbound}, yielding that
$$ \E|z R|^{2q} \leq (Cq)^{cq^2} \mathcal{E}_{4q}^{1/2}. $$
Here we are able to simplify the analysis in \cite{cacciapuoti2015bounds} by only using the bounds proportional to $R$ from Proposition \ref{b-R} to control $E|\Lambda|^{2q}$ in $Z_{E, \eta}$ and $E|\mathrm{Im}\Lambda|^{2q}$. Our simplifications carry over also to the Wigner case. We can assume that
$$[\mathrm{Im}( S_{\rho})]^{2q} + \E| \Lambda|^{2q} \geq \frac{1}{(N\eta)^{2q}} ,$$
(otherwise $\E | \Lambda|^{2q} \leq \frac{1}{(N\eta)^{2q}} ,$ as we want)
and in this case:
$$ \mathcal{E}_{2q} = \frac{1}{N^{2q} |z|^{q}} + \frac{\mathrm{Im} (|z| S_{\rho} )]^{2q} + \E |z \Lambda|^{2q}}{(N\eta)^{2q}} \leq \frac{\eta^q + \mathrm{Im} (|z| S_{\rho} )]^{2q} + \E |   z \Lambda|^{2q}}{(N\eta)^{2q}} ,$$
Using the bound proportional to $|R|$ from Proposition \ref{b-R}, we obtain
\begin{align*}
\E |z \Lambda|^q &\leq \frac{C^q \E| z R|^q}{|S_{\rho}+\frac{1}{2}|^q} \leq \frac{(Cq)^{cq^2}}{|S_{\rho}+\frac{1}{2}|^{q}} \left ( \frac{\eta^q + [\mathrm{Im} (|z|S_{\rho})]^{2q}}{(N\eta)^{2q}} \right)^{1/2} \\
&= \frac{(Cq)^{cq^2}}{|S_{\rho}+\frac{1}{2}|^q} \frac{|z|^q}{(N\eta)^q} \left ( \frac{\eta^q}{|z|^{2q}} + [\mathrm{Im} (S_{\rho})]^{2q} \right )^{1/2} \\
&\leq \frac{(Cq)^{cq^2} |z|^q } {(N\eta)^q} \left [ \left (\frac{\sqrt{\eta}}{|z| |S_{\rho}+\frac{1}{2}|} \right )^q+ \left ( \frac{\mathrm{Im} S_{\rho}}{|S_{\rho}+\frac{1}{2}|} \right )^q \right ].
\end{align*}
To obtain the desired bound we now note that
$\mathrm{Im} S_{\rho} \leq |S_{\rho}+\frac{1}{2}|$ and $\frac{\sqrt{\eta}}{|z| |S_{\rho}+\frac{1}{2}|} \leq C$ in our domain. The first one follows easily and for the second one we argue as follows:
$$ \frac{\sqrt{\eta}}{|z| |S_{\rho}+\frac{1}{2}|} = \frac{2\sqrt{\eta}}{\sqrt{|z|}\sqrt{|z-4|}} ,$$
and by triangle inequality either $|z|\geq 2$ or $|z-4|\geq 2$. Then in the first case, we use the bound $\sqrt{\eta} \leq \sqrt{|z-4|} $ and in the second case the bound $\sqrt{\eta} \leq \sqrt{|z|} $.

Overall, this implies that:
\begin{equation}\label{e:finalLambdabound}\mathbb{P} \left( |S_{N} - S_{\rho}|\geq \frac{K}{ N\eta} \right)\leq \frac{(N\eta)^q}{K^q}\E | \Lambda|^q \leq \frac{(Cq)^{cq^2}}{K^q} ,
\end{equation}
for $1 \leq q \leq c_0 \left (\frac{N\eta}{|\sqrt{z}|} \right )^{1/8} $ , $\frac{N\eta}{\sqrt{|z|}}\geq M $ , $K>0$ , $z = E + i \eta \in Z_{E, \eta} $ .
\end{proof}
\end{section}

\begin{section}{Rate of convergence to the Marchenko-Pastur distribution}

In this section we prove Theorem \ref{t:counting}.

Let
\begin{equation}
\mathcal{N}(I) := \#\{i \leq N\,|\; \lambda_i \in I\},
\end{equation}
denote the number of eigenvalues in the interval $I.$
We denote by $\nu$ the probability distribution of $\mathrm{Re} (x_{ij})$ and $\mathrm{Im} (x_{ij}),$ for $i,j=1,...,N.$
\begin{proof}[Proof of Theorem \ref{t:counting}]
Let $0<E \leq 4$. We will use a Pleijel argument from \cite{pleijel1963theorem}, recently used in obtaining estimates on a measure $\mu$ from estimates on its Stieltjes transform as in \cite{erdHos2016fluctuations}. We start from the following equations (equations (13) and (14) in  \cite{erdHos2016fluctuations}, following from equation (5) of \cite{pleijel1963theorem}). Here $\mu$ is any probability measure supported on an interval $[-K,K],$ for a large $K>0:$
\begin{equation}\label{e:pleijel}
\mu(-K, E) = \frac{1}{2\pi i}\int_{L(z_0)}S_{\mu}(z)dz + \frac{\eta_0}{\pi} \mathrm{Re} S_\mu(z_0) + \mathcal{O} \left (\eta_0 \mathrm{Im} S_\mu(z_0) \right )
\end{equation}
and
\begin{equation}\label{e:pleijel2}
\mu(x, x') = \frac{1}{2\pi i}\int_{\gamma(x, x')} S_{\mu}(z)dz + \mathcal{O} \left (\eta_0 \left | S_\mu(x + i\eta_0) \right | + \eta_0 \left | S_\mu(x' + i\eta_0 \right | \right ),
\end{equation}
where $S_\mu$ is the Stieltjes transform of $\mu$ and $L(z_0)$ is a contour as in Figure \ref{f:domains} (see also \cite{erdHos2016fluctuations} Fig 1A), namely connects with line segments the points $E-i\eta_0, E-iQ, -1-iQ , -1+ iQ, E+ iQ, E+ i\eta_0$ in that order with arbitrarily chosen constants $-1$ and $Q$, and $\gamma(x, x')$ is the chain connecting $x+i\eta_0$ with $x'+i\eta_0$ and $x'- i\eta_0$ with  $x-i\eta_0.$

By Markov inequality, we obtain that
\begin{equation}
\PP\left(|\rho_N(E) - P(E)| \geq \frac{C \log N}{N}\right) \leq \frac{N^q\E \left |\rho_N(E) - P(E) \right |^q}{\left(C \log N\right)^q} .
\end{equation}
Then using \eqref{e:pleijel} and taking $z_0 := E + i\eta_0$ with $\eta_0 := \frac{M\sqrt E}{N}$ with $M$ as in Theorem \ref{t:sti}, we obtain that:
\begin{multline}
 \E \left |\rho_N(E) - P(E) \right |^q = \E \bigg |\frac{1}{2\pi i}\int_{L(z_0)}\Lambda(z)dz + \frac{\eta_0}{\pi} \mathrm{Re} \Lambda(z_0) + \mathcal{O}\bigg ( \eta_0  ( \mathrm{Im} S_N(z_0)+ \mathrm{Im} S_{\rho}(z_0)  ) \bigg ) \bigg |^q
\\
\leq  C^q\left(\E\left|\int_{L(z_0)}\Lambda(z) dz\right|^q + \mathcal{O} \bigg (\eta_0^q \E|\Lambda(z_0)|^q+\eta_0^q \mathrm{Im} S_{\rho}(z_0)^q \bigg )\right),
\end{multline}
noting that the constant in the $\mathcal{O}$ comes from the Pleijel formula and is uniform in the matrix randomness. We study the above expression one term at a time.
For $E \leq 4$ we can bound the second term as follows
\begin{equation}\label{e:etalambda}
 \eta_0^q \E|\Lambda(z_0)|^q \leq  \eta_0^q \frac{(Cq)^{cq^2}}{(N\eta_0 )^q} \leq \frac{(Cq)^{cq^2}}{N^q}.
\end{equation}
The third term is bounded using the above inequality \eqref{e:etalambda} on $\Lambda$ as well as
\begin{equation}\label{e:etaoverE}
\eta_0 \mathrm{Im} S_{\rho} \leq \frac{C\eta_0}{\sqrt E} = \frac{CM}{N}.
\end{equation}
Now for the integral, we note that it suffices to study the part of the contour where $\mathrm{Im} (z) > 0$ since $\Lambda(\bar z) = \overline{\Lambda( z)}$. Thus we obtain:
\begin{align*}
\E\left|\int_{L(z_0)}\Lambda(z) dz\right|^q &\leq C^q \bigg (\E \left|\int_0^{\eta_0} \Lambda(-1 + iy) dy \right|^q \\
&+\E \left|\int_{\eta_0}^Q \Lambda(-1 + iy) - \Lambda(E + iy)dy \right|^q+  \E \left|\int_{-1}^E  \Lambda(x + iQ) dx \right|^q \bigg )
\end{align*}
Since all eigenvalues are positive we bound $|\Lambda|$ for $-1<0$ by $|\Lambda(-1+i\eta)| \leq 2$ which yields:
\begin{equation}
\left|\int_0^{\eta_0} \Lambda(-1 + iy) dy \right|^q \leq \left(\int_0^{\eta_0} |\Lambda(-1 + iy)| dy \right)^q \leq C^q \eta_0^q.
\end{equation}
Next we note that:
\begin{equation}
\E \left|\int_{-1}^E  \Lambda(x + iQ) dx \right|^q \leq \frac{(Cq)^{cq^2}}{(NQ)^q}
\end{equation}
Now we can bound the expected value of the integrals $ \E \left ( \int_{\eta_0}^Q | \Lambda( E+iy) |dy \right )^q $ and $ \E \left ( \int_{\eta_0}^Q | \Lambda( -1 +iy) |dy \right )^q $ for $E \leq 4$, noting that the argument is identical at $E$ and $-1$,
\begin{align*}
&\E \left ( \int_{\eta_0}^Q | \Lambda(E +iy) |dy \right )^q = \E \int_{\eta_0}^Q | \Lambda(E +iy_1) |dy_1 \int_{\eta_0}^Q | \Lambda(E+iy_2) |dy_2 \cdots \int_{\eta_0}^Q | \Lambda(E+iy_q) |dy_q \\
&= \E \int_{\eta_0}^Q \cdots \int_{\eta_0}^Q \prod_{j=1}^q | \Lambda(E +iy_j) | \prod_{j=1}^q dy_j = \int_{\eta_0}^Q \cdots \int_{\eta_0}^Q \E \prod_{j=1}^q | \Lambda(E +iy_j) | \prod_{j=1}^q dy_j \\
&\leq
 \int_{\eta_0}^Q \cdots \int_{\eta_0}^Q \prod_{j=1}^q \left (\E | \Lambda(E+iy_j) |^q \right )^{\frac{1}{q}}  \prod_{j=1}^q dy_j
\leq
 \frac{1}{N^q}  \int_{\eta_0}^Q \cdots \int_{\eta_0}^Q \prod_{j=1}^q \frac{(Cq)^{cq}}{y_j} \prod_{j=1}^q dy_j \\
&= \frac{(Cq)^{cq^2}}{N^q} \left ( \int_{\eta_0}^Q \frac{1}{y} dy \right )^q \leq (Cq)^{cq^2}  \frac{(\log N)^q}{N^q} ,
\end{align*}
where we can apply \eqref{e:finalLambdabound} inside the integral because our estimates on $\Lambda$ are uniform on compact sets.

To prove the other part of \eqref{e:counting} near the "hard" edge, we use the  \eqref{e:pleijel2} and study the interval $[-E, E]$, noting that $ \rho_N(E) = \mathcal{N}([-E, E])/N $  and $P(-E,E) = P(E)$. Similarly to the above, we have that:
\[
\rho_N(-E,E) - \rho(-E,E) =  \frac{1}{2\pi i } \int_{\gamma(-E,E)} \Lambda(z) dz + \mathcal{O} \left ( \eta_0 \Lambda(E+i\eta_0) \right ) + \mathcal{O} \left ( \eta_0 S_{\rho}(E+i\eta_0) \right ) .
\]
We now take expectation and power $q$ and use H\"olders inequality. The corresponding integral can be bounded similarly to above:
\begin{align*}
&\E \left | \int_{\gamma(-E,E)} \Lambda(z)dz \right |^q = 
\mathbb{E}\left|\int_{-E}^E \Lambda\left(x+i \eta_0\right)-\Lambda\left(x-i \eta_0\right) d x\right|^q \\
&=\mathbb{E}\left|\int_{-E}^E 2 \mathrm{Im} \Lambda \left(x+i \eta_0\right)\right|^q
\leq \frac{(C q)^{c q^2} E^q}{\left(N \eta_0\right)^q} \leq \frac{(C q)^{c q^2}(\sqrt{E})^q}{M^q}
\end{align*}
and, similarly to \eqref{e:etaoverE}:
\begin{equation}\label{e:deltabound}
\eta_0 S_{\rho}(E) \leq \frac{\eta_0}{\sqrt{E}}=\frac{M}{N} ,
\end{equation}
which together with \eqref{e:etalambda} yields the $\sqrt{E}$ - bound of \eqref{e:counting} for $E<4$.

To establish the \eqref{e:counting} for $E > 4$, we use \eqref{e:counting} for $E = 4$ to establish bounds on the number of eigenvalues outside of the spectrum. Letting $\mathcal{N}_I$ be the number of eigenvalues in an interval $I$, we see that:
\begin{equation}
 \mathcal{N}_{(4, \infty)} = N - N \rho_N(4) = N(P(4) - \rho_N(4))
\end{equation}
which by \eqref{e:counting} for $E = 4$ yields that:
\begin{equation}\label{e:outside}
\PP\left(\frac{\mathcal{N}_{(4, \infty)}}{N} > \frac{K\log N}{N}\right) \leq \frac{(Cq)^{q^2}}{K^q}
\end{equation}
and for $E > 4$,
\begin{equation}
\PP\left(|\rho_N(E) - P(E)| \geq \frac{K \log N}{N}\right) \leq \PP\left(\frac{\mathcal{N}_{(4, \infty)}}{N} > \frac{K\log N}{N}\right),
\end{equation}
thus \eqref{e:outside} gives the desired bound.
\end{proof}

\end{section}

\begin{section}{Rigidity of the eigenvalues}

The aim of this section is a proof of Theorem \ref{t:rigidity}.
\begin{proof}[Proof of Theorem \ref{t:rigidity}]
Let $i \leq \frac{N}{2}$. We will make use of the following inequalities near the "hard" edge and away from the "soft" edge:
\beq \label{est1} c \sqrt{x} \leq  P(x) \leq C \sqrt{x} , \eeq and \beq \label{est2} c  P(x)^{-1} \leq \rho(x) \leq C  P(x)^{-1} . \eeq
valid for $x\in (0,3] .$ The second inequality implies that
\beq \label{14}
c \frac{N}{i} \leq \rho(\gamma_i) \leq C  \frac{N}{i},
\eeq
for any $i \leq \frac{N}{2}.$

For $\epsilon > 0$, we have that:
\begin{align*}
&\mathbb{P} \left ( | \lambda_i - \gamma_i | \geq K \epsilon \frac{i}{N}  \right ) \\
&\leq \mathbb{P} \left ( | \lambda_i - \gamma_i | \geq K\epsilon\frac{i}{N}  \ \ \text{and} \ \ \lambda_i \leq \gamma_i \right ) + \mathbb{P} \left ( | \lambda_i - \gamma_i | \geq K\epsilon \frac{a}{N} \  \ \text{and} \ \ \lambda_i >\gamma_i \right ) \\
&= A + B \numberthis .
\end{align*}
We consider first the term $A.$ We set
$$\ell = K \varepsilon\frac{i}{N}.$$
From $ \lambda_i \leq \gamma_i $ and $| \lambda_i - \gamma_i | \geq \ell$ we find that $\lambda_i \leq \gamma_i - \ell .$ This implies that $ \rho_N( \gamma_i - \ell ) \geq \frac{i}{N} = P(\gamma_i)$. By the mean value theorem for the function $P$, there exists a point $x^* \in [\gamma_i - \ell , \gamma_i ] $ such that
$P(\gamma_i ) - P(\gamma_i - \ell ) = \rho(x^*) \ell$, yielding that:
\begin{align*}
\rho_N(\gamma_i - \ell) - P(\gamma_i - \ell) &= \rho_N(\gamma_i - \ell) - P(\gamma_i) + \rho(x^*)\ell
\geq \rho(x^*) \ell \\
&\geq \rho(\gamma_i) K \epsilon \frac{i}{N} \geq  cK \epsilon, \numberthis
\end{align*}
because $\rho$ is non-increasing, $i < N/2$, and we used \eqref{14}.
Setting $\varepsilon = \frac{\log N}{N} $ we deduce from Theorem \ref{t:counting} that:
\begin{equation} A \leq \PP \left ( |\rho_N(\gamma_i - \ell) - P(\gamma_i - \ell) | \geq  \frac{cK\log N}{N} \right ) \leq \frac{(Cq)^{cq^2}}{K^q}
\end{equation}
For $i \leq \log N$, set $\epsilon = \frac{i}{N} \geq c\sqrt{\gamma_i},$ from \eqref{est1}, to obtain the better bound:
\begin{equation}
A \leq \PP \left ( |\rho_N(\gamma_i - \ell) - P(\gamma_i - \ell) | \geq cK \sqrt{(\gamma_i - \ell)_+} \right )
\leq \frac{(Cq)^{cq^2}}{K^q},
\end{equation} 
because $ c\sqrt{(\gamma_i - \ell)_+} \leq c\sqrt{\gamma_i} \leq \frac{i}{N} \leq \frac{\log N}{N}$ and we used Theorem \ref{t:counting}. 

We now estimate the term $B.$ From the estimate \eqref{est1} near the "hard" edge we have that $P(x) \sim \sqrt{x}$, so:
$$ \gamma_i \leq C \left ( \frac{i}{N} \right )^2 ,$$
for some constant $C>0$ for all $i < N/2$. We consider the number
$$ 
y = 2C \left ( \frac{i}{N} \right )^2 
$$
and we further consider the cases that $\gamma_i + \ell \leq y $ or $ \gamma_i + \ell > y .$

In the first case, since $\lambda_i > \gamma_i $ and $| \lambda_i - \gamma_i | \geq \ell ,$ we have that  $\lambda_i > \gamma_i + \ell $ and so $ \rho_N( \gamma_i + \ell) \leq \frac{i}{N} = P(\gamma_i).$

Hence, from the mean value theorem, we find $x^* \in [\gamma_i , \gamma_i + \ell ] \subset [\gamma_i , y ] $ such that $P(\gamma_i + \ell) - P(\gamma_i ) = \rho(x^*) \ell$, yielding that:
\begin{align*}
&P(\gamma_i + \ell) - \rho_N(\gamma_i + \ell) = P(\gamma_i) - \rho_N(\gamma_i + \ell) + \rho(x^*)\ell \\
&\geq \rho(x^*) \ell = \rho(x^*)  K\epsilon \frac{i}{N} \geq \rho(y) K\epsilon \frac{i}{N} \geq  c K\epsilon, \numberthis
\end{align*}
where we used that $\rho$ is non-increasing and that $\rho(y) \geq \frac{c}{\sqrt{y}}$ near the "hard edge".

Setting $\epsilon = \frac{\log N}{N}$ and using Theorem \ref{t:counting}, we conclude that
\begin{equation} B \leq \PP \left( |P(\gamma_i + \ell) - \rho_N(\gamma_i + \ell) | \geq cK \frac{ \log N}{N}\right)\leq \frac{(Cq)^{cq^2}}{K^q},
\end{equation}
as required. To obtain rigidity at the "hard" edge ( equation \eqref{e:rigidityedge}, let $\epsilon = \frac{i}{N} $ to obtain:
\begin{align*}
B &\leq \PP \left( |P(\gamma_i + \ell) - \rho_N(\gamma_i + \ell) | \geq \frac{cKi}{N} \right) \\
&\leq \PP \left( |P(\gamma_i + \ell) - \rho_N(\gamma_i + \ell) | \geq c\sqrt{K} \sqrt{\gamma_i + \ell}  \right)
\leq \frac{(Cq)^{cq^2}}{K^{q/2}} , \numberthis
\end{align*}
where the second line follows as before because $\sqrt{\gamma_i} \leq c\frac{i}{N}$ and $\ell = \frac{K i^2}{N^2}$.

In the other case we have that $ \gamma_i + \ell > y$ so the inequality $\lambda_i > \gamma_i + \ell$ implies that $\lambda_i > y$ and therefore $\rho_N (y) \leq \frac{i}{N} = P(\gamma_i).$ Hence from the mean value theorem, there exists $x^* \in [\gamma_i, y]$ such that $P(y) - P(\gamma_i) \geq \rho(x^*) \ell$, which yields:
\begin{equation*}
P(y) - \rho_N(y) \geq P(\gamma_i) - \rho_N(y) + \rho(x^*)\ell \geq \rho(x^*) \ell = \rho(x^*)  K \epsilon \frac{i}{N} \geq \rho(y) K \epsilon \frac{i}{N} \geq  c K \epsilon,
\end{equation*}
and we can conclude \eqref{e:rigiditybulk} and \eqref{e:rigidityedge} as above. This finishes the proof of Theorem \ref{t:rigidity}.
\end{proof}
\end{section}
 \chapter{The Dyson equation method}

 In this chapter we generalize the Stieltjes transform technique to the Dyson equation technique.

 Let $H$ be a $N \times N$ Hermitian random matrix, whose spectrum we want to compute for large $N$. We recall that with the Stieltjes transform method we had to define the \textit{resolvent} matrix $G(z) = (H - zI)^{-1}$ and then the Stieltjes transform of the empirical spectral measure would be equal to the normalized trace of $G(z).$ We then had to show that the normalized trace of $G(z)$ convergences to a suitable Stieljes tranform $S_{\mu}(z)$ of a limiting measure $\mu$. Usually, $S_{\mu}(z)$ satisfies an algebraic equation which is "close" to the probabilistic one satisfied by $\langle G(z) \rangle$, which can be found using resolvent identities.
 
 In the Dyson equation technique, instead of studying the normalized trace $\langle G(z) \rangle$ of the resolvent matrix, we study the whole matrix $G(z)$ and deduce a basic probabilistic equation satisfied by it in the Gaussian case which we generalize for any distribution for the entries. If some conditions are satisfied about the initial Hermitian random matrix $H$, then we can simplify this equation to deduce a deterministic one which is "close" to the original and whose determistic solution $M(z)$ would be "close" to $G(z).$ Analogously, $M(z)$ turns out be the Stieltjes transform of a \textit{matrix-valued} measure, whose normalized trace $\langle M(z) \rangle$ can give us the liming spectral measure for the eigenvalues of $H$.

\pagebreak

 This technique is especially useful when we have correlations inside the Hermitian matrix $H$. In section \ref{s:GUE} we will apply this technique to the Gaussian Unitary Ensemble (GUE) and find out that it coincides with the Stieltjes transform technique. We then find out some differences for Wigner-type matrices and correlated Hermitian matrices in section \ref{s:W+C}.

 The technique is then studied and established theoretically in the next sections. There are two steps for this method to work, after proving the existence and uniqueness of the solution $M(z)$:
\begin{enumerate}
    \item Show that the probabilistic equation satisfied by $G(z)$ and the deterministic equation satisfied by $M(z)$ are "close" to each other. This is done by treating the first as a perturbation of the second one and showing that the error is small. This is achieved through the \textit{multivariate cumulant expansion} technique and is summarized in \ref{s:Cumulants}.

    \item Show that the deterministic equation is \textit{stable}. This means that solutions to it are "close" to solutions of small perturbations of it. This is achieved by showing the invertibility of a suitable \textit{linear stability operator}. This is summarized in section \ref{s:Stability}.
\end{enumerate}
We finally give the resulting theorems in section \ref{s:Results}. 

This analysis is based on the lecture notes \cite{erdos2019matrix}. The publications \cite{erdHos2019random} and \cite{ajanki2019stability} analyze these two steps and give the final theorems, while the existence and uniqueness of the solution for the Dyson equation can be found in \cite{helton2007operator} for the matrix case and in \cite{ajanki2019quadratic} for the vector case.
 
\begin{section}{Gaussian Hermitian random matrix models} \label{s:GUE}

The definition of the resolvent $G=G(z)$ results in the basic identity:
\beq \label{HG1}
HG = I + zG.
\eeq
We will include in this identity the information provided by the expected value of $HG$, which reminds of the formula (for Gaussian random variables):
\beq \label{gauss1}
\E [ h f(h)] = \E [h^2] \E[ f'(h) ],
\eeq
where $h$ is real centered Gaussian and $f$ a smooth function. The version of this for complex Gaussian random variables is the following:
\beq \label{gauss2}
\E[zf(z)] = \E \|z \|^2 \E \left [ \overline{\partial} f (z) \right ],
\eeq
where $\overline{\partial}$ denotes the Wirtinger derivative and $f$ is sufficiently smooth. This gets generalized for Gaussian random vectors $V \in \mathbb{R}^n$ as follows:
\beq \label{gauss3}
\E[ V f(V)] = K \E \left [ \nabla f(V) \right ],
\eeq
where $K=\E [ VV^T ]$ and $f:\mathbb{R}^n \mapsto \mathbb{R} $  is sufficiently smooth. 

The application of this formula to resolvent matrices in random matrices firstly appeared in \cite{khorunzhy1996asymptotic}. In our case, where $f$ represents the resolvent function of a complex centered Gaussian matrix, we have the following:
\begin{lem} \label{Gauss4}
    For a Hermitian Gaussian random matrix $H$ and its resolvent $G,$ it holds that:
    \beq
    \E [H G] = - \E \left [ \Tilde{\E}[\Tilde{H}G\Tilde{H}] G \right ] ,
    \eeq
    where $\Tilde{H}$ is an independent copy of $H$ and the second expectation is with respect to $\Tilde{H}$, i.e. $\Tilde{\E}[\Tilde{H}G\Tilde{H}] = {\E} \left [ {H}G {H} \middle | G \right ].$
\end{lem}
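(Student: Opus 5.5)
The plan is to apply the Gaussian integration by parts formula \eqref{gauss3} entrywise to $\E[HG]$, treating $G=(H-z)^{-1}$ as a smooth function of the (real) coordinates of $H$. First we write
\[
\E[(HG)_{ij}] = \sum_{k} \E[h_{ik} G_{kj}].
\]
We regard the family of real and imaginary parts of the entries $(h_{ab})$ as a jointly Gaussian centered vector. For each summand, \eqref{gauss3} (equivalently \eqref{gauss2} in complex form) replaces $h_{ik}$ by a covariance-weighted derivative:
\[
\E[h_{ik} G_{kj}] = \sum_{a,b} \E[h_{ik} h_{ab}]\, \E[\partial_{h_{ab}} G_{kj}],
\]
where $\partial_{h_{ab}}$ is the derivative in the direction of the matrix unit $E_{ab}$. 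The Hermitian constraint $h_{ba}=\overline{h_{ab}}$ is absorbed by letting the covariance $\E[h_{ik}h_{ab}]$ encode it.

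The second step is the resolvent derivative formula. From $G(H-z)=I$, perturbing $H$ by $\epsilon E_{ab}$ gives
\[
\partial_{h_{ab}} G = -G E_{ab} G, \qquad \text{so} \qquad \partial_{h_{ab}} G_{kj} = -G_{ka}G_{bj}.
\]
Substituting this yields
\[
\E[(HG)_{ij}] = -\sum_{k,a,b} \E[h_{ik}h_{ab}]\, \E[G_{ka}G_{bj}].
\]

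The third step recognises the right-hand side as $-\E[\tilde\E[\tilde H G \tilde H]G]_{ij}$. Let $\tilde H$ be an independent copy of $H$, with $G$ held fixed. Then
\[
\tilde\E[\tilde H G\tilde H]_{ib} = \sum_{k,a} \E[h_{ik}h_{ab}]\, G_{ka},
\]
because $\tilde H$ has the same second moments as $H$. Multiplying on the right by $G$ and summing over $b$ reproduces the expression above. Taking the outer expectation then gives exactly the claimed identity.

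The step needing the most care is the bookkeeping in the integration by parts for complex Hermitian entries. One has to verify that differentiating with respect to both $h_{ab}$ and $h_{ba}=\overline{h_{ab}}$ (via Wirtinger derivatives, as in \eqref{gauss2}) together with the covariances $\E[h_{ik}h_{ab}]$, $\E[h_{ik}\overline{h_{ab}}]$ indeed collapses to the single sum over ordered pairs $(a,b)$ with weight $\E[h_{ik}h_{ab}]$. The cleanest way is to expand $H=\sum_\alpha \xi_\alpha B_\alpha$ in a real basis of Hermitian matrices with real Gaussian coefficients $\xi$. We then apply \eqref{gauss3} in the $\xi$ variables with $\partial_{\xi_\alpha}G=-GB_\alpha G$. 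This gives
\[
\E[HG] = -\sum_{\alpha,\beta}\E[\xi_\alpha\xi_\beta]\,\E[B_\alpha G B_\beta G],
\]
and the coefficient $\sum_{\alpha,\beta}\E[\xi_\alpha\xi_\beta]B_\alpha G B_\beta$ is precisely $\tilde\E[\tilde H G\tilde H]$.

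Integrability, which justifies \eqref{gauss3}, follows from $\|G\|\le 1/\mathrm{Im}\, z$ and the boundedness of all derivatives of $G$.
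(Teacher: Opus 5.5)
Your argument is correct, but it is organised differently from the paper's.

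\textbf{The paper's route.} The paper works entrywise in the real coordinates $(\mathrm{Re}\,H_{jk},\mathrm{Im}\,H_{jk})$. It immediately specialises to the GUE-type covariance ($K_{11}=K_{22}$, $K_{12}=K_{21}=0$, identically distributed entries) and derives $\E[H_{jk}G_{ab}]=-\E|H_{12}|^2\,\E[G_{ak}G_{jb}]$. It then checks that both $\E[HG]_{jk}$ and $-\E\bigl[\tilde\E[\tilde H G\tilde H]G\bigr]_{jk}$ equal $-\E|H_{12}|^2\,\E[G_{jk}\Tr G]$. So it proves the identity only for that covariance structure. In exchange it obtains the explicit formula $\tilde\E[\tilde H G\tilde H]=\E|H_{12}|^2\,\Tr(G)\,I$, which is used right afterwards to get the GUE Dyson equation.

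\textbf{Your route.} Your expansion $H=\sum_\alpha\xi_\alpha B_\alpha$ with $\partial_{\xi_\alpha}G=-GB_\alpha G$ never needs to know the covariance. It proves the identity for an arbitrary centred jointly Gaussian Hermitian matrix. That is exactly what is needed to justify the general self-energy $S[R]=\E[HRH]$ used for the correlated models of Section~\ref{s:W+C}. It also treats the real diagonal entries on the same footing as the off-diagonal ones. By contrast, the paper's claim $K_{11}=K_{22}$ is false on the diagonal, where $\mathrm{Im}\,H_{jj}=0$; its final formula survives there only because of the GUE normalisation $\E h_{jj}^2=\E|H_{12}|^2$.

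Your preliminary entrywise formula, with weights $\E[h_{ik}h_{ab}]$ over ordered pairs $(a,b)$, is also right. The Wirtinger derivative $\partial_{\bar h_{ab}}$ acts as the directional derivative along $E_{ba}$, and $\bar h_{ab}=h_{ba}$. The basis expansion simply makes this bookkeeping automatic.
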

\begin{proof}
    For the proof, we apply \eqref{gauss3}, while identifying $\mathbb{C}$ with $\mathbb{R}^2$. We obtain that:
    \begin{align*}
        &\E[H_{jk}G_{ab}] \\ 
        &= \E \left [ K_{11} \partial_{\mathrm{Re}(H_{jk})} G_{ab} + K_{12} \partial_{\mathrm{Im}(H_{jk})}G_{ab} + i K_{21}\partial_{\mathrm{Re}(H_{jk})} G_{ab} + i K_{22}\partial_{\mathrm{Im}(H_{jk})} G_{ab} \right ] \\
        &= - \E \bigg [ ( K_{11} - K_{22} +iK_{12}+iK_{21} ) G_{aj}G_{kb} + ( K_{11}+K_{22}-iK_{12} +iK_{21} ) G_{ak}G_{jb} \bigg ] ,
    \end{align*}
    where $K$ is the real $2N \times 2N$ covariance matrix of the $k$-th column of $H$ which belongs in $\mathbb{C}^{N}$. 
    
    We observe that $K_{11} = K_{22} = \E [\mathrm{Re}H_{jk} ]^2 = \E [\mathrm{Im}H_{jk} ]^2$ and $$ K_{12}=K_{21} = \E [\mathrm{Re}H_{jk} \mathrm{Im}H_{jk} ] = 0.$$
    Therefore, we have that:
    \begin{equation} \label{HG2}
        \E[H_{jk}G_{ab}] = - \E \left [ \| H_{jk}\|^2 G_{ak}G_{jb} \right ] = - \E \| H_{12}\|^2 \E \left [ G_{ak}G_{jb} \right ] ,
    \end{equation}
    for any $j,k,a,b \in \{1,...,N\}.$ Calculating the desired expected value, we see that:
    \begin{align*} \label{tilde-exp}
    &\Tilde{\E}[\Tilde{H}G\Tilde{H}]_{jk} = \sum \limits_{a,b} \tilde{\E} [ \tilde{H}_{ja} G_{ab} \tilde{H}_{bk} ] = \sum \limits_{a,b} G_{ab} \tilde{\E} [ \tilde{H}_{ja} \tilde{H}_{bk}] \\
    &= \delta_{jk} \E\| H_{12} \|^2 \sum \limits_{a=1}^N G_{aa} = 
    \delta_{jk} \E\| H_{12} \|^2 \mathrm{Tr}[G], \numberthis
    \end{align*}
    and this means that:
    \begin{align*}
    &\E \left [ \Tilde{\E}[\Tilde{H}G\Tilde{H}] G \right]_{jk} =  \E\| H_{12} \|^2 \E \left [ G_{jk} \mathrm{Tr}(G) \right ] .
    \end{align*}
    On the other hand, by \eqref{HG2}, we have that:
    \begin{align*}
    &\E [ HG]_{jk} = \sum \limits_{a=1}^N \E \left [ H_{ja} G_{ak} \right ] = - \sum \limits_{a=1}^N \E \| H_{12} \|^2 \E \left [ G_{aa} G_{jk} \right ] \\
    &= - \E\| H_{12} \|^2 \E \left [ G_{jk} \mathrm{Tr}(G) \right ] ,
    \end{align*}
    and the proof is complete.
\end{proof}
We can now define a linear operator $S: \mathbb{C}^{N \times N} \mapsto \mathbb{C}^{N \times N},$ by:
\beq \label{S-op}
S[R] = \tilde{\E} [\tilde{H} R \tilde{H} ] ,
\eeq
so that we may write:
\beq
\E [HG] = - \E [S[G]G].
\eeq
Plugging this information to \eqref{HG1} we obtain the \textit{Dyson equation}:
\beq \label{DysonE1}
I + (z+S[G])G = D , \quad D := HG + S[G]G,
\eeq
where $S$ defined in \eqref{S-op} is called the \textit{self-energy} operator in the Dyson equation and $D$ is called the \textit{error matrix}. Observe that $\E[D]=0$, so that we can always work with the probabilistically "close" equation:
\beq \label{DysonE2}
I + (z+S[M])M = 0 .
\eeq

In the GUE ensemble we have that $K_{11} = K_{22} = \frac{1}{2N}$ and of course $K_{12}=K_{21}=0.$ This gives the following expression (see \eqref{tilde-exp}) for the self-energy operator:
\beq
(S[R])_{jk} = \tilde{\E} [\tilde{H} R \tilde{H} ]_{jk} = \delta_{jk} \frac{1}{N} \mathrm{Tr}(R),
\eeq
so that the Dyson equation becomes:
\beq
I + \left (z + \frac{1}{N} \mathrm{Tr}(G) \right )G = 0.
\eeq
Taking normalized trace this is identical to the Stieltjes transform equation:
\beq
1 + (z + S_N)S_N = 0 .
\eeq
\end{section}

\begin{section}{Wigner-type and correlated Hermitian models}
\label{s:W+C}

 In the Wigner-type model we still assume that the entries of $H$ are independent up to the Hermitian symmetry, but we drop the identically-distributed condition. We define the variances of each random entry $h_{ij}$ as:
\beq \label{s-var}
s_{ij} = \E\|h_{ij}\|^2,
\eeq
which correspond to the matrix of variances $S$ with $(S)_{ij}:=s_{ij}.$ In the Wigner model, we have that $s_{ij} = \frac{1}{N}$ but here we just impose the condition that:
\beq \label{s-cond}
\frac{c}{N} \leq s_{ij} \leq \frac{C}{N},
\eeq
uniformly for each $i,j=1,...,N$ and some $c,C>0$.

We calculate the self-energy operator for this model as follows:
\begin{align*} \label{vec-self-e}
&\Tilde{\E}[\Tilde{H} R \Tilde{H}]_{jk} = \sum \limits_{a,b} R_{ab} \tilde{\E} [ \tilde{H}_{ja} \tilde{H}_{bk}] \\
&= \delta_{jk} \sum \limits_{a=1}^N R_{aa} s_{aj} = \delta_{jk} \left \langle S_j , \mathrm{diag}(R) \right \rangle , \numberthis
\end{align*}
where $S_j$ is the $j$-th column of $S$ and $\mathrm{diag}(R)$ is the vector corresponding to the diagonal of $R$. 

For the Dyson equation, by using \eqref{vec-self-e}, we now have to consider a system of $N$ equations for the unknown vector $m=(m_1,...,m_N)$ which represents the diagonal of $G$:
\beq \label{vec-D}
1 + (z + \left \langle S_j , m \right \rangle ) m_{j} = 0,
\eeq
for $j=1,...,N$.

For the stability analysis, the linear stability operator is given by:
\[ 
I - m^2 S : \ \ \mathbb{C}^N \mapsto \mathbb{C^N},
\]
with:
\beq \label{vec-stab}
\left [ (I - m^2 S)(x) \right ]_j := x_j - m_j^2  \sum \limits_{a=1}^N s_{ja} x_a .
\eeq
We will see a proof of this in section \ref{s:Stability} where the linear stability operator is given in its general form.

In the correlated Hermitian model, we drop the independence condition in the matrix entries of $H$, so that the matrix elements may have non-trivial correlations in addition to the one required by the Hermitian symmetry. 

This is the most general model and the one mostly used in applications where we use the \textit{Hermitization} technique to "hermitize" the matrix and then apply the Dyson equation technique. This is what we will do with the model of the next chapter, see section \ref{s:model}.

The self-energy operator here is in its most general form:
\[
(S[R])_{ij} = \E[H R H]_{ij} = \sum \limits_{a,b} \E[H_{ia}H_{bj}] R_{ab} ,
\]
and we have to use the analogue of \eqref{s-cond} which is:
\beq \label{S-cond}
c \langle R \rangle \preceq S[R] \preceq C \langle R \rangle,
\eeq
where $\langle \ \cdot \ \rangle$ represents the normalized trace of a matrix and $A \preceq B$ means that the matrix $B-A$ is positive semi-definite.

Conditions \eqref{s-cond} and \eqref{S-cond} are known as \textit{mean-field} conditions which make the spectrum map into an interval or area of order $1$.

The Dyson equation is in its general form:
\[
I +(z+S[G])G = 0,
\]
while the linear stability operator is also in its general form and given by:
\beq \label{super-oper1}
I - C_{M} \circ S  : \ \ \mathbb{C}^{N \times N} \mapsto \mathbb{C}^{N \times N},
\eeq
where $C_M$ is the "sandwich" operator $C_M[R]= MRM,$ and $M$ is the solution matrix, so that the linear stability operator becomes:
\beq \label{matrix-stab}
R \mapsto R - M S[R] M .
\eeq
Notice that this form implies \eqref{vec-stab} for the vector Dyson equation. A summary for the proof of \eqref{matrix-stab} will be given in the deterministic stability analysis in section \ref{s:stability}.
\end{section}

\begin{section}{The multivariate cumulant expansion}
\label{s:Cumulants}

In this section we give a summary of the proof that the probabilistic Dyson equation is "close" to the original one, which is treated as a perturbation of the probabilistic one with error given by the matrix $D$:
\[
I + (z + S[M])M = 0 , \quad I + (z+S[M])M = D .
\]
In order to prove this we have to bound expectations of high powers of the quantity $|D_{ij}|$, for any $i,j=1,...N$. 

We will follow the multivariate cumulant expansion technique which was given in \cite{erdHos2019random} and involves more general matrix cases with slow correlation decay.

Another approach for the proof was given in \cite{ajanki2019stability} for correlated
matrices with fast exponential correlation decay. There they used the Schur complement method together with concentration estimates on quadratic functionals of independent or essentially independent random vectors.

Let: 
\beq \label{D-error}
D = HG + S[G]G .
\eeq 
We extend formula \eqref{gauss1} to non-Gaussian random variables as follows:
\pagebreak
\begin{lem} Let $h$ be a general non-Gaussian real random variable such that all its moments and cumulants exist. Then for $f\in C^\infty,$ we have that:
\beq \label{non-Gauss}
\E[hf(h)] = \sum \limits_{k=0}^\infty \frac{\kappa_{k+1}}{k!} \E [f^{(k)}(h)],
\eeq
where $\kappa_{k}$ is the $k$-th cumulant of $h$.
\end{lem}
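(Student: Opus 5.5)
The plan is to prove the formula by expanding $f(h)$ in a Taylor series and matching coefficients through the moment--cumulant relation. I would start from the cumulant generating function $K(t)=\log \E[e^{th}]=\sum_{k\ge1}\kappa_k t^k/k!$. Differentiating the identity $\E[e^{th}]=e^{K(t)}$ in $t$ gives
\[
\E[h e^{th}] = K'(t)\,\E[e^{th}] = \sum_{k=0}^\infty \frac{\kappa_{k+1}}{k!}\, t^k\, \E[e^{th}].
\]
Since $\frac{d^k}{dh^k} e^{th} = t^k e^{th}$, this is exactly \eqref{non-Gauss} for the exponential test functions $f(h)=e^{th}$. The same computation with $t=is$ purely imaginary gives the formula for $f(h)=e^{ish}$, in terms of the characteristic function, which is always defined.

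The second step extends the identity from exponentials to general $f$. I would first do this for polynomials, where it is purely algebraic. Taking $f(h)=h^n$ and comparing coefficients of $t^n$ above gives
\[
m_{n+1}=\sum_{k=0}^{n}\binom{n}{k}\kappa_{k+1}m_{n-k},
\]
which is the standard moment--cumulant recursion. Here the sum on the right-hand side of \eqref{non-Gauss} is finite, because $f^{(k)}=0$ for $k>n$. Linearity then gives the identity for every polynomial $f$, under the stated hypothesis that all moments and cumulants exist.

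For general $f\in C^\infty$, I would read the infinite sum the way it is used for resolvent entries later in the chapter. The honest statement is a truncated expansion:
\[
\E[hf(h)]=\sum_{k=0}^{\ell}\frac{\kappa_{k+1}}{k!}\E[f^{(k)}(h)]+\mathcal{R}_{\ell+1},
\]
with $|\mathcal{R}_{\ell+1}|\le C_\ell\,\E|h|^{\ell+2}\sup_x|f^{(\ell+1)}(x)|$. To prove it, I would Taylor expand $f$ to order $\ell$ around $0$, apply the polynomial identity to the Taylor polynomial, and control the difference using the integral form of the remainder together with Hölder's inequality.

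The full series \eqref{non-Gauss} then follows whenever $\mathcal{R}_{\ell+1}\to0$. This holds, for example, when $f$ is entire with suitable growth, or when $h$ has compact support and $f$ is analytic on a neighbourhood of it; in the latter case one can argue by Fourier or Weierstrass approximation from the exponential case.

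The main obstacle is this convergence issue. For arbitrary $C^\infty$ functions the series need not converge, so the literal infinite sum needs an assumption such as analyticity of $f$ together with summability of $\sum_k |\kappa_{k+1}|\,\E|f^{(k)}(h)|/k!$. I would state the truncated version as the actual lemma and derive the infinite series under that extra assumption. The truncated form is also what the later cumulant-expansion estimates for $D$ actually need.
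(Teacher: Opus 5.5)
Your doubts about the literal statement are justified, and your route to a correct version departs from the paper's at the step that matters. Both arguments start from the same identity: the paper's $\hat\mu'=\hat\mu\,(\log\hat\mu)'$ and your $M'=K'M$. The paper then passes to general $f$ by Fourier synthesis. It writes $\E[hf(h)]$ as an integral of $\overline{\hat f}$ against $\hat\mu'$, replaces $(\log\hat\mu)'(t)$ by its cumulant power series for every real $t$, and integrates term by term. This is only formal.

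In fact the lemma is false as stated. Take a Rademacher variable $h=\pm1$ and $f(x)=\sin(\pi x/2)$. Then $\E[hf(h)]=1$. On the other hand, $\kappa_{k+1}=0$ for even $k$, and $f^{(k)}(\pm1)=\pm(\pi/2)^k\cos(k\pi/2)=0$ for odd $k$, so every term of the series vanishes. Here $\hat\mu(t)=\cos t$ vanishes exactly at the frequencies $\pm\pi/2$ of $f$, which is where the paper's expansion of $\log\hat\mu$ breaks down.

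Your replacement is correct. You get the exact identity for polynomials from the moment--cumulant recursion; read this as an identity of formal power series, so no moment generating function is needed. You then take the degree-$\ell$ Taylor polynomial and estimate the remainder. The Taylor remainder of $f^{(k)}$ is at most $|x|^{\ell+1-k}\sup|f^{(\ell+1)}|/(\ell+1-k)!$. Combined with $|\kappa_{k+1}|\le C_k\E|h|^{k+1}$ and $\E|h|^{k+1}\,\E|h|^{\ell+1-k}\le\E|h|^{\ell+2}$, this gives your bound on $\mathcal R_{\ell+1}$. That truncated expansion is what the bounds on the error matrix $D$ actually rely on. The paper's route buys only brevity, and the full series only at a formal level.

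Several of your side claims are wrong, however.

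First, the series identity for $e^{th}$ holds only for $|t|$ below the radius of convergence of $K$, which can be $0$ even when all moments exist. The existence of the characteristic function does not extend it to all $e^{ish}$, because the logarithm of the characteristic function can be singular, as in the example above.

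Second, compact support of $h$ together with analyticity of $f$ near it does not give $\mathcal R_{\ell+1}\to0$. For Rademacher $h$ and $f(x)=e^{2x}$, the partial sums are $\cosh 2$ times the Taylor partial sums of $\tanh$ at $2>\pi/2$. These diverge, whereas $\E[hf(h)]=\sinh 2$. Approximation by exponentials or polynomials cannot repair this, since the series does not pass to such limits.

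Third, your proposed extra hypothesis, analyticity plus summability of $\sum_k|\kappa_{k+1}|\E|f^{(k)}(h)|/k!$, is still insufficient. The $\sin(\pi x/2)$ example is entire and its series converges trivially, but to the wrong value.

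Hypotheses that do work include the following two.
\begin{itemize}
\item Absolute convergence of all the double series obtained by inserting the Taylor expansion of $f$. Rearrangement then reduces everything to the polynomial identity.
\item An integrable $\hat f$ supported in $[-r,r]$ with $r$ smaller than the radius of convergence of the cumulant series. This is exactly the regime in which the paper's Fourier computation is legitimate.
\end{itemize}
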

We recall that the cumulants of a real random variable are defined by the relation:
\beq \label{cumulant}
\log \E [e^{th}] = \sum \limits_{k=0}^\infty \frac{\kappa_k}{k!}t^k ,
\eeq
which is similar to the moment-generating formula:
\beq \label{moment}
\E [e^{th}] = \sum \limits_{k=0}^\infty \frac{m_k}{k!}t^k ,
\eeq
and so the moments can define the cumulants and the cumulants can define the moments, by comparing the two power series.
\begin{proof}[Proof of \eqref{non-Gauss}]
    For the proof, we can use the Fourier transform of $f$,
    \[
    \Hat{f}(t) = \int_{\mathbb{R}} f(x) e^{itx} dx
    \]
    and then differentiate to get that:
    \[
    \Hat{f'}(t) = i \int_{\mathbb{R}} xf(x) e^{itx}dx,
    \]
    so by the Fourier-inversion formula for the function $xf(x)$ we have that:
    \beq \label{parseval}
    xf(x) = i \int_{\mathbb{R}} \overline{\Hat{f'}(t) } e^{itx} dt \Rightarrow \E[xf(x)] = i \int_{\mathbb{R}} \overline{\Hat{f'}(t) } \Hat{\mu}(t) dt,
    \eeq 
    where we defined the measure $\mu$ to be the distribution of the real random variable $h$ and
    \[
    \Hat{\mu}(t) = \int_{\mathbb{R}} e^{itx} d \mu(x) = \E[e^{ith}] .
    \]
    We then perform integration by parts on the RHS of \eqref{parseval} to get that:
    \begin{align*} \label{parseval2}
    &i \int_{\mathbb{R}} \overline{\hat{f}^{\prime}(t)} \hat{\mu}(t) d t  =-i \int_{\mathbb{R}} \overline{\hat{f}(t)} \hat{\mu}^{\prime}(t) dt =-i \int_{\mathbb{R}} \overline{\hat{f}(t)} \hat{\mu}(t)(\log \hat{\mu}(t))^{\prime} dt \\
    &=\sum_{k=0}^{\infty} \frac{\kappa_{k+1}}{k!} \int_{\mathbb{R}}(i t)^k \overline{\hat{f}(t)} \hat{\mu}(t) d t=\sum_{k=0}^{\infty} \frac{\kappa_{k+1}}{k!} \mathbb{E} [f^{(k)}(h) ] ,
    \end{align*}
    where in the last step we used the Fourier-inversion formula once more for $f$ and differentiated it $k$ times.
\end{proof}
In order to apply \eqref{non-Gauss} to random matrices, we have to define the \textit{joint cumulants} for a family of random variables, such as a random matrix.

If $\mathbf{h}=\left(h_1, h_2, \ldots h_m\right)$ is a collection of random variable, then
$$
\kappa(\mathbf{h})=\kappa\left(h_1, h_2, \ldots h_m\right)
$$
are the coefficients of the logarithm of the moment-generating function:
\beq \label{log-moments-multi}
\log \mathbb{E} [e^{\mathbf{t} \cdot \mathbf{h}}] =\sum_{\mathbf{k}} \frac{\mathbf{t}^{\mathbf{k}}}{\mathbf{k} !} \kappa_{\mathbf{k}},
\eeq
where $\mathbf{t}=\left(t_1, t_2, \ldots, t_n\right) \in \mathbb{R}^n$ and $\mathbf{k}=\left(k_1, k_2, \ldots, k_n\right) \in \mathbb{N}^n$ is a multi-index with $\mathrm{n}$ components and:
$$
\mathbf{t}^{\mathbf{k}}:=\prod_{i=1}^n \mathrm{t}_{\mathrm{i}}^{{k}_{{i}},}, \quad \mathbf{k} ! :=\prod_{i=1}^n {k}_{{i}} !, \quad \kappa_{\mathbf{k}}=\kappa \left(\mathrm{h}_1, \mathrm{~h}_1, \ldots \mathrm{h}_2, \mathrm{~h}_2, \ldots\right),
$$
where $h_j$ appears $k_j$-times. The analogue of \eqref{non-Gauss} is:
\beq \label{non-Gauss-multi}
\mathbb{E} [h_1 f(\mathbf{h})]=\sum_{\mathbf{k}} \frac{\kappa_{\mathbf{k}+\mathbf{e}_1}}{\mathbf{k} !} \mathbb{E} [f^{(\mathbf{k})}(\mathbf{h})], \quad \mathbf{h}=\left(\mathrm{h}_1, \mathrm{~h}_2, \ldots, \mathrm{h}_{\mathrm{n}}\right),
\eeq
where $f^{(\mathbf{k})} = \frac{\partial^{k_1}...\partial^{k_n}}{\partial{h_1}...\partial{h_n} } f$ and the summation is for all $n$ multi-indices with:
\[
\mathbf{k}+\mathbf{e}_1 = (k_1+1,k_2,...,k_n) .
\]
The proof is similar with the proof of \eqref{non-Gauss}.

Now, back to the definition of the error matrix $D$ in \eqref{D-error}, we will use this multivariate cumulant expansion \eqref{non-Gauss-multi} to prove that the expectation of high powers of the quantity $|D_{ij}|$ is small. This is done by writing:
\beq \label{D^p}
\E |D_{ij}|^{2p} = \E \left [ (HG + \mathcal{S}[G]G )_{ij} D_{ij}^{p-1} \overline{D_{ij}^p} \right ]
\eeq
and then use \eqref{non-Gauss-multi} to do an integration by parts in the first $H$ factor, while considering everything else as a function $f(H)$. This involves some heavy analysis of cumulant expansions and combinatorics, which can be found in \cite{erdHos2019random}.

The authors there arrive at the result of [Theorem 4.1] in \cite{erdHos2019random}, which takes into account a couple of different possible random matrix norms for the error matrix $D$. A simple corollary is for the following maximum-norm:
\begin{thm}[Bound on the error matrix D] \label{error-thm}
    Under the mean-field condition \eqref{S-cond} and a finite-moments condition \eqref{finite-m} for the matrix $H$, we have that for any $\gamma, \epsilon , D > 0 $ and some bounded $z=E+i\eta$ with $\eta \geq N^{-1+\gamma}$, there exists $C>0$ such that:
    \beq
    \mathbb{P} \left ( \| D(z) \|_{\max} \geq \frac{N^\epsilon}{\sqrt{N\eta}} \right ) \leq \frac{C}{N^D} .
    \eeq
\end{thm}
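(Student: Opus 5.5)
The plan is to prove a high-moment bound
\[
\E|D_{ij}(z)|^{2p} \le C_p \left(\frac{N^{\epsilon}}{\sqrt{N\eta}}\right)^{2p}
\]
for every fixed $p$, uniformly in $i,j$. Markov's inequality with $p$ large (depending on $D$ and $\epsilon$) then gives the probability bound for a single entry, and a union bound over the $N^2$ entries yields the statement for $\|D\|_{\max}$. Uniformity in $z$ is not needed, since the statement is for fixed bounded $z$.

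For the moment bound we start from \eqref{D^p}. We write $D_{ij} = (HG)_{ij} + (S[G]G)_{ij} = \sum_a H_{ia}G_{aj} + (S[G]G)_{ij}$. In each term $\E[H_{ia}\, G_{aj} D_{ij}^{p-1}\overline{D_{ij}}^{p}]$ we apply the multivariate cumulant expansion \eqref{non-Gauss-multi} with $h_1 = H_{ia}$, treating $f(H) = G_{aj}D_{ij}^{p-1}\overline{D_{ij}}^{p}$ as a smooth function of all entries correlated with $H_{ia}$.

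The second-order cumulant terms, in which the derivative falls on $G_{aj}$, produce exactly $-\E[(S[G]G)_{ij}\cdots]$ as in Lemma~\ref{Gauss4}. These cancel the $S[G]G$ part of $D$. What remains falls into three groups:
\begin{enumerate}
\item Second-order terms where the derivative hits $D^{p-1}\overline{D}^{p}$. Each such derivative yields products like $G_{\cdot\cdot}G_{\cdot\cdot}$ times $D^{2p-2}$. After summation over indices, the Ward identity \eqref{ward} gives a factor $\frac{\mathrm{Im}\,G}{N\eta}$, i.e. a gain of $(N\eta)^{-1}$ for two lost factors of $D$.
\item Higher cumulant terms ($k\ge 2$). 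These carry $\kappa_{k+1} = O(N^{-(k+1)/2})$ from the finite-moment assumption and the mean-field scaling \eqref{S-cond}. They contain at least one off-diagonal resolvent entry that can be summed via Ward, so they are also subleading.
\item A truncated remainder after a large but finite order. This is controlled by the trivial bound $\|G\|\le \eta^{-1}$ together with enough powers of $N^{-1/2}$.
\end{enumerate}
Young's inequality then closes a self-consistent inequality of the form
\[
\E|D_{ij}|^{2p} \le \sum_{r\ge 1}\left(\frac{N^{\epsilon}}{N\eta}\right)^{r}\left(\E|D_{ij}|^{2p}\right)^{1-r/2p}.
\]

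To use these bounds we need a priori control of the resolvent entries: $\|G\|_{\max}\le N^{\epsilon}$ with overwhelming probability. Since we are proving a bound on $D$ alone and not yet the local law, I would obtain this by a continuity or bootstrap argument in $\eta$, similar to the induction of Lemma~\ref{l:Gkk}. We start at $\eta\sim 1$, where $\|G\|\le 1$. We then decrease $\eta$ in steps $\eta\to\eta/N^{\gamma/2}$, using the monotonicity of Lemma~\ref{G11eta/s} to transfer the bound with a constant loss, and the stability of the Dyson equation to improve it back. Alternatively, one can state the theorem conditionally on such a bound, as is done in \cite{erdHos2019random}.

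The main obstacle is the combinatorics of the high-order cumulant terms when entries are correlated. For correlated entries, $\partial_{h_1}$ acts on many entries at once, and every term must be shown to gain a factor $N^{\epsilon}/\sqrt{N\eta}$ per removed $D$. I would organize this by a graph or power-counting bookkeeping, with each derivative of $G$ creating two resolvent factors. This would use the decay of joint cumulants assumed in \cite{erdHos2019random}, and for the detailed counting I would follow that paper's Theorem~4.1.
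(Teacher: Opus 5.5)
Your strategy is the paper's. The paper only sets up \eqref{D^p}, integrates by parts in the first $H$ factor via \eqref{non-Gauss-multi}, and cites Theorem~4.1 of \cite{erdHos2019random} (Theorem~\ref{t:errorbounds} here), calling the max-norm bound a simple corollary. Your cancellation of the covariance term against $S[G]G$, the Ward gains, the deferral of the correlated combinatorics, and Markov plus a union bound over the $N^2$ entries are what that citation contains. You also correctly invoke a correlation-decay hypothesis such as \eqref{exp-decay} or \eqref{slow-decay}, which the statement as written omits. Two things in your write-up need fixing.

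First, your closing self-consistent inequality has the wrong power. Each removed factor of $D$ gains only $N^{\epsilon}/\sqrt{N\eta}$; your own group 1 count gives $(N\eta)^{-1}$ per \emph{two} removed factors. So the $r$-th term must be $(N^{\epsilon}/\sqrt{N\eta})^{r}\,(\E|D_{ij}|^{2p})^{1-r/2p}$. With $(N^{\epsilon}/(N\eta))^{r}$ as you wrote it, Young's inequality would give $\E|D_{ij}|^{2p}\lesssim (N^{\epsilon}/(N\eta))^{2p}$, which is false. For GUE, \eqref{HG1} and $S[G]=\langle G\rangle I$ give $D_{ij}=(z+\langle G\rangle)G_{ij}$ for $i\neq j$, and this is of order $(N\eta)^{-1/2}$ in the bulk. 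Correspondingly, the Gaussian term in which the derivative hits $\overline{D_{ij}}$ contains $-\overline{(z+\langle G\rangle)G_{ii}}\,\frac{1}{N}\sum_a|G_{aj}|^2=-\overline{(z+\langle G\rangle)G_{ii}}\,\mathrm{Im}\,G_{jj}/(N\eta)$. This is of size $(N\eta)^{-1}$, not $(N\eta)^{-2}$.

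Second, you are right that Theorem~\ref{t:errorbounds} is only conditional, and on this point you are more careful than the paper. It controls $\|D\|_p$ through $\|G\|_q$ with a factor $(1+\|G\|_q)^{C_*}$, so $\|G\|\le\eta^{-1}$ is useless and an a priori bound is required.

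Your bootstrap, however, restores that bound via stability of the Dyson equation, which is only available in the bulk (Theorem~\ref{Dyson-stab}). It therefore gives the unconditional statement for $z$ with $\rho(E)\geq\delta$, as in Theorem~\ref{local-entry}, not for every bounded $z$. Elsewhere only your conditional formulation is justified.

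The step size is also too large. A step $\eta\to\eta N^{-\gamma/2}$ costs a factor $N^{\gamma/2}$ in $\|G\|$, not a constant. This factor enters Theorem~\ref{t:errorbounds} raised to the power $C_*$ and can swamp the gain $\sqrt{N\eta}\geq N^{\gamma/2}$ at the smallest scales. Take constant-factor steps as in Lemma~\ref{l:Gkk}, or steps $N^{-c\gamma}$ with $cC_*$ small as in Lemma~\ref{Boot}.
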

Here the maximum-norm of a matrix $T$ is defined by:
\beq \label{max-norm}
\|T\|_{\max} := \max \limits_{i,j} |T_{ij}|,
\eeq
while the finite-moments condition can be described by:
\beq \label{finite-m}
\max \limits_{i,j} \E  |\sqrt{N}H_{ij} |^p  \leq \mu_p,
\eeq
for a sequence of constants $\mu_p,$ where $H$ is our initial Hermitian matrix.
\end{section}

\begin{section}{The deterministic stability step} \label{s:Stability}

In this section we give a summary for the proof of the stability of the deterministic equation:
\beq \label{deter-eq}
I + (z + \mathcal{S}[M])M = 0.
\eeq
Since the solution $M$ of the equation \eqref{deter-eq} as well as the solution $M'$ of its perturbed version \eqref{DysonE1} can be expressed as a function of the variable $z$, to prove its stability it is enough to control the derivative:
\[
\partial_z M = \frac{\partial M(z) }{ \partial z} ,
\]
therefore we differentiate \eqref{deter-eq} to get a formula for the derivative of the solution:
\begin{align*}
    &(I + \mathcal{S}[\partial_z M] ) M + (z + \mathcal{S}[M] ) \partial_z M = 0 \Rightarrow \\
    &M^2 + M \mathcal{S}[\partial_z M] M  - \partial_z M = 0  \Leftrightarrow \\
    &\partial_z M - M\mathcal{S}[\partial_z M] M = M^2 \Leftrightarrow \\
    &( I - C_M \circ S )[\partial_z M] = M^2 , \numberthis
\end{align*}
where $C_M$ is the "sandwich" operator defined in \eqref{super-oper1}. 

If we now show that the "linear stability operator" $I - C_M \circ S$ is invertible, then the derivative $\partial_z M$ will have reasonable behaviour, which means that the equation \eqref{deter-eq} will be stable to perturbations.

To show invertibility, we have to bound the norm of the inverse of the linear stability operator. For this, we will use the "super-operator" norm $\| \ \cdot \ \|_{op}$ which is the operator norm over the Hilbert-Schmidt norm in the Hilbert space of complex matrices.

The idea of the proof in \cite{ajanki2019stability} is to write the linear stability operator as a product of invertible operators with an operator of the form \ $\mathcal{U} - \mathcal{T},$ where $\mathcal{U}$ is a unitary operator and $\mathcal{T}$ is a self-adjoint operator, called the "saturated self-energy" operator. In \cite{erdHos2019random} the multivariate cumulant expansion technique is used instead, to give the same result.

They both arrive at the following theorem:
\begin{thm}[Stability of the Dyson equation] \label{Dyson-stab}
    If the mean-field condition \eqref{S-cond} is satisfied, then for $z$ in the "bulk" of the support of $\rho$, we have that:
    \[
    \left \| ( I - C_{M(z)} \circ S )^{-1} \right \|_{op} \lesssim \frac{1}{\rho(z)^C},
    \]
\end{thm}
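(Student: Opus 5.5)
We follow the strategy of \cite{ajanki2019stability}: factor the stability operator $\mathcal{L} := I - C_{M}\circ S$ into operators whose inverses are easy to control, times one operator of the form $\mathcal{U}-\mathcal{T}$ with $\mathcal{U}$ unitary and $\mathcal{T}$ self-adjoint.

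\textbf{Step 1: a priori bounds on $M$.} First we collect a priori information on the solution $M=M(z)$ of \eqref{deter-eq}. Taking imaginary parts of $-M^{-1} = z + S[M]$ gives
\[
\mathrm{Im}\, M = \eta M^*M + M^* S[\mathrm{Im}\, M] M .
\]
The mean-field condition \eqref{S-cond} then gives $S[\mathrm{Im}\,M]\succeq c\langle \mathrm{Im}\,M\rangle \sim \rho(z)$. From this we derive, in the bulk, the bounds
\[
\|M\|\lesssim \rho^{-C}, \qquad \|M^{-1}\|\lesssim 1, \qquad \mathrm{Im}\, M \succeq c\,\rho^{C}.
\]
The second follows from $\|M^{-1}\| \le |z| + \|S[M]\|$ and the upper bound in \eqref{S-cond}. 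The first comes from $\mathrm{Im}\, M \succeq M^* S[\mathrm{Im}\,M] M \succeq c\rho\, M^*M$ combined with $\|\mathrm{Im}\, M\|\le \|M\|$. The third then follows by lower-bounding $M^*M$ through $\|M^{-1}\|\lesssim 1$.

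\textbf{Step 2: symmetrization and factorization.} Next we symmetrize. We polar-decompose $M$ through $W := (\mathrm{Im}\, M)^{1/2}$ and write
\[
M = W\, U\, W, \qquad U \text{ unitary},
\]
where $U$ is unitary because $W^{-1}MW^{-1}$ has imaginary part equal to the identity up to the normalization above; this uses the positivity from Step 1. Defining the "saturated self-energy" operator $\mathcal{T}[R] := W S[W R W] W$, which is self-adjoint and positivity preserving, we get
\[
\mathcal{L} = C_W\,(I - C_U \mathcal{T})\,C_W^{-1} = C_W C_U\,(C_U^{-1} - \mathcal{T})\,C_W^{-1}.
\]
This reduces the problem to bounding $\|(\mathcal{U}-\mathcal{T})^{-1}\|$ with $\mathcal{U} = C_U^{-1}$ unitary. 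The conjugating factors contribute at most powers of $\|W\|\,\|W^{-1}\|\lesssim \rho^{-C}$.

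\textbf{Step 3: spectral gap of $\mathcal{T}$ and invertibility (main obstacle).} By the imaginary-part identity of Step 1, $\mathcal{T}$ has a Perron--Frobenius eigenvector $F$ (essentially $W$) with eigenvalue
\[
\|\mathcal{T}\| = 1 - \eta\,\frac{\langle F, W M^*M W\rangle}{\langle F, F\rangle}\le 1 .
\]
The mean-field lower bound $\mathcal{T}[R]\succeq c\rho^{C}\langle R\rangle$ forces a spectral gap of order $\rho^{C}$ below this top eigenvalue, via a standard Perron--Frobenius / positivity-improving argument. For the inverse, on the orthogonal complement of $F$ we have $\|\mathcal{T}\|\le 1-\rho^{C}$, so $\mathcal{U}-\mathcal{T}$ is invertible there by a Neumann series. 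In the direction of $F$ we must exclude $\mathcal{U}F \approx F$: the bound
\[
|1 - \langle F,\mathcal{U}F\rangle| \gtrsim \rho^{C}
\]
should follow from $\mathrm{Re}\,U$ not being close to $\pm I$, since $\mathrm{Im}\, U$ is the identity up to normalization and the real part carries $\mathrm{Re}\, M$. Combining the two directions by the rotation-inversion lemma of \cite{ajanki2019stability},
\[
\|(\mathcal{U}-\mathcal{T})^{-1}\|\lesssim \frac{1}{\mathrm{gap}\cdot|1-\langle F,\mathcal{U}F\rangle|},
\]
and multiplying by the conjugation factors from Step 2 yields the stated bound $\rho(z)^{-C}$.

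I expect this last step to be the main obstacle: obtaining quantitative control of the spectral gap and of $|1-\langle F,\mathcal{U}F\rangle|$ simultaneously, uniformly in the bulk.
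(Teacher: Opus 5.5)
Your route is the one the paper sketches from \cite{ajanki2019stability}: Lemma \ref{lso-estimate} followed by the rotation--inversion Lemma \ref{rotate-inverse}. Step 1 is essentially fine. The bound $\|M^{-1}\|\lesssim 1$ really comes from tracing $\mathrm{Im}\,M\succeq c\langle\mathrm{Im}\,M\rangle M^*M$, which gives $\langle M^*M\rangle\le 1/c$ and hence $\|S[M]\|\lesssim 1$.

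The step that fails is Step 2. With $W=(\mathrm{Im}\,M)^{1/2}$ you have $W^{-1}MW^{-1}=X+i$, where $X:=W^{-1}(\mathrm{Re}\,M)W^{-1}$ is self-adjoint. This matrix is normal, but $|X+i|=(1+X^2)^{1/2}\succeq 1$, so it is unitary only if $\mathrm{Re}\,M=0$. An imaginary part equal to $I$ does not make a matrix unitary, and no normalization from Step 1 repairs this. So either $U$ is unitary and $M\neq WUW$, or $M=WUW$ and $C_U^{-1}$ is not an isometry of the Hilbert--Schmidt space. In both cases the reduction to ``unitary minus self-adjoint'', on which Step 3 rests, is lost.

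The missing idea is the \emph{balanced} polar decomposition of \cite{ajanki2019stability}. Put $Q:=(\mathrm{Im}\,M)^{1/2}(1+X^2)^{1/4}$ and $U:=(X+i)(1+X^2)^{-1/2}$, so that $M=QUQ^*$ with $U$ genuinely unitary. Write $\mathcal{K}[R]:=QRQ^*$ and $\mathcal{T}:=\mathcal{K}^*\circ S\circ\mathcal{K}$, i.e.\ $\mathcal{T}[R]=Q^*S[QRQ^*]Q$, which is self-adjoint and positivity preserving. Then $I-C_M\circ S=\mathcal{K}\,C_U\,(C_{U^*}-\mathcal{T})\,\mathcal{K}^{-1}$, and $\|Q\|,\|Q^{-1}\|\lesssim\rho^{-C}$ because $\|X\|\le\|(\mathrm{Im}\,M)^{-1}\|\,\|M\|$.

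This also fixes Step 3. The imaginary-part identity now reads $\mathcal{T}[\mathrm{Im}\,U]=\mathrm{Im}\,U-\eta\,Q^*Q$, so the positive reference direction is $\mathrm{Im}\,U=(1+X^2)^{-1/2}$ rather than $W$, and this gives $\|\mathcal{T}\|\le 1$. The operator $\mathcal{T}$ inherits \eqref{S-cond} with constants that are powers of $\rho$, which gives the two-sided spectral gap required in Lemma \ref{rotate-inverse}. Finally, $\mathrm{Im}\,U\succeq(1+\|X\|^2)^{-1/2}\gtrsim\rho^{C}$ is exactly what keeps $U$ away from $\pm I$, and it yields $|1-\|\mathcal{T}\|\langle K,C_{U^*}K\rangle|\gtrsim\rho^{C}$.
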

for some constant $C>0$, where $\rho$ is called the "density of states" and is defined as:
\beq \label{states}
\rho(z) := \frac{1}{\pi} \left \langle \mathrm{Im} M(z) \right \rangle .
\eeq
We will see in section \ref{s:Results} that this is indeed a density for a measure and it is actually the harmonic extension of the real density of the limiting spectral measure of $H$.
 
The following Lemma is derived by the analysis in \cite{ajanki2019stability}:
\begin{lem}[Linear stability operator estimate] \label{lso-estimate} Under the mean-field condition \eqref{S-cond}, we have the following estimate, for $z$ in the "bulk" of $\rho$:
\[
\left \| ( I - C_{M(z)} \circ S )^{-1} \right \|_{op} \lesssim \frac{1}{\rho(z)^C} \left \| \left ( \mathcal{U} - \mathcal{T} \right )^{-1} \right \|_{op},
\]
\end{lem}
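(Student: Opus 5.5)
The plan is to factorize the sandwich operator $C_{M}$ using a "balanced polar decomposition" of $M=M(z)$. This writes $I-C_M\circ S$ as a conjugate of an operator of the form $\mathcal{U}-\mathcal{T}$ by two well-conditioned maps. The operator norms of those maps and of their inverses should then be controlled by powers of $\rho(z)^{-1}$ through the Dyson equation \eqref{deter-eq} and the mean-field condition \eqref{S-cond}.

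\textbf{Step 1: lower bound on $\mathrm{Im}\,M$.} Taking imaginary parts in \eqref{deter-eq}, written as $M=-(z+S[M])^{-1}$, gives the identity
\[
\mathrm{Im}\,M=\eta\,M^*M+M^*S[\mathrm{Im}\,M]M .
\]
I will use the lower half of \eqref{S-cond}, $S[\mathrm{Im}\,M]\succeq c\langle \mathrm{Im}\,M\rangle=c\pi\rho$, to get $\mathrm{Im}\,M\succeq c\rho\,M^*M$. Next I need a lower bound on $M^*M$. From \eqref{deter-eq}, $M^{-1}=-(z+S[M])$, and $\|S[M]\|\le C\|M\|$ by the upper half of \eqref{S-cond}, so $M^*M\succeq \|M^{-1}\|^{-2}\succeq c(1+\|M\|)^{-2}$. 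Hence $\mathrm{Im}\,M\succeq c\rho(1+\|M\|)^{-2}$.

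For the upper bound on $\|M\|$ in the bulk, I would apply the same identity to a unit vector $x$. Since $\mathrm{Im}\,M\succeq c\rho\, M^*M$, we get $\|Mx\|^2\le \langle x,\mathrm{Im}\,M\,x\rangle/(c\rho)\le \|M\|/(c\rho)$. Taking the supremum over $x$ gives $\|M\|\lesssim\rho^{-1}$. Combining, $\|(\mathrm{Im}\,M)^{-1}\|\lesssim \rho^{-C}$ and $\|M\|\lesssim\rho^{-1}$.

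\textbf{Step 2: the factorization.} Set $Q:=(\mathrm{Im}\,M)^{1/2}$, which is invertible by Step 1, and write
\[
M=Q\,(A+i)\,Q,\qquad A:=Q^{-1}(\mathrm{Re}\,M)Q^{-1}.
\]
Next put $W:=(1+A^2)^{1/4}$ and $U:=(A+i)/|A+i|$, so that $U$ is unitary and commutes with $W$. This gives $M=QWUWQ$. Define $R:=QW$, $\mathcal{U}:=C_{U^*}$ (a unitary on Hilbert–Schmidt space), and the saturated self-energy $\mathcal{T}:=C_{R^*}\circ S\circ C_R$. The map $\mathcal{T}$ is self-adjoint because $S$ is self-adjoint and positivity preserving. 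A direct computation then gives
\[
I-C_M\circ S=C_R\,C_U\,(\mathcal{U}-\mathcal{T})\,C_R^{-1}.
\]

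\textbf{Step 3: norm bounds.} From the factorization,
\[
\|(I-C_M S)^{-1}\|_{op}\le \|C_R\|\,\|C_R^{-1}\|\,\|(\mathcal{U}-\mathcal{T})^{-1}\|_{op},
\]
using that $C_U$ is unitary. Here $\|C_R\|\le\|R\|^2$ and $\|C_R^{-1}\|\le\|R^{-1}\|^2$. The factor $W$ has $1\le W$ and $W\le (1+\|A\|^2)^{1/4}$, with $\|A\|\le\|\mathrm{Re}\,M\|\,\|(\mathrm{Im}\,M)^{-1}\|$. Together with $\|Q^{\pm1}\|$, everything is bounded by $\rho^{-C}$ via Step 1.

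\textbf{Main obstacle.} I expect the hardest part to be the upper bound on $\|M\|$ in terms of $\rho$, uniformly in $N$. The argument in Step 1 is the one I would try first; if it proves too crude, I would fall back on the bounds in \cite{ajanki2019stability}. Verifying the algebraic factorization is routine once $U$ and $W$ are seen to commute.
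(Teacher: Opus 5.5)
Your route, the balanced polar decomposition of \cite{ajanki2019stability}, is exactly the analysis the paper defers to; the paper gives no argument of its own for this lemma. Your Steps 1 and 3 are sound. The supremum argument already gives $\|M\|\lesssim\rho^{-1}$, so no fallback is needed. Since $\pi\rho=\langle\mathrm{Im}\,M\rangle\le\|M\|$, it also forces $\rho\lesssim 1$, and hence $\mathrm{Im}\,M\succeq c\rho^3$.

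The step that fails as written is the factorization in Step 2. With the paper's convention $C_X[Y]=XYX$, your $R=QW$ is not self-adjoint: $R^*=WQ$, and $Q=(\mathrm{Im}\,M)^{1/2}$ does not commute with $W=(1+A^2)^{1/4}$ for a general, non-normal solution $M$. Thus $M=QWUWQ=RUR^*$ rather than $RUR$, and
\[
C_R\,C_U\,\mathcal{T}\,C_R^{-1}=C_R\,C_U\,C_{R^*}\circ S:\ Y\mapsto RUR^*\,S[Y]\,R^*UR,
\]
which is not $C_M\circ S$ because $R^*UR=WQUQW\neq M$. The commutation of $U$ and $W$ only gives $A+i=WUW$; it does not cure this. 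In the paper's own $4\times 4$ analysis (Proposition~\ref{stab-est1}), $M$ is normal, so $R$ can be taken Hermitian and the one-sided sandwich is harmless there.

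The repair is to use two-sided sandwiches $\mathcal{C}_{X,Y}[Z]:=XZY$. Then $C_M=\mathcal{C}_{R,R^*}\,C_U\,\mathcal{C}_{R^*,R}$. Set $\mathcal{T}:=\mathcal{C}_{R^*,R}\circ S\circ\mathcal{C}_{R,R^*}$, i.e.\ $\mathcal{T}[X]=R^*S[RXR^*]R$, and keep $\mathcal{U}=C_{U^*}$ as you have it. Then
\[
I-C_M\circ S=\mathcal{C}_{R,R^*}\,C_U\,(\mathcal{U}-\mathcal{T})\,\mathcal{C}_{R,R^*}^{-1}.
\]
This $\mathcal{T}$ is self-adjoint, since the Hilbert--Schmidt adjoint of $\mathcal{C}_{R,R^*}$ is $\mathcal{C}_{R^*,R}$. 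It is also positivity preserving, being a congruence of $S$. The latter property is what the subsequent rotation--inversion step (Lemma~\ref{rotate-inverse}) relies on, and your literal $X\mapsto R^*S[RXR]R^*$ does not have it. Since $\|\mathcal{C}_{R,R^*}\|\le\|R\|^2$ and $\|\mathcal{C}_{R,R^*}^{-1}\|\le\|R^{-1}\|^2$, your Step 3 then goes through verbatim and gives the lemma.
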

for some constant $C>0,$ where $\mathcal{U}$ is a unitary operator and $\mathcal{T}$ a self-adjoint one.

Under some further conditions we can get the same result by the following supplementary Lemma in \cite{ajanki2019stability}, which is of independent interest and inspired our stability analysis in section \ref{s:stability} for a different random matrix model:
\begin{lem}[Rotation-Inversion Lemma] \label{rotate-inverse}
Let $\mathcal{T}$ be a self-adjoint and $\mathcal{U}$ a unitary operator on $\mathbb{C}^{N \times N}$. Suppose that $\mathcal{T}$ has a spectral gap, i.e., there is a constant $\theta>0$ such that:
$$
\text { Spec } (\mathcal{T}) \subset \left[-\|\mathcal{T}\|_{{op}}+\theta,\|\mathcal{T}\|_{{op}}-\theta\right] \cup\left\{\|\mathcal{T}\|_{{op}}\right\},
$$
with a non-degenerate largest eigenvalue $\|\mathcal{T}\|_{op} \leq 1$. Then there exists a constant $C>0$ such that:
$$
\left\|(\mathcal{U}-\mathcal{T})^{-1}\right\|_{op} \leq \frac{C}{\theta} \bigg| 1-\|\mathcal{T}\|_{op}\langle K, \mathcal{U}[K] \rangle \bigg |^{-1},
$$
where $K$ is the normalized eigenmatrix of $\mathcal{T}$, corresponding to the largest eigenvalue $\|\mathcal{T}\|_{op}.$
\end{lem}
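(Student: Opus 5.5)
The plan is to bound $\|(\mathcal{U}-\mathcal{T})^{-1}\|_{op}$ from below-side, i.e.\ to show $\|(\mathcal{U}-\mathcal{T})R\|_{HS} \geq c\,\theta\,|1-\|\mathcal{T}\|_{op}\langle K,\mathcal{U}[K]\rangle|\,\|R\|_{HS}$ for every $R\in\mathbb{C}^{N\times N}$. Write $\tau:=\|\mathcal{T}\|_{op}\le 1$ and let $\mathcal{P}=K\langle K,\cdot\rangle$, $\mathcal{Q}=I-\mathcal{P}$. Since $\mathcal{T}$ is self-adjoint with $\mathcal{T}[K]=\tau K$, it commutes with $\mathcal{P}$. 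The spectral gap assumption gives $\|\mathcal{T}\mathcal{Q}\|_{op}\le \tau-\theta\le 1-\theta$ (we may assume $\theta\le 1$). We also use $\mathcal{U}-\mathcal{T}=\mathcal{U}(I-\mathcal{U}^*\mathcal{T})$ and the fact that $\mathcal{U}$ is an isometry, so it suffices to invert $I-\mathcal{U}^*\mathcal{T}$.

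\emph{Step 1 (block decomposition).} Decompose $\mathbb{C}^{N\times N}=\mathrm{span}\{K\}\oplus K^\perp$ and write $I-\mathcal{U}^*\mathcal{T}$ as a $2\times 2$ block operator. The lower-right block $\mathcal{Q}(I-\mathcal{U}^*\mathcal{T})\mathcal{Q}$ is invertible on $K^\perp$ with inverse of norm at most $\theta^{-1}$, because $\|\mathcal{Q}\mathcal{U}^*\mathcal{T}\mathcal{Q}\|_{op}\le 1-\theta$ and a Neumann series applies. The off-diagonal blocks have norm at most $1$.

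\emph{Step 2 (Schur complement).} The Schur complement is a scalar
\[
s = 1-\tau\langle K,\mathcal{U}^*[K]\rangle - \big\langle K,\mathcal{U}^*\mathcal{T}\mathcal{Q}\,(\mathcal{Q}(I-\mathcal{U}^*\mathcal{T})\mathcal{Q})^{-1}\mathcal{Q}\,\mathcal{U}^*\mathcal{T}[K]\big\rangle .
\]
The block inversion formula then bounds $\|(I-\mathcal{U}^*\mathcal{T})^{-1}\|_{op}$ by $C\theta^{-1}(1+|s|^{-1})$, up to extra factors of $\theta^{-1}$ coming from the off-diagonal blocks. The first two terms of $s$ give (the conjugate of) $1-\tau\langle K,\mathcal{U}[K]\rangle$. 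The correction term is crude to bound directly, since it is only $O(\theta^{-1})$.

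\emph{Step 3 (energy argument).} To avoid losing in the correction term, I would run a direct argument alongside the Schur complement. Take $\|R\|=1$, $R=aK+\mathcal{Q}R$, and $\varepsilon:=\|(\mathcal{U}-\mathcal{T})R\|$. From $\|\mathcal{U}R\|=1$ and $\|\mathcal{T}R\|^2\le |a|^2+(1-\theta)^2\|\mathcal{Q}R\|^2\le 1-\theta\|\mathcal{Q}R\|^2$ we get $\|\mathcal{Q}R\|^2\lesssim \varepsilon/\theta$. Next, pair $(\mathcal{U}-\mathcal{T})R$ with $\mathcal{U}[K]$. Using $\langle\mathcal{U}K,\mathcal{U}\mathcal{Q}R\rangle=0$ and the self-adjointness of $\mathcal{T}$, this gives
\[
|a|\,|1-\tau\overline{\langle K,\mathcal{U}K\rangle}|\le \varepsilon+|\langle \mathcal{T}\mathcal{U}K,\mathcal{Q}R\rangle| .
\]
Combining these yields the lower bound on $\varepsilon$ whenever $|a|\ge 1/2$. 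In the complementary case $\|\mathcal{Q}R\|\ge 1/2$, we get directly $\varepsilon\gtrsim\theta$.

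The main obstacle is that this naive combination produces $\varepsilon\gtrsim\theta|1-\tau\langle K,\mathcal{U}K\rangle|^2$ rather than a bound linear in the gap quantity. To remove the square, I would bound the cross term $\langle \mathcal{T}\mathcal{U}K,\mathcal{Q}R\rangle$ linearly. The idea is to apply $\mathcal{Q}$ to the equation $(I-\mathcal{U}^*\mathcal{T})R=\mathcal{U}^*(\mathcal{U}-\mathcal{T})R$ and invert the lower-right block from Step 1. This gives $\|\mathcal{Q}R\|\le\theta^{-1}(\varepsilon+|a|\,\|\mathcal{Q}\mathcal{U}^*K\|)$. Then $\|\mathcal{Q}\mathcal{U}^*K\|^2=1-|\langle K,\mathcal{U}K\rangle|^2\le 2|1-\tau\langle K,\mathcal{U}K\rangle|$, which is small exactly when the gap quantity is small. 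Feeding this back into the pairing estimate should give the linear bound $\varepsilon\ge c\,\theta\,|1-\tau\langle K,\mathcal{U}K\rangle|$, with all constants universal.
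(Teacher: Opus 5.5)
Your diagnosis of the obstacle is correct, and you have already found the key fact $\|\mathcal{Q}\mathcal{U}K\|^2=\|\mathcal{Q}\mathcal{U}^*K\|^2=1-|\langle K,\mathcal{U}K\rangle|^2\le 2g$, where $g:=\bigl|1-\tau\langle K,\mathcal{U}[K]\rangle\bigr|$ and $\tau=\|\mathcal{T}\|_{op}$. But the last step, as you propose it, does not close. The pairing identity is exactly $\langle \mathcal{U}K,(\mathcal{U}-\mathcal{T})R\rangle=a\bigl(1-\tau\overline{\langle K,\mathcal{U}K\rangle}\bigr)-\langle \mathcal{T}\mathcal{Q}\mathcal{U}K,\mathcal{Q}R\rangle$. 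Hence the cross term is at most $(\tau-\theta)\sqrt{2g}\,\|\mathcal{Q}R\|$. If you insert the block-inversion bound $\|\mathcal{Q}R\|\le\theta^{-1}(\varepsilon+\tau|a|\sqrt{2g})$ there, you get
\[
|a|\,g\le \varepsilon\Bigl(1+\frac{(\tau-\theta)\sqrt{2g}}{\theta}\Bigr)+\frac{2\tau(\tau-\theta)}{\theta}\,|a|\,g .
\]
The coefficient of $|a|g$ on the right is of order $\theta^{-1}$; for $\tau=1$ it exceeds $1$ as soon as $\theta<2/3$. So nothing can be absorbed and the inequality is vacuous. This is not a bookkeeping accident. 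Substituting the block inverse just reproduces your Schur complement from Step 2, whose correction term is crudely of size $2g/\theta$, the same order as the main term. Any route through $(\mathcal{Q}(I-\mathcal{U}^*\mathcal{T})\mathcal{Q})^{-1}$ would need cancellation information you do not have.

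The repair uses only ingredients you already have. In the cross term, bound $\|\mathcal{Q}R\|$ by your Step 3 energy estimate $\|\mathcal{Q}R\|\le\sqrt{2\varepsilon/\theta}$, which comes from $\varepsilon\ge 1-\|\mathcal{T}R\|\ge\frac12(1-\|\mathcal{T}R\|^2)\ge\frac{\theta}{2}\|\mathcal{Q}R\|^2$, instead of by the block inverse. For $|a|\ge 1/2$ this gives
\[
\frac{g}{2}\le \varepsilon+\sqrt{2g}\,\sqrt{2\varepsilon/\theta}=\varepsilon+2\sqrt{g\varepsilon/\theta},
\]
so either $\varepsilon\ge g/4$ or $\varepsilon\ge\theta g/64$. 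For $|a|<1/2$ you have $\|\mathcal{Q}R\|^2>3/4$, hence $\varepsilon\ge 3\theta/8\ge 3\theta g/16$ because $g\le 2$. Thus $\|(\mathcal{U}-\mathcal{T})^{-1}\|_{op}\le 64/(\theta g)$, and Steps 1--2 can be dropped entirely. Equivalently, split into cases according to whether $\|\mathcal{Q}R\|^2$ is above or below a small multiple of $g$ (not of $g^2$). This is the usual two-case proof of the Rotation--Inversion Lemma. The thesis gives no proof of its own, deferring to \cite{ajanki2019stability}, and uses the same two-case structure in its proof of Proposition~\ref{stab-est1}.
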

The combination of Lemma \ref{lso-estimate} and Lemma \ref{rotate-inverse} gives again Theorem \ref{Dyson-stab}.

\end{section}

\begin{section}{Results about the Dyson equation} \label{s:Results}

In this section, we give three results concerning the Dyson equation analysis of a Hermitian matrix. Notice that the first and second result is independent of the third one for which we need the small perturbation theorem as well as the stability theorem.

The first two results summarize the properties of the solution matrix $M$ which can be regarded as a Stieltjes transform of a matrix-valued measure, which gives rise to a certain real density $\rho$.

The third result asserts that $\rho$ should be the limiting spectral density of the initial Hermitian matrix $H$ since its resolvent $G$ is indeed "close" to the solution matrix $M$, as the dimensions go to infinity.

\underline{1. Solution of the Dyson equation.}

We remind that the existence and uniqueness of the solution $M$ for the Dyson equation, with a positive semi-definite imaginary part, is established in \cite{helton2007operator}, through a certain fixed-point theorem and can be deduced by a well-defined iterative scheme.

Since $M(z):\mathbb{C}_+ \mapsto \mathbb{C}^{N \times N}_+$ is actually a function from $\mathbb{C}_+ := \left \{ z \in \mathbb{C} \ | \ \mathrm{Im}(z) > 0 \right \}$ to the Hilbert space of complex matrices with $\mathrm{Im} (M(z)) \succeq 0$ (because $\mathrm{Im}(z) = \eta > 0$), we can view it as a matrix-valued Herglotz function and apply its Nevanlinna representation for a suitable matrix-valued measure on the real line. All the underlying theory can be found in section 5 of \cite{gesztesy2000matrix}.

\pagebreak

We get the following theorem for the solution matrix $M(z)$:
\begin{thm}[Stieltjes transform representation of $M$] \label{ST-M} 
    Let $M$ be the unique solution of \eqref{deter-eq} with positive semi-definite imaginary part. Then, $M$ admits a Stieltjes transform representation:
\[
M_{xy} = \int_{\mathbb{R}} \frac{V_{xy}(dt)}{t - z} ,
\]
    for $x,y=1,...,N$ and $z \in \mathbb{C}_+$. The measure $V(dt) = (V_{xy}(dt))_{x,y=1}^N$ on the real line with values in positive semi-definite matrices is unique and satisfies the normalization $V(\mathbb{R})=I$. 
\end{thm}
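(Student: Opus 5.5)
The plan is to show that $M$ is a matrix-valued Herglotz (Nevanlinna) function with the decay $M(z)\approx -z^{-1}I$ at infinity, and then to apply the matrix Nevanlinna representation from \cite{gesztesy2000matrix}.

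First, analyticity. By \cite{helton2007operator}, $M(z)$ is the unique solution of \eqref{deter-eq} with $\mathrm{Im}\,M(z)\succeq 0$, and it is obtained as the limit of an iteration $M\mapsto -(z+\mathcal{S}[M])^{-1}$. Each iterate is holomorphic in $z$. The convergence is locally uniform on $\mathbb{C}_+$, since the iteration is a strict contraction in a suitable metric, uniformly on compacts. Hence $M$ is holomorphic on $\mathbb{C}_+$.

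Next, strict positivity of the imaginary part. From $M=-(z+\mathcal{S}[M])^{-1}$ we get
\[
\mathrm{Im}\,M = M^*\big(\eta I+\mathcal{S}[\mathrm{Im}\,M]\big)M \succeq \eta M^*M .
\]
Here $M$ is invertible because $z+\mathcal{S}[M]$ is. Since $\mathcal{S}$ is positivity preserving by \eqref{S-cond}, $\mathrm{Im}\,M\succ 0$. So $M$ is a matrix Herglotz function, and for every vector $u$ the scalar map $z\mapsto u^*M(z)u$ is a scalar Herglotz function.

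Then the asymptotics. Taking norms in $M(z+\mathcal{S}[M])=-I$ together with $\|\mathcal{S}[M]\|\le C\|M\|$ from \eqref{S-cond}, we get $\|M\|\le 2/|z|$ for $|z|$ large along $z=i\eta$. Feeding this back in gives $i\eta M(i\eta)=-I-\mathcal{S}[M]M=-I+\mathcal{O}(\eta^{-2})$. Consequently $\lim_{\eta\to\infty} i\eta\, M(i\eta)=-I$.

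The Nevanlinna representation gives
\[
M(z)=A+Bz+\int_{\mathbb{R}}\Big(\frac{1}{t-z}-\frac{t}{1+t^2}\Big)V(dt),
\]
with $B\succeq0$ and $V$ a positive semi-definite matrix measure. The bound $\|M(i\eta)\|\to 0$ forces $B=0$. The relation $i\eta M(i\eta)\to -I$ then forces $V(\mathbb{R})=I$, which in particular makes $\int t/(1+t^2)\,V(dt)$ convergent. It also forces $A$ to cancel against that term. This step can be checked entrywise via polarization on the scalar functions $u^*Mu$, using the scalar criterion that $\sup_\eta \eta\,|f(i\eta)|<\infty$ holds if and only if $f$ is the Stieltjes transform of a finite measure. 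Thus $M_{xy}=\int V_{xy}(dt)/(t-z)$.

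For uniqueness, recover $u^*V u$ from $u^*Mu$ by the retrieval formula \eqref{retr1}. The off-diagonal entries then follow by polarization.

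I expect the main obstacle to be the analyticity and local uniformity claim in the first step, since the fixed-point argument of \cite{helton2007operator} has to be checked to depend holomorphically on $z$. If that is awkward, I would instead use the implicit function theorem. This requires invertibility of $I-C_M\circ\mathcal{S}$ at each $z\in\mathbb{C}_+$, which holds because $\eta>0$ gives $\|C_M\circ \mathcal{S}\|<1$ in a weighted norm, as in \cite{ajanki2019quadratic}. Combined with uniqueness, this yields holomorphy of $M$.
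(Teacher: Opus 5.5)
Your proposal is correct and follows the paper's route. The paper simply regards $M$ as a matrix-valued Herglotz function and invokes the Nevanlinna representation of \cite[Section~5]{gesztesy2000matrix}; you also supply what the paper leaves implicit, namely holomorphy, $\mathrm{Im}\,M\succ 0$, and the derivation of $B=0$, $V(\mathbb{R})=I$ and the cancellation of the constant term from $i\eta M(i\eta)\to -I$.

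Two small remarks. First, take the bound $\|M(i\eta)\|\le 1/\eta$ from your identity $\mathrm{Im}\,M\succeq \eta M^*M$: taking norms in $zM=-I-\mathcal{S}[M]M$ alone gives only a quadratic inequality in $\|M\|$, which does not exclude the large root. Second, the analyticity step is easier than you fear: the iterates are holomorphic with $\|M_n(z)\|\le 1/\mathrm{Im}\,z$, so Vitali's theorem upgrades the pointwise convergence from \cite{helton2007operator} to locally uniform convergence without any uniform contraction estimate.
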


\underline{2. Density of states for the solution matrix.}

Following the analysis from the Nevanlinna functions in \cite{gesztesy2000matrix}, we have that the diagonal elements of $V$ are actually finite Borel measures in $\mathbb{R}$. The normalization $V(\mathbb{R})=I$ makes them probability measures. This makes the normalized trace of $V$ a probability measure, as a convex combination of probability measures. Hence, from the previous theorem \ref{ST-M}, we derive the following:
\begin{thm}[Density of states] \label{States-d}
    Let $\langle M(z) \rangle : \mathbb{C}_+ \mapsto \mathbb{C}_+$ denote the normalized trace of the solution matrix. Then $\langle M(z) \rangle$ admits a Stieltjes tranform representation of a probability density $\rho$, such that:
    \[
    \langle V(dt) \rangle = \rho(t) dt,
    \]
    i.e.
    \[
    \langle M(z) \rangle = \int_{\mathbb{R}} \frac{\rho(t)}{t - z} dt .
    \]
\end{thm}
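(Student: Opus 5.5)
The idea is to deduce Theorem \ref{States-d} from the matrix-valued representation of Theorem \ref{ST-M} by taking normalized traces, and then to show that the resulting probability measure is absolutely continuous.

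\emph{Step 1: normalized trace.} By Theorem \ref{ST-M}, $M_{xx}(z)=\int_{\mathbb{R}}\frac{V_{xx}(dt)}{t-z}$ for every $x$. Since $V(dt)$ takes values in positive semi-definite matrices, each diagonal entry $V_{xx}$ is a nonnegative Borel measure on $\mathbb{R}$. The normalization $V(\mathbb{R})=I$ gives $V_{xx}(\mathbb{R})=1$, so each $V_{xx}$ is a probability measure. Averaging over $x$ and using linearity of the integral,
\[
\langle M(z)\rangle=\frac1N\sum_{x=1}^N M_{xx}(z)=\int_{\mathbb{R}}\frac{\nu(dt)}{t-z},\qquad \nu:=\langle V(dt)\rangle=\frac1N\sum_{x}V_{xx}.
\]
Here $\nu$ is a convex combination of probability measures, hence itself a probability measure. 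Uniqueness of $\nu$ follows from Corollary \ref{re-cor}, since a measure is determined by its Stieltjes transform. This also shows that $\langle M(z)\rangle$ maps $\mathbb{C}_+$ into $\mathbb{C}_+$: the imaginary part is $\int \eta\,|t-z|^{-2}\,\nu(dt)>0$.

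\emph{Step 2: absolute continuity.} It remains to show $\nu(dt)=\rho(t)\,dt$, with $\rho$ agreeing with \eqref{states} in the limit $\eta\to0^+$. By Section \ref{s:ImS}, $\frac1\pi\mathrm{Im}\langle M(E+i\eta)\rangle=(P_\eta*\nu)(E)$. So it suffices to prove a uniform bound
\[
\sup_{E\in\mathbb{R},\,\eta>0}\|M(E+i\eta)\|\le C .
\]
Given this bound, the functions $P_\eta*\nu$ are uniformly bounded by $C/\pi$. Then for every interval $I$ we get $\nu(I)\le \frac{C}{\pi}|I|$ by Proposition \eqref{retr1}, so $\nu\ll dt$ with density $\rho\le C/\pi$. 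Moreover $\rho(E)=\lim_{\eta\to0^+}\frac1\pi\mathrm{Im}\langle M(E+i\eta)\rangle$ at almost every $E$, by the Lebesgue differentiation theorem applied to the Poisson kernel.

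\emph{Main obstacle: the uniform bound on $\|M\|$.} I would derive it from the Dyson equation \eqref{deter-eq}, written as $M=-(z+\mathcal{S}[M])^{-1}$, together with the mean-field condition \eqref{S-cond}. Taking imaginary parts gives
\[
\mathrm{Im}\,M = M^*\big(\eta+\mathcal{S}[\mathrm{Im}\,M]\big)M \succeq c\,\langle\mathrm{Im}\,M\rangle\, M^*M .
\]
Taking normalized traces then yields $\langle M^*M\rangle\le 1/c$, which bounds $M$ in the normalized Hilbert--Schmidt sense. To upgrade this to the operator norm I would argue as follows. Since $\mathcal{S}[\mathrm{Im}\,M]\succeq c\langle \mathrm{Im}\,M\rangle$, we get $\|M\|\le 1/(\eta+c\langle\mathrm{Im}\,M\rangle)$. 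I would then combine this with the previous identity to control $\langle \mathrm{Im}\,M\rangle$ from below in the region where $\|M\|$ is large. This case analysis, as carried out in \cite{ajanki2019quadratic, helton2007operator}, is where the real work lies.

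If only a local bound is available, for instance in the bulk, I would fall back on the weaker statement. In that case absolute continuity holds on that region, which is the case needed later.
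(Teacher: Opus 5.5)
Your Step 1 is exactly the paper's argument. The paper notes that the diagonal entries of $V$ are probability measures, so $\langle V\rangle$ is a convex combination of them, and reads off the representation from Theorem~\ref{ST-M}. It never shows that $\langle V\rangle$ is absolutely continuous; the density is in effect imported from the subsequent remark citing \cite{ajanki2019stability} for H\"older continuity under \eqref{S-cond}. Your Step 2 supplies this missing ingredient by an elementary argument. Some hypothesis like \eqref{S-cond} is genuinely required: for $\mathcal{S}=0$ the solution is $M=-z^{-1}I$ and $\langle V\rangle=\delta_0$, so Step 1 alone cannot produce a density. The citation gives more (regularity of $\rho$), while your route gives a self-contained proof that $\rho$ exists and is bounded.

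However, Step 2 as written is not finished, because you put the difficulty in the wrong place. You reduce to a uniform bound on the operator norm $\|M\|$, call it the real work, and leave it to the literature with a bulk-only fallback. That bound is not needed, and your own estimate $\|M\|\le(\eta+c\langle\mathrm{Im}\,M\rangle)^{-1}$ degenerates exactly where the density vanishes. All you actually use afterwards is a bound on $(P_\eta*\nu)(E)=\frac1\pi\mathrm{Im}\langle M(E+i\eta)\rangle$. This is a normalized-trace quantity, and the Hilbert--Schmidt bound you already derived controls it. From $\mathrm{Im}\,M\succeq(\eta+c\langle\mathrm{Im}\,M\rangle)M^*M$ you get $\langle\mathrm{Im}\,M\rangle\ge c\,\langle\mathrm{Im}\,M\rangle\langle M^*M\rangle$. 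Since $\langle\mathrm{Im}\,M\rangle>0$ for $\eta>0$, this gives $\langle M^*M\rangle\le 1/c$. Cauchy--Schwarz for the normalized trace then gives
\[
\mathrm{Im}\langle M(E+i\eta)\rangle\le|\langle M\rangle|\le\langle M^*M\rangle^{1/2}\le c^{-1/2}
\]
uniformly in $E\in\mathbb{R}$ and $\eta>0$. Feeding this into \eqref{retr1} as you do yields $\nu(I)\le|I|/(\pi\sqrt{c})$ for every interval $I$. Hence $\nu(dt)=\rho(t)\,dt$ with $\rho\le1/(\pi\sqrt{c})$ on all of $\mathbb{R}$; this is sharp for the GUE, where $c=1$ and the semicircle density peaks at $1/\pi$. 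With this one line the proof is complete, and you can drop both the operator-norm discussion and the bulk fallback.
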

We remark that under the flatness condition \eqref{S-cond}, the density $\rho$ becomes Hölder continuous. [Proposition 2.2] in \cite{ajanki2019stability}. 

The density $\rho$ is called the \textit{density of states} as we have seen in section \ref{s:Stability} because of the following fact. By the Stieltjes inversion formula, we have that:
\[
 \lim \limits_{\eta \to 0^+} \frac{1}{\pi} \langle \mathrm{Im} M(z) \rangle = \rho(t),
\]
 hence $$ \rho(z) = \frac{1}{\pi} \langle \mathrm{Im} M(z) \rangle $$ is the harmonic extension of $\rho$.

 We will see now that $\rho$ is actually the limiting density for the eigenvalue counting function of the Hermitian matrix $H$.
 
\underline{3. Approximation of the resolvent matrix - Local law.}

Here we use our two ingredients, the small error term $D$ in section \ref{s:Cumulants} and the stability of the Dyson equation in section \ref{s:Stability}, to present the last important theorem of this chapter, which is the approximation of the resolvent matrix $G = (H-zI)^{-1}$ by the solution matrix $M(z)$ for large dimensions.
\begin{thm}[Local law for the solution matrix] \label{local-entry}
    Assume that the flatness condition \eqref{S-cond} and the finite-moments condition \eqref{finite-m} hold, as well as a suitable decay of correlations condition like \eqref{exp-decay} for the matrix $H$. Let $M$ be the solution of the Dyson equation \eqref{deter-eq} with the self-energy operator defined as in \eqref{S-op}. Then for the spectral parameter $z=E+i\eta$ inside the "bulk" of the support of $\rho$, where $\rho$ is the density of states, i.e. with $\rho(E) \geq \delta$ and with $\eta \geq N^{-1+\gamma},$ for some $\gamma, \delta > 0,$ we have the "entrywise local law":
    \beq 
    \mathbb{P}\left(\left|G_{i j}(z)-M_{i j}(z)\right| \geq \frac{N^{\varepsilon}}{\sqrt{N \eta}}\right) \leq \frac{C}{N^D},
    \eeq
    for some constant $C$ depending on $\gamma, \delta, \varepsilon, D$ and the constants on \eqref{S-cond} and \eqref{finite-m}.
\end{thm}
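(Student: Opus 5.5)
We deduce the entrywise local law (Theorem \ref{local-entry}) by combining the two ingredients already stated: the probabilistic bound on the error matrix $D$ (Theorem \ref{error-thm}) and the stability of the Dyson equation (Theorem \ref{Dyson-stab}, or Lemmas \ref{lso-estimate} and \ref{rotate-inverse}). Subtracting the deterministic equation $I+(z+S[M])M=0$ from the perturbed one $I+(z+S[G])G=D$ gives an equation for $\Delta:=G-M$. Multiplying on the left by $M$ and using $-(z+S[M])^{-1}=M$, we get the quadratic stability equation
\[
(I-C_M\circ S)[\Delta] \;=\; -MD \;+\; M S[\Delta]\Delta .
\]
Inverting the linear stability operator gives, in a suitable norm,
\[
\|\Delta\| \;\lesssim\; \rho(z)^{-C}\bigl(\|M\|\,\|D\| + \|M\|\,\|S\|\,\|\Delta\|^2\bigr).
\]
In the bulk, $\rho(E)\geq\delta$, so $\rho(z)^{-C}$ and $\|M\|$ are bounded by constants depending on $\delta$. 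Here $\|M\|\lesssim 1$ comes from the Stieltjes representation of Theorem \ref{ST-M} together with the mean-field condition \eqref{S-cond}.

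The quadratic inequality forces a dichotomy: either $\|\Delta\|\lesssim\|D\|$ or $\|\Delta\|\gtrsim 1$. To exclude the large branch we run a continuity (bootstrap) argument in $\eta$. At $\eta\sim1$ we have the trivial bounds $\|G\|,\|M\|\le 1/\eta$, and for large $\eta$ a direct expansion shows $\Delta$ is small. We then step down along a grid $\eta_k$ with spacing $N^{-3}$ down to $N^{-1+\gamma}$. On this grid we use the Lipschitz continuity in $z$ of $G$ (constant $\le\eta^{-2}\le N^2$) and of $M$ (via the bounded inverse of the stability operator and $\partial_zM=(I-C_M S)^{-1}[M^2]$). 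This shows $\|\Delta\|$ cannot jump from the small branch to the large one. A union bound over the $N^{C}$ grid points costs only a polynomial factor, which is absorbed into $N^{-D}$.

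The main obstacle is the choice of norm. Theorem \ref{error-thm} controls $\|D\|_{\max}$, while the stability bound is stated in the Hilbert--Schmidt-induced operator norm. Entrywise control requires the inverse of $I-C_M\circ S$ to be bounded on $(\mathbb{C}^{N\times N},\|\cdot\|_{\max})$, and $\|M S[\Delta]\Delta\|_{\max}$ must be bounded by $\|\Delta\|_{\max}^2$.

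For the first requirement we would write $(I-C_MS)^{-1}=I+C_MS(I-C_MS)^{-1}$. The flatness condition \eqref{S-cond} and the correlation-decay condition make $S$ map $\|\cdot\|_{\max}$ into something controlled by $\|\cdot\|_{HS}/\sqrt N$, and $\|\cdot\|_{HS}/\sqrt N\le\|\cdot\|_{\max}$. It also requires decay of the off-diagonal entries of $M$, which follows from the decay assumption. This transfers the $\|\cdot\|_{op}$ bound of Theorem \ref{Dyson-stab} to the max norm.

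Once this norm-transfer is in place, plugging $\|D\|_{\max}\le N^{\varepsilon}/\sqrt{N\eta}$ on the high-probability event yields
\[
|G_{ij}-M_{ij}|\le \frac{N^{\varepsilon}}{\sqrt{N\eta}} \quad\text{with probability at least } 1-CN^{-D},
\]
after renaming $\varepsilon$.
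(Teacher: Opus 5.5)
Your architecture matches the paper's. The paper gives no separate argument for Theorem \ref{local-entry}: it only combines Theorem \ref{error-thm} with Theorem \ref{Dyson-stab} and defers the details to \cite{ajanki2019stability,erdHos2019random}. Your equation $(I-C_M\circ S)[\Delta]=-MD+MS[\Delta]\Delta$, together with a continuity argument in $\eta$, is exactly that combination.

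\textbf{The gap.} It is in the step you yourself call the main obstacle. With the paper's $\|X\|_{HS}=\sqrt{\mathrm{Tr}XX^*}$, flatness does give $\|S[T]\|\lesssim\|T\|_{HS}/\sqrt N$. However, $\|R\|_{HS}/\sqrt N\le\|R\|_{\max}$ is false: for the matrix with all entries equal to $\epsilon$ the left side is $\sqrt N\epsilon$. In general only $\|R\|_{HS}/\sqrt N\le\sqrt N\|R\|_{\max}$ (or $\le\|R\|$) holds. This is fatal as written, because $D$ is dense. Already for GUE, $D_{ij}=(z+\langle G\rangle)G_{ij}$ for $i\neq j$, so by the Ward identity $\|D\|_{HS}/\sqrt N\sim\eta^{-1/2}$. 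Your expansion $I+C_MS(I-C_M\circ S)^{-1}$ applies the Hilbert--Schmidt bound of Theorem \ref{Dyson-stab} to $(I-C_M\circ S)^{-1}$ acting directly on $-MD$. This loses a factor $\sqrt N$ and turns $N^{\varepsilon}/\sqrt{N\eta}$ into the useless $N^{\varepsilon}/\sqrt\eta$. The same loss occurs if you estimate $\|MD\|_{\max}$ and $\|MS[\Delta]\Delta\|_{\max}$ using only the operator norm $\|M\|$.

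\textbf{The fix.} The missing idea is to let $S$ act before the Hilbert--Schmidt inverse does, using the correlation decay to bound $S$ from the max norm into the operator norm. Write $S[R]_{ab}=\sum_{c,d}\mathbb{E}[H_{ac}H_{db}]R_{cd}$. The contribution with $a\approx b$, $c\approx d$ is a banded matrix with entries $\lesssim\|R\|_{\max}$. The contribution with $a\approx d$, $c\approx b$ has entries $O(\|R\|_{\max}/N)$. Hence $\|S[R]\|\lesssim\|R\|_{\max}$. Now use the identity
\[
(I-C_M\circ S)^{-1}=I+C_M\circ S+(C_M\circ S)(I-C_M\circ S)^{-1}(C_M\circ S).
\]
Here the inverse acts only on $C_MS[R]$, whose normalized Hilbert--Schmidt norm is at most $\|M\|^2\|S[R]\|\lesssim\|R\|_{\max}$. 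This gives $\|(I-C_M\circ S)^{-1}\|_{\max\to\max}\lesssim1+\|M\|^2+\|M\|^4\|(I-C_M\circ S)^{-1}\|_{op}$.

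For the right-hand side you also need two further bounds:
\begin{itemize}
\item $\max_x\sum_a|M_{xa}|\lesssim1$, i.e.\ off-diagonal decay of $M$, which is a non-trivial consequence of the decay of $S$;
\item $\max_a\sum_b|S[\Delta]_{ab}|\lesssim\|\Delta\|_{\max}$.
\end{itemize}
Together these give $\|MD\|_{\max}\lesssim\|D\|_{\max}$ and $\|MS[\Delta]\Delta\|_{\max}\lesssim\|\Delta\|_{\max}^2$.

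Finally, $\|M\|\lesssim1$ does not follow from the Stieltjes representation, which only gives $\eta^{-1}$. It follows from $\mathrm{Im}M=M^*(\eta+S[\mathrm{Im}M])M\succeq c\pi\rho\,M^*M$ and the lower flatness bound, which yield $\|M\|\le(c\pi\rho)^{-1}$. With these repairs your bootstrap in $\eta$ goes through.
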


We note the correlation decay condition for an exponential decay of the matrix correlations, is:
\beq \label{exp-decay}
\operatorname{Cov}\left(\phi\left(W_A\right) , \psi\left(W_B\right)\right) \leq C(\phi, \psi) e^{-d(A, B)},
\eeq
 where $W=\sqrt{N} H$ is the re-scaled random matrix, $A, B$ are two subsets of the index set $[1, N ] \times [1, N]$ and the distance $\mathrm{d}$ is the usual Euclidean distance between the sets $\mathrm{A} \cup \mathrm{A}^{\mathrm{t}}$ and $\mathrm{B} \cup \mathrm{B}^{\mathrm{t}}$ and $\mathrm{W}_{\mathrm{A}}=\left(w_{i j}\right)_{(i, j) \in \mathrm{A}}$ .

For a more general slow correlation decay, the condition is:
\beq \label{slow-decay}
\left|\kappa\left(f_1(W), f_2(W)\right)\right| \leq \frac{C}{1+d\left(\operatorname{supp} f_1, \operatorname{supp} f_2\right)^s}\left\|f_1\right\|_2\left\|f_2\right\|_2,
\eeq
for some $s>12$ and $f_1,f_2$ square integrable functions on random $N \times N$ complex Hermitian matrices.

A simple corollary of Theorem \ref{local-entry} is the following "averaged local law":
\begin{cor} \label{local-average}
    With the same conditions as in Theorem \ref{local-entry}, we have that:
    \beq
        \mathbb{P}\left(\left|\frac{1}{N} \operatorname{Tr}[G(z)-M(z)]\right| \geq \frac{N^{\varepsilon}}{N\eta}\right) \leq \frac{C}{N^D},
    \eeq
    again for a constant $C$ depending on $\gamma, \delta, \varepsilon, D$ and the constants on \eqref{S-cond} and \eqref{finite-m}.
\end{cor}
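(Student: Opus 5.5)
The averaged law should follow from the entrywise law of Theorem \ref{local-entry} by averaging over the diagonal and then exploiting fluctuation averaging to improve the rate from $1/\sqrt{N\eta}$ to $1/(N\eta)$. First, taking a union bound over the $N$ diagonal indices in Theorem \ref{local-entry} (at the cost of replacing $D$ by $D+1$), we get with probability at least $1-CN^{-D}$ the a priori bound
\[
\max_i |G_{ii}(z)-M_{ii}(z)| \le \frac{N^{\varepsilon}}{\sqrt{N\eta}} .
\]
Averaging directly only gives $\langle G-M\rangle = O(N^\varepsilon/\sqrt{N\eta})$. To gain the extra factor, we write the difference through the stability operator. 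Subtracting the deterministic equation \eqref{deter-eq} from the perturbed one \eqref{DysonE1} gives, after a short manipulation,
\[
(I-C_M\circ S)[G-M] = -MD + M\,S[G-M]\,(G-M),
\]
so that
\[
\langle G-M\rangle = -\langle (I-C_M\circ S)^{-1}[MD]\rangle + O\big(\|(I-C_M\circ S)^{-1}\|_{op}\,\|G-M\|_{\max}^2\big).
\]
By Theorem \ref{Dyson-stab} and the bulk condition $\rho\ge\delta$, the inverse is bounded by a constant. Using the flatness \eqref{S-cond} to control $S$ in terms of averaged quantities, the quadratic term is $O(N^{2\varepsilon}/(N\eta))$, which is already of the desired order.

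The remaining linear term is $\langle B[D]\rangle$ with $B := M^*$-type adjoint of $(I-C_M\circ S)^{-1}\circ C_M$ applied to the identity; concretely, it equals $\langle A D\rangle$ for a deterministic matrix $A$ of bounded norm. This is the main obstacle. The entrywise bound $\|D\|_{\max}\lesssim N^\varepsilon/\sqrt{N\eta}$ from Theorem \ref{error-thm} is too weak, so we need the averaged bound
\[
|\langle AD\rangle| \le \frac{N^{\varepsilon}\|A\|}{N\eta} .
\]
This is exactly the averaged version of Theorem 4.1 in \cite{erdHos2019random}. To obtain it, I would estimate $\E|\langle AD\rangle|^{2p}$ as in \eqref{D^p}: expand the first factor $H$ via the multivariate cumulant expansion \eqref{non-Gauss-multi}. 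In this expansion the second order cumulant cancels exactly against $S[G]G$. Every term hitting the remaining factors $\langle AD\rangle^{p-1}\overline{\langle AD\rangle}^{p}$ produces a derivative $\partial \langle AD\rangle$, which carries an extra $1/N$ from the trace together with the Ward identity $\sum_j|G_{ij}|^2=\mathrm{Im}G_{ii}/\eta$. This gains a factor $1/(N\eta)$ relative to the entrywise estimate. Higher cumulants are controlled using \eqref{finite-m} and the correlation decay \eqref{exp-decay}, with the a priori bound above supplying $|G_{ij}|\lesssim 1$.

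Finally, a Markov inequality with $p$ large turns the moment bound into probability $\le C N^{-D}$. Combining this with the quadratic estimate yields the claim for each fixed $z$. Uniformity in $z$, if needed, follows from a grid argument using the Lipschitz continuity of $G$ in $z$, with Lipschitz constant at most $\eta^{-2}\le N^2$.
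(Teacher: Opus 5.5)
The paper gives no argument here beyond calling this a simple corollary of Theorem \ref{local-entry}. You are right that plain averaging of the entrywise bound only yields $N^{\varepsilon}/\sqrt{N\eta}$. Your route is the stability operator $\mathcal{L}:=I-C_M\circ S$ plus the averaged fluctuation bound on $D$, i.e. the averaged half of Theorem 4.1 of \cite{erdHos2019random}, which the thesis later quotes as Theorem \ref{t:errorbounds} and uses this way for Proposition \ref{Local-law1}. That is the mechanism actually behind the statement, and it says more than the paper does.

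On the linear term: Theorem \ref{Dyson-stab} controls $\mathcal{L}^{-1}$ only in the Hilbert--Schmidt super-operator norm $\|\cdot\|_{op}$. To get $\|A\|\lesssim 1$ in operator norm for $A:=((\mathcal{L}^*)^{-1}[I])^*M$, use $(\mathcal{L}^*)^{-1}=I+S\circ C_M^*\circ(\mathcal{L}^*)^{-1}$ together with $\|S[R]\|\lesssim\langle R^*R\rangle^{1/2}$. This is where \eqref{S-cond} legitimately enters. Also, the gain from fluctuation averaging is a factor $(N\eta)^{-1/2}$ over the entrywise bound, not $(N\eta)^{-1}$.

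The genuine gap is the quadratic term $\langle\mathcal{L}^{-1}[MS[\Delta]\Delta]\rangle=\langle AS[\Delta]\Delta\rangle$, with $\Delta:=G-M$. You claim it is $O(\|\mathcal{L}^{-1}\|_{op}\|\Delta\|_{\max}^2)$ by flatness, but neither tool gives this.

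\emph{The super-operator bound fails.} It produces $\|\mathcal{L}^{-1}\|_{op}\|M\|\,\|S[\Delta]\|\,\langle\Delta^*\Delta\rangle^{1/2}$. By the Ward identity $\langle G^*G\rangle=\langle\mathrm{Im}\,G\rangle/\eta$, so $\langle\Delta^*\Delta\rangle\sim\eta^{-1}$. The $N^2$ off-diagonal entries of size $(N\eta)^{-1/2}$ add up, so entrywise smallness does not pass to this norm. Even granting $\|S[\Delta]\|\lesssim\|\Delta\|_{\max}$, you would miss the target by a factor $\sqrt{N}$.

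\emph{Flatness fails too.} \eqref{S-cond} bounds $S[R]$ by $\langle R\rangle$ only for $R\succeq 0$. For $\Delta$ it gives at best $\|S[\Delta]\|\lesssim\langle|\mathrm{Re}\,\Delta|\rangle+\langle|\mathrm{Im}\,\Delta|\rangle$, which is of order one or larger. Already for GUE, $\mathrm{Im}\,G$ has eigenvalues spread between about $\eta$ and $\eta^{-1}$. Flatness does not make $S[\Delta]$ a function of averaged quantities; that is special to trace-type self-energies (GUE, Wigner-type, or the block-trace operator of the next chapter).

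\emph{What works instead.} Write
$\langle AS[\Delta]\Delta\rangle=N^{-1}\sum_{i,a,b,j}\E[H_{ia}H_{bj}]\Delta_{ab}(\Delta A)_{ji}$.
Use the correlation decay \eqref{exp-decay}, not flatness, to get $N^{-1}\sum_{i,a,b,j}|\E[H_{ia}H_{bj}]|\lesssim 1$. Hence $|\langle AS[\Delta]\Delta\rangle|\lesssim\|\Delta\|_{\max}\max_{i,j}|\langle e_j,\Delta Ae_i\rangle|$. The last factor is an \emph{isotropic} quantity. So you need the isotropic local law (which is what \cite{erdHos2019random} actually prove) or off-diagonal decay of $A$, not merely the entrywise Theorem \ref{local-entry}.

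The same isotropic input is needed to invoke the averaged bound on $D$ at all, since Theorem \ref{t:errorbounds} is stated in terms of the isotropic norms \eqref{norms} of $G$. With that supplied, the quadratic term is $O(N^{2\varepsilon}/(N\eta))$ and your argument closes.
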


The main key technical tool and observation is still the fact that the Stieltjes transform of the eigenvalue counting function $\rho_N$ is given by the normalized trance of the resolvent, because:
\[
S_N(z) = \int_{\mathbb{R}} \frac{\rho_N(t)}{t - z} dt = \frac{1}{N} \sum \limits_{i=1}^N \frac{1}{\lambda_i - z} = \langle G(z) \rangle,
\]
so that:
\[
S_N(z) = \langle G(z) \rangle \to \langle M(z) \rangle = S_{\rho}(z)
\]
and hence according to all of our analysis in section \ref{s:ImS}: $\rho_N \to \rho,$ where $\rho$ is the density of states.
\end{section}
\chapter{A non-Hermitian generalization of the Marchenko-Pastur distribution}

\begin{section}{The random matrix model} \label{s:model}
In order to define our model, we firstly define two auxiliary square random matrices $P$ and $Q$ of a given dimension $N \times N.$ We assume that all the entries of $P$ and $Q$ are i.i.d. complex random variables with mean value $0$ and complex variance $\frac{1}{2N}.$

The key difference in this random matrix model compared to the previous results stated in section \ref{prev-r} is that we allow the entries of $P$ and $Q$ to have any possible common distribution $\mathcal{F}$ with the following properties:
\begin{itemize}
    \item \label{a1} $\mathcal{F}$ has mean zero, complex variance $\frac{1}{2N}$ and second moment zero, that is if $x \sim \mathcal{F}$ then $\E[\mathrm{Re} (x)]=\E [\mathrm{Im} (x)] = 0,$ $\mathrm{Var}[\mathrm{Re} (x)] = \mathrm{Var}[\mathrm{Im} (x)]=\frac{1}{4N}$ and $\E[\mathrm{Re} (x) \mathrm{Im} (x)]=0.$ The last assumption is made to ease the computations of the model while the first two are vital for the results.
    
    \item \label{a2} $\mathcal{F}$ has all of its moments finite, this means that if $x \sim \mathcal{F}$ then $\E[x^n] < \infty$ for any $n \in \mathbb{N}.$ This is necessary for the Dyson equation method to take place.

    \item \label{a3} $\mathcal{F}$ has a bounded density. Specifically, we make the assumption that there exist $q\geq 1,\ \kappa>0$ and a probability density $\psi \in L^q(\mathbb{C})$ with $\| \psi \|_q \leq N^\kappa$ such that:
    \beq \label{prob-dens}
    \mathbb{P} \left ( \sqrt{N} P_{ij} \in B \right ) = \mathbb{P} \left ( \sqrt{N} Q_{ij} \in B \right ) = \int_B \psi (z) d^2 z,
    \eeq
    for each $i,j=1,...,N$ and $B \subset \mathbb{C}$ a Borel set. This condition makes the least singular value problem (see section \ref{lsv}) much easier.
\end{itemize}
Our model is defined as follows. We pick
\beq \label{X1,X2}
    X_1 := \sqrt{1+\tau} P + \sqrt{1-\tau}Q \ \ \text{and} \ \ X_2 := \sqrt{1+\tau} P - \sqrt{1-\tau}Q,
\eeq
for a real parameter $\tau \in [0,1].$ Notice that $X_1$ and $X_2$ are correlated through this parameter. For $\tau=0,$ we have that $X_1$ and $X_2$ are completely uncorrelated, while for $\tau \in (0,1)$ and $\tau \to 1$ they become more and more correlated, reaching complete correlation for $\tau=1$ where they become identical matrices. 
Through $X_1$ and $X_2$ we now form the \textit{correlated sample covariance ensemble}:
\beq \label{X}
\mathbf{X} := X_1 X_2^*
\eeq
What can we say about the eigenvalues of this ensemble? For $\tau=0,1$ we get two well-known cases. Specifically, for $\tau=0$ we have a product of two independent circular matrices, while for $\tau=1$ we have a product of a circular matrix and its conjugate. Notice that in this case $\mathbf{X}$ becomes hermitian, that's why $\tau$ is called the $\textit{non-hermiticity}$ parameter.

In the first case, we have a finite product of circular random matrices, which means that the empirical spectral distribution converges to a power of a circular law. This distribution has the unit disk as support and exhibits a concentration of eigenvalues around the origin according to the number of product elements. In our case, we have the second power of the circular law. See \cite{o2011products} for this result.

In the second case, we have the classical sample covariance ensemble, which means that the empirical spectral distribution converges to the Marchenko-Pastur distribution. This means that the support in this case collapses to the real line in $[0,4]$. This is expected since the matrix becomes Hermitian and the eigenvalues real. See \cite{marchenko1967distribution} for this result.

For $\tau \in (0,1)$ we are expecting a transition between these two distributions. Since there is a concentration of eigenvalues near the origin both in the second power of the circular law as well as in the Marchenko-Pastur distribution, we are expecting this to hold for the transitional distribution while the initial disk moves to the right and shrinks to the real line. Of course, we are also expecting local laws to hold for this random matrix ensemble.
\end{section}

\begin{section}{Previous results on the spectral distribution}
\label{prev-r}

This model was previously studied in \cite{akemann2021non} where it was assumed that $\mathcal{F} \sim \mathcal{CN}(0,\frac{1}{2N}),$ that is $\mathcal{F}$ follows a complex normal distribution with mean $0$ and variance $\frac{1}{2N}.$ It was therefore given the name \textit{non-Hermitian Wishart ensemble}.

In that study, since there was a Gaussian distribution assumption for the entries of $P$ and $Q$, different and more concrete methods were used through orthogonal polynomials. This is because in this case we have specific potentials in our joint eigenvalue density for $X_1$ and $X_2,$ which can then be used to find the joint eigenvalue density of $\mathbf{X}$ and proceed with asymptotic estimates.

These methods are rather concrete and powerful, so they gave rise to the calculation of the transitional distribution ($0<\tau<1$) even for rectangular matrices $P,Q \in \mathbb{C}^{N \times (N+\nu)},$ where $\nu = \mathcal{O}(N)$ is a \textit{non-square} parameter.

Using the parameter $\alpha:= \lim \limits_{N \to \infty} \frac{\nu}{N}$ they arrived at the following result concerning the transitional distribution of the non-Hermitian Wishart ensemble:
\begin{thm}[Non-Hermitian Wishart ensemble law from \cite{akemann2021non}] 
As $N \to \infty$ the empirical spectral measure of the non-Hermitian Wishart ensemble converges to the following deterministic measure on the complex plane:
\beq \label{NHW-law}
d \widehat{\mu}(\zeta):=\frac{1}{1-\tau^2} \frac{1}{\pi \sqrt{4|\zeta|^2+\left(1-\tau^2\right)^2 \alpha^2}} \cdot \mathbf{1}_{\widehat{S}_{\tau,\alpha}}(\zeta) d^2 \zeta ,
\eeq
where the support $\widehat{S}_{\tau,\alpha}$ of the spectrum is given by:
\beq \label{NHW-supp}
\widehat{S}_{\tau,\alpha}:=\left\{\zeta=x+i y:\left(\frac{x-\tau(2+\alpha)}{\left(1+\tau^2\right) \sqrt{1+\alpha}}\right)^2+\left(\frac{y}{\left(1-\tau^2\right) \sqrt{1+\alpha}}\right)^2 \leq 1\right\} .
\eeq
\end{thm}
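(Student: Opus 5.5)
The plan is to derive the law from the spectrum of the auxiliary Dirac matrix $D$ of \eqref{1:dirac}, rather than from the orthogonal polynomial machinery of \cite{akemann2021non}. The Gaussian assumption makes each analytic input as easy as possible. For $P,Q\in\mathbb{C}^{N\times(N+\nu)}$ the blocks $X_1$ and $X_2^*$ are rectangular, so $D$ is a square matrix of size $2N+\nu$. The linearization identity gives $D^2=\mathrm{diag}(X_1X_2^*,\,X_2^*X_1)$. Since $X_1X_2^*$ and $X_2^*X_1$ have the same nonzero eigenvalues, the nonzero eigenvalues of $D$ come in pairs $\pm\sqrt{\lambda}$, where $\lambda$ runs over the eigenvalues of $\mathbf{X}$. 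Apart from these pairs, $D$ has exactly $\nu$ deterministic zero eigenvalues. Consequently the empirical spectral measure of $\mathbf{X}$ is the push-forward under $\zeta\mapsto\zeta^2$ of the part of the spectral measure of $D$ away from the origin, after renormalization. It therefore suffices to identify the limiting spectral measure $\mu_D$ of $D$ and push it forward.

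\textbf{Step 1: Dyson equation for the Hermitization of $D$.} Following the scheme of the previous chapter, I would Hermitize $D-\zeta$ into a $2(2N+\nu)$-dimensional Hermitian matrix $H_\zeta$ with resolvent $G(\zeta,i\eta)$. By Girko's formula,
\[
\int f\,d\mu_D=\frac{1}{2\pi}\int\Delta f(\zeta)\,\log|\det(D-\zeta)|\,d^2\zeta,
\qquad
\log|\det(D-\zeta)|=-\int_0^\infty \mathrm{Im}\,\mathrm{Tr}\,G(\zeta,i\eta)\,d\eta
\]
(suitably regularized at large $\eta$).

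Because the entries are Gaussian, Lemma \ref{Gauss4} is exact, so the error matrix $D$ of \eqref{D-error} has mean zero. The concentration of $\langle G\rangle$ follows from Gaussian Poincaré-type bounds, which is far easier than Theorem \ref{error-thm}. The self-energy operator $S$ of \eqref{S-op} only sees block traces, and the only correlations between blocks are $\E[(X_1)_{ij}\overline{(X_2)_{ij}}]\propto\tau$. Hence $M$ is block-constant, and the Dyson equation reduces to a finite (essentially $4\times4$) algebraic system. The unknowns are the block traces of $M$, and the coefficients depend on $\zeta,\eta,\tau$ and the aspect ratio $\alpha$.

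\textbf{Step 2: solve at $\eta\to0^+$ and differentiate.} I would solve this system in the limit $\eta\to0^+$. Inside the support, the branch with $\mathrm{Im}\,M\neq0$ determines the off-diagonal block trace $\langle M_{12}\rangle(\zeta)$. Then I would use $\partial_{\bar\zeta}\log|\det(D-\zeta)|\approx -\langle M_{12}\rangle$ together with Proposition \ref{re-comp}, computing $\frac{1}{\pi}\partial_{\bar\zeta}\langle M_{12}\rangle$ to obtain the density of $\mu_D$. The edge of the support is where the nontrivial branch merges with the trivial one.

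For $\alpha=0$ this should give the uniform law $\frac{1}{\pi(1-\tau^2)}$ on the ellipse $S_\tau$. As a sanity check on the push-forward, the map $w=\zeta^2$ is two-to-one with Jacobian $4|w|$, so the density of $w$ becomes
\[
\frac{2}{4\pi(1-\tau^2)|w|}=\frac{1}{2\pi(1-\tau^2)|w|},
\]
which is \eqref{NHW-law} at $\alpha=0$. The boundary parametrization $(1+\tau)\cos t+i(1-\tau)\sin t$ squares to $2\tau+(1+\tau^2)\cos 2t+i(1-\tau^2)\sin2t$, which is exactly \eqref{NHW-supp} at $\alpha=0$.

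For $\alpha>0$, the $\nu$ zero modes produce a deformed density of $\mu_D$. I would expect it to have the form $\propto |\zeta|^2/\sqrt{4|\zeta|^4+(1-\tau^2)^2\alpha^2}$ on a region whose image under squaring is the shifted and scaled ellipse in \eqref{NHW-supp}. I would verify this by showing that the push-forward satisfies the algebraic relation obtained by eliminating $\langle M_{12}\rangle$.

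\textbf{Step 3: justify Girko's formula.} This requires the almost-sure convergence of $\frac{1}{N}\log|\det(D-\zeta)|$ for almost every $\zeta$. The ingredients are:
\begin{itemize}
\item convergence of $\langle G\rangle$ to $\langle M\rangle$ for $\eta$ in a fixed window, which follows from Steps 1 and 2 together with stability of the reduced system;
\item uniform integrability near $\eta=0$, i.e.\ control of the smallest singular values of $D-\zeta$.
\end{itemize}
In the Gaussian case the smallest-singular-value bound follows from a standard density argument, in the spirit of assumption \eqref{prob-dens} and \cite{alt2021inhomogeneous}. The intermediate small singular values are handled by a local-law-type bound $\langle \mathrm{Im}\, G\rangle\lesssim 1$ down to $\eta\ge N^{-1+\epsilon}$. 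Finally, the push-forward under $\zeta\mapsto\zeta^2$ is continuous on $\mathbb{C}\setminus\{0\}$, and the origin carries no mass in the limit for $\mathbf{X}$, so weak convergence transfers to $\mathbf{X}$.

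\textbf{Main obstacle.} I expect the main difficulty to be Step 2. This means explicitly solving the reduced $4\times4$ Dyson system, selecting the correct branch, and checking that the computed density and support match \eqref{NHW-law} and \eqref{NHW-supp} for $\alpha>0$. The algebra is where the shift $\tau(2+\alpha)$ and the factor $\sqrt{1+\alpha}$ must emerge. If that elimination becomes unwieldy, I would instead Hermitize $\mathbf{X}-w$ directly through the linearization and solve for $w$. In that form the term $\sqrt{4|w|^2+(1-\tau^2)^2\alpha^2}$ should appear directly as the discriminant of a quadratic.
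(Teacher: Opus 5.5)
The thesis does not prove this statement. It is quoted from \cite{akemann2021non}, whose argument relies on the exact Gaussian structure: an explicit joint eigenvalue density for $X_1X_2^*$, analysed through orthogonal polynomials and large-$N$ asymptotics. Your route is genuinely different. It is the Hermitization/Dyson-equation scheme the thesis uses for the square Dirac matrix (Theorem \ref{main}), carried over to the $(2N+\nu)$-dimensional rectangular one and followed by the push-forward under $\zeta\mapsto\zeta^2$. The integrable approach gives exact finite-$N$ formulas and hence local statistics, but only for Gaussian entries. Yours gives only the macroscopic law. However, Gaussianity enters only through the error term and the singular-value control, so your route is the natural one to the universal rectangular version that the thesis leaves open in its Conclusion.

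The step you flag as the main obstacle closes cleanly. Put $\beta=(N+\nu)/N\to1+\alpha$. The self-energy feeds block traces from the $(N+\nu)$-dimensional blocks into the $N$-dimensional ones with factor $\beta$ (and $\tau\beta$ off the diagonal), and in the other direction with factors $1$ and $\tau$. So $M$ is determined by $iv_1,b_1$ on the $N$-dimensional blocks and $iv_2,b_2$ on the $(N+\nu)$-dimensional ones, with $b_j$ in the position of $(D^*-\bar\zeta)^{-1}$. In the bulk at $\eta=0^+$ the system reads
\[
v_1=\frac{\beta v_2}{\beta^2v_2^2+|\zeta+\tau\beta\bar b_2|^2},\qquad b_1=-\frac{\zeta+\tau\beta\bar b_2}{\beta^2v_2^2+|\zeta+\tau\beta\bar b_2|^2},
\]
\[
v_2=\frac{v_1}{v_1^2+|\zeta+\tau\bar b_1|^2},\qquad b_2=-\frac{\zeta+\tau\bar b_1}{v_1^2+|\zeta+\tau\bar b_1|^2}.
\]
For $\beta=1$ this is the thesis' computation of $\mathcal F(M_j)$. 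Set $s:=v_1/(\beta v_2)>0$. Then $b_1=-s(\zeta+\tau\beta\bar b_2)$ and $\beta b_2=-s^{-1}(\zeta+\tau\bar b_1)$, hence
\[
b_1=\frac{\tau\bar\zeta-s\zeta}{1-\tau^2},\qquad \beta b_2=\frac{\tau\bar\zeta-s^{-1}\zeta}{1-\tau^2}.
\]
The two expressions $v_1^2=s-|b_1|^2=\beta s-s^2|\beta b_2|^2$ are compatible if and only if $|\zeta|^2(s^{-1}-s)=\alpha(1-\tau^2)$. This has exactly one positive root, with $s=1$ when $\alpha=0$. Consequently $(s+s^{-1})\zeta=\sqrt{4|\zeta|^4+\alpha^2(1-\tau^2)^2}/\bar\zeta$ and
\[
\frac{b_1+\beta b_2}{2+\alpha}=\frac{1}{(2+\alpha)(1-\tau^2)}\Big(2\tau\bar\zeta-\frac{\sqrt{4|\zeta|^4+\alpha^2(1-\tau^2)^2}}{\bar\zeta}\Big).
\]
Applying $-\frac1\pi\partial_\zeta$ yields the density $\frac{4|\zeta|^2}{\pi(2+\alpha)(1-\tau^2)\sqrt{4|\zeta|^4+\alpha^2(1-\tau^2)^2}}$ on $\{v_1>0\}\setminus\{0\}$, together with the atom $\frac{\alpha}{2+\alpha}\delta_0$ from the $\nu$ zero modes. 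The edge condition $v_1=0$, i.e.\ $s(1-\tau^2)^2=|s\zeta-\tau\bar\zeta|^2$, becomes exactly \eqref{NHW-supp} in the variable $w=\zeta^2$ after eliminating $s$. Dropping the atom, multiplying by $\frac{2+\alpha}{2}$ and pushing forward gives \eqref{NHW-law}. So your predicted form is right, and the fallback linearization of $\mathbf X-w$ is unnecessary.

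Two points need repair. First, your density formula differentiates in the wrong variable. The block you obtain from $\partial_{\bar\zeta}\log|\det(D-\zeta)|$ approximates $\frac1n\mathrm{Tr}(D^*-\bar\zeta)^{-1}$, so the density is $-\frac1\pi\partial_\zeta$ of it. Equivalently it is $-\frac1\pi\partial_{\bar\zeta}\frac1n\mathrm{Tr}(D-\zeta)^{-1}$; note that with $S_\mu(z)=\int(w-z)^{-1}d\mu(w)$ one has $\partial_{\bar z}S_\mu=-\pi f$, so Proposition \ref{re-comp} needs a minus sign. Taking $\frac1\pi\partial_{\bar\zeta}$ as you wrote returns $\tau/(\pi(1-\tau^2))$ already at $\alpha=0$.

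Second, in Step 3 the row-distance/density argument does not apply verbatim to $D-\zeta$. Its rows have deterministic coordinates (the zero blocks and the $-\zeta$ diagonal), so the unit normal to the remaining rows can sit exactly where a given row carries no randomness. In the Gaussian case it is simpler to write $X_1=\tau X_2+\sqrt{1-\tau^2}\,\widetilde X$ with $\widetilde X$ independent of $X_2$. Condition on $X_2$ and apply a smoothed-analysis bound for Gaussian perturbations to get a polynomial lower bound on $\sigma_{\min}(\mathbf X-w)$. Then transfer it to $D-\zeta$, $\zeta\neq0$, via the Schur complement formula: it expresses every block of $(D-\zeta)^{-1}$ through $(\mathbf X-\zeta^2)^{-1}$, $X_1$, $X_2$ and $\zeta^{\pm1}$ (e.g.\ the top-left block is $\zeta(\mathbf X-\zeta^2)^{-1}$).
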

\begin{figure*}[h]
    \centering
     \includegraphics{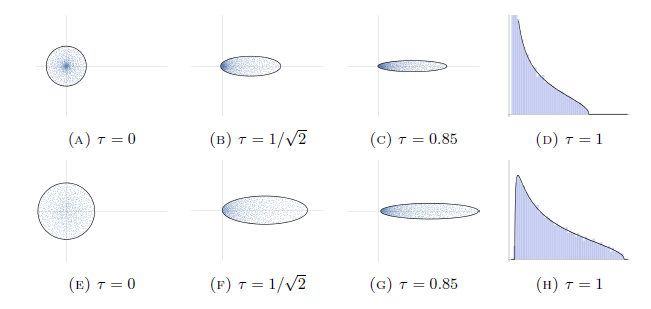}
     \caption{The eigenvalues of $\mathbf{X}$ for large N. We have that $\alpha = 0$ for the figures (A)-(D) and $\alpha=1$ for (E)-(H). This figure is from \cite{akemann2021non}. }
     \label{fig:NHW}
 \end{figure*}
\end{section}

\begin{section}{Main theorems}
Our main theorem is about the spectrum of the correlated covariance matrix $\mathbf{X_N}=X_1X_2^*$ where $X_1$ and $X_2$ are square matrices and its corresponding limiting local law, as the dimension increases.

The empirical measure $\widehat{\mu}_N$ associated with $\mathbf{X}_N$ is given by:
$$ \widehat{\mu}_N := \frac{1}{N} \sum \limits_{j=1}^N \delta_{\widehat{\zeta}_j},$$
where $ \{ \widehat{\zeta_j} \}_{j=1}^N $ are the $N$ complex eigenvalues of $\mathbf{X}_N.$
We firstly introduce the limiting measure $\widehat{\mu}$ of the empirical spectral distribution of this matrix as follows:
\begin{definition}[Limiting measure B] The measure $\widehat{\mu}$ is defined by
\beq \label{measure1}
    d\widehat{\mu}(z) := \frac{1}{1-\tau^2}\frac{1}{2\pi|z|} \mathbf{1}_{\widehat{S}\tau}(z) d^2 z,
\eeq
with the following support $\widehat{S}_\tau:$
\beq \label{supp1}
\widehat{S}_\tau(z) := \left \{ z=x+iy : \left ( \frac{
x-2\tau}{1+\tau^2} \right )^2 + \left ( \frac{y}{1-\tau^2}\right )^2 \leq 1 \right \}.
\eeq
\end{definition}
Notice that $\widehat{S}_\tau(z)$ defines a shifted ellipse centered at $(2\tau,0)$ while the measure predicts a concentration of eigenvalues around the edge point of the ellipse at $(0,0).$

The theorem about the correlated covariance matrix ensemble will follow from a similar theorem about the Dirac matrix, defined as follows:
$$ \mathbf{Y}_N := \begin{pmatrix} 0 & X_1 \\ X_2^* & 0 \end{pmatrix} \in \mathbb{C}^{2N \times 2N}$$
The limiting measure $\mu$ of the empirical spectral distribution of the Dirac matrix is defined as follows:
\begin{definition}[Limiting measure A] The measure $\mu$ is defined by
\beq \label{measure2}
d\mu(z) := \frac{1}{1-\tau^2} \frac{1}{\pi} \mathbf{1}_{S_{\tau}}(z) d^2 z,
\eeq
 with the following support $S_\tau$:
 \beq \label{supp2}
 S_\tau := \left \{ z=x+iy: \left (\frac{x}{1+\tau} \right )^2 + \left ( \frac{y}{1-\tau}\right )^2 \leq 1 \right \}.
 \eeq
\end{definition}
Here, $S_\tau$ is a classical ellipse centered at $(0,0)$ with its eccentricity and edge points depending on $\tau.$

For the proof of our theorems we will work primarily on the following domain for the Dirac matrix:
\begin{definition}[Spectral parameter domain A] \label{pardom}
For any $\delta, \kappa>0,$ define the ellipse that arises from the exclusion of the edges and a small ball around 0 as follows:
\beq \label{par-domB}
S_{\tau,\delta, \kappa} := \left \{ \zeta=x+iy : \left ( \frac{x}{1+\tau}\right )^2 + \left ( \frac{y}{1-\tau} \right )^2 \leq 1 - \delta \right \}\backslash B_0(\kappa),
\eeq
where $B_0(\kappa)$ is a ball of radius $\kappa$ around 0.
\end{definition}
Analogously, we define the spectral domain for the correlated covariance matrix:
\begin{definition}[Spectral parameter domain B]
    For any $\delta,\kappa >0$ define the shifted ellipse that arises from the exclusion of the edges and a small ball around $0$ as follows:
    \beq \label{par-domA}
    \widehat{S}_{\tau,\delta,\kappa} := \left \{ \zeta=x+iy : \left ( \frac{x - 2\tau}{1+\tau^2}\right )^2 + \left ( \frac{y}{1-\tau^2} \right )^2 \leq 1 - \delta \right \}\backslash B_0(\kappa) .
    \eeq
\end{definition}
For the statement of the main theorem we need to define the \textit{test functions} as complex functions $f:\mathbb{C}\to \mathbb{C}$ with compact support and at least two times complex-differentiable with continuous complex second derivative. Their corresponding \textit{zoom functions} are defined as follows:
\beq \label{zoomf}
f_{\zeta_0,a}(\zeta) := N^{2a} f(N^a(\zeta - \zeta_0)),
\eeq
where $N$ is the dimension of the matrix, $\zeta \in \mathbb{C}$ and $\zeta_0 \in \mathbb{C}$ is the point we want to "zoom in" at a zooming scale given by $a>0$.

We are now ready to state our main theorem.
\begin{thm}[Local universal law for the spectrum of the correlated sample covariance ensemble] \label{main'}
Let $a\in (0,1/2)$ and $\kappa,\delta \in (0,1).$ Then, for any $\epsilon>0$ and $\nu \in \mathbb{N}$, there is a constant $C>0,$ such that:
\[
\mathbb{P} \left ( \left | \frac{1}{N}\sum \limits_{\widehat{\zeta} \in Spec({X_1X_2^*})}f_{\widehat{\zeta_0},a} (z) - \int_{\mathbb{C}} f_{\widehat{\zeta_0},a} (z) d{\widehat{\mu}}(z)  \right | \leq N^{-1+2a+\epsilon} \|\Delta f\|_{L^1} \right ) \geq 1 - CN^{-\nu},
\]
uniformly for all $N \in \mathbb{N},$ $\widehat{\zeta_0} \in \widehat{S}_{\tau,\delta, \kappa}$ and test functions $f$ with corresponding zoom function $f_{\widehat{\zeta_0},a},$ such that $\|\Delta f\|_{L^{2+a}} \leq N^D \| \Delta f \|_{L^1}$ for some $D\in \mathbb{N}.$
\end{thm}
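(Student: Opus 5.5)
The plan is to deduce Theorem \ref{main'} from a local law for the Dirac matrix $\mathbf{Y}_N$ with limiting measure $\mu$ from \eqref{measure2}, and then transfer it to $X_1X_2^*$ by linearization. Since $\mathbf{Y}_N^2=\mathrm{diag}(X_1X_2^*,\,X_2^*X_1)$, and the two blocks share their nonzero eigenvalues, the spectrum of $\mathbf{Y}_N$ is $\{\pm\sqrt{\widehat\zeta_j}\}$, up to $\pm$ pairs. Hence
\[
\frac{1}{N}\sum_{\widehat\zeta}f(\widehat\zeta)=\frac{1}{N}\sum_{\zeta\in \mathrm{Spec}(\mathbf{Y}_N)} f(\zeta^2).
\]
The map $\zeta\mapsto\zeta^2$ sends $S_\tau$ onto $\widehat S_\tau$: writing $\zeta=(1+\tau)\cos\theta+i(1-\tau)\sin\theta$ gives $\zeta^2=2\tau+(1+\tau^2)\cos2\theta+i(1-\tau^2)\sin 2\theta$. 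It also pushes $\frac{1}{\pi(1-\tau^2)}d^2\zeta$ forward to $\frac{2}{\pi(1-\tau^2)}\frac{1}{4|z|}d^2z$, which is exactly \eqref{measure1}, because the Jacobian is $4|\zeta|^2=4|z|$ and the map is $2\!:\!1$. So the target statement reduces to a local law for $\mathbf{Y}_N$ tested against $g(\zeta):=f_{\widehat\zeta_0,a}(\zeta^2)$. Away from $B_0(\kappa)$ the map is a smooth local diffeomorphism, so $g$ is a sum of two zoom-type functions centred at $\pm\sqrt{\widehat\zeta_0}\in S_{\tau,\delta',\kappa'}$, with $\|\Delta g\|_{L^1}\asymp\|\Delta f\|_{L^1}$ (since $\Delta_\zeta g=4|\zeta|^2(\Delta f)(\zeta^2)$) and similar control of the $L^{2+a}$ norms.

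For the Dirac matrix I would follow \cite{alt2022local} via Girko's Hermitization. First, by Girko's formula,
\[
\frac1N\sum g(\zeta_j)=\frac{1}{4\pi N}\int\Delta g(w)\,\log|\det(\mathbf{Y}_N-w)|^2\,d^2w,
\]
and
\[
\log|\det(\mathbf{Y}_N-w)|=-\int_0^\infty \mathrm{Im}\,\mathrm{Tr}\,G^w(i\eta)\,d\eta
\]
(suitably regularized), where $G^w$ is the resolvent of the $4N\times4N$ Hermitization
\[
H^w=\begin{pmatrix}0&\mathbf{Y}_N-w\\(\mathbf{Y}_N-w)^*&0\end{pmatrix}.
\]
Second, I would set up the matrix Dyson equation $-M^{-1}=i\eta+Z_w+\mathcal S[M]$ with $4\times4$ block structure. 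Here $\mathcal S$ encodes the correlations $\E[(X_1)_{ij}\overline{(X_2)_{ij}}]=\tau/N$, and the self-consistent density is chosen so that $\frac{1}{\pi}\partial_{\bar w}\langle M^w(i0^+)\rangle$ gives \eqref{measure2}, in line with Proposition \ref{re-comp}. Third, I would prove the local law $|\langle G^w(i\eta)-M^w(i\eta)\rangle|\le N^\epsilon/(N\eta)$ for $\eta\ge N^{-1+\epsilon}$, uniformly in $w\in S_{\tau,\delta',\kappa'}$. This combines the cumulant bound on the error matrix (Theorem \ref{error-thm}) with a stability estimate on $(I-C_M\circ\mathcal S)^{-1}$, which I would obtain from a Rotation--Inversion argument as in Lemma \ref{rotate-inverse}.

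The $\eta$-integral is then split into three regimes. For $\eta\ge N^{-1+\epsilon}$ I use the local law. For $\eta\in[N^{-L},N^{-1+\epsilon}]$ I bound the contribution by eigenvalue counting, using rigidity of the small singular values of $\mathbf{Y}_N-w$ that follows from the local law. For $\eta<N^{-L}$ I need a lower bound on the least singular value, $\mathbb P(s_{\min}(\mathbf{Y}_N-w)\le N^{-L})\le N^{-\nu}$. This follows from the bounded density assumption \eqref{prob-dens}, as in \cite{alt2021inhomogeneous}: conditioning on all entries of $Q$ and all but one column of $P$, a column of $\mathbf{Y}_N-w$ has an $L^q$ density. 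Integrating in $w$ against $\Delta g$ gives the error $N^{-1+2a+\epsilon}\|\Delta f\|_{L^1}$; the condition $\|\Delta f\|_{L^{2+a}}\le N^D\|\Delta f\|_{L^1}$ is used, via H\"older, to control the exceptional small set of $w$. A union bound over an $N^{-C}$-net in $w$, combined with Lipschitz continuity, gives uniformity.

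I expect the main obstacle to be the stability of the $4\times4$ block Dyson equation uniformly in the bulk of $S_{\tau}$ and for small $\eta$. The saturated self-energy operator now has a richer structure because of the $\tau$-correlation between blocks, so I would need to identify its Perron--Frobenius eigenvector and establish a spectral gap. I would try first to compute $M^w(i\eta)$ explicitly, using the symmetry of the model to reduce it to a scalar cubic, and then check $|1-\|\mathcal T\|\langle K,\mathcal U[K]\rangle|\gtrsim\rho^C$ directly. The exclusion of $B_0(\kappa)$ is used both here and in the change of variables.
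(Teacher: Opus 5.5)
The reduction to the Dirac matrix is the same as the paper's. It uses $\mathbf Y_N^2=\mathrm{diag}(X_1X_2^*,X_2^*X_1)$ and the fact that the push-forward of $\mu$ under $\zeta\mapsto\zeta^2$ is $\widehat\mu$. Your splitting of $f_{\widehat\zeta_0,a}(\zeta^2)$ into two zoom functions centred at $\pm\sqrt{\widehat\zeta_0}$ is more careful than the paper, which applies Theorem~\ref{main} to $f_{\widehat\zeta_0,a}(z^2)$ directly. One normalisation needs fixing: $\mathbf Y_N$ has the $2N$ eigenvalues $\pm\sqrt{\widehat\zeta_j}$, so the identity should read $\frac1N\sum_{\widehat\zeta}f(\widehat\zeta)=\frac1{2N}\sum_{\zeta\in\mathrm{Spec}(\mathbf Y_N)}f(\zeta^2)$.

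\textbf{The stability step.} This is where there is a genuine gap, and it is exactly where this model departs from \cite{alt2022local}.

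First, $(I-C_M\circ\mathcal S)^{-1}$ has no bound uniform in $\eta\downarrow0$ on all of $\mathbb C^{4\times4}$. Indeed $\mathscr P(E_-)=-E_-$, and for $M$ of the form \eqref{sol} one has $ME_-M=-(v^2+|b|^2)E_-$. Hence, by Lemma~\ref{EST}, $\mathscr L E_-=(1-\|M\|^2)E_-=\frac{\eta}{v+\eta}E_-$. You must restrict to the invariant subspace $A_+=E_-^\perp$ and use that $\Delta=\underline{\mathbf G-\mathbf M}$ lies in $A_+$. This holds because the diagonal blocks of $(\mathbf H_\zeta-i\eta)^{-1}$ are $i\eta(AA^*+\eta^2)^{-1}$ and $i\eta(A^*A+\eta^2)^{-1}$ with $A=\mathbf Y_N-\zeta$, so they have equal traces.

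Second, even on $A_+$ the Rotation--Inversion Lemma~\ref{rotate-inverse} is unavailable. The spectrum of $\mathscr P|_{A_+}$ is $\{1,-1,0,\pm\tau\}$. The eigenvalue $1$ is double, with eigenvectors $\mathrm{diag}(1,0,0,1)$ and $\mathrm{diag}(0,1,1,0)$, and $-1$ is also an eigenvalue, with eigenvector $\mathrm{diag}(1,-1,1,-1)$. So $\mathcal T=\|M\|^2\mathscr P$ has neither a non-degenerate top eigenmatrix $K$ nor a spectral gap, and the quantity $|1-\|\mathcal T\|\langle K,\mathcal U[K]\rangle|$ you plan to check is not defined.

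The paper replaces the rank-one projection by the projection onto the three-dimensional span of the $\pm\|M\|^2$-eigenvectors. It computes $P_1(1-\mathcal U^*\mathcal T)P_1$ explicitly and splits into cases according to the size of $\|P_2w\|$. This gives $\|\mathscr L^{-1}|_{A_+}\|\lesssim\alpha^{-2}$ with $\alpha=1-|v^2-|b|^2|$ (Proposition~\ref{stab-est1}), and $\alpha$ is bounded below on $S_{\tau,\delta,\kappa}$ (Corollary~\ref{stab-est2}). The eigenvector $\mathrm{diag}(1,-1,1,-1)$ is what drives $\alpha\to0$ at $\zeta=0$. So $B_0(\kappa)$ must be excised on the stability side as well, not only for the change of variables.

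\textbf{Two smaller issues.} First, your least-singular-value conditioning does not decouple as stated. $P_{kj}$ enters $\mathbf Y_N$ both at position $(k,N+j)$, through $X_1$, and at position $(N+j,k)$, through $X_2^*$. After conditioning on $Q$ and the other columns of $P$, the span of the remaining columns of $\mathbf Y_N-w$ therefore still depends on $P_{\cdot j}$. The distance-to-span bound cannot be applied with this conditioning; you need an argument that respects these elliptic-type paired correlations.

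Second, the limiting density is not ``chosen'', and it is not $\frac1\pi\partial_{\bar w}\langle M^w\rangle$. Here $\langle\mathbf M\rangle=iv$, and $\partial_{\bar\zeta}(iv)$ is not even real in general. The density is read off from the off-diagonal entry: by \eqref{e:b}, as $\eta\downarrow0$ in the bulk, $-\frac1\pi\partial_{\bar\zeta}b=\frac{1}{\pi(1-\tau^2)}$, which is \eqref{measure2}. The paper inherits this identification because the reduced equation \eqref{Dyson2} coincides with the one in \cite{alt2022local}.
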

The Dirac matrix follows an elliptic law, as formulated in the following theorem.
\begin{thm}[Local universal law for the spectrum of the Dirac matrix] \label{main}
Let $a\in (0,1/2)$ and $\kappa,\delta \in (0,1).$ Then, for any $\epsilon>0$ and $\nu \in \mathbb{N}$, there is a constant $C>0,$ such that:
\[
\mathbb{P} \left ( \left | \frac{1}{2N}\sum \limits_{\zeta \in Spec(\mathbf{Y_N})} f_{\zeta_0,a} (\zeta) - \int_{\mathbb{C}} f_{\zeta_0,a} (z) d{\mu}(z)  \right | \leq N^{-1+2a+\epsilon} \|\Delta f\|_{L^1} \right ) \geq 1 - CN^{-\nu},
\]
uniformly for all $N \in \mathbb{N},$ $\zeta_0 \in {S}_{\tau,\delta, \kappa}$ and test functions $f$ with corresponding zoom function $f_{\zeta_0,a},$ such that $\|\Delta f\|_{L^{2+a}} \leq N^D \| \Delta f \|_{L^1}$ for some $D\in \mathbb{N}.$
\end{thm}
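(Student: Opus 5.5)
The plan is to follow the Girko Hermitization scheme of \cite{alt2022local}, adapted to the $4\times 4$ block structure. For $\zeta\in\mathbb{C}$ and $\eta>0$ we introduce the Hermitization
\[
H_\zeta := \begin{pmatrix} 0 & \mathbf{Y}_N-\zeta \\ (\mathbf{Y}_N-\zeta)^* & 0\end{pmatrix}\in\mathbb{C}^{4N\times 4N}, \qquad G_\zeta(i\eta):=(H_\zeta-i\eta)^{-1},
\]
and use Girko's formula
\[
\frac{1}{2N}\sum_{\zeta'\in \mathrm{Spec}(\mathbf{Y}_N)} f(\zeta') = \frac{1}{4\pi N}\int_{\mathbb{C}}\Delta f(\zeta)\,\log|\det(\mathbf{Y}_N-\zeta)|\,d^2\zeta ,
\]
where $\log|\det(\mathbf{Y}_N-\zeta)|$ is written as $-\int_0^\infty \mathrm{Im}\,\mathrm{Tr}\, G_\zeta(i\eta)\,d\eta$ (suitably regularized at $\eta\to\infty$). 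Since $H_\zeta$ is Hermitian with correlated entries (the correlation comes from $X_1,X_2$ sharing $P,Q$ through $\tau$), the Dyson equation machinery of the previous chapter applies.

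First, we compute the self-energy $S[R]=\E[(H_\zeta-\E H_\zeta)R(H_\zeta-\E H_\zeta)]$. By unitary invariance of the second-order structure, $S$ only sees the $4$ diagonal block traces, and it mixes the blocks of $X_1$ and $X_2^*$ with weight $\tau$ (since $\E[(X_1)_{ij}\overline{(X_2)_{ij}}]=\tau/N$). The Dyson equation $-M^{-1}=i\eta-A_\zeta+S[M]$ then reduces to a system for a $4\times4$ matrix of block-averages, exactly as in \cite{alt2022local} but with one more block level. We solve this at $\eta\to0$ inside $S_{\tau}$ and check that $\frac{1}{2\pi}\Delta_\zeta\int_0^\infty\langle \mathrm{Im} M_\zeta(i\eta)\rangle\,d\eta$ gives the uniform density $\frac{1}{\pi(1-\tau^2)}\mathbf{1}_{S_\tau}$ of $\mu$. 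This is an explicit algebraic computation.

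Second, we prove the local law $|\langle (G_\zeta-M_\zeta)(i\eta)\rangle|\le N^{\epsilon}/(N\eta)$ uniformly for $\zeta\in S_{\tau,\delta,\kappa}$ and $\eta\ge N^{-1+\epsilon}$. This takes two inputs. The error matrix $D$ is controlled by Theorem \ref{error-thm} via the cumulant expansion; the finite-moment assumption gives \eqref{finite-m}, and the correlation structure is finite-range, so it is trivially covered. The stability step is where I expect the main obstacle: bounding $\|(I-C_M\circ S)^{-1}\|$ uniformly on $S_{\tau,\delta,\kappa}$ and for small $\eta$. I would try to write the stability operator, restricted to the $4\times4$ block-constant range of $S$, in the form $\mathcal{U}-\mathcal{T}$, with $\mathcal{T}$ the saturated self-energy built from $|M|$, and then apply the Rotation–Inversion Lemma \ref{rotate-inverse}. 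This needs a spectral gap for $\mathcal{T}$ and a lower bound on $|1-\|\mathcal{T}\|\langle K,\mathcal{U}K\rangle|$. Both should follow from the explicit solution as long as we stay away from the edge ($\delta>0$) and from $0$ ($\kappa>0$); the latter is needed because of the degenerate Dirac structure there. Away from small $\eta$ the bound is the standard one.

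Third, we split the $\eta$-integral in Girko's formula into three regimes. For $\eta\ge N^{-1+\epsilon}$ we use the averaged local law, which after integrating against the zoomed $\Delta f_{\zeta_0,a}$ gives an error $N^{-1+2a+\epsilon}\|\Delta f\|_{L^1}$. For $\eta\in[N^{-L},N^{-1+\epsilon}]$ we control the number of small singular values via rigidity-type bounds from the local law at $\eta=N^{-1+\epsilon}$ (monotonicity of $\eta\,\mathrm{Im}\,\mathrm{Tr}\,G$). For $\eta\le N^{-L}$ we use the least singular value bound $\mathbb{P}(\sigma_{\min}(\mathbf{Y}_N-\zeta)\le N^{-L})\le N^{-\nu}$, which follows from the bounded density assumption \eqref{prob-dens} as in \cite{alt2021inhomogeneous}. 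The $L^{2+a}$ hypothesis on $\Delta f$ is used here, via Hölder, to integrate the log-singularity. Large $\eta$ is handled by the global law, and we finish with a union bound over a grid in $\zeta$ and Lipschitz continuity in $\zeta$.
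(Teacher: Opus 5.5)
Your overall architecture matches the paper, which runs the proof of Theorem 2.1 of \cite{alt2022local} with Proposition \ref{Local-law1} and Theorem \ref{sv} as inputs: Girko's formula, an averaged local law for the Hermitization via the Dyson equation, the least singular value bound from \eqref{prob-dens}, and the $\eta$-splitting. The gap is in the stability step, which is the genuinely new ingredient, and your proposed route there fails for two concrete reasons.

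\textbf{No uniform bound on the space you propose.} A uniform bound on $\|(I-C_M\circ S)^{-1}\|$ cannot hold on the $4\times 4$ block-constant space you propose to work on, because that space contains $E_-=\mathrm{diag}(1,1,-1,-1)$. With $\mathscr{L}R=R-M\mathscr{P}(R)M$ one has $\mathscr{P}(E_-)=-E_-$ and $ME_-M=-\|M\|^2E_-$, so
\[
\mathscr{L}E_-=(1-v^2-|b|^2)E_-=\frac{\eta}{\eta+v}\,E_- .
\]
Since $v$ stays bounded below on $S_{\tau,\delta,\kappa}$, this gives $\|\mathscr{L}^{-1}\|\gtrsim 1/\eta$. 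You must instead work on $A_+=E_-^{\perp}$, which $\mathscr{L}$ leaves invariant, and check that the partial trace $\Delta$ of $\mathbf{G}-\mathbf{M}$ lies there. This check uses the chiral symmetry $\langle\mathbf{G}_{11}\rangle=\langle\mathbf{G}_{22}\rangle$. The diagonal $2N\times 2N$ blocks of $\mathbf{G}$ are resolvent-type functions of the isospectral matrices $(\mathbf{Y}_N-\zeta)(\mathbf{Y}_N-\zeta)^*$ and $(\mathbf{Y}_N-\zeta)^*(\mathbf{Y}_N-\zeta)$, so their traces agree. One also needs $\langle\mathbf{M}_{11}\rangle=\langle\mathbf{M}_{22}\rangle=iv$.

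\textbf{The Rotation--Inversion Lemma does not apply, even on $A_+$.} Here $\mathcal{T}=\|M\|^2\mathscr{P}$, and $\mathscr{P}|_{A_+}$ has spectrum $\{1,-1,0,\pm\tau\}$. The eigenvalue $1$ is doubly degenerate, with eigenvectors $\mathrm{diag}(1,0,0,1)$ and $\mathrm{diag}(0,1,1,0)$. The eigenvalue $-1$ is attained at $\mathrm{diag}(1,-1,1,-1)$. This spectrum is independent of $\zeta$, so no choice of $\delta,\kappa$ produces the spectral gap and non-degenerate top eigenvector $K$ required by Lemma \ref{rotate-inverse}.

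In the $2\times2$ problem of \cite{alt2022local} the spectrum on $E_-^\perp$ is $\{1,\pm\tau\}$, which is why the lemma works there. The reduction to the $2\times 2$ equation is valid for $M$ but not for the fluctuation $\Delta$, which is a general element of $A_+$.

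The paper replaces the lemma as follows. It projects onto the three-dimensional span of the $\pm\|M\|^2$ eigenvectors, using that $\mathcal{T}$ has norm at most $\tau\|M\|^2\le\tau$ on the complement. It then computes the compression of $1-\mathcal{U}^*\mathcal{T}$ explicitly, obtaining $\|\mathscr{L}^{-1}|_{A_+}\|\lesssim\alpha^{-2}$ with $\alpha=1-|v^2-|b|^2|$. Uniformity on $S_{\tau,\delta,\kappa}$ then follows from \eqref{e:b} together with
\[
1-v^2+|b|^2=\frac{\eta}{\eta+v}+2|b|^2,\qquad 1+v^2-|b|^2=\frac{\eta}{\eta+v}+2v^2 .
\]
This is where excluding $B_0(\kappa)$ (where $b\to 0$) and the edge (where $v\to 0$) genuinely enters.

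\textbf{The error bound is not trivially covered.} Theorem \ref{error-thm} is stated under the flatness condition \eqref{S-cond}, and no Hermitization satisfies it. Here the self-energy sends a matrix supported in the first diagonal block only to the fourth, so $\bm{S}\mathbf{A}\not\succeq c\langle\mathbf{A}\rangle$. You need the conditional estimate of Theorem \ref{t:errorbounds}, which is expressed through $\|\mathbf{G}\|_q$ and $\|\mathrm{Im}\,\mathbf{G}\|_q$. That estimate must be closed by bootstrapping in $\eta$ down from the global law (Proposition \ref{global}, Lemma \ref{Boot}), passing between $\Delta$ and $\mathbf{\Delta}$ via \eqref{Delta-ineq} and \eqref{Delta-Delta}.

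\textbf{A smaller correction on the self-energy.} Its reduction to block traces comes from the i.i.d.\ entries with $\E p_{11}^2=\E q_{11}^2=0$, not from unitary invariance. It also sees the off-diagonal block traces $\langle\mathbf{A}_{13}\rangle,\langle\mathbf{A}_{24}\rangle,\langle\mathbf{A}_{31}\rangle,\langle\mathbf{A}_{42}\rangle$ with weight $\tau$, not only the four diagonal ones.
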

Here we will recover the identical Dyson equation for the approximation of the trace of the resolvent of the Dirac matrix as in \cite{alt2022local}, and we can rely on the work done in \cite{alt2022local} to deduce Theorem \ref{main}, as most proof details carry over directly to our case. We will focus on the differences between the case of the Dirac matrix and the elliptical law matrix from \cite{alt2022local}. The two ingredients we must prove will be as follows:
\begin{enumerate}
\item We have to show that the trace of the resolvent for the Dirac matrix can be approximated by the $v$ as in the $ 2 \times 2$ Dyson equation \eqref{Dyson2}. The precise statement of this is formulated in Proposition \ref{Local-law1}. This furthermore requires a proof of stability for the $4 \times 4$ Dyson equation as formulated in Proposition \ref{stab-est1} and proved in Section \ref{s:stability}.
\item We need to control the relevant least singular values, formulated in Theorem \ref{sv} and proved in Section \ref{lsv}.
\end{enumerate}
\begin{proof}[Proof of Theorem \ref{main}]
The proof is identical to the proof of Theorem 2.1 in \cite{alt2022local}, using our Proposition \ref{Local-law1} in place of their Proposition 3.1 and our Theorem \ref{sv} instead of their Theorem 3.4 to control the smallest singular value.
\end{proof}
\end{section}
\begin{section}{Hermitization and Green function estimate}
We start with Girko's formula
\beq \label{Girko}
\frac{1}{2N} \sum \limits_{\xi \in Spec (Y_N)} f(\xi) = \frac{1}{4\pi N} \int_{\mathbb{C}} \Delta f( \zeta ) \log | \det \mathbf{H}_\zeta | d^2 \zeta,
\eeq
where we introduced the Hermitization:
\beq \label{Herm}
\mathbf{H}_\zeta := \begin{pmatrix} 0 & \mathbf{Y}_N - \zeta \\ \mathbf{Y}_N^* - \bar{\zeta} & 0 \end{pmatrix}
\in \mathbb{C}^{4N \times 4N} \eeq
The log-determinant of $\mathbf{H}_{\zeta}$ can be obtained from the resolvent matrix $\mathbf{G}(\zeta,\eta):= (\mathbf{H}_{\zeta}-i \eta)^{-1} $ through the identity:
\beq \label{log-det}
\log | \det \mathbf{H}_{\zeta} | = -4N \int_{0}^T \langle \mathrm{Im} \mathbf{G}(\zeta,\eta) \rangle d\eta + \log | \det(\mathbf{H}_{\zeta}-iT) |,
\eeq
valid for any $T>0$, where we defined the normalized trace of a matrix $\mathbf{R} \in \mathbb{C}^{k \times k}$ as $\langle \mathbf{R} \rangle := \frac{1}{k} \mathrm{Tr} \mathbf{R}.$

Formula \eqref{Girko} is called the \textit{logarithmic potential} method and we will use approximations of the normalized resolvent trace and of the quantity $\log | \det(H_{\zeta}-iT) |$ to estimate the left-hand side. The spectral distribution can be recovered from the Laplacian of the log-determinant of the Hermitization on the right-hand side of \eqref{Girko} or its approximation as the \textit{spectral resolution} $\eta$ goes to $0$. 

This method is based on the complex retrieval of measure identity in section \ref{wirtinger} and we will not see the details here except from the approximation of the normalized trace of the resolvent as in proposition \ref{Local-law1}. The rest of the proof can be found in the proof of Theorem 2.1 in \cite{alt2022local}, including the proof of Lemma 3.5.
\end{section}

\begin{section}{Solution of the Dyson equation}

We consider now, as in \cite{alt2022local}, the Dyson equation, which will provide an approximation of the normalized resolvent trace. Since we are working with a $4 \times 4$ matrix here the equation will be modified:
\beq \label{Dyson}
-M^{-1} = \begin{pmatrix} iH & Z \\ \overline{Z} & iH \end{pmatrix} + \mathscr{P}(M), \ \mathscr{P}\left [ (M_{ij})_{i, j \leq 4} \right] = \begin{pmatrix} M_{44}  & 0  & \tau   M_{42}   & 0 \\ 0  &   M_{33}  & 0 & \tau   M_{31} \\ \tau   M_{24}   & 0 &   M_{22}   & 0   \\ 0 & \tau   M_{13}   & 0   &   M_{11}   \end{pmatrix},
\eeq
where $\zeta \in \mathbb{C}$, $\eta>0$ and $M=M(\zeta,\eta)\in \mathbb{C}^{4 \times 4}$ is the unknown matrix. Since there are many possible solutions, we require
\begin{equation}\label{e:eqImM1}
\mathrm{Im} M = \frac{1}{2i} (M - M^*)
\end{equation} to be positive definite. We have defined
$$ Z := \begin{pmatrix} \zeta & 0 \\ 0 & {\zeta} \end{pmatrix} \  \ , \ \ iH := \begin{pmatrix} i\eta & 0 \\ 0 & i\eta \end{pmatrix} .$$
The operator $\mathscr{P}$ arises from the calculation in \eqref{self-energy2}.
\begin{prop}[Solution of the Dyson equation] \label{sol-prop}
There exists a unique solution $M$ of \eqref{Dyson} with the constraint \eqref{e:eqImM1} and it satisfies:
\beq \label{sol}
M = \begin{pmatrix} iV & \overline{B} \\ B & iV \end{pmatrix},
\eeq
where we defined
\begin{equation}\label{e:V,b}
iV := \begin{pmatrix} iv(\zeta,\eta) & 0 \\ 0 & iv(\zeta,\eta) \end{pmatrix} \ \ , \ \ B := \begin{pmatrix}  {b(\zeta,\eta)} & 0 \\ 0 & b(\zeta,\eta) \end{pmatrix}  \end{equation}
for each $\eta>0$ , $\zeta \in \mathbb{C}$ and for some $v = v(\zeta,\eta) \in (0,\infty)$ and $b=b(\zeta,\eta) \in \mathbb{C}.$
\end{prop}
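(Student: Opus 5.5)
The plan is to separate existence and uniqueness, which come from general theory, from the explicit block form \eqref{sol}, which I will derive by combining uniqueness with the symmetries of \eqref{Dyson}. I write \eqref{Dyson} as $-M^{-1} = A + \mathscr{P}[M]$ with $A := \begin{pmatrix} iH & Z \\ \overline{Z} & iH\end{pmatrix}$, so that $\mathrm{Im} A = \eta I_4 > 0$.

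First, existence and uniqueness. I will check that $\mathscr{P}$ is a self-energy operator in the sense of \eqref{S-op}, namely $\mathscr{P}[R] = \tilde{\E}[\tilde{W} R \tilde{W}]$ for the Hermitized Dirac matrix $\tilde W$. This is the computation \eqref{self-energy2}, which uses $\E[X_1X_2^*]$-type covariances $\propto \tau$ and $\E|x_{ij}|^2$ for $P,Q$. It follows that $\mathscr{P}$ preserves the cone of positive semi-definite matrices. The theorem of \cite{helton2007operator}, cited in Section \ref{s:Results}, then gives a unique solution $M$ with $\mathrm{Im} M$ positive definite, and this solution can be obtained by iteration.

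Second, I reduce the form of $M$ via symmetry. I look for unitaries $U$ such that $UAU^* = A$ and $\mathscr{P}[UMU^*] = U\mathscr{P}[M]U^*$. For such $U$, $UMU^*$ solves \eqref{Dyson} with positive imaginary part, so uniqueness forces $UMU^* = M$. I will use two candidates:
\begin{itemize}
\item the sign matrix $U = \mathrm{diag}(1,-1,1,-1)$, which kills the entries $(1,2),(1,4),(2,3),(3,4)$ and their transposes;
\item the permutation $(1\,2)(3\,4)$, which identifies $M_{11}=M_{22}$, $M_{33}=M_{44}$, $M_{13}=M_{24}$ and $M_{31}=M_{42}$.
\end{itemize}
Both equivariances are direct index checks on the explicit formula for $\mathscr{P}$. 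After this step $M = \begin{pmatrix} m_1 I & cI \\ dI & m_2 I\end{pmatrix}$, and \eqref{Dyson} collapses to the $2\times 2$ scalar equation
\[
-\begin{pmatrix} m_1 & c \\ d & m_2\end{pmatrix}^{-1} = \begin{pmatrix} i\eta + m_2 & \zeta+\tau d \\ \bar\zeta+\tau c & i\eta+m_1\end{pmatrix}.
\]

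Third, I solve the scalar system. Inverting the $2\times 2$ matrix $N$ on the right gives $m_1 = -(i\eta+m_1)/\det N$ and $m_2 = -(i\eta+m_2)/\det N$, with the same determinant in both. Hence $m_j(\det N + 1) = -i\eta$ for $j=1,2$. Since $\eta>0$, we have $\det N \neq -1$, and therefore $m_1 = m_2 =: m$. Positivity of $\mathrm{Im} M$ forces $\mathrm{Im}\, m > 0$.

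The step I expect to be the main obstacle is showing that $\mathrm{Re}\, m = 0$ and $c = \bar d$, so that $m = iv$ and $d = b$, $c = \bar b$. I would first try the antiunitary symmetry $M \mapsto -J\overline{M}J$ for a suitable permutation $J$ exchanging the $\zeta$ and $\bar\zeta$ blocks. This map should send solutions to solutions while preserving $\mathrm{Im} M > 0$, so uniqueness would force the required reality relations.

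If that symmetry fails to be compatible with $\mathscr{P}$, the fallback is to construct the solution directly. With the ansatz \eqref{sol}, a direct computation gives $\mathrm{Im} M = vI$ and reduces \eqref{Dyson} to
\[
\frac{v}{\eta+v} = \frac{1}{D}, \qquad b = -\frac{\bar\zeta+\tau\bar b}{D}, \qquad D := (\eta+v)^2 + |\zeta+\tau b|^2.
\]
Solving the second relation linearly gives $b = (\tau\zeta - D\bar\zeta)/(D^2-\tau^2)$. The first relation then becomes one real equation in $v$, and I would find a root $v \in (0,\infty)$ by the intermediate value theorem, using that $\tau<1$ keeps $D^2-\tau^2$ away from zero. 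Uniqueness from the first step then identifies this explicit solution with $M$.
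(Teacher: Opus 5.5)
Your proof is correct, but it is organized differently from the paper's. Both arguments get existence and uniqueness from the Helton--Rashidi Far--Speicher theorem.

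\textbf{The paper's route.} It then uses the \emph{convergence} of the iteration $M_{n+1}=-(A+\mathscr{P}(M_n))^{-1}$. Starting from $M_0$ of the form \eqref{sol}, a Schur-complement computation shows that the iteration map preserves this form: it returns diagonal entries $i(v+\eta)/D$ and off-diagonal entries $-(\zeta+\tau b)/D$, with $D=(v+\eta)^2+|\zeta+\tau b|^2$. The closed set of such matrices therefore contains the limit, and $v>0$ follows from $\mathrm{Im}\,M>0$.

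\textbf{Your route.} You use only uniqueness. Your fallback is already a complete proof on its own, and it makes Steps 2--3 unnecessary. It is the same evaluation of $-(A+\mathscr{P}(M))^{-1}$ on the ansatz, but you must actually solve the fixed-point equations rather than just observe closure. With $D=1+\eta/v>1$ (so $D^2-\tau^2>0$ even at $\tau=1$), your formula becomes $b=-\mathrm{Re}\,\zeta/(1+\tau+\eta/v)+i\,\mathrm{Im}\,\zeta/(1-\tau+\eta/v)$. The remaining equation is
$1=v(v+\eta)+(1+\eta/v)\bigl[(\mathrm{Re}\,\zeta)^2/(1+\tau+\eta/v)^2+(\mathrm{Im}\,\zeta)^2/(1-\tau+\eta/v)^2\bigr]$.
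Its right side is continuous on $(0,\infty)$, tends to $0$ as $v\to 0^+$ and to $\infty$ as $v\to\infty$, so the intermediate value theorem gives a root $v>0$.

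\textbf{Comparison.} The paper's argument is shorter. Yours does not rely on convergence of the iteration, and it produces the explicit formula \eqref{e:b}, which the paper simply imports from Alt--Kr\"uger. Your symmetry reduction in Steps 2--3 is a third valid option that explains where the block structure comes from. Both unitary equivariances and the scalar computation giving $m_1=m_2$ check out.

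\textbf{Correction to the step you flagged.} As written, $M\mapsto -J\overline{M}J$ with a plain permutation $J$ cannot send solutions to solutions. The off-diagonal entries of $-\overline{A}$ are $-\bar\zeta$ and $-\zeta$, and no permutation of entries turns these into $\zeta,\bar\zeta$ for generic $\zeta$; a sign is needed. The map $M\mapsto -E_-M^*E_-$, with $E_-=\mathrm{diag}(1,1,-1,-1)$, works because:
\begin{itemize}
\item $\mathscr{P}[R]^*=\mathscr{P}[R^*]$, since $\tau$ is real;
\item $\mathscr{P}[E_-RE_-]=E_-\mathscr{P}[R]E_-$;
\item $-E_-A^*E_-=A$;
\item $\mathrm{Im}(-E_-M^*E_-)=E_-(\mathrm{Im}\,M)E_-$, which is positive definite.
\end{itemize}
Uniqueness then forces $M=-E_-M^*E_-$. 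For your reduced form this gives exactly $m=-\bar m$ and $c=\bar d$.
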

\begin{proof}
Let $W = \begin{pmatrix} iH & Z \\ \overline{Z} & iH \end{pmatrix}. $ 

The claim follows from Theorem 2.1 of \cite{helton2007operator}, which demonstrates the existence and uniqueness of the solution $M$ and gives it as a limit of iterates $M_n = \mathcal{F}^n(M_0)$ with $\mathcal{F}(M_j) = - (W + \mathscr{P}(M_j))^{-1}$. We see that if we take $M_0$ of the form \eqref{sol}, it remains of the form \eqref{sol} upon application of $\mathcal{F}$, thus $M$ is of the form \eqref{sol}. Indeed, using a Schur complement formula for inversion and noting that all the 2 $\times$ 2 blocks commute, we see that for $M_j$ of the form \eqref{sol}, explicitly, we compute
\begin{align}
&\mathcal{F}(M_j)_{11} = i \frac{v+\eta}{(v+\eta)^2 + |\zeta +\tau b|^2} = \mathcal{F}(M_j)_{33} \\
&\mathcal{F}(M_j)_{13} = \frac{-\zeta - \tau b}{(v+\eta)^2 + |\zeta +\tau b|^2} = \overline{\mathcal{F}(M_j)_{31}}.
\end{align}
This completes the proof of \ref{sol-prop}.
\end{proof}
Due to $\eqref{e:V,b}$ and \eqref{sol}, we can take partial normalized traces on \eqref{Dyson}  (see definition after equation \eqref{Delta-eq}) to recover a $2 \times 2$ Dyson Equation, which we will work with henceforth and which matches the equation in \cite{alt2022local}:
\beq \label{Dyson2}
-M^{-1} = \begin{pmatrix} i\eta & \zeta \\ \overline{\zeta} & i\eta \end{pmatrix} + \mathscr{P}_2(M), \ \mathscr{P}_2\left [ (M_{ij})_{1 \leq i, j \leq 2} \right] = \begin{pmatrix} M_{22}  & \tau   M_{21} \\ \tau   M_{12}  &   M_{11}   \end{pmatrix},
\eeq
where $\zeta \in \mathbb{C}$, $\eta>0$ and $M=M(\zeta,\eta)\in \mathbb{C}^{2 \times 2}$ is the unknown matrix with
\begin{equation}\label{e:eqImM}
\mathrm{Im} M = \frac{1}{2i} (M - M^*) \succeq 0.
\end{equation}
As the equation is identical to the equation in \cite{alt2022local}, the solution will satisfy all the same properties. Thus we can import without changes the following lemma and identities from \cite{alt2022local}:
\pagebreak
\begin{lem} [Lemma 4.1 in \cite{alt2022local}, Basic estimates of $M$] \label{EST} We have the following estimates:
\begin{enumerate}[label=(\roman*)]
    \item  Uniformly for all $\eta>0$ and $\zeta \in \mathbb{C}$, we have that:
\beq \label{est(i)}
\|M\| \leq \min\{1,\eta^{-1}\} \leq 2 (1+\eta^{-1}),
\eeq
\beq \label{est(ii)}
\|M^{-1}\| \lesssim 1+ \eta + |\zeta|.
\eeq
\item Let $r>0$. Then uniformly for all $\eta>0$ and $\zeta \in \mathbb{D}_r$, we have:
\begin{align} \label{est3}
M &= i\eta^{-1} + \mathcal{O}_r(\eta^{-2}), \\
M^* M &\succsim_r (1+\eta)^{-2}, \\
\mathrm{Im} M &\succsim_r \eta (1+\eta)^{-2}
\end{align}
\end{enumerate}
and in particular, \beq \|M\|^2 = v^2 + |b|^2 = \frac{v}{v+\eta}.\eeq
\end{lem}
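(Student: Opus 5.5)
The plan is to exploit the explicit structure of the solution. By Proposition \ref{sol-prop} and the reduction to \eqref{Dyson2}, the solution has the form
\[
M=\begin{pmatrix} iv & \bar b\\ b & iv\end{pmatrix}, \qquad v>0 .
\]
Then $M=iv\,I+K$, where $K$ is Hermitian with eigenvalues $\pm|b|$. Since $iv\,I$ commutes with $K$, the matrix $M$ is normal, its singular values are $|iv\pm|b||=\sqrt{v^2+|b|^2}$, and in particular $\|M\|^2=v^2+|b|^2$. Every estimate in the lemma should then reduce to scalar estimates on $v$ and $b$.

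First I substitute this form into \eqref{Dyson2}. The right-hand side becomes $-W$, where $W$ has diagonal entries $i(v+\eta)$ and off-diagonal entries $\zeta+\tau b$ and its conjugate. Inverting $W$ exactly as in the computation of $\mathcal F(M_j)$ in the proof of Proposition \ref{sol-prop}, with $D:=(v+\eta)^2+|\zeta+\tau b|^2$, gives the scalar fixed-point equations
\[
v=\frac{v+\eta}{D},\qquad |b|=\frac{|\zeta+\tau b|}{D}.
\]
Squaring and adding gives $v^2+|b|^2=D/D^2=1/D$. The first equation gives $1/D=v/(v+\eta)$, so
\[
\|M\|^2=v^2+|b|^2=\frac{v}{v+\eta}\le 1 .
\]
The first equation also gives $D=(v+\eta)/v\ge (v+\eta)^2$, hence $v(v+\eta)\le 1$ and $v\le\eta^{-1}$. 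Moreover $v/(v+\eta)\le v/\eta\le \eta^{-2}$, so $\|M\|\le \eta^{-1}$ as well, which proves \eqref{est(i)}.

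For \eqref{est(ii)} I write $M^{-1}=-W-\mathscr P_2(M)$ and bound the two pieces separately. Since $\|\mathscr P_2(M)\|\lesssim\|M\|\le 1$ and $\|W\|\le \eta+|\zeta|+|v|+|b|\lesssim 1+\eta+|\zeta|$, the claim follows.

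For part (ii), fix $\zeta\in\mathbb D_r$. Since $v,|b|\le 1$, I have $D\le (1+\eta)^2+(r+1)^2\lesssim_r(1+\eta)^2$. Then:
\begin{itemize}
\item From $M^*M=\frac{1}{D}\,I$, the bound on $D$ gives $M^*M\succsim_r(1+\eta)^{-2}$.
\item From $v=(v+\eta)/D\ge \eta/D$, I get $\operatorname{Im}M=v\,I\succsim_r\eta(1+\eta)^{-2}$.
\end{itemize}
For the large-$\eta$ expansion \eqref{est3}, I use $|b|\le(r+|b|)/D\le(r+1)\eta^{-2}$, so $b=\mathcal O_r(\eta^{-2})$. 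I also write $v(v+\eta)=1-v|\zeta+\tau b|^2/(v+\eta)$, which equals $1+\mathcal O_r(\eta^{-2})$ because $v\le\eta^{-1}$. Solving this quadratic gives $v=\eta^{-1}+\mathcal O_r(\eta^{-2})$. Hence $M=i\eta^{-1}I+\mathcal O_r(\eta^{-2})$; the lemma's notation $i\eta^{-1}$ is understood as $i\eta^{-1}I$, i.e.\ the leading term multiplies the identity.

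The main obstacle I expect is getting the fixed-point equations exactly right, with the correct conjugations and signs in the $b$-equation. The key identity $v^2+|b|^2=v/(v+\eta)$ depends on taking moduli correctly, and the normality of $M$ depends on its off-diagonal entries being $b$ and $\bar b$ rather than arbitrary values. I would first recheck this against the explicit formulas for $\mathcal F(M_j)_{11}$ and $\mathcal F(M_j)_{13}$ in the proof of Proposition \ref{sol-prop}. If $M$ turned out not to be normal, I would fall back on $\|M\|\le\eta^{-1}$ from the general positivity bound $\operatorname{Im}(-M^{-1})\succeq\eta$, and on Schur complement bounds for the singular values.
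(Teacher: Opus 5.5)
Your proof is correct. The paper gives no argument for this lemma: it imports it verbatim from \cite{alt2022local}, because \eqref{Dyson2} is literally the equation studied there. So what you have is a self-contained verification, not a variant of a proof in the text.

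Your route exploits the special structure of the solution. Since $M=iv\,I+K$ with $K$ Hermitian, $M$ is normal with $M^*M=MM^*=(v^2+|b|^2)I=D^{-1}I$ and $\operatorname{Im}M=vI$. Every claim therefore reduces to the two scalar relations $v=(v+\eta)/D$ and $|b|=|\zeta+\tau b|/D$; these agree with $M=-W^{-1}$ and with the formulas for $\mathcal F(M_j)$. This gives identities rather than inequalities, yields $\|M\|\le 1$ directly from $v^2+|b|^2=v/(v+\eta)$, and is sharper than needed: from $v\eta=1-\epsilon-v^2$ you actually get $v=\eta^{-1}+\mathcal O_r(\eta^{-3})$ for $\eta\ge 1$.

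The structural alternative for a general matrix Dyson equation runs as follows.
\begin{itemize}
\item $\operatorname{Im}(-M^{-1})\succeq\eta$ gives $\|M\|\le\eta^{-1}$.
\item $M^*M\succeq\|M^{-1}\|^{-2}$, together with the bound on $\|M^{-1}\|$, gives the lower bound on $M^*M$.
\item The identity $\operatorname{Im}M=M^*\bigl(\eta+\mathscr P_2[\operatorname{Im}M]\bigr)M\succeq\eta\,M^*M$, using that $\mathscr P_2$ preserves positivity for $|\tau|\le 1$, gives the bound on $\operatorname{Im}M$.
\item A Neumann series in $(i\eta)^{-1}$ gives the large-$\eta$ expansion.
\end{itemize}
That route uses the explicit form of $M$ only for $\|M\|\le 1$, so it transfers to other self-energy operators. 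Yours is shorter and fully explicit for this particular one.

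A few cosmetic points:
\begin{itemize}
\item The right-hand side of \eqref{Dyson2} is $W$ itself, not $-W$; that is, $M=-W^{-1}$. Your subsequent formulas are the correct ones for $M=-W^{-1}$, so nothing downstream changes.
\item In the bound on $\|M^{-1}\|$ you reuse the letter $W$ for the bare matrix $\begin{pmatrix} i\eta&\zeta\\ \bar\zeta&i\eta\end{pmatrix}$. Either reading gives $\|M^{-1}\|\le 2+\eta+|\zeta|$; in fact $\|M^{-1}\|=\sqrt D$ exactly.
\item For uniformity of $M=i\eta^{-1}+\mathcal O_r(\eta^{-2})$ over all $\eta>0$, say explicitly that $\eta\le 1$ is trivial, since there $\|M-i\eta^{-1}\|\le 1+\eta^{-1}\le 2\eta^{-2}$. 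The quadratic analysis is only needed for $\eta\ge 1$.
\item The printed inequality $\min\{1,\eta^{-1}\}\le 2(1+\eta^{-1})$ is trivial. It is presumably a typo for $2(1+\eta)^{-1}$, which also holds.
\item Your fallback paragraph can be dropped, since normality of $M$ follows directly from its form.
\end{itemize}
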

Here the $\lesssim$ symbol means that a quantity is less than another up to a constant. The symbol $\lesssim_{a}$ means that a quantity is less than another up to a constant that may depend on $a$.

Furthermore, we import equation (4.7) from \cite{alt2022local}, which gives the following expression for $b$:
\begin{equation}\label{e:b}
b = -\frac{\mathrm{Re} \zeta}{1+ \tau+ \eta/v} + i\frac{\mathrm{Im} \zeta}{1 - \tau + \eta/v}.
\end{equation}
The following proposition will give an estimate for the normalized trace of the Green function of the Hermitization of the Dirac matrix, by the solution matrix of the Dyson equation, the proof of which will be the theme of section \ref{Local}:
\begin{prop}[Local law for $\mathbf{H}_{\zeta}$, averaged version] \label{Local-law1} Let $v$ be defined as in \eqref{sol} and let $\gamma , \delta, \kappa > 0$. Then for any $\epsilon>0$ and $\nu>0$, there exists $C_{\epsilon,\nu}>0$ such that:
$$ \mathbb{P} \bigg ( \left | \langle \mathbf{G}(\zeta,\eta) \rangle - i v(\zeta,\eta) \right |  \leq \frac{N^\epsilon}{N\eta} \bigg ) \geq 1 - C_{\epsilon,\nu} N^{-\nu} ,$$
uniformly for all bounded $\eta \geq N^{-1+\gamma},$ ${\zeta \in S_{\tau,\delta, \kappa}} $ and $N \in \mathbb{N}.$
\end{prop}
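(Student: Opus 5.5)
The plan is to follow the two-step scheme of the Dyson equation chapter, applied to the $4N\times 4N$ Hermitization $\mathbf{H}_\zeta$ of the Dirac matrix $\mathbf{Y}_N$.

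\textbf{Step 1: the self-energy.} First I write $\mathbf{H}_\zeta - i\eta = \mathbf{W}_N - \mathbf{Z}$, where $\mathbf{W}_N$ is the random part, built from $X_1, X_2, X_1^*, X_2^*$, and $\mathbf{Z}$ is deterministic, built from $\zeta$ and $i\eta$. I then compute the self-energy $\mathcal{S}[\mathbf{R}] = \E[\mathbf{W}_N \mathbf{R}\, \mathbf{W}_N]$. Because of \eqref{X1,X2} and the vanishing second moments, this uses only $\E|x_{ij}|^2 = 1/(2N)$ together with
\[
\E[(X_1)_{ij}\overline{(X_2)_{ij}}] = \tau/N, \qquad \E|(X_k)_{ij}|^2 = 1/N .
\]
Consequently $\mathcal{S}[\mathbf{R}]$ depends only on the normalized traces of the $16$ blocks of size $N\times N$. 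It equals $\mathscr{P}$ of \eqref{Dyson} applied to the matrix of block traces, tensored with $I_N$. This identifies $M\otimes I_N$ as the solution of the matrix Dyson equation $I+(\mathbf{Z}+\mathcal{S}[\mathbf{M}])\mathbf{M}=0$, with $M$ as in Proposition \ref{sol-prop}. The $(1,1)$ block of $M$ has normalized trace $iv$, so $\langle \mathbf{M}\rangle = iv$ and the goal reduces to bounding $\langle \mathbf{G}-\mathbf{M}\rangle$.

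\textbf{Step 2: the error matrix.} Next I show that $\mathbf{D} := \mathbf{W}_N\mathbf{G} + \mathcal{S}[\mathbf{G}]\mathbf{G}$ is small. The entries of $\mathbf{W}_N$ have a finite-range correlation structure: each entry of $P$ or $Q$ appears in at most four positions. They have all moments finite, and the flatness condition \eqref{S-cond} holds for the relevant blocks. Therefore the cumulant expansion of Section \ref{s:Cumulants} (Theorem 4.1 of \cite{erdHos2019random}, as Theorem \ref{error-thm} and its averaged version) gives two bounds with overwhelming probability, for $\eta\ge N^{-1+\gamma}$:
\[
\|\mathbf{D}\|_{\max}\le N^\epsilon/\sqrt{N\eta}, \qquad |\langle \mathbf{A}\mathbf{D}\rangle|\le N^\epsilon/(N\eta)
\]
for deterministic bounded $\mathbf{A}$. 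This is essentially the argument of \cite{alt2022local}, adapted to the doubled block structure. Note that $\mathcal{S}$ is not flat on all of $\mathbb{C}^{4N\times 4N}$, only block-flat; this is the same situation as in \cite{alt2022local}, so their version of the error bound applies.

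\textbf{Step 3: stability of the Dyson equation.} I subtract the two equations and obtain
\[
\mathcal{B}[\mathbf{G}-\mathbf{M}] = -\mathbf{M}\mathbf{D} + \mathbf{M}\,\mathcal{S}[\mathbf{G}-\mathbf{M}](\mathbf{G}-\mathbf{M}),
\]
where $\mathcal{B} = I - C_{\mathbf{M}}\circ\mathcal{S}$ is the linear stability operator. Since $\mathcal{S}$ factors through block traces, it suffices to invert $\mathcal{B}$ restricted to $4\times 4$ matrices, i.e.\ the operator $R\mapsto R - M\mathscr{P}(R)M$. I expect this to be the main obstacle (Proposition \ref{stab-est1}). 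For $\zeta \in S_{\tau,\delta,\kappa}$ I will show $\|\mathcal{B}^{-1}\|\lesssim_{\delta,\kappa} 1$ uniformly in small $\eta$. I would try first to mimic the rotation--inversion strategy of Lemma \ref{rotate-inverse}: write $\mathcal{B}$ as a conjugation of $\mathcal{U}-\mathcal{T}$ using $\mathrm{Im}\,M$, where $\mathcal{T}$ is the saturated self-energy. Then I exploit the block structure \eqref{sol}. The $4\times4$ space splits into invariant subspaces on which $\mathscr{P}$ acts either as the $2\times2$ operator $\mathscr{P}_2$ of \cite{alt2022local} or as a pure swap. On the first type, \cite{alt2022local} already gives a bound. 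On the remaining subspaces I compute the eigenvalues explicitly in terms of $v$, $b$, $\tau$ via \eqref{e:b}. In the bulk $v\gtrsim 1$, and the relation $\|M\|^2=v/(v+\eta)<1$ keeps these eigenvalues away from $1$.

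\textbf{Step 4: bootstrap.} Finally I run the standard bootstrap in $\eta$. At $\eta\sim1$ the quadratic term is trivially small by Lemma \ref{EST}. Stability then gives $\|\mathbf{G}-\mathbf{M}\|_{\max}\le N^{\epsilon}/\sqrt{N\eta}$. Using this entrywise bound, Lipschitz continuity in $\eta$, and a union bound over a grid, I decrease $\eta$ to $N^{-1+\gamma}$. Taking the normalized trace and using the averaged bound on $\langle \mathbf{A}\mathbf{D}\rangle$ with $\mathbf{A}$ obtained from $(\mathcal{B}^{*})^{-1}$ applied to the identity then yields $|\langle\mathbf{G}\rangle - iv|\le N^\epsilon/(N\eta)$ with probability at least $1-C_{\epsilon,\nu}N^{-\nu}$.
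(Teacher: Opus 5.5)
Your Steps 1, 2 and 4 follow the paper's route: the self-energy computation $\bm{\hat S}=\bm S$, the error bound from \cite{erdHos2019random}, reduction by partial traces to a $4\times4$ equation, and the bootstrap in $\eta$ from \cite{alt2022local}.

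Step~3, however, claims something false, and the argument breaks there. The stability operator $\mathscr{L}(R)=R-M\mathscr{P}(R)M$ is \emph{not} uniformly invertible on all of $\mathbb{C}^{4\times4}$. Take $E_-=\mathrm{diag}(1,1,-1,-1)$. Then $\mathscr{P}(E_-)=-E_-$, and the form \eqref{sol} of $M$ gives $ME_-M=-(v^2+|b|^2)E_-=-\|M\|^2E_-$, so
\[
\mathscr{L}(E_-)=(1-\|M\|^2)\,E_-=\frac{\eta}{\eta+v}\,E_- .
\]
Hence $\|\mathscr{L}^{-1}\|\ge(\eta+v)/\eta\sim\eta^{-1}$ in the bulk. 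The relation $\|M\|^2=v/(v+\eta)<1$ that you invoke is exactly what degenerates as $\eta\to0$.

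The same direction $\mathrm{diag}(1,-1)$ is degenerate for the $2\times2$ operator of \cite{alt2022local}. Any bounded inverse there holds only on its orthogonal complement, so it cannot be imported as a full-space bound on your "first type" subspace, which is where $E_-=\mathrm{diag}(1,-1)\otimes I_2$ lives. With only the full-space estimate, your bootstrap yields $\|\Delta\|\lesssim\eta^{-1}\|D\|$, which is useless at $\eta\sim N^{-1+\gamma}$.

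The missing idea has two parts. First, invert $\mathscr{L}$ only on the invariant subspace $A_+=E_-^{\perp}$, where it is boundedly invertible for $\zeta\in S_{\tau,\delta,\kappa}$ (Proposition \ref{stab-est1}, Corollary \ref{stab-est2}). Second, check that the partial trace $\Delta$ of $\mathbf{G}-\mathbf{M}$ actually lies in $A_+$. This follows from the chiral structure of the Hermitization. With $A=\mathbf{Y}_N-\zeta$, the Schur complement formula gives the two $2N\times2N$ diagonal blocks of $\mathbf{G}$ as $i\eta(AA^*+\eta^2)^{-1}$ and $i\eta(A^*A+\eta^2)^{-1}$. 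These have equal normalized traces because $A$ is square. Also $\langle\mathbf{M}_{11}\rangle=\langle\mathbf{M}_{22}\rangle=iv$. Only with this can \eqref{Delta-44} be inverted on $A_+$ to give \eqref{Delta-ineq}, after which \eqref{Delta-Delta} and Lemma \ref{Boot} close the bootstrap.

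Your final averaging against $(\mathcal{B}^*)^{-1}[I]$ is in fact harmless, since $I\in A_+$. The entrywise control of $\mathbf{G}-\mathbf{M}$, needed inside the bootstrap for the quadratic term and for $\|\mathbf{G}\|_q$, is not. Separately, Lemma \ref{rotate-inverse} does not apply directly: $\mathscr{P}$ has eigenvalues $\pm1$ with multiplicity, so there is no spectral gap. The paper replaces it by projecting onto the $\pm1$ eigenspaces and computing explicitly.
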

\end{section}

\begin{section}{Local law for the Hermitization matrix} \label{Local}

In this section we prove Proposition \ref{Local-law1}.

To express the approximation $\mathbf{G} \stackrel{(N \to \infty)}{\approx} \mathbf{M}$ we introduce as in \cite{alt2022local} some appropriate norms. For any random matrix $R\in \mathbb{C}^{l \times l}$ in dimension $l \in \mathbb{N}$ we define the $p$-norms
$$ \|R\|_p:=\|R\|_p^{iso} := \sup \limits_{\|x\|,\|y\|\leq 1} ( \mathbb{E} | \langle x, Ry \rangle|^p )^{1/p} , $$
\beq \label{norms}
\|R\|_p^{avg} := \sup \limits_{\|W\|\leq 1} ( \mathbb{E} | \langle WR \rangle |^p)^{1/p}, \eeq
where the supremum is taken over $x,y \in \mathbb{C}^l$ and $W \in \mathbb{C}^{l \times l},$ respectively. The symbol $\langle \cdot , \cdot  \rangle$ denotes the Euclidean scalar product on $\mathbb{C}^l$ and the symbol $\langle \ \cdot \  \rangle$ the normalized trace on $\mathbb{C}^{l \times l}.$

Notice that for any finite $l\in \mathbb{N}$ the norms are equivalent to each other and also equivalent to the standard norm of the random variable $\|R\|$ where $\|\cdot\|$ denotes the operator norm. ($\|R\|$ is a positive real random variable).

The main Theorem \ref{Local-law2} states the approximation of the $4N \times 4N$ resolvent matrix $\mathbf{G}=\mathbf{G}(\zeta,\eta)=(\mathbf{H}_\zeta - i\eta)^{-1} \in \mathbb{C}^{4N \times 4N}$ to the deterministic matrix $\mathbf{M}=\mathbf{M}(\zeta,\eta)\in \mathbb{C}^{4N \times 4N}$, as $N \to \infty$. We recall that $\mathbf{M}$ is defined as $\mathbf{M} := M \otimes \mathbf{1}_{N \times N}$ which means that
\beq \label{M-matrix}
\mathbf{M}(\zeta,\eta) = \begin{pmatrix} iv(\zeta,\eta) & \overline{b(\zeta,\eta)} \\ b(\zeta,\eta) & iv(\zeta,\eta) \end{pmatrix}
\eeq
where every entry in this $2 \times 2$ block structure is a multiple of the identity matrix $\mathbf{1}_{2N \times 2N},$ where $v(\zeta,\eta)$ and $b(\zeta,\eta)$ are defined as in \ref{sol}.
\pagebreak
\begin{thm}[Local law for the Hermitization matrix $\mathbf{H}_\zeta$, main version] \label{Local-law2} Let $\gamma \in (0,1)$ and $p \in \mathbb{N}$. Uniformly, for all bounded $\eta \geq N^{-1+\gamma}$ and $\zeta \in E_{\rho,\gamma},$ the following local law holds:
\beq \label{local-law-eq}
\| \mathbf{G} - \mathbf{M} \|_p^{iso} \lesssim_{p , \gamma} \frac{N^\gamma}{\sqrt{N\eta}} \ \ , \ \ \  \| \mathbf{G} - \mathbf{M} \|_p^{avg} \lesssim_{p,\gamma} \frac{N^\gamma}{N\eta} .
\eeq
\end{thm}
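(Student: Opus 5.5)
The plan is to follow the scheme of Chapter 5: write a perturbed Dyson equation for $\mathbf{G}$, bound the error matrix by a cumulant expansion, and invert the linearized equation using a stability estimate that holds on all of $E_{\rho,\gamma}$ rather than only in the bulk. I take $E_{\rho,\gamma}$ to be the region of spectral parameters used in the corresponding theorem of \cite{alt2022local}: bounded $\zeta$ together with the constraint relating $\eta$ to the density $v$ (this is my reading, as the set is not defined in this chapter). The argument is then a bootstrap in $\eta$, from $\eta\sim 1$ down to $\eta=N^{-1+\gamma}$.

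\textbf{Perturbed equation.} Starting from $(\mathbf{H}_\zeta-i\eta)\mathbf{G}=I$, I would define the self-energy $\mathcal{S}[\mathbf{R}]:=\E[(\mathbf{H}_\zeta-\E\mathbf{H}_\zeta)\mathbf{R}(\mathbf{H}_\zeta-\E\mathbf{H}_\zeta)]$. The $\tau$-correlation between $X_1=\sqrt{1+\tau}P+\sqrt{1-\tau}Q$ and $X_2$ (namely $\E|x_1|^2=\E|x_2|^2=1/N$ and $\E x_1\overline{x_2}=\tau/N$ entrywise) should produce $\mathcal{S}[\mathbf{R}]=\mathscr{P}(\langle\mathbf{R}\rangle_{\rm blocks})\otimes 1$. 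Here $\langle\cdot\rangle_{\rm blocks}$ is the $4\times4$ matrix of normalized block traces and $\mathscr{P}$ is the operator in \eqref{Dyson}. With this, the equation reads
\[
I+(Z_\eta+\mathcal{S}[\mathbf{G}])\mathbf{G}=\mathbf{D},\qquad \mathbf{D}:=\mathbf{W}\mathbf{G}+\mathcal{S}[\mathbf{G}]\mathbf{G},
\]
with $\mathbf{W}$ the centered randomness. By the multivariate cumulant expansion \eqref{non-Gauss-multi}, exactly as in Section \ref{s:Cumulants} (all moments are finite and the model has only a finite-range correlation), I would obtain
\[
\|\mathbf{D}\|_p^{iso}\lesssim N^{\epsilon}\Big(\tfrac{\|\mathrm{Im}\,\mathbf{G}\|_{q}}{N\eta}\Big)^{1/2}\big(1+\|\mathbf{G}\|_q\big)^{C}
\]
and the averaged analogue with $\tfrac{\|\mathrm{Im}\,\mathbf{G}\|_q}{N\eta}$. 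This is Theorem \ref{error-thm} in the norms \eqref{norms}; only the list of nonzero second cumulants changes relative to \cite{alt2022local}.

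\textbf{Stability.} Subtracting \eqref{Dyson} gives
\[
\mathcal{L}[\mathbf{G}-\mathbf{M}]=-\mathbf{M}\mathbf{D}+\mathbf{M}\mathcal{S}[\mathbf{G}-\mathbf{M}](\mathbf{G}-\mathbf{M}),\qquad \mathcal{L}=I-C_{\mathbf{M}}\circ\mathcal{S}.
\]
Since $\mathcal{S}$ only sees block traces, $\mathcal{L}$ reduces to an operator on $\mathbb{C}^{4\times4}$. Using Proposition \ref{sol-prop}, this operator splits into copies of the $2\times2$ stability operator of \eqref{Dyson2} acting on the index pairs $\{1,4\},\{2,3\}$ and $\{1,3\},\{2,4\}$, plus blocks on which $\mathcal{S}$ vanishes and $\mathcal{L}=I$. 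The main obstacle is to check this block decomposition explicitly for the $4\times4$ $\mathscr{P}$. If it holds, the bound $\|\mathcal{L}^{-1}\|\lesssim (v^2+\eta/v)^{-1}$ (roughly) of \cite{alt2022local} transfers uniformly over $E_{\rho,\gamma}$, including the edge region and outside the spectrum where $v$ is small; the definition of $E_{\rho,\gamma}$ is what keeps $\|\mathcal{L}^{-1}\|\lesssim N^{\gamma/2}$. If the decomposition fails, I would instead use the Rotation–Inversion Lemma \ref{rotate-inverse}, writing $\mathcal{L}$ as a product of invertible operators with $\mathcal{U}-\mathcal{T}$ and checking the spectral gap of the saturated $\mathcal{T}$ directly; this is the approach of Section \ref{s:stability}.

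\textbf{Bootstrap.} For $\eta\ge 1$ the bounds hold trivially, by \eqref{est(i)} and $\|\mathbf{G}\|\le\eta^{-1}$. On a grid $\eta_k=\eta_{k-1}N^{-\gamma/10}$, I would assume an a priori bound $\|\mathbf{G}-\mathbf{M}\|_p^{iso}\le N^{-\gamma/10}$ at $\eta_k$; this follows from the bound at $\eta_{k-1}$ together with the monotonicity of $\eta\|\mathbf{G}\|$, as in Lemma \ref{G11eta/s}. The quadratic term is then dominated by the linear one, and the stability estimate combined with the $\mathbf{D}$-bound gives the isotropic rate $N^{\gamma}/\sqrt{N\eta}$. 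For the averaged rate, I would test against $W$ and use the averaged $\mathbf{D}$-bound with the adjoint $(\mathcal{L}^*)^{-1}[W]$. A union bound over the grid and Lipschitz continuity in $(\zeta,\eta)$ give uniformity on $E_{\rho,\gamma}$ for bounded $\eta\ge N^{-1+\gamma}$. Proposition \ref{Local-law1} then follows by taking the trace.
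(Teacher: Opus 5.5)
Your architecture matches the paper's: a perturbed Dyson equation, the $\mathbf{D}$-bound from \cite{erdHos2019random}, reduction to $\mathbb{C}^{4\times4}$ via block traces, a stability estimate, and a bootstrap in $\eta$. However, the stability step, which you rightly flag as the crux, fails as you set it up.

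\textbf{The stability operator.} The reduced operator $\mathscr{L}R=R-M\mathscr{P}(R)M$ is not a sum of copies of the $2\times 2$ operator of \eqref{Dyson2}. $M$ couples only the index pairs $\{1,3\}$ and $\{2,4\}$. $\mathscr{P}$ swaps the two diagonal pair-blocks via $\mathscr{P}_2$ and kills the off-diagonal pair-blocks. So $\mathscr{L}$ acts as the identity on the off-diagonal pair-blocks, as $I-C_M\mathscr{P}_2$ on the sector $R_{\{1,3\}}=R_{\{2,4\}}$, and as $I+C_M\mathscr{P}_2$ on the sector $R_{\{1,3\}}=-R_{\{2,4\}}$. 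This gives two near-kernel directions that your plan does not handle.
\begin{itemize}
\item $E_-=\mathrm{diag}(1,1,-1,-1)$ satisfies $\mathscr{L}E_-=(1-\|M\|^2)E_-=\frac{\eta}{v+\eta}E_-$, which is $O(\eta)$ throughout the bulk. So $\|\mathscr{L}^{-1}\|\sim 1/\eta$ unless you restrict to the $\mathscr{L}$-invariant subspace $A_+=E_-^{\perp}$. You then need $\langle\mathbf{G}_{11}\rangle=\langle\mathbf{G}_{22}\rangle$ and $\langle\mathbf{M}_{11}\rangle=\langle\mathbf{M}_{22}\rangle$ to place $\Delta$ in $A_+$. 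This is the key idea you are missing.
\item $R=\mathrm{diag}(1,-1,1,-1)$ lies in $A_+$ and satisfies $\mathscr{P}R=-R$. At $\zeta=0$ one has $b=0$, $M=iv$ and $v\approx 1$, which gives $\mathscr{L}R=(1-v^2)R=\frac{\eta}{v+\eta}R$. So stability is \emph{not} governed by $v$ being bounded below: it degenerates at the origin, deep in the bulk.
\end{itemize}
The paper's bound $\alpha^{-2}$, with $\alpha=1-|v^2-|b|^2|$, also blows up at the edge, where $v\to 0$. This is why the paper proves the statement only on $\mathbb{A}_{\delta,\gamma}=S_{\tau,\delta,\kappa}\times[N^{-1+\gamma},1]$, which is how the undefined $E_{\rho,\gamma}$ must be read, with $\|\mathscr{L}^{-1}|_{A_+}\|\lesssim_{\delta,\kappa}1$. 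Your extension to the edge and the exterior with $\|\mathcal{L}^{-1}\|\lesssim N^{\gamma/2}$ is unsupported.

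Your fallback does not apply as stated either. $\mathcal{T}=\|M\|^2\mathscr{P}$ has no spectral gap: its top eigenvalue is doubly degenerate on $A_+$, and $-\|\mathcal{T}\|$ is also an eigenvalue. So Lemma \ref{rotate-inverse} cannot be used directly. The paper instead projects onto the span of the three $\pm\|M\|^2$-eigenvectors and computes $\|P_1(1-\mathcal{U}^*\mathcal{T})P_1w\|$ explicitly.

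\textbf{The bootstrap.} Monotonicity of $\eta\mapsto\eta\|\mathbf{G}\|$ yields only the a priori bound $\|\mathbf{G}(\eta_k)\|\lesssim N^{\gamma/10}$, not $\|\mathbf{G}-\mathbf{M}\|\le N^{-\gamma/10}$. So the claim that the quadratic term is dominated by the linear one is not justified. In the paper (Lemma \ref{Boot}, following \cite{alt2022local}), the growth of $\|\mathbf{G}\|_q$ is fed into the $\mathbf{D}$-estimate. That estimate tolerates only a polynomial factor $(1+\|\mathbf{G}\|_q)^{C_*}$, so the step in $\eta$ must be $N^{-c_*\gamma}$ with $c_*$ small relative to $C_*$, not a fixed $N^{-\gamma/10}$. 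Smallness of $\Delta$ is then recovered from the truncated inequality \eqref{Delta-ineq} together with a continuity argument.

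The base case $\eta\sim 1$ is also not trivial. The bounds $\|\mathbf{G}\|,\|\mathbf{M}\|\le 1$ give $O(1)$, not $N^{\gamma}/\sqrt{N}$, and the paper invokes the global law (Proposition \ref{global}).

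Finally, identifying $\mathbb{E}[(\mathbf{H}_\zeta+\mathbf{Z})\mathbf{A}(\mathbf{H}_\zeta+\mathbf{Z})]$ with $\mathscr{P}$ applied to the block traces requires $\mathbb{E}p_{11}^2=\mathbb{E}q_{11}^2=0$ to kill the transpose terms. Your "should produce" skips this.
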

\textit{Proof of Proposition \ref{Local-law1}.} By the definition of $\mathbf{M}$ in \eqref{M-matrix} we have that $\langle \mathbf{M} \rangle = iv.$ Thus, for suitably chosen $\gamma$ and $p$, we get as in \cite{alt2022local} that
\begin{align*}
&\mathbb{P} \bigg ( \left | \langle \mathbf{G}(\zeta,\eta) \rangle - iv(\zeta,\eta) \right | \geq \frac{N^\epsilon}{N\eta} \bigg ) \leq \frac{(N\eta)^p}{N^{\epsilon p}} \E \left |  \langle \mathbf{G}(\zeta,\eta) \rangle - iv(\zeta,\eta) \right |^p \leq  \\
&\leq \frac{(N\eta)^p}{N^{\epsilon p}} \left ( \| \mathbf{G} - \mathbf{M} \|_p^{avg} \right )^p ,
\end{align*}
by Markov's inequality and the result follows.

Here we prove Theorem \ref{Local-law2} in the local regime $\eta \ll 1$ so as to get our universal local spectral distribution for the Dirac matrix.
\begin{proof}[Proof of Theorem \ref{Local-law2}] As in \cite{alt2022local} the proof follows from the fact that $\mathbf{G}$ approximately satisfies the matrix Dyson equation:
\beq \label{Dyson-eq}
1 + ( i\eta + \mathbf{Z} + \bm{S} \mathbf{M} ) \mathbf{M} = 0,
\eeq
where each constant is a multiple of the identity matrix $\boldsymbol{1}_{4N \times 4N}$, $\mathbf{Z} := - \E \mathbf{H}_{\zeta} $, and the unknown matrix $\mathbf{M}$ will approximate the resolvent matrix $\mathbf{G}$ in $\mathbb{C}^{4N \times 4N}.$ Here $\bm{S}$ is the natural extension of $\mathscr{P}$ in \eqref{Dyson} to $\mathbb{C}^{4N \times 4N}$, i.e. for
\begin{equation}
\mathbf{A} = \begin{pmatrix} \mathbf{A}_{11}& \mathbf{A}_{12} & \mathbf{A}_{13} & \mathbf{A}_{14} \\
\mathbf{A}_{21}& \mathbf{A}_{22} & \mathbf{A}_{23} & \mathbf{A}_{24} \\
\mathbf{A}_{31}& \mathbf{A}_{32} & \mathbf{A}_{33} & \mathbf{A}_{34} \\
\mathbf{A}_{41}& \mathbf{A}_{42} & \mathbf{A}_{43} & \mathbf{A}_{44} 
\end{pmatrix} \in \mathbb{C}^{4N \times 4N} ,
\end{equation}
\beq \label{dyson-comp}
\mathbf{Z} = \begin{pmatrix} 0  & \zeta \boldsymbol{1}_{2N \times 2N} \\ \overline{\zeta} \boldsymbol{1}_{2N \times 2N} & 0  \end{pmatrix} , \  \bm{S} \mathbf{A} = \begin{pmatrix} \langle \mathbf{A}_{44} \rangle  & 0 & \tau \langle \mathbf{A}_{42} \rangle  & 0 \\ 0 & \langle \mathbf{A}_{33}\rangle  & 0 & \tau \langle \mathbf{A}_{31} \rangle  \\ \tau \langle \mathbf{A}_{24} \rangle  & 0 & \langle \mathbf{A}_{22} \rangle   & 0 \\ 0 & \tau \langle \mathbf{A}_{13} \rangle  & 0 & \langle \mathbf{A}_{11} \rangle  \end{pmatrix} ,
\eeq
where each entry is a multiple of the $N \times N$ identity matrix. The matrix $\mathbf{M}$ defined in \eqref{M-matrix} solves \eqref{Dyson-eq}.

As in \cite{alt2022local} and \cite{erdHos2019random}, we introduce the \textit{self-energy} operator:
$$ \bm{\hat{S}}: \mathbb{C}^{4N \times 4N} \mapsto \mathbb{C}^{4N \times 4N} \ \ \text{by} $$
\beq \label{self-energy} \bm{\hat{S}} \mathbf{A} = \E \left \{ ( \mathbf{H}_\zeta + \mathbf{Z} ) \mathbf{A} (\mathbf{H}_\zeta + \mathbf{Z} ) \right \} .\eeq
We show that $\bm{\hat{S}} = \bm{{S}} $ which justifies the form that we have for the Dyson equation \eqref{Dyson-eq}. Careful calculation of the self-energy operator for a general matrix $\mathbf{A}$ shows that:
\begin{align*}
\bm{\hat{S}} \mathbf{A} &= \E \{ ( \mathbf{H}_\zeta + \mathbf{Z} ) \mathbf{A} (\mathbf{H}_\zeta + \mathbf{Z} ) \} = \E \left \{ \begin{pmatrix} 0 & \mathbf{Y_N} \\ \mathbf{Y_N}^* & 0 \end{pmatrix} \begin{pmatrix} \mathbf{A}_{11} & \mathbf{A}_{12} \\ \mathbf{A}_{21} & \mathbf{A}_{22} \end{pmatrix} \begin{pmatrix} 0 & \mathbf{Y_N} \\ \mathbf{Y_N}^* & 0 \end{pmatrix} \right \}
\\
&= \E \begin{pmatrix}
\mathbf{Y_N} A_{22} \mathbf{Y_N}^* & \mathbf{Y_N} A_{21} \mathbf{Y_N} \\
\mathbf{Y_N}^* A_{12} \mathbf{Y_N}^* & \mathbf{Y_N}^* A_{11} \mathbf{Y_N}
\end{pmatrix},
\end{align*}
where $\mathbf{A}_{ij}$ are $2N \times 2N$ matrices for $i,j=1,2$. We now calculate each term in the resulting block matrix. We only show what happens in the first and second block entry and the rest calculations will be similar:
\begin{align*}
&\E [\mathbf{Y_N} \mathbf{A}_{22} \mathbf{Y_N}^*] = \E \left \{ \begin{pmatrix} 0 & X_1 \\ X_2^* & 0 \end{pmatrix} \begin{pmatrix} [\mathbf{A}_{22}]_{11} & [\mathbf{A}_{22}]_{12} \\ [\mathbf{A}_{22}]_{21} & [\mathbf{A}_{22}]_{22} \end{pmatrix} \begin{pmatrix} 0 & X_2 \\ X_1^* & 0 \end{pmatrix}\right \} \\
&=
 \begin{pmatrix} \langle[\mathbf{A}_{22}]_{22} \rangle & [\mathbf{A}_{22}]_{21}^T \left [ (1+\tau)\E p_{11}^2 - (1-\tau) \E q_{11}^2 \right ]  \\ [\mathbf{A}_{22}]_{12}^T \left [ (1+\tau)\E p_{11}^2 - (1-\tau) \E q_{11}^2 \right ] & \langle [\mathbf{A}_{22}]_{11} \rangle \end{pmatrix}\\
&=
 \begin{pmatrix} \langle[ \mathbf{A}_{22}]_{22} \rangle  & 0 \\ 0 & \langle [\mathbf{A}_{22}]_{11} \rangle \end{pmatrix},
\end{align*}
where $[\mathbf{A}_{22}]_{kl}$ are $N \times N$ matrices for $k,l=1,2$. We used the first assumption \ref{a1} for the distribution of the matrix entries in the last equality. We now compute the second block:
\begin{align*}
&\E [\mathbf{Y_N} \mathbf{A}_{21} \mathbf{Y_N}] = \E \left \{ \begin{pmatrix} 0 & X_1 \\ X_2^* & 0 \end{pmatrix} \begin{pmatrix} [\mathbf{A}_{21}]_{11} & [\mathbf{A}_{21}]_{12} \\ [\mathbf{A}_{21}]_{21} & [\mathbf{A}_{21}]_{22} \end{pmatrix} \begin{pmatrix} 0 & X_1 \\ X_2^* & 0 \end{pmatrix}\right \}
\\
&= \begin{pmatrix} \tau \langle  [\mathbf{A}_{21}]_{22} \rangle & [\mathbf{A}_{21}]_{21}^T (1+\tau)\E p_{11}^2 + [\mathbf{A}_{21}]_{21}^T (1-\tau) \E q_{11}^2 \\  [\mathbf{A}_{12}]_{12}^T (1+\tau)\E p_{11}^2 + [\mathbf{A}_{12}]_{12}^T (1-\tau) \E q_{11}^2 & \tau \langle [\mathbf{A}_{21}]_{11} \rangle \end{pmatrix} \\
&=
 \begin{pmatrix} \tau \langle  [\mathbf{A}_{21}]_{22} \rangle & 0\\ 0 & \tau \langle [\mathbf{A}_{21}]_{11} \rangle \end{pmatrix},
\end{align*}
where $[\mathbf{A}_{21}]_{kl}$ are $N \times N$ matrices for $k,l=1,2$ and $A^T$ denotes the transpose matrix of the matrix $A$.

The result will be
\beq \label{self-energy2}
\bm{\hat{S}} \mathbf{A} = \begin{pmatrix} \langle \mathbf{A}_{44} \rangle & 0& \tau\langle \mathbf{A}_{42} \rangle & 0 \\ 0 & \langle \mathbf{A}_{33}\rangle & 0 & \tau \langle \mathbf{A}_{31} \rangle \\  \langle \mathbf{A}_{24} \rangle & 0 & \langle \mathbf{A}_{22} \rangle & 0\\ 0 & \tau \langle \mathbf{A}_{13} \rangle & 0& \langle \mathbf{A}_{11} \rangle \end{pmatrix},
\eeq
with each entry being a multiple of the $N \times N$ identity matrix, which coincides with $\bm{S} \mathbf{A}$.

We continue with the proof and mark that \eqref{Dyson-eq} is an approximation of the resolvent identity:
\beq \label{peturb}
1 + ( i\eta + \mathbf{Z} + \bm{S} \mathbf{G} ) \mathbf{G} = \mathbf{D}, \quad \mathbf{D}:= (\mathbf{H}_\zeta +\mathbf{Z} + \bm{S}\mathbf{G}) \mathbf{G} ,
\eeq
which can be regarded as a perturbation of \eqref{Dyson-eq} with error matrix $\mathbf{D}$. 

The following theorem from \cite{erdHos2019random} gives bounds for the error matrix $\mathbf{D}$ when $\bm{S}$ is given by the self-energy operator $\bm{\hat{S}}$ of our random matrix model:
\pagebreak
\begin{thm}[Bound on the error matrix, from \cite{erdHos2019random}, Theorem 4.1] \label{t:errorbounds}
Under our assumptions \ref{a2} for our random matrix model, we have the following for the error matrix of the Dyson equation. Let $\epsilon>0$ and $p \in \mathbb{N}.$ There exists a constant $C_*>0$ such that, uniformly for $\eta \in [\frac{1}{N},1],$ we have the following bounds on the error matrix:
\begin{align} \label{error-bounds}
    \|\mathbf{D}\|_p^{iso} &\lesssim_{p,\epsilon} N^\epsilon \sqrt{\frac{\|\mathrm{Im}\mathbf{G}\|_q}{N\eta}}(1+\|G\|_q)^{C_*}\left (1+\frac{\|\mathbf{G}\|_q}{N^{1/4}} \right )^{C_*p} \\
    \|\mathbf{D}\|_p^{avg} &\lesssim_{p,\epsilon} N^\epsilon {\frac{\|\mathrm{Im}\mathbf{G}\|_q}{N\eta}}(1+\|G\|_q)^{C_*}\left (1+\frac{\|\mathbf{G}\|_q}{N^{1/4}} \right )^{C_*p},
\end{align}
with $q=C_*p^4/\epsilon,$ where $\mathbf{D}$ is defined by equation \eqref{peturb}.
\end{thm}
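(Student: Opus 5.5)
This bound is an application of the general error-matrix estimate of \cite{erdHos2019random} (Theorem 4.1 there), so the plan is to check that our Hermitization $\mathbf{H}_\zeta$ satisfies its hypotheses. Then the isotropic and averaged bounds in \eqref{error-bounds} transfer verbatim. The work is therefore a verification of assumptions, not a new cumulant expansion.

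First, I will write $\mathbf{H}_\zeta = \mathbf{W} - \mathbf{Z}$. Here $\mathbf{Z} = -\E\mathbf{H}_\zeta$ is deterministic with $\|\mathbf{Z}\|\lesssim 1$ for bounded $\zeta$. The random part is
\[
\mathbf{W} = \begin{pmatrix} 0 & \mathbf{Y}_N \\ \mathbf{Y}_N^* & 0\end{pmatrix}.
\]
Its entries are linear combinations of the entries of $P,Q$ through \eqref{X1,X2}.

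Second, I will verify the moment condition \eqref{finite-m}. Since all moments of $\mathcal{F}$ are finite by assumption \ref{a2}, and the entries of $\sqrt N P$, $\sqrt N Q$ have $N$-independent moments, each entry of $\sqrt{N}\mathbf{W}$ has bounded $p$-th moments uniformly in $N$.

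Third, I will verify the correlation structure. Each entry of $\mathbf{W}$ depends only on the pair $(p_{ij},q_{ij})$ for one index pair $(i,j)$. Hence the family of entries splits into independent clusters of bounded size: $(i,j)$ in $X_1$ and $X_2$, together with their transposed positions in the adjoint blocks. This is a finite-range dependence, so the slow-decay cumulant condition \eqref{slow-decay} (and also \eqref{exp-decay}) holds trivially, since all cross-cumulants between functions of disjoint clusters vanish.

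Fourth, I will identify the self-energy. The computation \eqref{self-energy2} already shows $\bm{\hat S}=\bm S$, using the vanishing second moments from assumption \ref{a1}. So $\mathbf{D}$ in \eqref{peturb} is exactly the error matrix of \cite{erdHos2019random}.

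The only nontrivial point is whether Theorem 4.1 of \cite{erdHos2019random} requires the flatness condition \eqref{S-cond}. I expect it does not: that theorem is a purely probabilistic cumulant-expansion bound expressed through $\|\mathbf{G}\|_q$ and $\|\mathrm{Im}\mathbf{G}\|_q$, and flatness enters only later, in the stability analysis. I would check this carefully, because our $\bm S$ is not flat: it only couples specific blocks, via the blocks $\mathbf{A}_{44},\mathbf{A}_{42},\dots$ in \eqref{dyson-comp}. If the cited proof uses flatness anywhere, I would instead use the block-structured version of \cite{erdHos2019random}, as is done in \cite{alt2022local} for the same $2\times2$ equation. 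That version needs only the moment and correlation conditions above.

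Given these checks, the isotropic bound follows directly from Theorem 4.1. The averaged bound is the corresponding averaged statement of that theorem, with $q=C_*p^4/\epsilon$.
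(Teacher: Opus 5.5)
Your route is the paper's: the thesis gives no argument for this statement beyond importing Theorem 4.1 of \cite{erdHos2019random}. Your checklist therefore supplies more than the text does: $\mathbf H_\zeta=\mathbf W-\mathbf Z$ Hermitian with $\|\mathbf Z\|\lesssim 1$, moments, correlation structure, $\bm{\hat S}=\bm S$ from the vanishing second moments, and flatness playing no role in the error bound. That last point is exactly how the paper, following \cite{alt2022local}, applies the theorem to a non-flat Hermitization.

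One verification step does not hold as stated. In the natural labelling of $\{1,\dots,4N\}$, the four entries of $\mathbf H_\zeta$ that depend on $(p_{ij},q_{ij})$ sit at $(i,3N+j)$, $(3N+j,i)$, $(N+j,2N+i)$ and $(2N+i,N+j)$. The symmetrised Euclidean distance between the first two and the last two is $\sqrt2\,(N+j-i)$, typically of order $N$. Yet after rescaling by $\sqrt N$ their covariance is $N\E[(X_1)_{ij}\overline{(X_2)_{ij}}]=\tau$. So for $\tau>0$ the conditions \eqref{exp-decay} and \eqref{slow-decay}, which use the usual distance, fail. The dependence is by independent bounded clusters, but it is not finite-range in this labelling.

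The repair is short. Conjugate by the permutation matrix $\Pi$ that makes $i,N+i,2N+i,3N+i$ consecutive, or equivalently use a pseudometric that identifies indices modulo $N$. Conjugation leaves $\|\cdot\|^{iso}_p$, $\|\cdot\|^{avg}_p$, $\|\mathbf G\|_q$ and $\|\mathrm{Im}\,\mathbf G\|_q$ unchanged. It also sends $\mathbf D$ to the error matrix of the permuted model, because $\Pi\,\E[\mathbf W\mathbf A\mathbf W]\,\Pi^T=\E[(\Pi\mathbf W\Pi^T)(\Pi\mathbf A\Pi^T)(\Pi\mathbf W\Pi^T)]$. In the new labelling every cluster has bounded diameter. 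Any joint cumulant whose arguments split into groups depending on disjoint sets of clusters vanishes, so the two-point and higher-order decay hypotheses hold with bounded constants.

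In the same spirit, the finite-moment assumption on $\mathcal F$ only says that the moments of an $N$-dependent law are finite. The moment hypothesis of \cite{erdHos2019random} needs $\E|\sqrt N p_{ij}|^k\le\mu_k$ and $\E|\sqrt N q_{ij}|^k\le\mu_k$ uniformly in $N$. You should say explicitly that you are reading this uniformity into the assumption rather than deriving it from it.
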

With the help of this proposition we are able to prove the local law \eqref{local-law-eq} for the Hermitization matrix $\mathbf{H}_{\zeta}.$ By subtracting \eqref{Dyson-eq} from \eqref{peturb} and defining our target matrix $\mathbf{\Delta}:= \mathbf{G} - \mathbf{M} $ we arrive at the equation:
\beq \label{Delta-eq}
\mathbf{\Delta} - \mathbf{M} (\bm{S} \mathbf{\Delta}) \mathbf{M} = \mathbf{M} (\bm{S} \mathbf{\Delta}) \mathbf{\Delta} - \mathbf{M} \mathbf{D}.
\eeq
We now use the \textit{partial trace operator}: $\mathbb{C}^{2N \times 2N} \mapsto \mathbb{C}^{2 \times 2},$ \ $\mathbf{A} \to \underline{\mathbf{A}},$ where:
\[
\underline{\mathbf{A}} := \begin{pmatrix}
    \langle \mathbf{A_{11}} \rangle & \langle \mathbf{A_{12}} \rangle \\ \langle \mathbf{A_{21}} \rangle & \langle \mathbf{A_{22}} \rangle
\end{pmatrix}, \]
whenever \[
{\mathbf{A}} = \begin{pmatrix}
    \mathbf{A_{11}} & \mathbf{A_{12}} \\ \mathbf{A_{21}} & \mathbf{A_{22}}
\end{pmatrix} ,
\]
with each entry $A_{ij}$ being an $N \times N$ block matrix. This allows us to reduce our equation from $\mathbb{C}^{4N \times 4N}$ to $\mathbb{C}^{4 \times 4}$ and deduce a first bound for the matrix $\Delta:= \underline{\mathbf{\Delta}}.$ We arrive at the equation:
\beq \label{Delta-44}
\mathscr{L} \Delta = M (\mathscr{P} \Delta) \Delta - M D,
\eeq
where $D:= \underline{\mathbf{D}},$ and $\mathscr{L}:\mathbb{C}^{4 \times 4} \mapsto \mathbb{C}^{4 \times 4}$ is the linear stability operator from (4.10) of \cite{alt2022local} - see section \ref{s:stability}. From Corollary \ref{stab-est2} we have that $\mathscr{L}$ leaves a certain subspace of $\mathbb{C}^{4\times 4}$ which we call $ A_+$ invariant - see section \ref{s:stability}, and
\beq \label{kappa}
K_{\delta, \kappa} := \sup_{\zeta \in {S}_{\tau,\delta, \kappa}}\|\mathscr{L}^{-1}|_{A_+} \| \lesssim_{\delta, \kappa} 1.
\eeq
Note, importantly, that $\Delta \in A_+$ because $\langle \mathbf{G}_{11} \rangle = \langle \mathbf{G}_{22} \rangle  $ (which can be seen using the Schur formula for inverse) and $\langle \mathbf{M}_{11} \rangle = \langle \mathbf{M}_{22} \rangle = iv$ (see \eqref{M-matrix}).

Thus, we can invert $\mathscr{L}$ on $A_+$ in \eqref{Delta-44} and take the $p$-norm defined on \eqref{norms} and derive the inequality:
\[
\|\Delta\|_p ( 1- K_{\delta, \kappa} \|\Delta\|_p ) \lesssim K_{\delta, \kappa} \|D\|_p ,
\]
where we used the comparability of the norms for $l=2.$ This means that if we choose $c$ as a small enough constant, then:
\beq \label{Delta-ineq}
\| \Delta \mathbf{1}( \|\Delta\|_p \leq {c}/{K_{\delta, \kappa}}) \|_p \lesssim K_{\delta, \kappa} \| D \|_p .
\eeq
We also note two important inequalities that connect $D$ with $\mathbf{D}$ and $\Delta$ with $\mathbf{\Delta}$. We have that:
\beq \label{D-D}
\|D\|_p \lesssim \|\mathbf{D}\|_p^{avg}
\eeq
and
\beq \label{Delta-Delta}
\|\mathbf{\Delta}\|_p^{\#} \lesssim \|\Delta\|_p + \|\Delta\|_{2p} \|\mathbf{\Delta}\|_{2p}^{\#} + \|\mathbf{D}\|_{p}^{\#},
\eeq
which comes from \eqref{Delta-eq} after taking norms and using Cauchy-Schwartz as well as the definition of $\bm{S}.$ This relationship between norms of $\mathbf{\Delta}$ and $\Delta$ is analogous to (5.22) in \cite{alt2022local} and is  the final ingredient that is needed to complete the proof of Lemma \ref{Boot}, which yields the next Proposition \ref{Local-law1}. The proof of Lemma \ref{Boot} is identical to the proof of Lemma 5.4 in \cite{alt2022local}.
\end{proof}
\begin{lem}[Bootstrapping] \label{Boot}
There is a constant $c_*>0$ depending only on the distribution of the entries of the random matrices $P$ and $Q$ such that $\|\mathbf{\Delta}\|_p \lesssim_{p,\delta,\gamma} N^{-\gamma/6}$ for all $p \in \mathbb{N}$ in $\mathbb{A}_{\delta,\gamma}$ implies $\|\mathbf{\Delta}\|_p \lesssim_{p,\delta,\gamma} N^{-\gamma/6}$ for all $p \in \mathbb{N}$ in $\mathbb{A}_{\delta,(1-c_*)\gamma}.$
\end{lem}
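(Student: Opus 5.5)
The plan is a bootstrap in the spectral parameter $\eta$, as in Lemma 5.4 of \cite{alt2022local}. The region $\mathbb{A}_{\delta,\gamma}$ is the set of $(\zeta,\eta)$ with $\zeta\in S_{\tau,\delta,\kappa}$ and $N^{-1+\gamma}\le\eta\le 1$. Lowering $\gamma$ to $(1-c_*)\gamma$ extends the region down to smaller $\eta$. We fix a target point $(\zeta,\eta)$ with $N^{-1+(1-c_*)\gamma}\le \eta< N^{-1+\gamma}$ and set $\eta' := \eta N^{c_*\gamma}$, so that $(\zeta,\eta')$ lies in the region where the hypothesis holds.

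\textbf{Step 1 (a priori bounds at $\eta$).} First we transfer bounds from $\eta'$ to $\eta$ by monotonicity. The map $\eta\mapsto\eta\,\mathrm{Im}\langle x,\mathbf{G}(\eta)x\rangle$ is increasing; this is the matrix analogue of Lemma \ref{G11eta/s}. The off-diagonal isotropic entries are then controlled by the diagonal ones via Cauchy--Schwarz and the Ward identity. This gives $\|\mathbf{G}(\eta)\|_q\lesssim N^{c_*\gamma}(1+\|\mathbf{M}\|)\lesssim N^{c_*\gamma}$, and similarly $\|\mathrm{Im}\mathbf{G}(\eta)\|_q\lesssim N^{c_*\gamma}$, for every $q$. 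Here we use $\|M\|\le 1$ from Lemma \ref{EST}.

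\textbf{Step 2 (error bound).} Inserting these bounds into Theorem \ref{t:errorbounds} gives
\[
\|\mathbf{D}\|_p^{iso}\lesssim N^{\epsilon+C c_*\gamma}(N\eta)^{-1/2}, \qquad \|\mathbf{D}\|_p^{avg}\lesssim N^{\epsilon+Cc_*\gamma}(N\eta)^{-1}.
\]
Since $N\eta\ge N^{(1-c_*)\gamma}$, choosing $c_*$ and $\epsilon$ small relative to $\gamma$ makes both quantities at most $N^{-\gamma/3}$, say. By \eqref{D-D}, the same bound holds for $\|D\|_p$.

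\textbf{Step 3 (closing the bootstrap).} This is the main obstacle. The stability inequality \eqref{Delta-ineq} only controls $\Delta$ on the event $\|\Delta\|\le c/K_{\delta,\kappa}$, so we need a gap/continuity argument in $\eta$. We would use a dyadic (or $N^{-C}$-spaced) grid of $\eta$ values between $\eta$ and $\eta'$, together with the Lipschitz continuity $|\partial_\eta \mathbf{G}|\le\eta^{-2}$ of both $\mathbf{G}$ and $\mathbf{M}$. Stepping down from $\eta'$, where $\Delta$ is already small, each step transfers smallness to the next point up to an $N^{-C}$ error. At each grid point, \eqref{Delta-44} combined with the a priori bound shows that $|\Delta|$ lies either below $C K_{\delta,\kappa}|D|$ or above $c/K_{\delta,\kappa}$. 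Continuity rules out the jump to the second alternative with overwhelming probability, via high moments and a union bound over the polynomially many grid points. This yields $\|\Delta\|_p\lesssim N^{-\gamma/3}$.

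Then \eqref{Delta-Delta} is iterated in the isotropic norm, with $\|\mathbf{\Delta}\|_{2p}\lesssim N^{c_*\gamma}$ from Step 1 and $\|\Delta\|_{2p}$ small. This gives $\|\mathbf{\Delta}\|_p\lesssim N^{-\gamma/3}+N^{-\gamma/3+c_*\gamma}\lesssim N^{-\gamma/6}$ for $c_*$ small. All constants depend only on the moment assumptions \ref{a2}, which is why $c_*$ depends only on the entry distribution. Uniformity over $\zeta\in S_{\tau,\delta,\kappa}$ follows from the uniform bound \eqref{kappa}.
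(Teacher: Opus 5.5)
Your proposal is correct and follows the same route as the paper, which simply imports Lemma 5.4 of \cite{alt2022local}. That route consists of an a priori bound $\|\mathbf{G}\|_q\lesssim N^{c_*\gamma}$ from monotonicity in $\eta$, the error bound \eqref{error-bounds}, the stability inequality \eqref{Delta-ineq} on $A_+$, and finally \eqref{Delta-Delta}; your grid/continuity argument supplies the removal of the indicator in \eqref{Delta-ineq}, which the paper's sketch glosses over.

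One small repair is needed in Step 1. Cauchy--Schwarz plus the Ward identity applied directly to $\langle x,\mathbf{G}y\rangle$ only gives a bound of order $\eta^{-1/2}$. Apply them instead to $\partial_\eta\langle x,\mathbf{G}y\rangle=i\langle x,\mathbf{G}^2y\rangle$ and integrate from $\eta$ to $\eta'$; this yields the claimed $N^{c_*\gamma}$.
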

\pagebreak
Here we recall the main steps of the bootstrapping argument from \cite{alt2022local}. Inequality \eqref{Delta-ineq}, together with the inequalities \eqref{D-D} , \eqref{Delta-Delta} and \eqref{error-bounds} allows us to show proposition \ref{Local-law2}, i.e. $\|\mathbf{\Delta}\|_p \ll 1,$ by bootstapping from $\eta \sim 1$ to $\eta \sim N^{-1+\gamma},$ for $1 \gg \gamma>0.$ We remark a sketch of the proof and the rest of the details can be found in \cite{alt2022local}:
\begin{itemize}
    \item \underline{Implication inside induction.} Through proposition \eqref{error-bounds} a bound of the form $\|\mathbf{\Delta}\|_p \lesssim 1$ implies that
    $\|D\|_p \ll 1$ because then $\|\mathbf{G}\|_p \leq \|\mathbf{M}\|_p + \|\mathbf{\Delta}\|_p \lesssim 1 ,$ for all $p \in \mathbb{N}$ and $N\eta \gg 1.$ This in turn, through \eqref{Delta-ineq}, implies that $\|\Delta\|_p \ll 1.$ Finally we estimate that $\|\mathbf{\Delta}\|_p \ll 1$ because of \eqref{Delta-Delta}. Altogether, this argument shows that $\|\mathbf{\Delta}\|_p \lesssim 1$ implies $\|\mathbf{\Delta}\|_p \ll 1$ on all of $\mathbb{A}_{\delta,\gamma},$ for any $\gamma>0$, where we introduced the parameter set:
    $$(\zeta,\eta) \in \mathbb{A}_{\delta,\gamma}:= S_{\tau,\delta, \kappa} \times [N^{-1+\gamma},1].$$
    This implication is bootstrapped from $\eta \sim 1$ all the way to $\eta \sim N^{-1+\gamma},$ and is formulated in the following bootstrapping lemma \ref{Boot} which we import from \cite{alt2022local}. Its proof in our model is similar.

    \item \underline{Induction basis.} The regime $\eta \sim 1$ is established by the global law in \cite{erdHos2019random}, [Theorem 2.1] in combination with [Lemma 5.4.1] and the bound $\|G\| \leq \eta^{-1} \leq N $ on $\mathbb{A}_{\delta,\gamma}.$ Notice that the $M$ matrix there can be replaced by our solution matrix $\mathbf{M}$. Specifically, we get the following proposition:
    \begin{prop}[Global law] \label{global} There exists a universal constant $c>0$ such that for any $\epsilon>0:$
    \beq
    \|\mathbf{\Delta}\|_p \lesssim \frac{N^{\epsilon}}{\sqrt{N\eta}}, \ \ \  \|\mathbf{\Delta}\|_p^{avg} \lesssim \frac{N^{\epsilon}}{N\eta}
    \eeq
    \end{prop}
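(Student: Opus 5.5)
The plan is to establish the global law at spectral scale $\eta \sim 1$ (more precisely, for $\eta \geq c$ with $c$ a small fixed constant) by combining the error bound of Theorem \ref{t:errorbounds} with the stability of the $4\times 4$ Dyson equation. In this regime both sides are easy to control a priori: $\|\mathbf{G}\| \leq \eta^{-1} \lesssim 1$ deterministically, so every norm $\|\mathbf{G}\|_q$ and $\|\mathrm{Im}\,\mathbf{G}\|_q$ in \eqref{error-bounds} is $\mathcal{O}(1)$. Theorem \ref{t:errorbounds} then gives
\[
\|\mathbf{D}\|_p^{iso} \lesssim_{p,\epsilon} \frac{N^{\epsilon}}{\sqrt{N\eta}}, \qquad \|\mathbf{D}\|_p^{avg} \lesssim_{p,\epsilon} \frac{N^{\epsilon}}{N\eta},
\]
and the factors $(1+\|\mathbf{G}\|_q/N^{1/4})^{C_*p}$ are harmless.

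The next step is to pass from $\mathbf{D}$ to $\mathbf{\Delta}$ through \eqref{Delta-eq}. For $\eta \geq c$ I would avoid relying on the bulk stability bound \eqref{kappa}. Instead, I would use the fact that Lemma \ref{EST} gives $\|M\| \leq \eta^{-1}$, while $\bm{S}$ is a contraction in operator norm: each block of $\bm{S}\mathbf{A}$ is a normalized trace times the identity, and $|\tau| \leq 1$. Hence $\|\mathbf{M}(\bm{S}\,\cdot)\mathbf{M}\| \leq \eta^{-2}$, so $1-\mathbf{M}(\bm{S}\,\cdot)\mathbf{M}$ is invertible with bounded inverse for $\eta \geq 2$ by a Neumann series.

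For $c \leq \eta \leq 2$, I would invoke the stability of the Dyson equation on $A_+$ from the stability section (Corollary \ref{stab-est2}), together with the structure \eqref{Delta-44} for the reduced $4\times4$ quantity $\Delta = \underline{\mathbf{\Delta}}$. The needed statement is that $\|\mathscr{L}^{-1}\|$ is bounded uniformly for $\eta$ in a compact subinterval of $(0,\infty)$ and $\zeta$ bounded. This follows by continuity from \eqref{est3}, which says $\mathrm{Im} M \succsim \eta$.

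With stability in hand, \eqref{Delta-44} reads $\Delta = \mathscr{L}^{-1}\bigl(M(\mathscr{P}\Delta)\Delta - MD\bigr)$. The quadratic term is handled by the a priori bound $\|\Delta\| \leq \|\mathbf{G}\|+\|\mathbf{M}\| \lesssim 1$, but this does not make it small. I therefore expect this to be the main obstacle.

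The way around it is a continuity argument in $\eta$, starting from $\eta = N^{100}$ or any very large value. There $\mathbf{G}$ and $\mathbf{M}$ are both $i\eta^{-1}+\mathcal{O}(\eta^{-2})$ by \eqref{est3}, so $\Delta$ is tiny. One then decreases $\eta$ in steps of $N^{-3}$ down to $c$, using Lipschitz continuity of $\mathbf{G}$ and $M$ in $\eta$ (derivative $\lesssim \eta^{-2}$) and a union bound in high probability. This keeps $\Delta$ inside the small ball where \eqref{Delta-ineq} applies, and yields $\|\Delta\|_p \lesssim N^{\epsilon}/(N\eta)$. An alternative is to import this regime directly from the global law of \cite{erdHos2019random}, Theorem 2.1, whose hypotheses (flatness \eqref{S-cond} for $\bm{S}$ restricted to the Hermitization, and finite moments \ref{a2}) I would verify for our $\bm{\hat{S}}$ computed in \eqref{self-energy2}.

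Finally, I would lift from $\Delta$ to $\mathbf{\Delta}$ using \eqref{Delta-Delta}. In the isotropic norm this gives $\|\mathbf{\Delta}\|_p \lesssim \|\Delta\|_p + \|\Delta\|_{2p}\|\mathbf{\Delta}\|_{2p} + \|\mathbf{D}\|_p \lesssim N^{\epsilon}/\sqrt{N\eta}$, since $\|\mathbf{\Delta}\|_{2p} \lesssim 1$. In the averaged norm, applying \eqref{D-D} and taking the averaged version of \eqref{Delta-eq} gives $N^{\epsilon}/(N\eta)$. Adjusting $\epsilon$ then yields the proposition.
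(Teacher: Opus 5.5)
Your argument is sound for $\eta\gtrsim 1$, but it does not cover the range of $\eta$ the proposition is about. In the paper the proposition is asserted for all bounded $\zeta$ and all $\eta\in[N^{-c\epsilon},1]$; the range is stated just after the proposition environment. The universal constant $c$ sits in the exponent, which is why it is quantified before $\epsilon$, and $\eta$ may be polynomially small.

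You read $c$ as a fixed cutoff $\eta\geq c$, and you rely on that in three places:
\begin{itemize}
\item $\|\mathbf{G}\|_q\lesssim 1$ in \eqref{error-bounds};
\item a uniform stability constant;
\item $\|\mathbf{\Delta}\|_{2p}\lesssim 1$ in the lifting step via \eqref{Delta-Delta}.
\end{itemize}
So as written you prove only the $\eta\gtrsim1$ part.

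Within your approach this is repaired by tracking the losses coming from $\eta^{-1}\leq N^{c\epsilon}$.
\begin{itemize}
\item \emph{Error bound.} It picks up at most a factor $N^{(C_*+1)c\epsilon}$. The factor $(1+\|\mathbf{G}\|_q/N^{1/4})^{C_*p}$ stays bounded once $c\epsilon<1/4$.
\item \emph{Stability.} Do not appeal to Corollary \ref{stab-est2}, which is restricted to the bulk. Nor does ``continuity from $\mathrm{Im}\,M\succsim\eta$'' help, since that alone gives no invertibility. Instead use $M=\|M\|U$ with $U$ unitary and $\|M\|^2=v/(v+\eta)$. Also use that $\mathscr{P}$ is a Hilbert--Schmidt contraction: it copies eight distinct entries into eight distinct positions, four of them multiplied by $\tau\in[0,1]$. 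This gives $\|\mathscr{L}^{-1}\|\leq(1-\|M\|^2)^{-1}=1+v/\eta\leq1+\eta^{-1}$ on all of $\mathbb{C}^{4\times 4}$, for every $\zeta$, with no restriction to $A_+$.
\item \emph{Continuity argument.} With stability constant $K\lesssim N^{c\epsilon}$, the two branches of the quadratic inequality are separated by a gap of order $N^{-c\epsilon}$. The small branch is of order $N^{-1+O(\epsilon)}\eta^{-1}$. So your continuity argument in $\eta$ runs unchanged down to $\eta=N^{-c\epsilon}$.
\item \emph{Lifting step.} Use $\|\mathbf{\Delta}\|_{2p}\leq 2\eta^{-1}$ in place of $\|\mathbf{\Delta}\|_{2p}\lesssim 1$.
\end{itemize}
Choosing $c$ small compared to $1/C_*$ absorbs all these losses into $N^{\epsilon}$.

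Your fallback cannot be carried out as described. You propose to import \cite{erdHos2019random}, Theorem 2.1, after checking the flatness condition \eqref{S-cond} for $\bm{\hat S}$. But flatness fails for a Hermitization: $\mathbf{A}=\mathrm{diag}(I_N,0,0,0)\succeq 0$ has $\langle\mathbf{A}\rangle=1/4$, while $\bm{S}\mathbf{A}=\mathrm{diag}(0,0,0,I_N)$. Whatever form of that global law one imports cannot rest on flatness; away from the real axis the needed stability is supplied by $\|M\|<1$.

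That citation, together with the bound $\|\mathbf{G}\|\leq\eta^{-1}$ and the identification of the $M$ there with $\mathbf{M}$, is the paper's entire proof. Your main route is therefore genuinely different. You rederive the global law from the error bound of Theorem \ref{t:errorbounds}, the explicit stability of the $4\times4$ equation, and a continuity argument in $\eta$. That is longer, but self-contained, and it shows which stability input is actually used.

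Finally, Theorem \ref{t:errorbounds} is quoted only for $\eta\leq 1$. The part of your continuity path above $\eta=1$ (you start at $N^{100}$) therefore needs the routine large-$\eta$ version of that bound.
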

    for any bounded $\zeta$ and $\eta \in [N^{-c\epsilon},1].$
\end{itemize}
\end{section}

\begin{section}{Stability of the Dyson equation} \label{s:stability}

To relate $\Delta$ and $\mathbf{\Delta}$ in the boostrapping argument above, we will need to show the invertibility of the \textit{stability operator} defined by:
\begin{align*} \label{stabop} 
&\mathscr{L}: \mathbb{C}^{4 \times 4} \mapsto \mathbb{C}^{4 \times 4} \\
\mathscr{L} &(R) := R - M \mathscr{P}(R) M \numberthis
\end{align*}
Specifically, we will need bounds on the norm of the inverse of this operator, in a subspace of $\mathbb{C}^{4 \times 4}$ where it can be inverted.

We introduce the $4 \times 4$ matrix:
\[
E_- :=  \begin{pmatrix}
I_{2 \times 2} & {0} \\ {0} & - I_{2 \times 2} \end{pmatrix}
\]
We will work on the subspace $A_+ := E_{-}^{\perp}$ orthogonal to this matrix where orthogonality is understood with respect to the Hilbert-Schmidt inner product on $\mathbb{C}^{4 \times 4}.$ We note that:
\[
A_+ = \left \{ \begin{pmatrix}
    A & B \\ C & D
\end{pmatrix} \in \mathbb{C}^{4\times 4} \ \bigg | \ \mathrm{Tr}A = \mathrm{Tr}D \right \}
\]
We write $\mathscr{L}^{-1} |_{A_+}$ to denote the restriction of the inverse of  $\mathscr{L}$ in the domain $A_+$, i.e. the operator is first restricted on $A_+$ and then inverted. In the proof of this proposition we will use $\|.\|$ for the operator norm.
\begin{prop}[Linear stability estimate] \label{stab-est1} For any $\zeta \in \mathbb{C}$ and $\eta >0 ,$ the operator $ \mathscr{L}$ leaves $A_+$ invariant and is invertible on that subspace. Let
\begin{equation}\label{alpha}
\alpha := 1 - |v^2 - |b|^2|
\end{equation}
The norm of the inverse of the restriction operator on $A_+$ satisfies:
\beq \label{stabest1}
\| \mathscr{L}^{-1} |_{A_+} \| \lesssim \alpha^{-2},
\eeq
uniformly for all bounded $\zeta$ and $\eta \in (0,1]$.
\end{prop}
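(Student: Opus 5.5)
The plan is to reduce the $4\times 4$ stability operator $\mathscr{L}$ to two copies of the $2\times 2$ stability operator of \cite{alt2022local}, one of them with a sign flip, and then handle each copy separately.

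\textbf{Step 1: reorder the indices.} Order the basis as $(1,3,2,4)$. By Proposition \ref{sol-prop}, $M$ becomes block diagonal, $M=\mathrm{diag}(m,m)$, with
\[
m=\begin{pmatrix} iv & \overline{b}\\ b & iv\end{pmatrix}.
\]
Reading off the definition of $\mathscr{P}$ in \eqref{Dyson}: $(\mathscr{P}R)_{11}=R_{44}$, $(\mathscr{P}R)_{13}=\tau R_{42}$, $(\mathscr{P}R)_{31}=\tau R_{24}$, $(\mathscr{P}R)_{33}=R_{22}$, and symmetrically for the indices $\{2,4\}$. Hence $\mathscr{P}R$ is block diagonal in the new basis. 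Write $R^a$ and $R^b$ for the diagonal blocks of $R$ supported on $\{1,3\}$ and $\{2,4\}$. Then
\[
\mathscr{P}R=\mathrm{diag}\big(\Phi(R^b),\Phi(R^a)\big),\qquad \Phi(X)=\begin{pmatrix} X_{22}&\tau X_{21}\\ \tau X_{12}& X_{11}\end{pmatrix},
\]
and $\Phi$ is exactly the map $\mathscr{P}_2$ of \eqref{Dyson2}.

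\textbf{Step 2: split $\mathbb{C}^{4\times4}$ into invariant pieces.} This gives an orthogonal splitting into three $\mathscr{L}$-invariant subspaces:
\begin{enumerate}
\item The off-diagonal blocks, coupling $\{1,3\}$ with $\{2,4\}$. Since $M\mathscr{P}(R)M$ is block diagonal, $\mathscr{L}$ acts as the identity here.
\item The symmetric sector $R^a=R^b$. Here $\mathscr{L}$ acts as $\mathscr{L}_2^- := I-\mathcal{C}_m\Phi$, which is the $2\times 2$ stability operator of \cite{alt2022local}.
\item The antisymmetric sector $R^a=-R^b$. Here $\mathscr{L}$ acts as $\mathscr{L}_2^+ := I+\mathcal{C}_m\Phi$.
\end{enumerate}
In the new basis $E_-$ becomes $\mathrm{diag}(e,e)$ with $e=\mathrm{diag}(1,-1)$. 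So $E_-$ lies in the symmetric sector. Invariance of $A_+$ then reduces to $\mathscr{L}_2^-$ preserving $e^\perp$ (equivalently, to its adjoint fixing the line through $e$), which is the statement already used in \cite{alt2022local}. The bound $\|(\mathscr{L}_2^-)^{-1}|_{e^\perp}\|\lesssim\alpha^{-2}$, or a better power of $\alpha$, is imported from their analysis of (4.10). If needed, it can be re-derived by writing $\mathscr{L}_2^-$ explicitly as a $4\times4$ matrix in the basis $E_{11},E_{22},E_{12},E_{21}$.

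\textbf{Step 3: the antisymmetric sector (main obstacle).} What remains is to show that $\mathscr{L}_2^+=I+\mathcal{C}_m\Phi$ is invertible on all of $\mathbb{C}^{2\times2}$ with $\|(\mathscr{L}_2^+)^{-1}\|\lesssim\alpha^{-2}$. The plan is direct computation: write $\mathcal{C}_m\Phi$ as a $4\times4$ matrix in the same basis, with entries polynomial in $iv$, $b$, $\bar b$, $\tau$. Its entries are bounded by $\|m\|^2=v/(v+\eta)\le 1$, so by Cramer's rule the inverse is bounded by $C/|\det(\mathscr{L}_2^+)|$. It therefore suffices to show
\[
|\det(I+\mathcal{C}_m\Phi)|\gtrsim\alpha^2.
\]
I expect the determinant to factor into terms like $1\pm(v^2-|b|^2)$ and $1-\tau^2|b|^2$-type expressions. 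For these I would use three facts: $v^2+|b|^2=v/(v+\eta)\le1$ from Lemma \ref{EST}, the explicit formula \eqref{e:b} for $b$, and $\tau\in[0,1]$. The goal is to bound each factor below by $1-|v^2-|b|^2|=\alpha$.

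A naive norm argument does not suffice. Although $\|\mathcal{C}_m\Phi\|\le\|m\|^2\le1$, this is not bounded away from $1$ in the bulk. So some sign structure is needed to exclude the eigenvalue $-1$. The heuristic is that the only near-critical eigenvalue of $\mathcal{C}_m\Phi$ is $+1$, attained along the $e$-direction, which is the one removed by restricting to $A_+$. If the determinant does not factor cleanly, the fallback is the Rotation-Inversion Lemma \ref{rotate-inverse}: symmetrize $\mathcal{C}_m$ into a unitary times a self-adjoint saturated operator, and show that the overlap term $|1-\|\mathcal{T}\|\langle K,\mathcal{U}K\rangle|$ stays away from $0$ with the opposite sign.

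Finally, combine the three sectors. They are mutually orthogonal, so the norm of $\mathscr{L}^{-1}|_{A_+}$ is the maximum of the three bounds, giving \eqref{stabest1}.
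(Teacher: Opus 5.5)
Your Steps 1 and 2 are correct, and they give a genuinely different and cleaner route than the paper's. The paper never splits $\mathbb{C}^{4\times4}$. It writes $\mathscr{L}=\mathcal{U}^*(\mathcal{U}-\|M\|^2\mathscr{P})$ and projects onto the three-dimensional span of the $\pm1$ eigenvectors of $\mathscr{P}|_{A_+}$. It then argues by cases on the size of the complementary component, using the gap $1-\tau^2$ there. Your sectors separate exactly these directions: $I_4$ is the symmetric copy of $I$, while $\mathrm{diag}(1,-1,-1,1)$ and $\mathrm{diag}(1,-1,1,-1)$ are the antisymmetric copies of $e$ and of $I$.

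The gap is Step 3. It is the only part not already in \cite{alt2022local}, and you leave its key inequality as an expectation. Moreover, the heuristic you build it on is false. At $\zeta=0$ we have $b=0$ and $m=ivI$, so $\mathcal{C}_m\Phi=-v^2\Phi$ and $(I+\mathcal{C}_m\Phi)(I)=(1-v^2)I$, with $v\to1$ as $\eta\to0$. So $\mathcal{C}_m\Phi$ does approach the eigenvalue $-1$ in the antisymmetric sector. This is the paper's $P_{-1}$ direction, and there $\|\mathscr{L}^{-1}|_{A_+}\|\ge\alpha^{-1}$. It is exactly why $\alpha$ and the excluded ball $B_0(\kappa)$ appear. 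Consequently no Rotation-Inversion overlap term can ``stay away from $0$'', and the determinant does not factor as you guess in general.

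The gap can be closed by explicit computation. Since $mem=-(v^2+|b|^2)e$, the matrix $e$ is an exact eigenvector of $\mathcal{C}_m\Phi$ with eigenvalue $s:=v^2+|b|^2=v/(v+\eta)\le1$. Its complement $e^\perp=\mathrm{span}\{I,if,g\}$ is invariant, where $f=\begin{pmatrix}0&1\\1&0\end{pmatrix}$ and $g=\begin{pmatrix}0&1\\-1&0\end{pmatrix}$. Put $d:=|b|^2-v^2$ and $p:=(\mathrm{Re}\,b)^2-(\mathrm{Im}\,b)^2$, so that $|p|\le|b|^2$ and $\alpha=1-|d|$. Let $T$ be the matrix of $\mathcal{C}_m\Phi$ on $e^\perp$. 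Then $\mathrm{tr}\,T=d+2\tau p$, its principal $2\times2$ minors sum to $2\tau ps+\tau^2ds$, and $\det T=\tau^2s^3$. Hence, for $\sigma=\pm1$,
\[
\det(1+\sigma T)=(1+\sigma d)+2\sigma\tau p(1+\sigma s)+\tau^2 s(d+\sigma s^2).
\]
Write this quadratic in $\tau$ in Bernstein form $(1-\tau)^2c_0+2\tau(1-\tau)c_1+\tau^2c_2$, with $c_0=1+\sigma d$, $c_1=c_0+\sigma p(1+\sigma s)$, and $c_2$ its value at $\tau=1$. Using $|p|\le|b|^2$ and $s\le1$:
\begin{itemize}
\item for $\sigma=1$, $c_1\ge 1-s$ and $c_2\ge(1-s)^2(1+s)$;
\item for $\sigma=-1$, $c_1\ge(1-|b|^2)^2+v^2(1+|b|^2)$ and $c_2\ge(1-s)\big((1-s)^2+4v^2\big)$.
\end{itemize}
Hence $\det(1\pm T)\ge(1-\tau)^2(1\pm d)\ge(1-\tau)^2\alpha$. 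Since $\|T\|\le1$, Cramer's rule gives $\|(1\pm T)^{-1}\|\lesssim(1-\tau)^{-2}\alpha^{-1}$. Along $e$ the antisymmetric eigenvalue is $1+s\ge1$. This proves \eqref{stabest1}, even with $\alpha^{-1}$. It also covers the symmetric sector, so no import is needed. The constant must depend on $1-\tau$, as it does in the paper's Case 1.

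Your fallback can also be repaired: split off $e$ and flip the sign in the antisymmetric sector. The overlap term is then exactly $1-d$ in the symmetric and $1+d$ in the antisymmetric sector. It is not bounded away from $0$, but it is $\ge\alpha$.
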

\begin{proof}
As in \cite{alt2022local}, the linear stability operator $\mathscr{L}$ can be written in this form $\mathcal{U}-\mathcal{T},$ where $\mathcal{U}$ is a unitary matrix and $\mathcal{T}$ a self-adjoint matrix. From \eqref{sol}, we deduce that the solution matrix $M$ is normal and therefore there exists a polar decomposition, $M=|M|U=U|M|,$ for some unitary matrix $U.$ By direct computation, we find that:
\[
|M|^2 = M^*M = \|M\|^2 \cdot I_4,\hspace{3mm} \|M\|^2= v^2+|b|^2 = \frac{v}{\eta+v}
\]
and so we have that $U$ should be equal to:
\[
U = (v^2 + |b|^2)^{-1/2} M .
\]
We write the linear stability operator as
$ \mathscr{L} = \mathcal{U}^*(\mathcal{U} - \|M\|^2\mathscr{P}),$ where we defined $\mathscr{P}$ as in \eqref{Dyson} and $\mathcal{U}$ is defined as $\mathcal{U}R = U^* R U^*,$ for any $R \in \mathbb{C}^{4 \times 4}.$ Note that $\mathcal{U}$ and $\mathscr{P}$ leave $A_+$ invariant and $\mathscr{P}$ is self-adjoint whereas $\mathcal{U}$ is unitary. We observe that:
\[
\mathrm{Spec}(\mathscr{P}|_{A_+}) = \left \{ 1,-1,0, \tau, -\tau \right \},
\]
with the following eigenspaces: 
\begin{itemize}
    \item $ P_1 = \left \langle \begin{pmatrix}
        1 & 0 & 0 & 0 \\ \ 0 & 0 & 0 & 0 \\ 0 & 0 & 0 & 0 \\ 0 & 0 & 0 & 1
    \end{pmatrix}   ,
    \begin{pmatrix}
        0 & 0 & 0 & 0 \\ \ 0 & 1 & 0 & 0 \\ 0 & 0 & 1 & 0 \\ 0 & 0 & 0 & 0
    \end{pmatrix}  \right \rangle $
    \item $ P_{-1} = \left \langle \begin{pmatrix}
        1 & 0 & 0 & 0 \\ \ 0 & -1 & 0 & 0 \\ 0 & 0 & 1 & 0 \\ 0 & 0 & 0 & -1
    \end{pmatrix} \right \rangle $
\end{itemize}
Hence, for $\mathcal{T} :=\|M\|^2 \mathscr{P},$ we have that $\|\mathcal{T}\|= \|M\|^2 \cdot \|\mathscr{P}\| = \|M\|^2 = \frac{v}{\eta+v}\leq 1$ .
To complete the proof, we will take inspiration from \cite{ajanki2017singularities}, (Lemma 5.8) and \cite{alt2022local}, (Lemma 4.5). In our case, the Hermitian operator has no spectral gap. Instead it has a double eigenvalue at 1 and a non-degenerate eigenvalue at $-1$. However, using a projection onto the span of the eigenvectors corresponding to $\pm 1$ instead of a projection onto the top eigenvector in the case of a non-trivial spectral gap and performing explicit computations for our particular operator $\UU$, we are able to recover necessary estimates on the inverse of the stability operator. 

We will show that \begin{equation}
\| (\mathcal{U} - \mathcal{T})^{-1} \| \leq C \left(\min \left \{ (1 - v^2 + |b|^2)^2, (1+v^2-|b|^2)^2\right \}  \right )^{-1} ,
\end{equation}
which is equivalent to proving that
\begin{equation}
\|(\UU-\TT)w\| \geq c \min \left \{(1 - v^2 +|b|^2)^2, (1+v^2-|b|^2)^2\right \}\|w\|
\end{equation}
for some constant $c> 0,$ for any unit vector $w$. Let $w \in \mathbb{C}^{4 \times 4}$ with $\|w\| = 1$. We will decompose $w$ according to the spectral projections of $\mathcal{T}$,
\begin{equation}\label{e:wproj}
w = P_1w + P_2w
\end{equation}
where $P_1$ is the projection onto $span(h_1, ..., h_k)$ and $P_2$ is the projection onto its orthogonal complement. The matrices $h_1,...,h_k$ are defined as the eigenvectors corresponding to the eigenvalues $\pm \|M\|^2$ of $\mathcal{T}$ which correspond to the eigenvalues $\pm 1$ of $\mathscr{P}.$ Notice that $k=3$.

We will consider 2 cases:
\begin{enumerate}
\item[Case 1:] $  \|P_2w\|^2 \geq c \alpha^2$
\item[Case 2:] $  \|P_2w\|^2 <  c \alpha^2$ ,
\end{enumerate}
for some suitably small $c>0$.

\pagebreak
In Case 1,
using triangle inequality we obtain
\begin{align*}
&\|(\UU - \TT)w\| \geq \|w\| - \|\TT w\| = 1 -(\|\TT P_1w\|^2 + \|\TT P_2w\|^2)^{1/2} \\ 
&\geq 1 -(\|\TT\|^2 \|P_1w\|^2 + \|\TT P_2w\|^2)^{1/2}
\end{align*}
and using that $1 - \sqrt{1-y} \geq y/2$ for any $y \in [0, 1]$ we obtain:
\begin{align*}
2\|(\UU - \TT)w\| &\geq 1 - \|\TT\|^2 \|P_1w\|^2 - \|\TT P_2w\|^2 \\
&= 1 - \|\TT\|^2 \|P_1w\|^2 - \tau^2\|P_2w\|^2 \\
&\geq 1 - (\|P_1w\|^2+ \|P_2w\|^2) + (1-\tau^2)\|P_2w\|^2\\
&= (1-\tau^2)\|P_2w\|^2 . \numberthis
\end{align*}
Using the definition of Case 1 in the above inequality, we obtain:
\begin{equation}
2\|(\UU - \TT)w\| \geq (1+\tau)(1-\tau)c \alpha^2
\geq c \alpha^2.
\end{equation}.
For Case 2, we note:
\begin{align*}
\| (\UU - \TT)w\| &= \|(I - \UU^*\TT)w\|
\geq \|P_{1} (1 - \UU^* \TT)w\|
\\
&\geq  \|P_{1} (1 - \UU^* \TT)P_1w\| - \|P_{1} (1 - \UU^* \TT)P_2w\|.
\end{align*}
Looking at the first term, we obtain by direct computation:
\begin{align*}\label{e:stabComputation}
\|P_1 (1 - \UU^* \TT)P_1w\|^2 &= \left \|\sum_{i = 1}^3 \langle h_i, (1 - \UU^* \TT) \sum_{j=1}^3 \langle h_j, w \rangle h_j\rangle h_i \right \|^2 \\
&= \left \|\sum_{i,j = 1}^3 \langle h_j, w \rangle (\langle h_i, h_j\rangle - \langle h_i, \UU^* \TT  h_j\rangle) h_i \right \|^2
\\
&= \left \|\sum_{i,j = 1}^3 \langle h_j, w \rangle (\langle h_i, h_j\rangle - \lambda_j \langle h_i, \UU^*  h_j\rangle) h_i \right \|^2
\\
&= \sum_{i=1}^3 \left( \sum_{j=1}^3 \langle h_j, w \rangle (\langle h_i, h_j\rangle - \lambda_j \langle h_i, \UU^*  h_j\rangle) \right)^2
\end{align*}
We expand now everything to get that:
\begin{align*}
&\|P_1 (1 - \UU^* \TT)P_1w\|^2 \\
&=\left( \langle h_1, w \rangle (1 +v^2) - |b|^2 \langle h_2, w \rangle\right)^2 + \left( -|b|^2\langle h_1, w \rangle (1 +v^2) \langle h_2, w \rangle\right)^2 \\
&+ \langle h_3, w \rangle^2(1 - v^2 + |b|^2)^2 \\
&= \left ((1+v^2)^2+|b|^4 \right )\langle h_1, w \rangle^2 + \left ((1+v^2)^2+|b|^4 \right )\langle h_2, w \rangle^2 - 4 |b|^2 (1+v^2)\langle h_1, w \rangle\langle h_2, w \rangle \\
&+ \langle h_3, w \rangle^2(1 - v^2 +|b|^2)^2 \\
&= \left ((1+v^2-|b|^2 \right )^2 \left (\langle h_1, w\rangle^2 + \langle h_2, w \rangle^2 \right ) + 2|b|^2(1+v^2+ |b|^2)(\langle h_1, w\rangle-\langle h_2, w\rangle)^2 \\
&+ \langle h_3, w \rangle^2(1 - v^2+ |b|^2)^2 \\
&\geq \|P_1w\|^2 \min \left \{(1 - v^2+|b|^2)^2, (1+v^2-|b|^2)^2 \right \} . \numberthis
\end{align*}
Now to upper bound the second term:
\begin{align*}
\left \|\sum_{i=1}^3 \langle h_i, \UU^* \TT P_2 w\rangle h_i \right \|^2 = \sum_{i=1}^3 \langle h_i, \UU^* \TT P_2 w\rangle^2 = \sum_{i=1}^3 \langle P_2 \UU h_i, \TT P_2 w\rangle^2
\\
\leq \|\TT\|^2 \|P_2w\|^2 \sum_{i=1}^3 \|P_2 \UU h_i\|^2 
\leq
3\|\TT\|^2 c\alpha^2 \leq c \alpha^2 \numberthis
\end{align*}
From \eqref{e:b}, we notice that $0\leq \alpha \leq 1 $, and we have that:
\begin{equation} \label{e:bdP1w}
\|P_1 w\|^2 = 1 - \|P_2 w\|^2 \geq  1 - c\alpha^2 .
\end{equation}
Putting this together, we obtain that:
\begin{equation}
\| (\UU - \TT)w\| \geq |\alpha|\sqrt{1-c\alpha^2} - c|\alpha| \geq ca^2
\end{equation}
which yields the result.
\end{proof}

\begin{cor}[Linear stability estimate in the bulk] \label{stab-est2} Let $\delta, \kappa \in (0,1).$ Then uniformly, for all $\zeta \in \ S_{\tau,\delta,\kappa}$ and $\eta \in (0,1] ,$ we have:
\beq \label{stabest2}
\| \mathscr{L}^{-1} |_{A_+} \| \lesssim_{\delta, \kappa} 1
\eeq
\end{cor}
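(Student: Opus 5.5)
By Proposition \ref{stab-est1} it suffices to show that $\alpha = 1-|v^2-|b|^2|$ is bounded below by a positive constant depending only on $\delta,\kappa$. This must hold uniformly for $\zeta \in S_{\tau,\delta,\kappa}$ and $\eta\in(0,1]$. Once $\alpha \gtrsim_{\delta,\kappa} 1$, the estimate \eqref{stabest1} gives $\|\mathscr{L}^{-1}|_{A_+}\| \lesssim \alpha^{-2}\lesssim_{\delta,\kappa} 1$ directly. Since $v>0$ and, by Lemma \ref{EST}, $v^2+|b|^2 = \frac{v}{v+\eta}\le 1$, we have $|v^2-|b|^2|\le 1$. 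So I need to rule out two degenerations separately: $v^2 - |b|^2 \to 1$ and $|b|^2 - v^2\to 1$.

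For the first, $v^2-|b|^2\le v^2+|b|^2 = \frac{v}{v+\eta}$, so closeness to $1$ forces both $\eta/v\to 0$ and $|b|\to 0$. I will show that $v$ stays bounded away from zero \emph{and} that $|b|$ stays bounded below inside the bulk.

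\begin{itemize}
\item \emph{Lower bound on $|b|$.} Formula \eqref{e:b} gives $|b|\ge \frac{|\mathrm{Re}\,\zeta|}{1+\tau+\eta/v}$ and $|b| \ge \frac{|\mathrm{Im}\,\zeta|}{1-\tau+\eta/v}$. On $S_{\tau,\delta,\kappa}$ we have $|\zeta|\ge\kappa$, so at least one of these is $\gtrsim \kappa/(2+\eta/v)$. Hence $|b|\gtrsim_\kappa 1$ as soon as $\eta/v\lesssim 1$.
\item \emph{Lower bound on $v$.} I would take the $(1,1)$ entry of the $2\times 2$ equation \eqref{Dyson2}. As in the computation in Proposition \ref{sol-prop}, this gives $v = \frac{v+\eta}{(v+\eta)^2+|\zeta+\tau b|^2}$. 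Writing $\zeta+\tau b$ through \eqref{e:b}, one gets $1 = \frac{1+\eta/v}{(v+\eta)^2 + |\zeta+\tau b|^2}$, and hence
\[
\frac{(\mathrm{Re}\,\zeta)^2}{(1+\tau+\eta/v)^2}+\frac{(\mathrm{Im}\,\zeta)^2}{(1-\tau+\eta/v)^2} = \frac{v}{v+\eta}-v^2 = |b|^2 .
\]
Letting $\eta/v\to0$, the right-hand side tends to at most $1-v^2$. Inside $S_{\tau,\delta,\kappa}$ the left-hand side is at most $1-\delta$. This yields $v^2\gtrsim \delta$ in the limit, and continuity in $\eta$ handles the rest.
\end{itemize}

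The main obstacle is making this continuity argument uniform in $\eta\in(0,1]$. I would argue by contradiction with a compactness argument over bounded $\zeta$ and $\eta\in[0,1]$, using that $v,b$ extend continuously to $\eta=0$ (as in \cite{alt2022local}). When $\eta\ge \eta_*$ for a fixed small $\eta_*$, one has $\frac{v}{v+\eta}\le 1-c$, which already gives $\alpha\ge c$. Then $|b|^2\ge c_\kappa$, and thus $v^2-|b|^2\le 1-v^2-|b|^2+\dots$, more simply $v^2-|b|^2\le 1-2|b|^2 \le 1-2c_\kappa$.

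For the second degeneration, $|b|^2-v^2\to1$ would force $v\to0$ and $|b|\to1$. But the displayed identity gives $|b|^2\le$ the left-hand side evaluated with $\eta/v\ge0$. Each denominator only increases when $\eta/v$ is included, so this quantity is at most $(\mathrm{Re}\,\zeta)^2/(1+\tau)^2 + (\mathrm{Im}\,\zeta)^2/(1-\tau)^2 \le 1-\delta$. Hence $|b|^2-v^2\le 1-\delta$.

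Combining the two cases gives $\alpha\ge \min\{\delta, 2c_\kappa, c\}$, which is a constant depending only on $\delta,\kappa$. This proves \eqref{stabest2}.
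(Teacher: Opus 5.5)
Your two uses of \eqref{e:b} are correct and carry the whole argument. First, $|\zeta|\ge\kappa$ gives $|b|^2\ge \kappa^2/(2+\eta/v)^2$. Second, the denominators in \eqref{e:b} only grow with $\eta/v$, so the ellipse condition gives $|b|^2\le \frac{(\mathrm{Re}\,\zeta)^2}{(1+\tau)^2}+\frac{(\mathrm{Im}\,\zeta)^2}{(1-\tau)^2}\le 1-\delta$. The paper instead writes $1-v^2+|b|^2=\frac{\eta}{\eta+v}+2|b|^2$ and $1+v^2-|b|^2=\frac{\eta}{\eta+v}+2v^2$, then simply cites \eqref{e:b}. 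The second expression tacitly needs a lower bound on $v$ in the bulk whenever $\eta/v$ is small, and the paper never supplies it. Your bound $1+v^2-|b|^2\ge 1-|b|^2\ge\delta$ avoids this entirely and shows where $\delta$ enters, so on that side your route is cleaner than the paper's.

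The step you single out as the main obstacle, however, is not established, and it is also unnecessary. Your limiting argument shows $v^2\gtrsim\delta$ only in the regime where $\eta/v$ is already small. It says nothing about $v$ of order $\eta$, and your compactness argument at $\eta=0$ would have to exclude exactly that branch, which you do not do. Your small-$\eta$ case uses $v\gtrsim 1$ to get $\eta/v\lesssim 1$ and hence $|b|^2\ge c_\kappa$. So the regime $\eta<\eta_*$ with $\eta/v\gtrsim 1$ is left uncovered.

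The fix is to split on $\eta/v$ instead of $\eta$. Since $1-(v^2+|b|^2)=\frac{\eta}{\eta+v}$, you have $1-v^2+|b|^2=\frac{\eta}{\eta+v}+2|b|^2$. This is at least $\frac12$ if $\eta/v\ge 1$, and at least $\frac{2\kappa^2}{9}$ if $\eta/v<1$ by your lower bound on $|b|$. Combined with $1+v^2-|b|^2\ge\delta$, this gives $\alpha\ge\min\{\delta,2\kappa^2/9\}$ for every $\eta>0$, with no continuity argument at $\eta=0$.

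If you want a lower bound on $v$ anyway, it follows directly. For $a\in(0,2]$ and $s\ge 0$ one has $\frac{a^2}{(a+s)^2}\le\frac{1}{1+s}$, which gives $|b|^2\le\frac{1-\delta}{1+\eta/v}$. Hence $v(v+\eta)\ge\delta$, and so $v\ge\delta/2$ for $\eta\le 1$.
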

\begin{proof}
It follows from \eqref{stabest1} and \eqref{e:b}, and noting that 
\begin{align*}
1 - v^2 + |b|^2 &= \frac{\eta}{\eta + v} + 2|b|^2, \text{ and } \\
1 + v^2 - |b|^2 &= \frac{\eta}{\eta + v} + 2v^2.
\end{align*}
Notice that we have $\alpha = 0$ when $\eta \to 0$ and $u=b=0,$ i.e. when we are near the origin, that's why we exclude it from our domain when we get our local laws.
\end{proof}
\end{section}

\begin{section}{The least singular value problem} \label{lsv}

We prove here an important theorem about the least singular value of the matrix $\mathbf{Y_N} - \zeta.$

Not so surprisingly, the Dirac matrix
\[
\mathbf{Y_N} = \begin{pmatrix}
    0 & {X_1} \\ {X_2^*} & 0
\end{pmatrix}
\]
has almost elliptic correlations. This can be seen as follows. We pick elements $\mathbf{X_1}_{ij}$ and $\mathbf{X_2^*}_{ab}$ for $i,j=1,...,N$ and $a,b=1,...,N$ and compute that:
\begin{align*}
{X_1}_{ij} {X_2^*}_{ab} &= (1+\tau)P_{ij}P^*_{ab} - (1-\tau)Q_{ij}Q^*_{ab} - \sqrt{1-\tau^2}P_{ij}Q^*_{ab} + \sqrt{1-\tau^2}Q_{ij}P^*_{ab} \\
&= (1+\tau)P_{ij}\overline{P_{ba}} - (1-\tau)Q_{ij}\overline{Q_{ba}} - \sqrt{1-\tau^2}P_{ij}\overline{Q_{ba}} + \sqrt{1-\tau^2}Q_{ij}\overline{P_{ba}}
\end{align*}
If we take expectation in this expression the only non-zero terms in this correlation will appear for $b=i$ and $a=j,$ which means that:
\beq \label{corr}
N \E {X_1}_{ij} {X_2^*}_{ab} = \begin{cases} \tau , \ \ \text{if} \ \ a=j,b=i \\ 0,\ \ \text{otherwise}
\end{cases}
\eeq
The matrix $\sqrt{N}\mathbf{Y_N}$ however doesn't fully satisfy the assumptions of [\cite{nguyen2015elliptic},Theorem 1.9] with the elliptic correlation parameter $\tau \in [0,1]$, (see also \cite{gotze2015minimal}), because of the deterministic zero entries.

Due to this we are assuming a bounded density on the entries (see \ref{a3}) and follow the proof in \cite{alt2021inhomogeneous} to deduce a lower bound for the least singular value of $\mathbf{Y_N} - \zeta$ with high probability.
\begin{thm}[Smallest singular value of $\mathbf{Y_N} - \zeta.$] \label{sv}
 Let $\sigma_{2N}(\zeta)$ denote the least singular value of the random matrix $\mathbf{Y_N} - \zeta$. Then for any $B>0$ there exists $A>0$ and $C>0$ such that
\[
\mathbb{P} \left ( \sigma_{2N}(\zeta) \leq N^{-A} \right ) \leq C N^{-B},
\]
uniformly for all suitably large $N \in \mathbb{N}$ and bounded $\zeta \in \mathbb{C}.$
\end{thm}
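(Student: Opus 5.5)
The plan is to exploit the bounded density assumption \eqref{prob-dens}, as in \cite{alt2021inhomogeneous}, rather than the combinatorial machinery of \cite{nguyen2015elliptic}. The structure of $\mathbf{Y_N}-\zeta$ is
\[
\mathbf{Y_N}-\zeta = \begin{pmatrix} -\zeta & X_1 \\ X_2^* & -\zeta\end{pmatrix},
\]
and the randomness enters only through $P,Q$. First I would reduce to a problem about independent entries with densities. The map $(P,Q)\mapsto (X_1,X_2)$ is an invertible linear change of variables for $\tau\in[0,1)$, and for $\tau=1$ the blocks are both $\sqrt2 P$. In either case each row $r$ of $\mathbf{Y_N}-\zeta$ has random entries that are linear combinations, with coefficients bounded above and below, of independent variables with densities in $L^q$ of norm at most $N^\kappa$.

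Next I would use the standard bound $\sigma_{2N}(A)\ge (2N)^{-1/2}\min_r \mathrm{dist}(A_r, H_r)$ (the negative second moment identity $\sum_r \sigma_r^{-2}=\sum_r \mathrm{dist}(A_r,H_r)^{-2}$), where $A_r$ is the $r$-th row and $H_r$ the span of the remaining rows. This reduces the problem to a small ball estimate for $\mathrm{dist}(A_r,H_r)=|\langle u, A_r\rangle|$, where $u$ is a unit normal to $H_r$. The difficulty is that, because of the correlations, $u$ is not independent of $A_r$. Row $i$ of the upper block involves $P_{i\cdot},Q_{i\cdot}$, and these also appear in column $i$ of $X_2^*$, hence in every row of the lower block.

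To handle this I would condition on all variables except the pair $(P_{ij},Q_{ij})$ for a single index $j$. The row $A_r$ depends on $X_{1,ij}$, while the remaining rows depend on this pair only through the single entry $(X_2^*)_{ji}$, located in row $N+j$. I would then write $\langle u, A_r\rangle$ as a function of the two complex variables $(p,q)=(P_{ij},Q_{ij})$ with everything else frozen. Via Cramer's rule this function is a ratio of determinants, and each determinant is a polynomial of degree at most $2$ in $(p,\bar p, q,\bar q)$ that is affine in each of the two entries carrying $(p,q)$. Equivalently, $\mathrm{dist}(A_r,H_r)=|\det(A)|/\sqrt{\det(\text{Gram of } H_r)}$, and $|\det A|$ is affine-linear in the column-$i$ entry of row $N+j$ and in the $(i,N+j)$ entry separately.

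The small ball probability for a nonzero degree-$2$ polynomial in variables with an $L^q$ density with $\|\psi\|_q\le N^\kappa$ would be bounded by a Carbery--Wright type, or direct anticoncentration, estimate of size $N^{C\kappa}(\epsilon/\|\text{coeffs}\|)^{c}$. To conclude, the coefficients must not be too small with overwhelming probability. This is where I would combine the crude bounds $\|\mathbf{Y_N}\|\le N^{C}$ (from finite moments) with the existence of a nondegenerate coefficient, which I would obtain by choosing $j$ among $N$ options via a union and pigeonhole argument as in \cite{alt2021inhomogeneous}. A union bound over the $2N$ rows then gives $\mathbb P(\sigma_{2N}\le N^{-A})\le CN^{-B}$ once $A$ is taken large relative to $B,\kappa,q$. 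The estimate is uniform in bounded $\zeta$ because $\zeta$ enters only deterministically.

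I expect the main obstacle to be the anticoncentration step under the correlation between $A_r$ and $H_r$. Specifically, one has to show that after conditioning the relevant quadratic polynomial in $(p,q)$ has a coefficient bounded below by $N^{-C}$ with high probability, since degeneracy would kill the estimate. My first attempt would use the linear (in $p$, with $q$ fixed) part. For $\tau<1$, $X_{1,ij}$ and $(X_2^*)_{ji}$ use independent directions $p\pm q$-type combinations, so I can freeze one variable and exploit the density of the other, which makes the map affine and reduces the problem to a genuine one-variable density bound. The case $\tau=1$ would need separate treatment, since both entries are then equal to $\sqrt2 P_{ij}$.
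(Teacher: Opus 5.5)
\textbf{Verdict: genuine gap.} Your skeleton is the paper's: the bound $\sigma_{2N}\ge(2N)^{-1/2}\min_r\mathrm{dist}(A_r,H_r)$ and a union over rows. Your refinement is also sound: you condition on everything except the pair $(P_{ij},Q_{ij})$, which enters only the entries $(i,N+j)$ and $(N+j,i)$. You are right that the unit normal is not independent of $A_r$. The paper's proof conditions on $y$ and then integrates $\psi$ over the entries of $R_i$ as if $R_i$ were independent of $y$, so it cannot help at the point where your proposal stalls.

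\textbf{The non-degeneracy step fails as proposed.} For an upper row $r=i$, let $u=(u^{(1)},u^{(2)})\in\mathbb{C}^N\times\mathbb{C}^N$ with $\|u\|=1$ and $A_su=0$ for all $s\neq r$. Then $\mathrm{dist}(A_r,H_r)=|A_ru|=\bigl|-\zeta u_i+\sum_k X_{1,ik}u_{N+k}\bigr|$. After the Gram normalization, the coefficient of $X_{1,ij}$ that must not be small is exactly $u_{N+j}$.
\begin{itemize}
\item Pigeonholing over $j\in[N]$ only gives $|u_{N+j}|\ge\|u^{(2)}\|/\sqrt N$.
\item The bound $\|\mathbf{Y_N}\|\le N^{C}$ says nothing about $\|u^{(2)}\|$. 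A priori $u$ could sit almost entirely on the first $N$ coordinates, where row $r$ carries only the deterministic entry $-\zeta$ and zeros. This is exactly the ``deterministic zero entries'' obstruction.
\end{itemize}
The missing idea is orthogonality to the other block of rows. The relations $A_{N+j}u=0$ for all $j$ read $X_2^*u^{(1)}=\zeta u^{(2)}$. Hence $|\zeta|\,\|u^{(2)}\|\ge\sigma_{\min}(X_2)\|u^{(1)}\|$, and so $\|u^{(2)}\|\ge\min\{1/2,\,c\,\sigma_{\min}(X_2)\}$ for bounded $\zeta$. For lower rows use $X_1u^{(2)}=\zeta u^{(1)}$ instead.

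$X_1$ and $X_2$ have i.i.d.\ entries whose densities are rescaled copies of $\psi$ convolved with each other, with $L^q$ norms at most $N^{C}$. The standard independent-rows argument then gives $\sigma_{\min}(X_1),\sigma_{\min}(X_2)\ge N^{-C}$ with probability $1-O(N^{-B})$. Only with this input does the union over $j$ produce some $|u_{N+j}|\ge N^{-C-1/2}$.

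\textbf{The anticoncentration step also needs repair.} Given all variables except $(P_{ij},Q_{ij})$, the normal $u=u(w)$ depends only on $w=(X_2^*)_{ji}$. With $z=X_{1,ij}$, the quantity $A_ru=c(w)+z\,u_{N+j}(w)$ is affine in $z$ for fixed $w$. You cannot freeze $w$ and use a density of $z$, because $z$ and $w$ are dependent. They are correlated for $\tau>0$, and even at $\tau=0$ they are independent only in the Gaussian case. The conditional density of $z$ given $w$ has no uniform $L^q$ bound.

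For $\tau<1$, use Fubini with the joint density $\rho$ of $(z,\bar w)$, which is an invertible linear image of $(P_{ij},Q_{ij})$, followed by H\"older in $w$:
\[
\mathbb{P}\bigl(|A_ru|\le t,\ |u_{N+j}(w)|\ge s,\ |w|\le R \bigm| \text{rest}\bigr)\le\Bigl(\frac{\pi t^2}{s^2}\Bigr)^{1-1/q}\int_{|w|\le R}\|\rho(\cdot,\bar w)\|_{L^q}\,d^2w\le\Bigl(\frac{\pi t^2}{s^2}\Bigr)^{1-1/q}(\pi R^2)^{1-1/q}\|\rho\|_{L^q(\mathbb{C}^2)}.
\]
Here $\|\rho\|_{L^q(\mathbb{C}^2)}\le C_\tau N^{C}$.

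\textbf{The case $\tau=1$ is not covered.} There $\bar w=z$, so $A_ru=c(\bar z)+z\,u_{N+j}(\bar z)$ is no longer affine in the free variable, and $\mathbf{Y_N}$ is Hermitian. For real $\zeta$ one needs a genuinely different argument, either a Wegner-type estimate or quadratic anticoncentration. Deferring this to ``separate treatment'' leaves the statement unproved at $\tau=1$.
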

\begin{proof}
    We will use assumption \ref{a3}. We begin with the usual estimate about the least singular value connecting it with the rows of the matrix $\mathbf{Y}_N - \zeta$. This estimate was firstly established in \cite{rudelson2008littlewood} and the general technique can be found for example in (Lemma 4.12,  \cite{bordenave2012around}) and (Proposition 7.1, \cite{alt2021inhomogeneous}):
    \[
    \sigma_{2N}(\zeta) \geq (2N)^{-1/2} \min_{i \in [2N]} \mathrm{dist}(R_i, R_{-i}),
    \]
    where $R_1,...,R_{2N}$ are the rows of $\mathbf{Y}_N - \zeta$. Continuing with a union bound, we get that, for $0<u<1$:
    \begin{equation} \label{union}
    \mathbb{P}(\sigma_{2N}(\zeta) \leq u) \leq 2N \max_{i \in [2N]} \mathbb{P}[(2N)^{-1/2} \mathrm{dist} (R_i, R_{-i}) \leq u ]
    \end{equation}
    We fix $i \in [2N]$ and pick a unit vector $y$ orthogonal to $R_{-i}$ and measurable with respect to $A_i:=\{ R_j \ | \ j \neq i \}.$ We remind that $R_{-i}:= \mathrm{span}({A_i}).$
    We notice that:
    \[
    \mathrm{dist}(R_i,R_{-i}) = \| \pi_i(R_i)\| \cdot \|y\| \geq |\langle R_i,y \rangle|,
    \]
    by the Cauchy-Schwartz inequality, where $\pi_i$ is the orthogonal projection onto the orthogonal complement of $R_{-i}.$ Since $y$ is normalized there exists $j\in [2N]$ such that $|y_j| = \max \limits_k |y_k| \geq (2N)^{-1/2}$. We then estimate that:
    \begin{equation} \label{expect-prob}
    \mathbb{P}\left(\left|\left\langle R_i, y\right\rangle\right| \leq u \sqrt{2N} \right) =\mathbb{E}\left [ \mathbb{P}\left(\left|\left\langle R_i, y\right\rangle \right| \leq u \sqrt{2N} \bigg | y \right )  \right].
    \end{equation}
    We now estimate the inequality $\left|\left\langle R_i, y\right\rangle\right| \leq u \sqrt{2N}$ as:
    \begin{align*}
        \left|\left\langle R_i, y\right\rangle\right| = \left | \sum \limits_{k} \overline{x_{ik}} y_k - \overline{\zeta}y_i  \right |,
    \end{align*}
     where the sum is over $N$ indices and $x_{ik}$ are elements of either $X_1$ or $X_2^*.$  
     
     To deduce the bound $u \sqrt{2N}$ it is enough that:
     \begin{equation*}
     \left |\overline{x_{ik}}y_k - \frac{\overline{\zeta}}{N} y_i \right | \leq \frac{1}{N} u\sqrt{2N}, \quad \text{for each} \ k,
     \end{equation*}
     or:
    \begin{equation*}
    |y_k| \left |x_{ik} - \frac{\zeta}{N}\frac{\overline{y_i}}{\overline{y_k}} \right | \leq \frac{1}{N} u\sqrt{2N} ,  \quad \text{for each} \ k \ \text{with} \ y_k \neq 0.
    \end{equation*}
    But,
    \[
    |y_k| \left |x_{ik} - \frac{\zeta}{N}\frac{\overline{y_i}}{\overline{y_k}} \right | \leq |y_j| \left |x_{ik} - \frac{\zeta}{N}\frac{\overline{y_i}}{\overline{y_k}} \right |,
    \]
    so it is enough that:
    \beq \label{enough}
    \left |\sqrt{N}x_{ik} - \frac{\zeta}{\sqrt{N}}\frac{\overline{y_i}}{\overline{y_k}} \right | \leq \frac{\sqrt{2}u}{|y_j|} ,  \quad \text{for each} \ k \ \text{with} \ y_k \neq 0,
    \eeq
    Thus,
    \begin{align*}
        &\mathbb{P}\left(\left|\left\langle R_i, y\right\rangle \right| \leq u \sqrt{2N} \bigg | y \right ) \leq \sum \limits_k \mathbb{P}\left(\left| \sqrt{N}x_{ik} - \frac{\zeta}{\sqrt{N}}\frac{\overline{y_i}}{\overline{y_k}} \right| \leq \frac{\sqrt{2}u}{|y_j|}  \right ) \\ 
        &= \sum \limits_k \int_{B \left (\frac{\zeta}{\sqrt{N}}\frac{\overline{y_i}}{\overline{y_k}}, \frac{\sqrt{2}u}{|y_j|} \right )} \psi(z)  d^2z \leq N \left ( \pi \frac{2u^2}{|y_j|^2} \right )^{\frac{q-1}{q}} \| \psi \|_q \\
        &\leq N^{\kappa+1} \left ( \frac{\pi u^2}{N} \right )^{\frac{q-1}{q}} = \pi N^{\kappa - \frac{1}{q}} u^{2 - \frac{2}{q}}.
    \end{align*}
    From \eqref{union}, we deduce that, for every $u \in (0,1):$
    \begin{equation} \label{sing1}
        \mathbb{P} \left (\sigma_{2N}(\zeta) \leq u \right )  \leq 2\pi N^{\kappa + 1- \frac{1}{q}} u^{2 - \frac{2}{q}}.
    \end{equation}
    Let $B>0.$ We pick $A>0$ and $q>1$ such that:
    \[
    \kappa + 1 - \frac{1}{q} - 2A + \frac{2A}{q} \leq -B,
    \]
    or:
    \[
    A \geq \frac{B+\kappa+1-\frac{1}{q}}{2-\frac{2}{q}},
    \]
    and then choose $u = N^{-A} \in (0,1)$ to get the result we want.
\end{proof}
\end{section}

\begin{section}{Universality results}

We now perform the change of variables to deduce the local law for the correlated covariance matrix. We follow the linearization of a random matrix technique from \cite{o2015products}.
\begin{proof}[Proof of Theorem \ref{main'}]
For the Dirac matrix $\mathbf{Y_N}$, it holds that:
\[
\mathbf{Y^2_N} = \begin{pmatrix}
    X_1 X_2^* & 0 \\ 0 & X_2^*X_1 .
\end{pmatrix}
\]
Notice that $X_1X_2^*$ has the same eigenvalues with $X_2^* X_1 $. Define now a test function $f: \mathbb{C}\mapsto \mathbb{C}$ with corresponding zoom function $f_{\widehat{\zeta_0},a}$ for $\widehat{\zeta_0} \in \widehat{S}_{\tau,\delta,\kappa}$ and $a\in (0,1/2)$ such that $\|\Delta f\|_{L^{2+a}} \leq N^D \|\Delta f \|_{L^1}$ for some $D \in \mathbb{N}$. Observe that:
\begin{align*}
\int_{\mathbb{C}} f_{\widehat{\zeta_0},a} (z) d \widehat{\mu}_N &= \frac{1}{N}  \sum \limits_{\zeta \in Spec(X_1X_2^*)} f_{\widehat{\zeta_0},a} (\zeta) \\
&= \frac{1}{2N} \sum \limits_{\zeta \in Spec(\mathbf{Y^2_N})} f_{\widehat{\zeta_0},a} (\zeta)\\
&= \int_{\mathbb{C}} f_{\widehat{\zeta_0},a} (z^2) d\mu_N (z),
\end{align*}
where $\widehat{\mu}_N$ is the counting measure for the correlated covariance matrix, whereas $\mu_N$ is the counting measure for the Dirac matrix. By Theorem \ref{main} we get the local weak convergence $\mu_N \to \mu$, where $\mu$ is the measure for the spectrum of the Dirac matrix, which means that:
\beq \label{prob-ineq}
\mathbb{P} \left ( \left | \frac{1}{N} \sum_{\zeta \in Spec(X_1X_2^*) }f_{\widehat{\zeta_0},a}( \zeta)   - \int_{\mathbb{C}} f_{\widehat{\zeta_0},a} (z^2) d\mu(z) \right | \leq N^{-1+2a+\epsilon} \right ) \geq 1 - CN^{-\nu},
\eeq
for any $\epsilon>0,\ \nu \in \mathbb{N}$ and some suitable $C>0$.

We now perform the change of variables $z^2 \mapsto z.$ The Jacobian of the transformation is equal to:
\beq \label{Jacob}
|J(z)| = \left | \frac{d}{dz} (\sqrt{z}) \right |^2 = \frac{1}{4|z|} .
\eeq
We are left to prove the transformation from the centered ellipse to the shifted one. Indeed, by firstly expanding the elliptic spectral domain of the Dirac matrix to an equivalent form and then introducing complex variables $z=x+iy$ we arrive at the desired domain:
\begin{align*}
    &\frac{x^2}{(1+\tau)^2} + \frac{y^2}{(1-\tau)^2} - 1 \leq 0 \Leftrightarrow \\
    &\left [ \frac{x^2}{(1+\tau)^2} + \frac{y^2}{(1-\tau)^2} - 1 \right ] \left [ \frac{x^2}{(1-\tau)^2} + \frac{y^2}{(1+\tau)^2} + 1 \right ] \leq 0 \Leftrightarrow \\
    &\frac{x^4}{(1-\tau^2)^2} + \frac{y^4}{(1-\tau^2)^2} + \frac{x^2 y^2}{(1+\tau)^4} + \frac{x^2y^2}{(1-\tau)^4} + \frac{x^2 - y^2}{(1+\tau)^2} - \frac{x^2-y^2}{(1-\tau)^2} \leq 1 \Leftrightarrow \\
    &\frac{x^4+y^4}{(1-\tau^2)^2} + \frac{(1+\tau)^4+(1-\tau)^4}{(1-\tau^2)^4}x^2y^2 + \frac{(1-\tau)^2 - (1+\tau^2)}{(1-\tau^2)^2} (x^2-y^2) \leq 1 \Leftrightarrow \\
    &x^4+y^4 - 4\tau(x^2-y^2) + 2\frac{1+6\tau^2 +\tau^4}{(1-\tau^2)^2}x^2y^2 \leq (1-\tau^2)^2 \Leftrightarrow \\
    &x^4+y^4 - 4\tau (x^2-y^2) + 2\frac{(1-\tau^2)^2 + 8\tau^2}{(1-\tau^2)^2}x^2y^2 \leq (1-\tau^2)^2 \Leftrightarrow \\
    &(x^2+y^2)^2 + \frac{16\tau^2}{(1-\tau^2)^2}x^2y^2 - 4\tau(x^2-y^2) \leq (1-\tau^2)^2 .
\end{align*}
\pagebreak

In this more convenient form we introduce complex variables before making the substitution:
\begin{align*}
    &(x^2+y^2)^2 + \frac{16\tau^2}{(1-\tau^2)^2}x^2y^2 - 4\tau (x^2-y^2) \leq (1-\tau^2)^2 \Leftrightarrow \\
    &|z|^4 + \frac{16\tau^2}{(1-\tau^2)^2}x^2y^2 - 4\tau (z^2 - 2ixy) \leq (1-\tau^2)^2 \Leftrightarrow \\
    &|z|^4 - 4\tau z^2 + 8\tau i \frac{z+\overline{z}}{2}\frac{z-\overline{z}}{2i} + \frac{16\tau^2}{(1-\tau^2)^2} \left ( \frac{z+\overline{z}}{2}\frac{z-\overline{z}}{2i} \right )^2 \leq (1-\tau^2)^2 \Leftrightarrow \\
    &|z|^4 -4\tau z^2 + 2\tau (z^2 - \overline{z}^2) - \frac{\tau^2}{(1-\tau^2)^2} (z^2 - \overline{z}^2)^2 \leq (1-\tau^2)^2 .
\end{align*}
This is the right place to make the substitution $z^2 \mapsto z$ and then we will get back to real variables, expand everything and complete the squares. We arrive at the new domain $\widehat{S}_{\tau}$ as follows:
\begin{align*}
    &|z|^2 - 4\tau z + 2\tau (z-\overline{z}) - \frac{\tau^2}{(1-\tau^2)^2} (z - \overline{z})^2 \leq (1-\tau^2)^2 \Leftrightarrow \\
    &x^2+y^2 - 4\tau x + \frac{4\tau^2}{(1-\tau^2)^2} y^2 \leq 1 + \tau^4 - 2\tau^2 \Leftrightarrow \\
    &x^2 - 4\tau x + 2\tau^2 + \frac{(1-\tau^2)^2y^2 + 4\tau^2y^2}{(1-\tau^2)^2} \leq 1 + \tau^4 \Leftrightarrow \\
    &x^2 - 4\tau x + 2\tau^2 + \frac{y^2}{(1-\tau^2)^2}(1+\tau^2)^2 \leq 1 + \tau^4 \Leftrightarrow \\
    &(x- 2\tau)^2 + \frac{y^2}{(1-\tau^2)^2}(1+\tau^2)^2 \leq (1+\tau^2)^2 \Leftrightarrow \\
    &\frac{(x-2\tau)^2}{(1+\tau^2)^2} + \frac{y^2}{(1-\tau^2)^2} \leq 1 .
\end{align*}
That means that
\begin{align} \label{int-eq}
\int_{\mathbb{C}} f_{\widehat{\zeta_0},a}(z^2) d \mu (z) = \int_{{\mathbb{C}}} f_{\widehat{\zeta_0},a}(z) d \widehat{\mu} (z),
\end{align}
as we wanted, where we used the fact that the transformation $z \mapsto z^2$ maps the complex plane two times onto itself and the identity for the Jacobian \eqref{Jacob} to retrieve the measure $\widehat{\mu}$ for the correlated covariance matrix.

Inequality \ref{prob-ineq} together with equality \ref{int-eq} implies theorem \ref{main'}.
\end{proof}
\end{section}
\chapter{Conclusion}

\begin{section}{Overview}
In this thesis we started by giving some preliminary definitions and results about the usual deterministic matrices of linear algebra and functional analysis. We then moved on to define them as random objects in some proper probability spaces in which they are called random matrices.

We then gave definitions regarding the convergence behaviour of a sequence of usual deterministic measures to another measure. We managed to transform the space of all probability measures in a field into a measurable space, which allowed us to study measure-valued random variables, which are called random measures. Just like the convergence behaviour of usual deterministic measures, we discussed about the probabilistic convergence of a sequence of random measures to another deterministic measure. A classical example of a random measure in random matrix theory which encodes a lot of information about the eigenvalues of a random matrix as we saw was the empirical spectral measure, which as a sequence depends on the dimension of the matrix. Its convergence properties as well as its deterministic limiting measure can give valuable insights about the initial random matrix.
\newpage

After that, we defined an appropriate transformation of a measure to a complex analytic function which encodes a lot of the properties of the measure, such as its convergence. This was about the Stieltjes tranformation which gives a 1-1 correspondence between each finite Borel measure and its Stieltjes function. We saw that there were easy formulas to recover the measure by its Stieltjes transformation, while the limiting measure corresponds to the limiting Stieltjes function in terms of uniform convergence of functions. This means that in order to establish the convergence of the measures it is enough to establish the convergence of the Stieltjes functions. What's more, we saw that this technique is specifically useful for establishing local laws for the limiting measure, as the Stieltjes function also encodes the scale of the convergence of the measures.

We used the Stieltjes transform technique to analyse the convergence of the empirical spectral measure of a random sample covariance matrix to the deterministic Marchenko-Pastur measure. Initially, we proved an optimal rate of convergence of the Stieltjes functions of the sequence of the spectral measures to the Stieltjes function of the Marchenko-Pastur distribution. We proved this in the optimal scaling for the Stieltjes functions. Special care was taken for the concentration of the eigenvalues around zero and the corresponding singularity of the limiting measure. Using this functional convergence we could go back to the measure-theoretic convergence of the spectral measures to the Marchenko-Pastur distribution and establish it in an almost optimal rate. We then used this result to prove the rigidity of the eigenvalues depending on their position with special treatment given near zero. Their locations turn out to be really close to the ones predicted by the Marchenko-Pastur measure while the size of the fluctuations proved was almost optimal.

In the second part of this thesis, we started analysing the Dyson equation method. It turns out that the Stieltjes transform method is a special case of this technique, as the random empirical spectral measure gets Stieltjes-transformed to the complex function which matches with the trace of the resolvent of the initial random matrix. The Dyson equation method is more general as it involves the treatment of the resolvent as a whole matrix and not just its trace. Of course, this means that the resolvent matrix includes almost all the information about the empirical spectral measure in which we are interested. The Dyson equation method gives a self-consistent matrix equation for the resolvent matrix which takes into account only the first and second order correlations of the initial random matrix. A stability analysis is always needed then for such a technique so as to prove that the solution matrix of the Dyson equation remains close to the resolvent matrix subject to perturbations. If some assumptions are satisfied for the initial random matrix, then the method works well and we get as a theorem that the solution matrix is indeed close to the resolvent matrix and admits now a Stieltjes transform representation of a matrix whose trace can give us the limit measure of the empirical spectral measure sequence.

We used the Dyson equation technique in a random matrix model whose limitng spectral measure lies on the complex plane and interpolates between the second power of the circular law, the second power of a shifted elliptical law and the Marchenko-Pastur distribution. There were previous results about this limit measure when the entries of the random matrix model were Gaussian random variables. The Dyson equation technique could then be used for proving the same result for a universal case of random variables. We began by hermitizing the random matrix and setting up a Dyson equation for the new hermitized matrix. We solved this equation and proved that the trace of the solution was close to the trace of the resolvent matrix of the initial random matrix. For the solution, we needed to apply a stability analysis. After that, we were able to retrieve the same limit measure from the trace of the solution matrix while also using a least singular value control of the hermitized matrix.
\end{section}
\newpage

\begin{section}{Further directions}
There were some missing targets while producing all the previous results. Specifically, in the local Marchenko-Pastur analysis, we wanted to prove a rate of convergence of the rate of $\mathcal{O}\left (\frac{\sqrt{\log N}}{N} \right )$ and a fluctuation estimate for the bulk eigenvalues of order $\mathcal{O}\left (\frac{\sqrt{\log N}}{N} \right )$ as these are the optimal bounds and they have been proved for the Wishart ensemble. Secondly, in the Correlated Covariance matrices, the initial goal was to prove everything for rectangular random matrices $X_1$ and $X_2$ and not just square ones, as was the case in the previous result for the non-Hermitian Wishart ensemble. What's more, our analysis avoided some local laws around the origin of the spectrum of this ensemble as well as in its edges.

We invite all random matrix theory researchers to deal with these issues by providing even more refined results in this analysis while advancing and pushing forward this rather new and exciting scientific field.
\end{section}
@article{cacciapuoti2013local,
  title={Local {M}archenko-{P}astur law at the hard edge of {S}ample {C}ovariance {M}atrices},
  author={Cacciapuoti, Claudio and Maltsev, Anna and Schlein, Benjamin},
  journal={Journal of Mathematical Physics},
  volume={54},
  number={4},
  pages={043302},
  year={2013},
  publisher={American Institute of Physics}
}

@article{cacciapuoti2015bounds,
  title={Bounds for the {S}tieltjes transform and the density of states of {W}igner matrices},
  author={Cacciapuoti, Claudio and Maltsev, Anna and Schlein, Benjamin},
  journal={Probability Theory and Related Fields},
  volume={163},
  number={1},
  pages={1--59},
  year={2015},
  publisher={Springer}
}

@article{erdos2019matrix,
  title={The matrix {D}yson equation and its
 applications for {R}andom {M}atrices},
  author={L{\'a}szl{\'o} Erdős},
  journal={Random Matrices},
  year={2019},
}

@phdthesis{fleermann2019global,
  title={Global and local semi--circle laws for {R}andom {M}atrices with correlated entries},
  author={Fleermann, Michael},
  year={2019},
  school={FernUniversit{\"a}t in Hagen}
}

@article{erdHos2019random,
  title={Random matrices with slow correlation decay},
  author={Erd{\H{o}}s, L{\'a}szl{\'o} and Kr{\"u}ger, Torben and Schr{\"o}der, Dominik},
  journal={Forum of Mathematics, Sigma},
  volume={7},
  pages={e8},
  year={2019},
  publisher={Cambridge University Press}
}

@article{gotze2014rate,
  title={Rate of Convergence of the Expected Spectral Distribution Function to the {M}archenko--{P}astur Law},
  author={Tikhomirov, AN},
  journal={Siberian Advances in Mathematics},
  year={2009},
  volume = {19},
  pages = {277--286}
}

@article{gotze2016optimal,
  title={Optimal bounds for convergence of expected spectral distributions to the semi-circular law},
  author={G{\"o}tze, Friedrich and Tikhomirov, Alexander},
  journal={Probability Theory and Related Fields},
  volume={165},
  number={1-2},
  pages={163--233},
  year={2016},
  publisher={Springer}
}

@article{marchenko1967distribution,
  title={Distribution of eigenvalues for some sets of random matrices},
  author={Marchenko, Vladimir Alexandrovich and Pastur, Leonid Andreevich},
  journal={Matematicheskii Sbornik},
  volume={114},
  number={4},
  pages={507--536},
  year={1967},
  publisher={Russian Academy of Sciences, Steklov Mathematical Institute of Russian~…}
}

@inproceedings{gustavsson2005gaussian,
  title={Gaussian fluctuations of eigenvalues in the {GUE}},
  author={Gustavsson, Jonas},
  booktitle={Annales de l'IHP Probabilit{\'e}s et statistiques},
  volume={41},
  pages={151--178},
  year={2005}
}

@article{su2006gaussian,
  title={Gaussian fluctuations in complex sample covariance matrices},
  author={Su, Zhonggen},
  journal={Electronic Journal of Probability},
  volume={11},
  pages={1284--1320},
  year={2006},
  publisher={The Institute of Mathematical Statistics and the Bernoulli Society}
}

@article{pillai2014universality,
  title={Universality of covariance matrices},
  author={Pillai, Natesh S and Yin, Jun},
  journal={The Annals of Applied Probability},
  volume={24},
  number={3},
  pages={935--1001},
  year={2014},
  publisher={Institute of Mathematical Statistics}
}

@article{pleijel1963theorem,
  title={On a theorem by {P. M}alliavin},
  author={Pleijel, {\AA}ke},
  journal={Israel Journal of Mathematics},
  volume={1},
  number={3},
  pages={166--168},
  year={1963},
  publisher={Springer}
}

@article{erdHos2016fluctuations,
  title={Fluctuations of functions of {W}igner matrices},
  author={Erd{\H{o}}s, L{\'a}szl{\'o} and Schr{\"o}der, Dominik},
  pages = {1--15},
  journal={Electronic Communications in Probability},
  volume={21},
  year={2016},
  publisher={The Institute of Mathematical Statistics and the Bernoulli Society}
}

@article{osekowski2012note,
  title={A note on {B}urkholder-{R}osenthal inequality},
  author={Osekowski, Adam},
  journal={Bull. Pol. Acad. Sci. Math},
  volume={60},
  number={2},
  pages={177--185},
  year={2012}
}

@article{erdos2012local,
  title={The local relaxation flow approach to universality of the local statistics for {R}andom {M}atrices},
  author={Erd{\H{o}}s, L{\'a}szl{\'o} and Schlein, Benjamin and Yau, Horng-Tzer and Yin, Jun},
  journal = {Annales de l'Institut Henri Poincaré, Probabilités et Statistiques},
  number = {1},
  publisher = {Institut Henri Poincaré},
  pages = {1 -- 46},
  year = {2012},
  volume = {48}
}

@article{o2011products,
  title={Products of independent non-{H}ermitian random matrices},
  author={O'Rourke, Sean and Soshnikov, Alexander},
  year={2011},
  volume = {16},
  journal = {Electronic Journal of Probability}
}

@article{akemann2021non,
  title={A non-{H}ermitian generalisation of the {M}archenko--{P}astur distribution: from the circular law to multi-criticality},
  author={Akemann, Gernot and Byun, Sung-Soo and Kang, Nam-Gyu},
  journal={Annales Henri Poincar{\'e}},
  volume={22},
  pages={1035--1068},
  year={2021},
  publisher={Springer}
}

@article{alt2022local,
  title={Local elliptic law},
  author={Alt, Johannes and Kr{\"u}ger, Torben},
  journal={Bernoulli},
  volume={28},
  number={2},
  pages={886--909},
  year={2022},
  publisher={Bernoulli Society for Mathematical Statistics and Probability}
}

@article{o2015products,
  title={Products of independent elliptic random matrices},
  author={O’Rourke, Sean and Renfrew, David and Soshnikov, Alexander and Vu, Van},
  journal={Journal of Statistical Physics},
  volume={160},
  number={1},
  pages={89--119},
  year={2015},
  publisher={Springer}
}

@article{ajanki2017singularities,
  title={Singularities of Solutions to Quadratic Vector Equations on the Complex Upper Half-Plane},
  author={Ajanki, Oskari and Kr{\"u}ger, Torben and Erd{\H{o}}s, L{\'a}szl{\'o}},
  journal={Communications on Pure and Applied Mathematics},
  volume={70},
  number={9},
  pages={1672--1705},
  year={2017},
  publisher={Wiley Online Library}
}

@article{nguyen2015elliptic,
  title={The elliptic law},
  author={Nguyen, Hoi H and O’Rourke, Sean},
  journal={International Mathematics Research Notices},
  volume={2015},
  number={17},
  pages={7620--7689},
  year={2015},
  publisher={Oxford University Press}
}

@article{gotze2015minimal,
  title={On minimal singular values of random matrices with correlated entries},
  author={G{\"o}tze, Friedrich and Naumov, Alexey and Tikhomirov, Alexander},
  journal={Random Matrices: Theory and Applications},
  volume={4},
  number={02},
  pages={1550006},
  year={2015},
  publisher={World Scientific}
}

@article{alt2021inhomogeneous,
  title={Inhomogeneous circular law for correlated matrices},
  author={Alt, Johannes and Kr{\"u}ger, Torben},
  journal={Journal of Functional Analysis},
  volume={281},
  number={7},
  pages={109120},
  year={2021},
  publisher={Elsevier}
}

@article{helton2007operator,
  title={Operator-valued semicircular elements: solving a quadratic matrix equation with positivity constraints},
  author={Helton, J William and Far, Reza Rashidi and Speicher, Roland},
  journal={International Mathematics Research Notices},
  volume={2007},
  number={9},
  year={2007}
}

@ARTICLE{Fyodorov:2011,
AUTHOR = {Fyodorov, Y. },
TITLE   = {{R}andom {M}atrix theory},
YEAR    = {2011},
JOURNAL = {Scholarpedia},
VOLUME  = {6},
NUMBER  = {3},
PAGES   = {9886},
}

@article{sommers1988spectrum,
  title={Spectrum of large random asymmetric matrices},
  author={Sommers, Hans Juergen and Crisanti, Andrea and Sompolinsky, Haim and Stein, Yaakov},
  journal={Physical review letters},
  volume={60},
  number={19},
  pages={1895},
  year={1988},
  publisher={APS}
}

@book{feier2012methods,
  title={Methods of proof in {R}andom {M}atrix {T}heory},
  author={Feier, Adina Roxana},
  year={2012},
  publisher={Harvard University}
}

@article{bryson2021marchenko,
  title={Marchenko--{P}astur law with relaxed independence conditions},
  author={Bryson, Jennifer and Vershynin, Roman and Zhao, Hongkai},
  journal={Random Matrices: Theory and Applications},
  volume={10},
  number={04},
  pages={2150040},
  year={2021},
  publisher={World Scientific}
}

@article{bothner2022complex,
  title={The complex elliptic Ginibre ensemble at weak non-Hermiticity: edge spacing distributions},
  author={Bothner, Thomas and Little, Alex},
  journal={arXiv:2208.04684},
  year={2022}
}

@article{ginibre1965statistical,
  title={Statistical ensembles of complex, quaternion, and real matrices},
  author={Ginibre, Jean},
  journal={Journal of Mathematical Physics},
  volume={6},
  number={3},
  pages={440--449},
  year={1965},
  publisher={American Institute of Physics}
}

@article{girko1985circular,
  title={Circular law},
  author={Girko, Vyacheslav L},
  journal={Theory of Probability \& Its Applications},
  volume={29},
  number={4},
  pages={694--706},
  year={1985},
  publisher={SIAM}
}

@article{naumov2012elliptic,
  title={Elliptic law for real random matrices},
  author={Naumov, Alexey},
  year = {2013},
  pages = {31-38},
  journal = {Moscow University Computational Mathematics and Cybernetics}
}

@article{benaych2016lectures,
  title={Lectures on the local semi--circle law for Wigner matrices},
  author={Florent Benaych-Georges and Antti Knowles},
  journal={arXiv: Probability},
  year={2016},
}

@book{kallenberg1997foundations,
  title={Foundations of modern {P}robability},
  author={Kallenberg, Olav},
  year={1997},
  publisher={Springer}
}

@article{smith1995recursive,
  title={A recursive formulation of the old problem of obtaining moments from cumulants and vice versa},
  author={Smith, Peter J},
  journal={The American Statistician},
  volume={49},
  number={2},
  pages={217--218},
  year={1995},
  publisher={Taylor \& Francis}
}

@book{anderson2010introduction,
  title={An Introduction to Random Matrices},
  author={Anderson, Greg W. and Guionnet, Alice and Zeitouni, Ofer},
  year={2009},
  publisher={Cambridge University Press}
}

@article{parzen1962estimation,
  title={On estimation of a probability density function and mode},
  author={Parzen, Emanuel},
  journal={The annals of mathematical statistics},
  volume={33},
  number={3},
  pages={1065--1076},
  year={1962},
  publisher={JSTOR}
}

@book{lebl2019tasty,
  title={Tasty bits of several complex variables},
  author={Lebl, Jiri},
  year={2019},
  publisher={Lebl, Jiri}
}

@book{martin1966complex,
  title={Complex {A}nalysis},
  author={Martin, D and Ahlfors, LV},
  year={1966},
  publisher={McGraw-Hill, New York}
}

@book{gong2007concise,
  title={Concise complex analysis (revised edition)},
  author={Gong, Sheng and Gong, Youhong},
  year={2007},
  publisher={World Scientific Publishing Company}
}

@book{akemann2011oxford,
  title={The Oxford Handbook of Random Matrix Theory},
  author={Akemann, Gernot and Baik, Jinho and Di Francesco, Philippe},
  year={2015},
  publisher={Oxford University Press}
}

@book{mehta2004random,
  title={Random {M}atrices},
  author={Mehta, Madan Lal},
  year={2004},
  publisher={Elsevier}
}

@article{pastur1973spectra,
  title={Spectra of random self--adjoint operators},
  author={Pastur, Leonid A},
  journal={Russian mathematical surveys},
  volume={28},
  year={1973},
  publisher={IOP Publishing}
}

@article{wigner1958distribution,
  title={On the distribution of the roots of certain symmetric matrices},
  author={Wigner, Eugene P},
  journal={Annals of Mathematics},
  volume={67},
  number={2},
  pages={325--327},
  year={1958},
  publisher={JSTOR}
}

@book{Artin:1998,
  title={Algebra},
  author={Artin, M.},
  year={2011},
  publisher={Pearson Education}
}

@book{dudley2002real,
  title={Real Analysis and Probability},
  author={Dudley, R.M.},
  year={2002},
  publisher={Cambridge University Press}
}

@book{tao2023topics,
  title={Topics in random matrix theory},
  author={Tao, Terence},
  volume={132},
  year={2023},
  publisher={American Mathematical Society}
}

@book{durrett2019probability,
  title={Probability: Theory and Examples},
  author={Durrett, Rick},
  year={2019},
  publisher={Cambridge university press}
}

@article{kargin2013lecture,
  title={Lecture Notes on {R}andom {M}atrix {T}heory},
  author={Kargin, Vladislav and Yudovina, Elena},
  journal={arXiv:1305.2153},
  year={2013}
}

@book{chung2001course,
  title={A course in Probability Theory},
  author={Chung, Kai Lai},
  year={2001},
  publisher={Academic press}
}

@book{kallenberg2017random,
  title={Random measures, theory and applications},
  author={Kallenberg, Olav},
  year={2017},
  publisher={Springer}
}

@book{rudin86real,
author = {Rudin, Walter},
title = {Real and Complex Analysis, 3rd Ed.},
year = {1987},
isbn = {0070542341},
publisher = {McGraw-Hill, Inc.},
address = {USA}
}

@book{voisin_2002, 
place={Cambridge}, 
series={Cambridge Studies in Advanced Mathematics}, 
title={Hodge Theory and Complex Algebraic Geometry},
publisher={Cambridge University Press}, 
author={Voisin, Claire}, 
year={2002}
}

@book{higham2002accuracy,
  title={Accuracy and stability of numerical algorithms},
  author={Higham, Nicholas J},
  year={2002},
  publisher={SIAM}
}

@article{erdos2010universality,
  title={Universality for generalized {W}igner matrices with {B}ernoulli distribution},
  author={Erdos, L{\'a}szl{\'o} and Yau, Horng-Tzer and Yin, Jun},
  journal={arXiv:1003.3813},
  year={2010}
}

@article{erdHos2012rigidity,
  title={Rigidity of eigenvalues of generalized {W}igner matrices},
  author={Erd{\H{o}}s, L{\'a}szl{\'o} and Yau, Horng-Tzer and Yin, Jun},
  journal={Advances in Mathematics},
  volume={229},
  number={3},
  pages={1435--1515},
  year={2012},
  publisher={Elsevier}
}

@article{erdHos2013averaging,
  title={Averaging fluctuations in resolvents of random band matrices},
  author={Erd{\H{o}}s, L{\'a}szl{\'o} and Knowles, Antti and Yau, Horng-Tzer},
  journal={Annales Henri Poincar{\'e}},
  volume={14},
  number={8},
  pages={1837--1926},
  year={2013},
  publisher={Springer}
}

@article{erdHos2013local,
  title={The local semi--circle law for a general class of {R}andom {M}atrices},
  author={Erd{\H{o}}s, L{\'a}szl{\'o} and Knowles, Antti and Yau, Horng-Tzer and Yin, Jun},
  journal={Electronic Journal of Probability},
  year={2012},
  volume={18},
  pages={1-58},
}

@article{rudelson2008littlewood,
  title={The {L}ittlewood--{O}fford problem and invertibility of {R}andom {M}atrices},
  author={Rudelson, Mark and Vershynin, Roman},
  journal={Advances in Mathematics},
  volume={218},
  number={2},
  pages={600--633},
  year={2008},
  publisher={Elsevier}
}

@article{bordenave2012around,
author = {Charles Bordenave and Djalil Chafa{\"i}},
title = {{Around the circular law}},
volume = {9},
journal = {Probability Surveys},
publisher = {Institute of Mathematical Statistics and Bernoulli Society},
pages = { 1 -- 89},
year = {2012},
}

@article{ajanki2019stability,
  title={Stability of the matrix {D}yson equation and random matrices with correlations},
  author={Ajanki, Oskari H and Erd{\H{o}}s, L{\'a}szl{\'o} and Kr{\"u}ger, Torben},
  journal={Probability Theory and Related Fields},
  volume={173},
  pages={293--373},
  year={2019},
  publisher={Springer}
}

@article{ajanki2019quadratic,
  title={Quadratic vector equations on complex upper half-plane},
  author={Ajanki, Oskari and Erd{\H{o}}s, L{\'a}szl{\'o} and Kr{\"u}ger, Torben},
  journal={Memoirs of the American Mathematical Society},
  volume={261},
  number={1261},
  year={2019},
  publisher={American Mathematical Society}
}

@article{khorunzhy1996asymptotic,
  title={Asymptotic properties of large random matrices with independent entries},
  author={Khorunzhy, Alexei M and Khoruzhenko, Boris A and Pastur, Leonid A},
  journal={Journal of Mathematical Physics},
  volume={37},
  number={10},
  pages={5033--5060},
  year={1996},
  publisher={American Institute of Physics}
}

@article{gesztesy2000matrix,
  title={On matrix--valued {H}erglotz functions},
  author={Gesztesy, Fritz and Tsekanovskii, Eduard},
  journal={Mathematische Nachrichten},
  volume={218},
  number={1},
  pages={61--138},
  year={2000},
  publisher={Wiley Online Library}
}

\bibliographystyle{plain}
\end{document}